\begin{document}

% define theorem environments
\newtheorem{theorem}{Theorem}    %[section]
\newtheorem{proposition}[theorem]{Proposition}
\newtheorem{conjecture}[theorem]{Conjecture}
\def\theconjecture{\unskip}
\newtheorem{corollary}[theorem]{Corollary}
\newtheorem{lemma}[theorem]{Lemma}
\newtheorem{sublemma}[theorem]{Sublemma}
\newtheorem{observation}[theorem]{Observation}
\theoremstyle{definition}
\newtheorem{definition}{Definition}
\newtheorem{notation}[definition]{Notation}
\newtheorem{remark}[definition]{Remark}
\newtheorem{question}[definition]{Question}
\newtheorem{questions}[definition]{Questions}
\newtheorem{example}[definition]{Example}
\newtheorem{problem}[definition]{Problem}
\newtheorem{exercise}[definition]{Exercise}

\numberwithin{theorem}{section}
\numberwithin{definition}{section}
\numberwithin{equation}{section}

\def\earrow{{\mathbf e}}
\def\rarrow{{\mathbf r}}
\def\uarrow{{\mathbf u}}
\def\tpar{T_{\rm par}}
\def\apar{A_{\rm par}}

\def\reals{{\mathbb R}}
\def\torus{{\mathbb T}}
\def\heis{{\mathbb H}}
\def\integers{{\mathbb Z}}
\def\naturals{{\mathbb N}}
\def\complex{{\mathbb C}\/}
\def\distance{\operatorname{distance}\,}
\def\support{\operatorname{support}\,}
\def\dist{\operatorname{dist}\,}
\def\Span{\operatorname{span}\,}
\def\degree{\operatorname{degree}\,}
\def\kernel{\operatorname{kernel}\,}
\def\dim{\operatorname{dim}\,}
\def\codim{\operatorname{codim}}
\def\trace{\operatorname{trace\,}}
\def\Span{\operatorname{span}\,}
\def\ZZ{ {\mathbb Z} }
\def\p{\partial}
\def\rp{{ ^{-1} }}
\def\Re{\operatorname{Re\,} }
\def\Im{\operatorname{Im\,} }
\def\ov{\overline}
\def\eps{\varepsilon}
\def\lt{L^2}
\def\diver{\operatorname{div}}
\def\curl{\operatorname{curl}}
\def\etta{\eta}
\newcommand{\norm}[1]{ \|  #1 \|}
\def\Span{\operatorname{span}}
\def\expect{\mathbb E}
\def\paraboloid{{{\mathbb P}^2}}
\def\Sbest{{\mathbf S}}
\def\Pbest{{\mathbf P}}

\newcommand{\Norm}[1]{ \left\|  #1 \right\| }
\newcommand{\set}[1]{ \left\{ #1 \right\} }
\def\one{\mathbf 1}
\newcommand{\modulo}[2]{[#1]_{#2}}

\def\scriptf{{\mathcal F}}
\def\scriptg{{\mathcal G}}
\def\scriptm{{\mathcal M}}
\def\scriptb{{\mathcal B}}
\def\scriptc{{\mathcal C}}
\def\scriptt{{\mathcal T}}
\def\scripti{{\mathcal I}}
\def\scripte{{\mathcal E}}
\def\scriptv{{\mathcal V}}
\def\scriptS{{\mathcal S}}
\def\scripta{{\mathcal A}}
\def\scriptr{{\mathcal R}}
\def\scripto{{\mathcal O}}
\def\scripth{{\mathcal H}}
\def\frakv{{\mathfrak V}}

\begin{comment}
\def\scriptx{{\mathcal X}}
\def\scriptj{{\mathcal J}}
\def\scriptr{{\mathcal R}}
\def\scriptS{{\mathcal S}}
\def\scripta{{\mathcal A}}
\def\scriptk{{\mathcal K}}
\def\scriptd{{\mathcal D}}
\def\scriptp{{\mathcal P}}
\def\scriptl{{\mathcal L}}
\def\scriptv{{\mathcal V}}
\def\scripth{{\mathcal H}}
\def\scriptn{{\mathcal N}}
\def\frakg{{\mathfrak g}}
\def\frakG{{\mathfrak G}}
\def\boldn{\mathbf N}
\end{comment}

\author{Michael Christ}
\address{
        Michael Christ\\
        Department of Mathematics\\
        University of California \\
        Berkeley, CA 94720-3840, USA}
\email{mchrist@math.berkeley.edu}
\thanks{The first author was supported in part by NSF grant
DMS-0901569. The second author was supported by the National Science
Foundation under agreement DMS-0635607. Any opinions, findings, and conclusions
or recommendations expressed in this paper are those of the authors
and do not necessarily reflect the views of the National Science Foundation.}
%s DMS-0401260 and
\author{Shuanglin Shao}
\address{Shuanglin Shao\\
School of Mathematics, Institute for Advanced Study, Princeton, NJ 08540
\\
IMA, University of Minnesota, Minneapolis, MN 55455
}
\email{slshao@ima.umn.edu}

\date{
% October 15, 2009. Revision
%May 19, 2010. Revision
May 27, 2010.}

\title[Extremals for a Fourier restriction inequality]
{Existence of Extremals \\ for a Fourier Restriction Inequality}

\begin{abstract}
The adjoint Fourier restriction inequality
of Tomas and Stein states
that the mapping $f\mapsto \widehat{f\sigma}$
is bounded from $\lt(S^2)$ to $L^4(\reals^3)$.
We prove that there exist functions which extremize
this inequality, and that any extremizing
sequence of nonnegative functions has a subsequence
which converges to an extremizer.
%and (in a future paper) that any extremizer is infinitely differentiable.
\end{abstract}

\maketitle
{\Small \tableofcontents}

\section{Introduction}

Let $S^2$ denote the unit sphere in $\reals^3$, equipped
with surface measure $\sigma$. The adjoint Fourier restriction
inequality of Tomas and Stein, for $S^2$, states that
there exists $C<\infty$ such that
\begin{equation} \label{inequalityR}
\norm{\widehat{f\sigma}}_{L^4(\reals^3)}
\le C\norm{f}_{\lt(S^2,\sigma)}
\end{equation}
for all $f\in\lt(S^2)$.
With the Fourier transform defined to be
$\widehat{g}(\xi) = \int e^{-ix\cdot\xi}g(x)\,dx$,
denote by
\begin{equation}
\scriptr
=\sup_{0\ne f\in\lt(S^2)} \norm{\widehat{f\sigma}}_{L^4(\reals^3)}
\ \big/\ \norm{f}_{\lt(S^2,\sigma)}
\end{equation}
the optimal constant in the inequality \eqref{inequalityR}.

\begin{definition}
An extremizing sequence for the inequality \eqref{inequalityR}
is a sequence $\{f_\nu\}$ of functions in $\lt(S^2)$
satisfying $\norm{f_\nu}_2\le 1$,
such that $\norm{\widehat{f_\nu\sigma}}_{L^4(\reals^3)}\to\scriptr$
as $\nu\to\infty$.

An extremizer for the inequality \eqref{inequalityR}
is a function $f\ne 0$ which satisfies $\norm{\widehat{f\sigma}}_4=\scriptr\norm{f}_2$.
\end{definition}

The main result of this paper is:
\begin{theorem} \label{thm:main}
There exists an extremizer in $\lt(S^2)$ for the
inequality \eqref{inequalityR}.
\end{theorem}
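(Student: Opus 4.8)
The plan is to use the concentration-compactness method adapted to this Fourier-analytic setting. Starting from an extremizing sequence $\{f_\nu\}$ with $\norm{f_\nu}_2 \le 1$ and $\norm{\widehat{f_\nu\sigma}}_4 \to \scriptr$, I would first reduce to nonnegative functions: since $|\widehat{f\sigma}| \le \widehat{|f|\sigma}$ fails pointwise in general but $\norm{\widehat{f\sigma}}_4 \le \norm{\widehat{|f|\sigma}}_4$ does hold by the standard observation that $\|\widehat{f\sigma}\|_4^4 = \|(f\sigma) * (f\sigma)\|_2^2$ and $|(f\sigma)*(f\sigma)| \le (|f|\sigma)*(|f|\sigma)$, so replacing $f_\nu$ by $|f_\nu|$ keeps it an extremizing sequence. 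Then I would pass to a weak limit $f_\nu \rightharpoonup f$ in $\lt(S^2)$ along a subsequence; the whole game is to show this convergence is strong, i.e. $\norm{f}_2 = 1$ and no mass escapes.

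The main technical input is a refinement of the Tomas-Stein inequality that quantifies the loss when $f$ is spread out: roughly, a bilinear or $L^p$-improving estimate showing that $\norm{\widehat{f\sigma}}_4$ is controlled by $\norm{f}_2$ times a small power of something like $\sup_\tau \norm{f}_{L^2(\tau)}$ where $\tau$ ranges over caps, or by interpolating against a weaker norm. The key step would be to establish an inequality of the form $\norm{\widehat{f\sigma}}_4 \lesssim \norm{f}_2^{1-\theta}\, Q(f)^\theta$ for some $\theta>0$, where $Q$ is a quantity that is small for functions concentrated near a single point or spread uniformly over the sphere. Such a refinement localizes the extremizing sequence: after the symmetries of the problem (translations in $\reals^3$ act on $\widehat{f\sigma}$ as modulations of $f$; rotations of $S^2$) are quotiented out, the sequence must concentrate in a bounded region of phase space, which is what converts weak convergence into strong convergence.

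The concentration-compactness dichotomy would then be run: either (vanishing) $\norm{\widehat{f_\nu\sigma}}_4 \to 0$, contradicting $\scriptr > 0$; or (dichotomy) the sequence splits into two pieces separated in frequency/space with $\norm{f_\nu}_2^2 \approx \alpha^2 + (1-\alpha^2)$, and then a near-orthogonality estimate for $\widehat{(g+h)\sigma}$ when $g,h$ are widely separated — showing $\norm{\widehat{(g+h)\sigma}}_4^4 \approx \norm{\widehat{g\sigma}}_4^4 + \norm{\widehat{h\sigma}}_4^4$ up to negligible error — forces $\scriptr^4 \le \alpha^4 \scriptr^4 + (1-\alpha^2)^2\scriptr^4$, impossible unless $\alpha\in\{0,1\}$ by strict concavity; or (compactness) after applying symmetries the sequence is tight, hence converges strongly to an extremizer $f$ with $\norm{f}_2 = 1$. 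Lower semicontinuity of the norm under weak limits combined with strong convergence gives $\norm{\widehat{f\sigma}}_4 = \scriptr\norm{f}_2 = \scriptr$.

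The hard part will be the refined/bilinear restriction estimate and making the near-orthogonality in the dichotomy case quantitative on the curved sphere: unlike the paraboloid, $S^2$ is compact, so "frequency separation" really means separation on the sphere or in the spatial (Fourier) variable $\xi\in\reals^3$, and the relevant decomposition of $(f\sigma)*(f\sigma)$ — a measure on $\reals^3$ supported where the convolution of two spheres lives — must be controlled carefully. I expect one needs an explicit description of this convolution (it is absolutely continuous with a bounded density, vanishing like a square root at the boundary sphere of radius $2$) together with a Cauchy-Schwarz/stationary-phase argument to get the $\theta$-gain. Establishing that the only obstruction to compactness is the noncompact group of spatial translations (modulations of $f$), and that these can be normalized away, is the crux; once that is in hand the rest is the standard concentration-compactness bookkeeping.
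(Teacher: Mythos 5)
Your overall framework (reduce to nonnegative functions, pass to the convolution form $\norm{f\sigma*f\sigma}_2$, localize via a refined restriction estimate, run a concentration-compactness trichotomy) matches the spirit of the paper, and the strict-concavity argument you sketch for the dichotomy case is essentially how the paper rules out splitting into two distant pieces. But there is a genuine gap at the heart of the proposal: you assert a refinement of the form $\norm{\widehat{f\sigma}}_4\lesssim \norm{f}_2^{1-\theta}Q(f)^\theta$ with $Q$ small for functions \emph{concentrated near a single point}. No such estimate can exist. If $f_\nu$ is an $\lt$-normalized Gaussian-type bump on a cap of radius $r_\nu\to 0$, then $|f_\nu|^2\,d\sigma$ tends to a Dirac mass while $\norm{\widehat{f_\nu\sigma}}_4/\norm{f_\nu}_2$ tends to the paraboloid constant $\scriptr_\paraboloid>0$ (and to $(3/2)^{1/4}$ times the corresponding quantity for an even, antipodally doubled version). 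Concentration at a point, or at a pair of antipodal points, is therefore a loss-of-compactness scenario that is invisible to any structural refinement of Tomas--Stein; it is also not generated by a noncompact symmetry group that one could quotient out, since the sphere has no dilation symmetry and the modulations $f\mapsto e^{ix\cdot\xi}f$ are irrelevant for nonnegative sequences. Your trichotomy as stated has no mechanism to exclude this scenario, so the ``compactness'' branch cannot be reached.

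The paper closes exactly this gap with an arithmetic input rather than a structural one: it proves the strict comparison of optimal constants $\Sbest\ge 2^{1/4}\Pbest$ by computing $\norm{\sigma*\sigma}_2$ and $\norm{1}_2$ explicitly for the constant function (alternatively, by a perturbative computation with $e_\xi$, $|\xi|$ large), while an antipodally concentrating sequence can achieve at most $(3/2)^{1/4}\Pbest$. Since $2^{1/4}>(3/2)^{1/4}$, concentration on shrinking caps would force $\Sbest\le(3/2)^{1/4}\Pbest$ (via a rescaling/transference limit to the paraboloid), a contradiction. The refined Moyua--Vargas--Vega estimate is then used in the opposite direction from what you propose: it penalizes \emph{spreading}, showing that any near-extremizer has a substantial piece normalized on a single cap, and the remaining work (a high/low frequency cross-term estimate using nonnegativity and Fourier integral operator bounds) upgrades this to precompactness after rescaling. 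Without the explicit constant comparison, or some substitute for it, your argument cannot rule out that the supremum $\scriptr$ is attained only in the Dirac-mass limit, and the proof does not close.
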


The inequality dual to \eqref{inequalityR} is
$\norm{\widehat{h}}_{\lt(S^2,\sigma)}
\le C\norm{h}_{L^{4/3}(\reals^3)}$.
If $f$ extremizes \eqref{inequalityR},
%and $g = \widehat{f\sigma}$, then $h=g|g|^2$
then $\widehat{f\sigma}\cdot|\widehat{f\sigma}|^2$
extremizes the dual inequality.

\begin{definition}
A sequence of functions
%$\{g_\nu\}$
in $\lt(S^2)$
is precompact if any subsequence has a sub-subsequence which is
Cauchy in $\lt(S^2)$.
\end{definition}

Nonnegative functions play a special role in our analysis, because
\begin{equation} \label{onlypositivefnsmatter}
\norm{\,\widehat{|f|\sigma}\,}_4\ge\norm{\widehat{f\sigma}}_4
\text{ for all $f\in\lt(S^2)$.}
\end{equation}
Therefore if $\{f_\nu\}$ is an extremizing sequence,
so is $\{|f_\nu|\}$.
Any limit, in the $\lt$ norm, of an extremizing sequence is of course an extremizer.
Thus the following implies Theorem~\ref{thm:main}.
\begin{theorem} \label{thm:preconvergence}
Any extremizing sequence of nonnegative functions in $\lt(S^2)$ for the inequality
\eqref{inequalityR} is precompact.
\end{theorem}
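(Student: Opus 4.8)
The plan is to run a concentration--compactness argument. Since $S^2$ is compact, the inequality \eqref{inequalityR} carries no scaling or translation symmetry, and modulations act trivially on $\norm{\widehat{f\sigma}}_4$ while destroying nonnegativity; so the only mechanism by which a nonnegative, $\lt$--normalized extremizing sequence can fail to be precompact is concentration of a positive fraction of its mass into a vanishingly small cap. The heart of the proof is to rule this out, using the fact that a concentrating sequence ``sees'' only the parabolic model at the concentration point, whose sharp extension constant is strictly smaller than $\scriptr$.

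Concretely, let $\{f_\nu\}$ be a nonnegative extremizing sequence; it suffices to produce a strongly convergent subsequence, since the same argument applied to an arbitrary subsequence then yields precompactness. Passing to a subsequence we may assume $\norm{f_\nu}_2\to 1$ (if the limit were $c<1$ then $\scriptr\le\scriptr c$, impossible), and after dividing by $\norm{f_\nu}_2$ we may take $\norm{f_\nu}_2=1$. Pass to a further subsequence with $f_\nu\rightharpoonup f$ weakly in $\lt(S^2)$; then $f\ge 0$, and writing $a=\norm{f}_2^2\in[0,1]$ one has $\norm{f_\nu-f}_2^2\to 1-a$. Since $\omega\mapsto e^{-i\omega\cdot\xi}$ lies in $\lt(S^2,d\sigma)$ for each fixed $\xi$, weak convergence forces $\widehat{f_\nu\sigma}(\xi)\to\widehat{f\sigma}(\xi)$ for every $\xi\in\reals^3$; together with the uniform bound $\norm{\widehat{f_\nu\sigma}}_4\le\scriptr$, the Brezis--Lieb lemma gives
\[
\scriptr^4=\lim_\nu\norm{\widehat{f_\nu\sigma}}_4^4
=\norm{\widehat{f\sigma}}_4^4+\lim_\nu\norm{\widehat{(f_\nu-f)\sigma}}_4^4
\le \scriptr^4 a^2+\scriptr^4(1-a)^2 .
\]
Hence $a^2+(1-a)^2\ge 1$, forcing $a\in\{0,1\}$. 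If $a=1$ then $\norm{f_\nu}_2\to\norm{f}_2$, so $f_\nu\to f$ in $\lt$, whence $\widehat{f_\nu\sigma}\to\widehat{f\sigma}$ in $L^4$, $\norm{\widehat{f\sigma}}_4=\scriptr$, and $f$ is an extremizer. The whole problem therefore reduces to excluding the vanishing case $f_\nu\rightharpoonup 0$.

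To exclude vanishing I would invoke a quantitative refinement of Tomas--Stein. Decomposing $S^2$ into caps of a small radius $\eps$ and expanding $\norm{\widehat{f\sigma}}_4^4=\norm{(\widehat{f\sigma})^2}_2^2$, the contributions of transverse pairs of caps satisfy a bilinear restriction estimate with a gain that is a positive power of $\eps$ --- a consequence of the absolute continuity, with controlled density, of the convolution of surface measures on transverse caps. Iterating this across dyadic scales shows that $\norm{\widehat{f\sigma}}_4$ can approach $\scriptr\norm{f}_2$ only if essentially all of the $\lt$ mass of $f$ collapses, at some scale $\rho_\nu\to 0$ about some point $p_\nu$, into a single approximately Gaussian bump. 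After composing with rotations (a compact family, hence discardable along a subsequence) one gets a fixed concentration point $p$, scales $\rho_\nu\to 0$, and functions $g_\nu(\eta)=f_\nu(\exp_p(\rho_\nu\eta))$ bounded and nonvanishing in $\lt(\reals^2)$; moreover the cap of $S^2$ at $p$, dilated by $\rho_\nu^{-1}$, flattens onto the paraboloid $\paraboloid$. Transporting the extension operator through this dilation, the Jacobian factors cancel exactly and one obtains
\[
\scriptr^4=\lim_\nu\norm{\widehat{f_\nu\sigma}}_4^4
\le \Pbest^4\,\limsup_\nu\norm{g_\nu}_{\lt(\reals^2)}^4
\le \Pbest^4\,\limsup_\nu\norm{f_\nu}_2^4=\Pbest^4 ,
\]
where $\Pbest$ is the sharp constant in the corresponding extension inequality for $\paraboloid$. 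It then remains to verify the strict inequality $\scriptr>\Pbest$: the value of $\Pbest$ is computed explicitly from the sharp Strichartz inequality of Hundertmark--Zharnitsky (whose extremizers are Gaussians), while testing \eqref{inequalityR} against $f\equiv\one$, using $\widehat\sigma(\xi)=4\pi\,|\xi|^{-1}\sin|\xi|$ and $\int_0^\infty r^{-2}\sin^4 r\,dr=\pi/4$, gives $\scriptr\ge\norm{\widehat\sigma}_4/\norm{\one}_{\lt(S^2)}=2\pi$, and $2\pi>\Pbest$. The displayed chain is then contradictory, vanishing is impossible, and the proof is complete.

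The main obstacle is the refinement of Tomas--Stein and the profile analysis it powers: upgrading the soft assertion ``near-extremality forces concentration'' to a precise statement in which the rescaled sequence converges to an object governed by the paraboloid constant, and controlling the errors generated by the curvature of $S^2$ and by the cap decomposition. By contrast the Brezis--Lieb dichotomy is routine, and the strict inequality $\scriptr>\Pbest$, although it genuinely uses the explicit value of $\Pbest$, reduces to a finite numerical comparison.
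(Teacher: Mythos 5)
Your Brezis--Lieb dichotomy, reducing the theorem to excluding the vanishing case $f_\nu\rightharpoonup 0$, is fine. The genuine gap is in how you exclude vanishing. You assert that near-extremality forces essentially all of the $\lt$ mass into a \emph{single} shrinking cap (indeed into a single approximately Gaussian bump), and you then derive the contradiction $\scriptr\le\Pbest$, checked numerically via the constant function. But for nonnegative $f$ the relevant bilinear/cap refinements cannot distinguish a cap $\scriptc$ from its antipode $-\scriptc$: one has $\norm{f\sigma*\tilde f\sigma}_2=\norm{f\sigma*f\sigma}_2$ with $\tilde f(x)=f(-x)$ (equivalently, wave packets from $\scriptc$ and $-\scriptc$ oscillate in parallel directions, so there is no transversality gain between them), and the quantity $\norm{\widehat{f\sigma}}_4$ does not decrease under antipodal symmetrization. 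Hence the most such estimates can give is concentration on a \emph{pair of antipodal caps}, and that scenario is not excluded by $\scriptr>\scriptr_\paraboloid$: splitting the mass evenly between two antipodal shrinking caps produces nonnegative sequences with $\norm{\widehat{f_\nu\sigma}}_4/\norm{f_\nu}_2\to(3/2)^{1/4}\scriptr_\paraboloid>\scriptr_\paraboloid$, so the chain ending in $\scriptr\le\Pbest$ is simply not reachable and your threshold is the wrong one. What is actually needed is the strict inequality $\scriptr>(3/2)^{1/4}\scriptr_\paraboloid$; this is exactly why the paper proves $\scriptr\ge 2^{1/4}\scriptr_\paraboloid$ via the constant function (your $2\pi$ computation is precisely this bound) and carries out the entire cap analysis modulo the identification $\scriptc\equiv-\scriptc$, keeping track of even, antipodally symmetric profiles throughout.

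Separately, even setting the antipodal issue aside, the statement that ``essentially all of the mass collapses into a single approximately Gaussian bump'' is much stronger than what the refined Tomas--Stein/cap estimates give: they produce one cap (pair) carrying a fixed positive fraction of the mass, and upgrading this to all-but-$\eps$ of the mass, with uniform height and spatial normalization at the cap scale, is a substantial part of the argument (the paper's decomposition algorithm, the near-extremal orthogonality lemma, and the weak interaction of distant caps); the Gaussian shape is not available a priori and is never needed. Moreover, passing to the paraboloid limit and concluding $\limsup_\nu\norm{\widehat{f_\nu\sigma}}_4/\norm{f_\nu}_2\le(3/2)^{1/4}\scriptr_\paraboloid$ requires some compactness of the rescaled profiles (uniform Fourier tightness); this is where the paper makes essential use of nonnegativity, via a lower bound on the low-frequency mass of the rescaled functions and a high/low frequency splitting whose cross term is shown to be negligible. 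As written, your proposal treats these points as technical, but they are where the real work lies, and the antipodal-pair obstruction in particular makes the argument, in its present form, incorrect rather than merely incomplete.
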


%Via general principles, there follows:
%\begin{corollary} \label{cor:dual}
%There exists an extremal function in $L^{4/3}(\reals^3)$
%for the dual inequality.
%\end{corollary}

In particular, the set of all nonnegative extremizers is itself compact.
We do not know whether nonnegative extremizers are unique modulo rotations of $S^2$
and multiplication by constants.
They do possess the following symmetry, which will be useful in our analysis.

\begin{theorem}
Every extremizer satisfies $|f(-x)|=|f(x)|$ for almost every $x\in S^2$.
\end{theorem}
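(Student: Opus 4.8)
The plan is to reduce to the case of nonnegative extremizers, decompose such a function into its antipodally even and odd parts, and then apply a one-parameter family of $\lt$-norm-preserving perturbations; extremality forces a vanishing condition which, combined with the real-analyticity of the Fourier transform of a compactly supported measure, eliminates the odd part.

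First I would observe that it suffices to treat nonnegative extremizers: if $f$ is any extremizer, then \eqref{onlypositivefnsmatter} together with the definition of $\scriptr$ gives $\scriptr\norm{|f|}_2\ge\norm{\widehat{|f|\sigma}}_4\ge\norm{\widehat{f\sigma}}_4=\scriptr\norm{f}_2=\scriptr\norm{|f|}_2$, so $|f|$ is a nonnegative extremizer, and the statement ``$|f|(-x)=|f|(x)$ a.e.'' is precisely ``$|f(-x)|=|f(x)|$ a.e.'' So let $g\ge 0$ be an extremizer, and write $g=g_e+g_o$ with $g_e(x)=\tfrac12(g(x)+g(-x))$, $g_o(x)=\tfrac12(g(x)-g(-x))$. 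These are real-valued $\lt$ functions, $g_e$ antipodally even and $g_o$ antipodally odd; they are orthogonal in $\lt(S^2)$, so $\norm{g}_2^2=\norm{g_e}_2^2+\norm{g_o}_2^2$; and using the antipodal invariance of $\sigma$ one checks that $\widehat{g_e\sigma}$ is real-valued while $\widehat{g_o\sigma}$ is purely imaginary. Write $\widehat{g_e\sigma}=\phi$ and $\widehat{g_o\sigma}=i\psi$ with $\phi,\psi$ real, so that $|\widehat{g\sigma}|^2=\phi^2+\psi^2$ and $\norm{\widehat{g\sigma}}_4^4=\int_{\reals^3}(\phi^2+\psi^2)^2$.

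Next I would perturb by rotating the phase of the odd part. For $\theta\in\reals$ put $h_\theta=g_e+e^{i\theta}g_o\in\lt(S^2)$; orthogonality gives $\norm{h_\theta}_2=\norm{g}_2$, so \eqref{inequalityR} with the sharp constant yields $\norm{\widehat{h_\theta\sigma}}_4\le\scriptr\norm{g}_2=\norm{\widehat{g\sigma}}_4$. A short computation gives $|\widehat{h_\theta\sigma}|^2=\phi^2+\psi^2-2(\sin\theta)\phi\psi$, hence, with $A=\int(\phi^2+\psi^2)^2$, $B=\int(\phi^2+\psi^2)\phi\psi$, $C=\int\phi^2\psi^2$, one has $\norm{\widehat{h_\theta\sigma}}_4^4=A-4(\sin\theta)B+4(\sin^2\theta)C\le A$ for every $\theta$. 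Letting $\theta\to0$ through positive and negative values forces the linear coefficient $B$ to vanish, and then $\theta=\pi/2$ forces $C\le0$; since trivially $C\ge0$ we conclude $C=\int_{\reals^3}|\widehat{g_e\sigma}|^2\,|\widehat{g_o\sigma}|^2=0$.

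Finally I would close the argument by analyticity. The function $\widehat{g_e\sigma}\cdot\widehat{g_o\sigma}$ is continuous and vanishes a.e., hence vanishes everywhere on $\reals^3$. Since $g_e\sigma$ and $g_o\sigma$ are finite measures supported in the compact sphere, their Fourier transforms extend to entire functions on $\complex^3$ and are in particular real-analytic on $\reals^3$, so each has nowhere dense zero set unless it is identically zero. Therefore, if $g_e$ and $g_o$ were both nonzero, the dense open sets where $\widehat{g_e\sigma}\ne0$ and where $\widehat{g_o\sigma}\ne0$ would intersect, contradicting the identical vanishing of the product; so $g_e\equiv0$ or $g_o\equiv0$ on $S^2$. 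The case $g_e\equiv0$ forces $g(x)=-g(-x)$, which with $g\ge0$ gives $g\equiv0$, impossible; hence $g_o\equiv0$, i.e.\ $g(x)=g(-x)$ for a.e.\ $x$, which is what was needed. I expect the main obstacle to be organizational rather than deep: one must hit on the correct family of perturbations — the phase rotation of the odd part, which preserves the $\lt$ norm while moving the $L^4$ norm — and one must use the compact support of $g\sigma$ to rule out the degenerate possibility $g_e\equiv0$.
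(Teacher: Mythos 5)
Your proof is correct, but it takes a genuinely different route from the paper's. The paper deduces the theorem from a stronger statement (Proposition~\ref{prop:symmetrize}): for \emph{every} nonnegative $f$, the antipodal symmetrization $f_\star$ satisfies $\norm{f\sigma*f\sigma}_2\le\norm{f_\star\sigma*f_\star\sigma}_2$, with strict inequality unless $f=f_\star$. That is proved by expanding the quadrilinear form over quadruples $a+b=c+d$, exploiting the $48$-element symmetry group generated by \eqref{orbit}, and maximizing the trigonometric expression $\Gamma(\varphi,\psi,\alpha,\beta)\le\tfrac32$ orbit by orbit. Your argument instead works only at an extremizer: after reducing to $g=|f|\ge 0$ and splitting $g=g_e+g_o$, you exploit that $\widehat{g_e\sigma}$ is real and $\widehat{g_o\sigma}$ purely imaginary, perturb by the $\lt$-norm-preserving family $h_\theta=g_e+e^{i\theta}g_o$, and read off from $\norm{\widehat{h_\theta\sigma}}_4^4=A-4(\sin\theta)B+4(\sin^2\theta)C\le A$ that $B=0$ and then $C=\int|\widehat{g_e\sigma}|^2|\widehat{g_o\sigma}|^2=0$; compact support of $g_e\sigma,g_o\sigma$ makes both transforms real-analytic, so one of $g_e,g_o$ vanishes, and nonnegativity kills the odd alternative. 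All steps check out (the identity $|\widehat{h_\theta\sigma}|^2=\phi^2+\psi^2-2\sin\theta\,\phi\psi$, the finiteness of $A,B,C$, the two-sided limit forcing $B=0$, and the Baire/analyticity step are all sound). What you gain is a shorter, softer argument that avoids the orbit combinatorics and the $\Gamma\le\tfrac32$ maximization; what you lose is the quantitative monotonicity statement itself, which the paper uses later for non-extremal functions (to reduce extremizing \emph{sequences} to even ones, Step 4 and the local-maximality argument), and which your variational argument — relying on first-order criticality at an exact extremizer — cannot deliver.
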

Proposition~\ref{prop:symmetrize} below states that more generally,
the quantity $\norm{\widehat{f\sigma}}_4$ never decreases under
$\lt$ norm preserving
symmetrization of $f$ with respect to the map $x\mapsto -x$.

For complex-valued extremizers and near-extremizers, the situation regarding
precompactness of extremizing sequences is different,
due to the presence of a noncompact group of symmetries of the inequality.
For $\xi\in\complex^3$ define $e_\xi(x) = e^{x\cdot\xi}$.
Then $\norm{\widehat{fe_{i\xi}\sigma}}_4=\norm{\widehat{f\sigma}}_4$
for arbitrary $\xi\in \reals^3$, $f\in\lt(S^2)$. Consequently complex-valued
extremizing sequences need not be precompact.
However, we will show in a sequel \cite{christshao2}
that this simple obstruction is the only one;
if $\{f_\nu\}$ is any complex-valued
extremizing sequence, then there exists a sequence
$\{\xi_\nu\}\subset\reals^3$
such that $e^{-ix\cdot\xi_\nu}f_\nu(x)$ is precompact.

\begin{comment}
\begin{theorem}\label{thm:complexextremizers}
Every
complex-valued extremizer for the
inequality \eqref{inequalityR}
is of the form
$ce^{ix\cdot\xi}F(x)$
where
$\xi\in \reals^3$, $c\in\complex$,
and $F$ is a nonnegative extremizer.
\end{theorem}
\end{comment}

\begin{comment}
\newline
(ii)
Let $\{f_\nu\}$ be a complex-valued extremizing sequence in $\lt(S^2)$.
Then there exists a sequence $\{\xi_\nu\}\subset \reals^3$
such that $\{e_{-{i\xi}_\nu}f_\nu\}$ is precompact in $\lt(S^2)$.
\newline
To prove (i), note that any complex extremizer can be written
as $e^{i\phi}h$ where $h\ge 0$ and $\phi$
is real-valued. $h$ itself is an extremizer, and
it is elementary that $h>0$ almost everywhere on $S^2$.
Moreover, $\phi$ satisfies the functional equation
$e^{i\phi(x)}e^{i\phi(y)}=$ some function of $x+y$
for almost every $(x,y)\in S^2$. From this it can be deduced
that $e^{i\phi(x)}$ must agree $\sigma$--almost everywhere with
$e^{ix\cdot\xi}$ for some $\xi\in \reals^3$.
However, our proof of (ii) concerning near-extremizers is more involved, resting on
an idea from additive combinatorics.
\end{comment}

The symmetries $f\mapsto f\cdot e^{ix\cdot\xi}$ merit further discussion. Matters
are clearer for the paraboloid
$\paraboloid=\{(y_1,y_2,y_3): y_3=\tfrac12y_1^2+\tfrac12 y_2^2\}$
than for $S^2$.
For $\paraboloid$, the analogues of these unimodular exponentials
are {\em quadratic} exponentials $e^{ix\cdot\eta + i\tau|x|^2}$
with $(\eta,\tau)\in\reals^{2+1}$; compare with $S^2$,
where $\xi\in\reals^3$ also ranges over a three-dimensional space.
To see the analogy,
consider a small neighborhood of $(0,0,1)\in S^2$,
equipped with coordinates $x'\in\reals^2$ so that $x=(x',(1-|x'|^2)^{1/2})$.
Then for $\xi=(0,0,\lambda)$, $e^{ix\cdot\xi} = \exp(i\lambda (1-\tfrac12|x'|^2
+O(|x'|^4))$ for small $x'$; thus for small $x'$ one has essentially quadratic
oscillation.
The presence of these symmetries among the extremizers for $\paraboloid$
implies that, in the language of concentration compactness theory \cite{kunze},
an extremizer $f$ can be tight at a scale $r$,
and $\widehat{f}$ can simultaneously be tight at a scale $\widehat{r}$,
with the product $r\cdot\widehat{r}$ arbitrarily large.

A routine variational argument leads to a generalized Euler-Lagrange equation.
Using Plancherel's Theorem, the connection between the Fourier transform and convolution,
and Cauchy-Schwarz, the definition of an extremizer can be reformulated.
\begin{proposition}
A function
$f\in\lt(S^2)$ is an extremizer if and only if
\begin{equation} \label{eulerlagrange}
\Big(f\sigma*f\sigma*f\sigma\Big)\Big|_{S^2} = {\mathbf S}^4 \norm{f}_2^2 f
\text{ a.e.\ on } S^2.
\end{equation}
\end{proposition}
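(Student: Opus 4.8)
The plan is to rewrite the defining identity $\norm{\widehat{f\sigma}}_4=\Sbest\norm{f}_2$ by means of Plancherel's theorem and the convolution theorem, and then to recognize \eqref{eulerlagrange} as the case of equality in a single Cauchy--Schwarz inequality.

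\emph{A convolution identity.} Put $F=f\sigma$. By \eqref{inequalityR}, $\widehat F\in L^4(\reals^3)$, so $\widehat F\cdot|\widehat F|^2=\widehat F\cdot\widehat F\cdot\overline{\widehat F}$ lies in $L^{4/3}(\reals^3)$. The convolution theorem identifies this product as the Fourier transform of the triple convolution $F*F*F$ (here one uses the central symmetry of $\sigma$, so that $\overline{\widehat F}$ is again the Fourier transform of a measure carried by $S^2$). The dual inequality $\norm{\widehat h}_{\lt(S^2,\sigma)}\le \Sbest\norm{h}_{L^{4/3}(\reals^3)}$ to \eqref{inequalityR}, applied with $h=\widehat F\cdot|\widehat F|^2$, then shows that the restriction $\bigl(F*F*F\bigr)\big|_{S^2}$ is a well-defined function in $\lt(S^2)$. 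Pairing it against $f$ and invoking Plancherel yields, for a fixed positive constant $c$ depending only on the Fourier normalization,
\begin{equation}\label{proposalkeyidentity}
\norm{\widehat{f\sigma}}_4^4 \;=\; c\,\big\langle\, \bigl(F*F*F\bigr)\big|_{S^2},\ f\,\big\rangle_{\lt(S^2)}.
\end{equation}
All interchanges of integration are legitimate by Hölder's inequality together with \eqref{inequalityR} and its dual.

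\emph{Equality analysis.} Applying Cauchy--Schwarz on $\lt(S^2)$ to the right side of \eqref{proposalkeyidentity}, and then the dual inequality once more (note $\norm{h}_{4/3}=\norm{\widehat{f\sigma}}_4^3$), gives
\[
\norm{\widehat{f\sigma}}_4^4 \;\le\; c\,\bigl\|\bigl(F*F*F\bigr)\big|_{S^2}\bigr\|_{\lt(S^2)}\,\norm{f}_2 \;\le\; \Sbest\,\norm{\widehat{f\sigma}}_4^3\,\norm{f}_2,
\]
which is just \eqref{inequalityR} recovered with its optimal constant. Hence $f\ne 0$ is an extremizer exactly when both of these inequalities are equalities; in particular equality in the Cauchy--Schwarz step forces $\bigl(F*F*F\bigr)|_{S^2}=\lambda f$ for some scalar $\lambda\ge 0$, and feeding this back into \eqref{proposalkeyidentity} while using $\norm{\widehat{f\sigma}}_4=\Sbest\norm{f}_2$ evaluates $\lambda=\Sbest^4\norm{f}_2^2$, which is \eqref{eulerlagrange}. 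Conversely, if \eqref{eulerlagrange} holds, substituting it into the right side of \eqref{proposalkeyidentity} produces $\norm{\widehat{f\sigma}}_4^4=\Sbest^4\norm{f}_2^4$ at once, so $f$ is an extremizer. (The forward implication can also be obtained by the routine variational argument alluded to above: $t\mapsto\norm{\widehat{(f+tg)\sigma}}_4^4$ is a polynomial in $t$ whose derivative at $t=0$ equals $4c\,\Re\langle (F*F*F)|_{S^2},\,g\rangle_{\lt(S^2)}$ for every $g\in\lt(S^2)$, and stationarity of the quotient $\norm{\widehat{f\sigma}}_4/\norm{f}_2$ forces $(F*F*F)|_{S^2}$ to be proportional to $f$.)

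\emph{The main obstacle.} The substantive work is confined to the first step: giving meaning to $\bigl(F*F*F\bigr)|_{S^2}$ --- a convolution of three singular measures, then restricted to the Lebesgue-null sphere --- as a bona fide $\lt(S^2)$ function, and legitimizing the Fubini interchanges that produce \eqref{proposalkeyidentity}. This relies on the fact that $\widehat F\cdot|\widehat F|^2$ is genuinely an $L^{4/3}$ function (because $\widehat F\in L^4$), so that the dual Tomas--Stein estimate may be invoked, and on the small but fiddly bookkeeping involved in the conjugation/reflection $\overline{\widehat F}=\widehat{g\sigma}$ with $g(x)=\overline{f(-x)}$ --- it is the central symmetry of $\sigma$ that keeps the third convolution factor of the form $g\sigma$. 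Once \eqref{proposalkeyidentity} is available, the equivalence with \eqref{eulerlagrange} is, as shown, the one-line equality analysis of a Cauchy--Schwarz inequality.
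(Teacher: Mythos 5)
Your argument is correct and follows essentially the same route the paper indicates for this proposition: Plancherel and the convolution theorem rewrite $\norm{\widehat{f\sigma}}_4^4$ as a constant times the pairing of $\big(f\sigma*f\sigma*f\sigma\big)\big|_{S^2}$ against $f$ (with the same reflection/conjugation bookkeeping the paper elides), and the characterization of extremizers is then the equality case of Cauchy--Schwarz together with the dual Tomas--Stein bound, the constant $\Sbest^4\norm{f}_2^2$ being fixed by feeding extremality back into the identity. This is precisely the ``routine variational argument \dots\ using Plancherel's Theorem, the connection between the Fourier transform and convolution, and Cauchy--Schwarz'' that the paper invokes without further detail, so there is nothing substantive to compare beyond that.
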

Since the value of ${\mathbf S}$ has not been determined, this equation is
not entirely explicit.
By a routine variational argument,
any critical point $f$ of the functional $\norm{\widehat{f\sigma}}_4^4/\norm{f}_2^4$
satisfies the same equation, with $\Sbest$ replaced by some constant depending
on $f$; see for instance \cite{christquilodran},
where more general results of this type are justified.
\eqref{eulerlagrange} will be used in a forthcoming paper \cite{christshao2}
to prove that all critical points are infinitely differentiable.
% Complex-valued extremizers and extremizing sequences will also be discussed there.

Fundamental questions remain open, including:
\begin{questions}
Are extremizers unique modulo rotations and multiplication by constants?
Are constant functions extremizers?
\end{questions}
In this context, it is interesting to observe that
constant functions are {\em local} maxima.
Let $\one$ denote the constant function $f(x)\equiv 1$.
\begin{theorem} \label{thm:localmax}
There exists $\delta>0$ such that
whenever $\norm{f-\one}_{\lt(S^2)}<\delta$,
\begin{equation}
\frac{\norm{\widehat{f\sigma}}_4^4}{\norm{f}_2^4}
\le
\frac{\norm{\widehat{\sigma}}_4^4}{\norm{\one}_2^4},
\end{equation}
with equality only if $f$ is constant.
\end{theorem}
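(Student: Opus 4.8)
The plan is to carry out a second-variation analysis of the functional $\Phi(f) = \norm{\widehat{f\sigma}}_4^4/\norm{f}_2^4$ at $f = \one$. Writing $f = \one + g$ with $\norm{g}_{\lt}$ small, I would expand $\Phi(\one+g)$ to second order and show that the Hessian is strictly negative definite on the subspace of $g$ that are genuinely "new" — that is, after accounting for the directions in which $\Phi$ is constant (scaling $f \mapsto cf$ contributes the direction $g = \one$, so one should decompose $g = a\one + g^\perp$ with $g^\perp \perp \one$ in $\lt(S^2)$). Concretely, because $\Phi(cf) = \Phi(f)$, the component $a\one$ is harmless, and it suffices to show that the quadratic form $Q(g^\perp) := \tfrac{d^2}{dt^2}\big|_{t=0}\Phi(\one + tg^\perp)$ satisfies $Q(g^\perp) \le -c\norm{g^\perp}_2^2$ for some $c > 0$.

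The key computational step is to identify $Q$ explicitly. Expanding $\norm{\widehat{(\one+g)\sigma}}_4^4 = \norm{(\one+g)\sigma * (\one+g)\sigma}_{\lt(\reals^3)}^2$ (overline conjugate paired appropriately), the quadratic-in-$g^\perp$ terms split into a piece coming from $\|\sigma * (g^\perp\sigma) + \overline{\text{sym}}\|^2$-type cross terms and a piece from $2\Re\langle \sigma*\sigma,\, (g^\perp\sigma)*(g^\perp\sigma)\rangle$. Using Plancherel and the convolution structure, each of these becomes a quadratic form in $g^\perp$ whose kernel is built from iterated convolutions of $\sigma$ with itself, i.e., from the function $\sigma*\sigma$ restricted to $S^2$ and related objects. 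The main point is that $\sigma*\sigma*\sigma\sigma|_{S^2}$ acting as a multiplier/convolution operator, when diagonalized, has $\one$ as its top eigenfunction (with eigenvalue proportional to $\Sbest^4\norm{\one}_2^2$ by the Euler–Lagrange equation \eqref{eulerlagrange}, since $\one$ is at least a critical point — one should check that $\one$ does satisfy \eqref{eulerlagrange}, which amounts to $\sigma*\sigma*\sigma|_{S^2}$ being constant, a classical explicit computation), and that this eigenvalue is \emph{strictly} separated from the rest of the spectrum. Spherical symmetry makes the relevant operators $SO(3)$-equivariant, so they act diagonally on the spherical-harmonic decomposition $\lt(S^2) = \bigoplus_\ell \scripth_\ell$; the problem reduces to computing finitely many "reduced" eigenvalues $\lambda_\ell$ (one per degree $\ell$, obtained from known integrals of products of Legendre/Gegenbauer functions or, equivalently, from the Fourier transform $\widehat\sigma$ which is an explicit Bessel function $\sim |\xi|^{-1}\sin|\xi|$) and verifying $\lambda_\ell < \lambda_0$ for every $\ell \ge 1$, with a uniform gap.

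The main obstacle I anticipate is precisely this last spectral-gap verification: one must show that after diagonalization the quadratic form $Q$ restricted to $\scripth_\ell$ for $\ell \ge 1$ is strictly negative, with the bound uniform in $\ell$ (the uniformity for large $\ell$ should come from decay estimates — the relevant multipliers decay in $\ell$ — so only finitely many degrees require a genuine numerical/closed-form check, but that check must be done carefully and some of the small-$\ell$ cases may be delicate). A secondary technical point is controlling the cubic and higher-order remainder $\Phi(\one + g) - \Phi(\one) - \tfrac12 Q(g^\perp)$ by $o(\norm{g^\perp}_2^2)$ uniformly for small $g$; this follows from the boundedness of $f \mapsto \widehat{f\sigma}$ from $\lt(S^2)$ to $L^4$ (inequality \eqref{inequalityR}) together with Hölder, since all terms of order $\ge 3$ in $g$ carry at least three factors of $g$ in an $L^4$–$\lt$ pairing and hence are $O(\norm{g}_2^3)$. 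Finally, the equality case: tracing through the strict negativity shows $\Phi(\one+g) = \Phi(\one)$ forces $g^\perp = 0$, i.e. $f$ is a constant multiple of $\one$.
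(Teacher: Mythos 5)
Your overall strategy coincides with the paper's: treat $\one$ as a critical point, expand $\Phi(f)=\norm{\widehat{f\sigma}}_4^4/\norm{f}_2^4$ to second order, diagonalize the resulting quadratic form in spherical harmonics via a Funk--Hecke computation, and absorb the cubic remainder using the Tomas--Stein bound. In the paper this is exactly Lemma~\ref{lemma:spharmonics}: the kernel $|x-y|^{-1}$ has Funk--Hecke eigenvalues $4\pi/(2k+1)$, so the gap comes out in closed form and no numerical check or uniformity-in-$\ell$ issue arises.

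There is, however, a genuine gap at the step you yourself flag as delicate. Your plan is to prove $Q(g^\perp)\le -c\norm{g^\perp}_2^2$ for all $g^\perp\perp\one$, and your use of conjugates and real parts indicates you allow complex perturbations. That bound is false: the modulation symmetry $\norm{\widehat{e_{i\xi}f\sigma}}_4=\norm{\widehat{f\sigma}}_4$ gives $\Phi(e^{ix\cdot\xi})=\Phi(\one)$ exactly for every $\xi\in\reals^3$, and $e^{ix\cdot\xi}\to\one$ in $\lt(S^2)$ as $\xi\to 0$; consequently the Hessian of $\Phi$ at $\one$ vanishes identically on the three-dimensional space of purely imaginary degree-one spherical harmonics (one can check directly that for $g=ih$ with $h$ of degree one, the second-order term $4\norm{h\sigma*\sigma}_2^2-2\langle\sigma*\sigma,h\sigma*h\sigma\rangle$ equals exactly the threshold $2\Psi(\one)\norm{\one}_2^{-2}\norm{h}_2^2=16\pi^2\norm{h}_2^2$, using the degree-one eigenvalues $\pm 4\pi/3$). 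So there is no spectral gap in those directions, a purely second-order argument cannot close there, and the equality conclusion you would draw (``equality forces $g^\perp=0$, hence $f$ constant'') is contradicted by $f=e^{ix\cdot\xi}$ itself. The paper sidesteps this by first reducing to nonnegative $f$ via $\Phi(f)\le\Phi(|f|)$ and then to antipodally even real perturbations via Proposition~\ref{prop:symmetrize}; after that reduction only even degrees $k\ge 2$ survive, where the eigenvalue is at most $4\pi/5$, strictly below the critical value $4\pi/3$, uniformly. If you instead restrict to real perturbations the strict gap does hold at every degree (including $k=1$, where the relevant combination is $16\pi^2/3<16\pi^2$; note also that for non-even $g$ the quadratic form is $4\langle\sigma*\sigma,\tilde g\sigma*g\sigma\rangle+2\langle\sigma*\sigma,g\sigma*g\sigma\rangle$ rather than $6\langle\sigma*\sigma,g\sigma*g\sigma\rangle$, so the two kernels $|x-y|^{-1}$ and $|x+y|^{-1}$ must be kept distinct), but then you must justify why real perturbations suffice --- which is precisely the missing reduction --- and treat the equality case for genuinely complex $f$ separately.
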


\begin{comment}
\begin{theorem} \label{thm:smoothness}
Any extremizer of the inequality \eqref{inequalityR} is $C^\infty$.
\end{theorem}
\end{comment}

Let $\paraboloid$ be the paraboloid introduced above.
% $\paraboloid=\{(x_1,x_2,x_3)\in\reals^3: x_3=\tfrac12x_1^2+\tfrac12 x_2^2\}$.
Let $\sigma_P$ be the measure $d\sigma_P= dx_1\,dx_2$
on $\paraboloid$.\footnote{See \cite{christextremal} for a brief discussion of the naturality
of this measure from a geometric perspective.}
Then the mapping $f\mapsto \widehat{f\sigma_P}$
is likewise bounded from $\lt(\paraboloid,\sigma_P)$ to $L^4(\reals^3)$.
Denote by $\scriptr_{\paraboloid}$ the optimal constant in the inequality
\begin{equation} \label{Rparaboloiddefn}
\norm{\widehat{f\sigma_P}}_{L^4(\reals^3)}
\le \scriptr_{\paraboloid}\norm{f}_{\lt(\paraboloid,\sigma_P)}.
\end{equation}
Foschi \cite{foschi} has proved that extremals exist for this inequality, and moreover,
that every Gaussian function of $(x_1,x_2)$ is an extremal; alternative
proofs were  given by
Hundertmark and Zharnitsky \cite{HZ}
and by Bennett, Bez, Carbery, and Hundertmark  \cite{hotstrichartz}.
% What sort of uniqueness did they establish?
The simple relation $\scriptr\ge\scriptr_{\paraboloid}$
is of significance for our discussion.
This relation follows from examination of a suitable sequence of trial functions
$f_\nu$, such that $f_\nu(x)^2\,dx$ converges weakly to a Dirac mass on $S^2$,
and $f_\nu$ is approximately a Gaussian in suitably rescaled coordinates,
depending on $\nu$. It is essential for this comparison that $\paraboloid$
has the same curvature as $S^2$, which explains the factors of $\tfrac12$
in the definition of $\paraboloid$.

The first author to discuss existence of extremizers for Strichartz/Fourier
restriction inequalities was apparently Kunze \cite{kunze}, who proved the
existence of extremizers for the
parabola in $\reals^2$, and showed that (in our notation) any nonnegative
extremizing sequence is precompact.
Several papers have subsequently dealt with related problems,
in some cases
determining all extremizers explicitly
\cite{foschi}, \cite{HZ}, \cite{hotstrichartz}, \cite{carneiro},
% However, one of us has been unable to understand the argument in \cite{kunze}.
in other cases merely proving existence
\cite{shao}.
A powerful result which leads easily \cite{shao} to existence of
extremizers is the profile decomposition; see \cite{begoutvargas}.
Of these works, the one most closely related to ours is that of Kunze.
One difficulty which we face is the lack of exact scaling symmetries.
In some facets of the analysis this is merely a technical obstacle,
but it is bound up with the most essential obstacle,
which is the possibility that the optimal constant might be achieved only in
a limit where $|f|^2$ tends to a Dirac mass, or a sum of two Dirac masses.

Our analysis follows the general concentration compactness framework
developed by Lions \cite{lions1984a},\cite{lions1984b},\cite{lions1985a},\cite{lions1985b}.
We have elected to make the exposition self-contained in this respect,
not drawing on that theory; to do so would apparently not dramatically shorten
the exposition, since most of our labor is devoted to specific issues
raised by the character of a particular nonlocal operator.

Existence of extremals for a convolution inequality in which curvature
plays an essential role, as it does here,
was proved in \cite{christextremal}.
The underlying geometry governing \cite{christextremal} is more
subtle, but the operator analyzed there is merely linear,
while the analysis of the present paper is bilinear.
Yet despite differences in details,
that analysis and the method of the present paper have much in common.
The role of an inequality of Moyua, Vargas, and Vega \cite{mvv}
used here was played in \cite{christextremal} by \cite{quasiextremal}.

\medskip
We are indebted to Terence~Tao for bringing the question to our attention,
and to Diogo Oliveira e Silva for useful comments on the exposition.

\begin{comment}
To the best of our knowledge,
the arguments of \cite{foschi} and \cite{hotstrichartz}
which establish the existence and identity of extremizers for $\paraboloid$
do not extend to $S^2$. Extremizers for $\paraboloid$ satisfy
equality in two successive Cauchy-Schwarz inequalities; both of these
have analogues for $S^2$, but there  are
no nonzero functions for which both inequalities for $S^2$ become
equalities.  Our proof of existence is consequently different,
and has a great deal in common with the work of Kunze. Like Kunze,
we have not identified the optimal constant in the inequality,
nor proved any uniqueness beyond the above compactness property.
% ??? Is there any useful remark salvageable from the following?
%We will show below that for
%$\xi\in\reals^3$ with $|\xi|$ sufficiently large,
%the functions $e_\xi$ do not give rise to extremizers for $S^2$.
\end{comment}

\section{Outline of the proof and definitions}
The following overview of the proof includes notations, definitions,
and statements of intermediate results which are not repeated subsequently,
and thus is an integral part of the presentation.

\paragraph{\bf Step 1.}
The first step is quite simple, but in it a critical distinction appears
between our problem for $S^2$, and for higher-dimensional spheres.
The inequality
$\norm{\widehat{f\sigma}}_{L^4(\reals^3)}
\le C\norm{f}_{\lt(S^2,\sigma)}$
is equivalent, by Plancherel's theorem, to
\begin{equation} \label{secondSversion}
\norm{f\sigma*f\sigma}_{\lt(\reals^3)} \le \Sbest^2\norm{f}_{\lt(S^2)}^2,
\end{equation}
where
\begin{equation}
\scriptr= (2\pi)^{3/4}\Sbest
\end{equation}
and $*$ denotes convolution of measures.
This has been exploited in
\cite{kunze},\cite{foschi},\cite{HZ},\cite{hotstrichartz}.
In higher dimensions, the exponent $4$ is replaced by
an exponent which is no longer an even integer, and no such equivalence is available.

Now the pointwise inequality
$|f\sigma*f\sigma|\le |f|\sigma*|f|\sigma$,
the relation $\widehat{\mu*\nu}=\widehat{\mu}\widehat{\nu}$,
and Plancherel's theorem imply
\begin{lemma}
For any complex-valued function $f\in\lt(S^2)$,
\begin{equation}
\norm{\widehat{f\sigma}}_{L^4(\reals^3)}
\le
\norm{\widehat{|f|\sigma}}_{L^4(\reals^3)}.
\end{equation}
Therefore if $f$ is an extremizer for inequality \eqref{inequalityR},
then so is $|f|$;
if $\{f_\nu\}$ is an extremizing sequence, so is $\{|f_\nu|\}$.
\end{lemma}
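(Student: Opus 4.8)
The plan is to deduce the inequality from the three ingredients already isolated just before the statement: the pointwise bound $|f\sigma*f\sigma|\le|f|\sigma*|f|\sigma$, the multiplicativity $\widehat{\mu*\nu}=\widehat\mu\,\widehat\nu$, and Plancherel's theorem. First I would pass to the convolution formulation of Step~1. Since $(\widehat{f\sigma})^2=\widehat{f\sigma*f\sigma}$, Plancherel's theorem on $\reals^3$ gives
\begin{equation*}
\norm{\widehat{f\sigma}}_{L^4(\reals^3)}^4
=\norm{(\widehat{f\sigma})^2}_{L^2(\reals^3)}^2
=(2\pi)^3\,\norm{f\sigma*f\sigma}_{L^2(\reals^3)}^2,
\end{equation*}
and the same identity holds verbatim with $f$ replaced by $|f|$. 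Hence the asserted inequality is equivalent to $\norm{f\sigma*f\sigma}_{L^2(\reals^3)}\le\norm{|f|\sigma*|f|\sigma}_{L^2(\reals^3)}$, and it suffices to prove the pointwise domination of the corresponding densities on $\reals^3$.

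For that pointwise bound I would test against an arbitrary nonnegative $\phi\in C_c(\reals^3)$ and use the triangle inequality inside the double integral:
\begin{equation*}
\left|\int_{\reals^3}\phi\,d(f\sigma*f\sigma)\right|
=\left|\int_{S^2\times S^2}\phi(x+y)\,f(x)\,f(y)\,d\sigma(x)\,d\sigma(y)\right|.
\end{equation*}
The right-hand side is at most $\int_{S^2\times S^2}\phi(x+y)\,|f(x)|\,|f(y)|\,d\sigma(x)\,d\sigma(y)=\int_{\reals^3}\phi\,d(|f|\sigma*|f|\sigma)$. As this holds for every $\phi\ge0$, and as $f\sigma*f\sigma$ and $|f|\sigma*|f|\sigma$ are absolutely continuous measures with $L^2$ densities --- which is exactly the assertion of \eqref{secondSversion} applied to $f$ and to $|f|$ --- the density of $f\sigma*f\sigma$ is bounded in absolute value, almost everywhere, by the density of $|f|\sigma*|f|\sigma$. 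Squaring and integrating over $\reals^3$ yields the $L^2$ inequality, and hence the stated $L^4$ inequality.

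The two consequences are then formal, since $\norm{|f|}_2=\norm{f}_2$. If $f$ is an extremizer, then $\scriptr\norm{f}_2=\norm{\widehat{f\sigma}}_4\le\norm{\widehat{|f|\sigma}}_4\le\scriptr\norm{|f|}_2=\scriptr\norm{f}_2$, so equality holds at every step and $|f|$ is an extremizer. If $\{f_\nu\}$ is an extremizing sequence, then $\norm{|f_\nu|}_2=\norm{f_\nu}_2\le1$ while $\scriptr\ge\norm{\widehat{|f_\nu|\sigma}}_4\ge\norm{\widehat{f_\nu\sigma}}_4\to\scriptr$, whence $\{|f_\nu|\}$ is extremizing as well.

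I do not expect a genuine obstacle here; the one point deserving a word of care is that the relation $|f\sigma*f\sigma|\le|f|\sigma*|f|\sigma$ be understood correctly, namely that these convolutions of measures are genuinely functions, so that a pointwise comparison is meaningful. This is supplied for free by \eqref{secondSversion}, or alternatively by the classical fact that $\sigma*\sigma$ has a locally bounded density on $\reals^3$, a manifestation of the curvature of $S^2$.
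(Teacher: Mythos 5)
Your argument is correct and is essentially the paper's own proof: the paper derives the lemma from exactly the three ingredients you use — the pointwise domination $|f\sigma*f\sigma|\le|f|\sigma*|f|\sigma$, the relation $\widehat{\mu*\nu}=\widehat{\mu}\,\widehat{\nu}$, and Plancherel — and you have simply written out the details (including the justification, via \eqref{secondSversion}, that the convolutions are genuine $L^2$ densities so the pointwise comparison makes sense). No discrepancies to report.
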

This permits us to work with nonnegative functions throughout the
analysis. For much of our analysis this makes no difference,
but nonnegativity will be useful in Step 7,
allowing an elementary approach to a step whose analogue
in higher dimensions seems to require more sophisticated techniques.

\paragraph{\bf Step 2.}
A potential obstruction to the existence of extremizers, and certainly to
the precompactness of arbitrary extremizing sequences, is the possibility
that for an extremizing sequence satisfying $\norm{f_\nu}_2=1$,
$|f_\nu|^2$ could conceivably converge weakly to a Dirac mass at a point of $S^2$.
Indeed, if $\scriptr$ were to equal $\scriptr_\paraboloid$, then there would
necessarily
exist extremizing sequences of this type. Therefore an essential step
in our analysis is to prove that $\scriptr>\scriptr_\paraboloid$.

In fact, as will be explained below, this is true in two distinct ways.
The more superficial is this:
\begin{lemma} \label{lemma:32}
Let $g\in\lt(S^2)$ be supported in $\{x\in S^2: x_3>\tfrac12\}$.
Define $f(x)=2^{-1/2}g(x)+ 2^{-1/2}\overline{g(-x)}$.
Then $\norm{f}_2=\norm{g}_2$, and
\begin{equation}
\norm{f\sigma*f\sigma}_{\lt(\reals^3)}
= (3/2)^{1/2}
\norm{g\sigma*g\sigma}_{\lt(\reals^3)}.
\end{equation}
\end{lemma}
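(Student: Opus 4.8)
The plan is to exploit an almost-orthogonality, in $\lt(\reals^3)$, of the three pieces into which $f\sigma*f\sigma$ decomposes. Write $h(x)=\overline{g(-x)}$, so that $h$ is concentrated on the antipodal cap $C_-=\{x\in S^2:\ x_3<-\tfrac12\}$ while $g$ is concentrated on $C_+=\{x\in S^2:\ x_3>\tfrac12\}$. Since $\sigma$ is invariant under $x\mapsto -x$ one has $\|h\|_2=\|g\|_2$, and since $C_+\cap C_-=\emptyset$ the functions $g$ and $h$ are orthogonal in $\lt(S^2)$, whence $\|f\|_2^2=\tfrac12\|g\|_2^2+\tfrac12\|h\|_2^2=\|g\|_2^2$; this gives the first assertion. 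Moreover $f\sigma=2^{-1/2}(g\sigma+h\sigma)$, so by bilinearity of convolution of measures
\[
f\sigma*f\sigma=\tfrac12\bigl(g\sigma*g\sigma+2\,g\sigma*h\sigma+h\sigma*h\sigma\bigr).
\]

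Next I would record the elementary reflection identity $\widehat{h\sigma}(\xi)=\overline{\widehat{g\sigma}(\xi)}$: substituting $x\mapsto-x$ in the integral defining $\widehat{h\sigma}$ and using the reflection invariance of $\sigma$ converts it into $\overline{\widehat{g\sigma}(\xi)}$. Since $\widehat{\mu*\nu}=\widehat\mu\,\widehat\nu$, the three convolutions above have Fourier transforms $(\widehat{g\sigma})^2$, $|\widehat{g\sigma}|^2$ and $\overline{(\widehat{g\sigma})^2}$ respectively, all of the same modulus $|\widehat{g\sigma}|^2$. By \eqref{inequalityR} (equivalently \eqref{secondSversion}) this function lies in $L^2$, so each of the three convolutions lies in $\lt(\reals^3)$, and by Plancherel all three have the same $\lt(\reals^3)$ norm, namely $\|g\sigma*g\sigma\|_{\lt(\reals^3)}$.

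It remains to check that $g\sigma*g\sigma$, $g\sigma*h\sigma$ and $h\sigma*h\sigma$ have pairwise almost everywhere disjoint supports in $\reals^3$, so that the cross terms drop out of $\|f\sigma*f\sigma\|_2^2$; this is the only step needing care. Each of these is a finite measure concentrated on $C_++C_+$, $C_++C_-$, $C_-+C_-$ respectively, and, being also a function in $\lt(\reals^3)$ by the previous paragraph, vanishes almost everywhere off the corresponding set. The third coordinate $z_3=x_3+y_3$ of a point of $C_++C_+$ lies in $(1,2]$, that of $C_++C_-$ in $(-\tfrac12,\tfrac12)$, and that of $C_-+C_-$ in $[-2,-1)$; these three ranges are disjoint. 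Hence
\[
\|f\sigma*f\sigma\|_{\lt(\reals^3)}^2
=\tfrac14\bigl(\|g\sigma*g\sigma\|_2^2+4\|g\sigma*h\sigma\|_2^2+\|h\sigma*h\sigma\|_2^2\bigr)
=\tfrac14(1+4+1)\,\|g\sigma*g\sigma\|_2^2
=\tfrac32\,\|g\sigma*g\sigma\|_2^2,
\]
which is the claimed identity. The main (and only mildly delicate) obstacle is the bookkeeping in this last step: confirming that the a priori measure-valued convolutions really are supported, as $\lt$ functions, in the stated slabs, with overlaps confined to the Lebesgue-null hyperplanes $\{z_3=\pm1\}$; everything else is routine.
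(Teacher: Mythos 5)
Your proof is correct, and it in fact establishes the lemma exactly as stated -- the precise equality, for complex-valued $g$ -- so it is worth comparing with what the paper actually does. The paper's displayed proof is really aimed at Corollary~\ref{cor:greater}: it takes a near-extremizer for the paraboloid supported in an open hemisphere, replaces it by its absolute value, expands $\nu*\nu=(\mu+\tilde\mu)*(\mu+\tilde\mu)$, and, since all pieces are then nonnegative, only needs the one-sided bound $\norm{\nu*\nu}_2^2\ge\norm{\mu*\mu}_2^2+\norm{\tilde\mu*\tilde\mu}_2^2+4\norm{\mu*\tilde\mu}_2^2$; the equality of the three norms is taken from the real-space identity \eqref{sameL2norms} of Lemma~\ref{lemma:switchconvolutionfactors}, which is proved by a pairing/Fubini argument for measures. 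You instead exploit the hypothesis $\support g\subset\{x_3>\tfrac12\}$ to place the three convolutions in the pairwise disjoint slabs $z_3\in(1,2]$, $z_3\in(-\tfrac12,\tfrac12)$, $z_3\in[-2,-1)$, so the cross terms vanish identically and you get the equality (not merely $\ge$); and you obtain $\norm{g\sigma*h\sigma}_2=\norm{g\sigma*g\sigma}_2=\norm{h\sigma*h\sigma}_2$ on the Fourier side, from $\widehat{h\sigma}=\overline{\widehat{g\sigma}}$ together with Plancherel, with the Tomas--Stein bound \eqref{inequalityR} guaranteeing that $|\widehat{g\sigma}|^2\in\lt(\reals^3)$ so that all objects are honest $\lt$ functions. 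The two mechanisms are equivalent in substance -- your Plancherel step is \eqref{sameL2norms} in disguise -- but your route needs no reduction to nonnegative functions, handles the complex conjugate in the definition of $f$ cleanly, and yields the exact identity claimed in the statement, whereas the paper's version settles for the inequality that the corollary requires. Your support bookkeeping is also slightly stronger than you need: the three closed slabs are genuinely disjoint, so no appeal to null hyperplanes $\{z_3=\pm1\}$ is necessary.
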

Define the optimal constant in the corresponding
inequality for the paraboloid to be
\begin{equation} \label{Pbestdefn}
\Pbest =
\sup_{0\ne g\in\lt(\paraboloid,\sigma_P)}
\frac{
\norm{g\sigma_P*g\sigma_P}_{L^2(\reals^3)}^{1/2} }
{ \norm{g}_{\lt(\paraboloid,\sigma_P)} }\,\,.
\end{equation}
Thus the optimal constants for the sphere and paraboloid satisfy
\begin{corollary} \label{cor:greater}
\begin{equation}
\Sbest\ge (3/2)^{1/4}\Pbest.
\end{equation}
\end{corollary}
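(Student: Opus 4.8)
The plan is to combine Lemma~\ref{lemma:32} with a concentration argument comparing a small spherical cap of $S^2$ with the paraboloid $\paraboloid$; such a comparison is available precisely because $S^2$ and $\paraboloid$ have the same curvature at the relevant point, which is the reason for the factors of $\tfrac12$ in the definition of $\paraboloid$. Write $\scriptc=\set{x\in S^2:\ x_3>\tfrac12}$ for the cap of Lemma~\ref{lemma:32} and set
\begin{equation}
Q=\sup\set{\norm{g\sigma*g\sigma}_{\lt(\reals^3)}^{1/2}\big/\norm{g}_{\lt(S^2)}:\ 0\neq g\in\lt(S^2),\ \support g\subset\scriptc}.
\end{equation}
I would first reduce the corollary to the inequality $Q\ge\Pbest$. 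If $g$ is supported in $\scriptc$ and $f(x)=2^{-1/2}g(x)+2^{-1/2}\overline{g(-x)}$ is the function furnished by Lemma~\ref{lemma:32}, then $\norm{f}_2=\norm{g}_2$, and combining Lemma~\ref{lemma:32} with \eqref{secondSversion},
\begin{equation}
(3/2)^{1/2}\norm{g\sigma*g\sigma}_2=\norm{f\sigma*f\sigma}_2\le\Sbest^2\norm{f}_2^2=\Sbest^2\norm{g}_2^2.
\end{equation}
Taking square roots and then the supremum over all such $g$ gives $(3/2)^{1/4}Q\le\Sbest$, so it remains to prove $Q\ge\Pbest$.

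To prove $Q\ge\Pbest$ I would use trial functions that concentrate at the north pole $(0,0,1)$ of $S^2$ and, after rescaling, resemble a fixed profile on $\paraboloid$. Fix $\phi\in C_c^\infty(\reals^2)$, regarded as a function on $\paraboloid$ via $y'\mapsto(y',\tfrac12|y'|^2)$, so $\norm{\phi}_{\lt(\paraboloid,\sigma_P)}^2=\int_{\reals^2}|\phi|^2$. Parametrize a neighbourhood of the north pole by $x'\mapsto(x',(1-|x'|^2)^{1/2})$, under which $d\sigma=(1-|x'|^2)^{-1/2}\,dx'$, and for each large positive integer $\nu$ let $g_\nu(x',(1-|x'|^2)^{1/2})=\nu\,\phi(\nu x')$, extended by zero; for $\nu$ large, $\support g_\nu\subset\scriptc$. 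The substitution $y'=\nu x'$ gives at once
\begin{equation}
\norm{g_\nu}_{\lt(S^2)}^2=\int_{\reals^2}|\phi(y')|^2\,(1-\nu^{-2}|y'|^2)^{-1/2}\,dy'\ \longrightarrow\ \norm{\phi}_{\lt(\paraboloid,\sigma_P)}^2\qquad(\nu\to\infty).
\end{equation}
By Plancherel's theorem, exactly as in Step~1, $\norm{\widehat{h\sigma}}_{L^4(\reals^3)}^2=(2\pi)^{3/2}\norm{h\sigma*h\sigma}_{\lt(\reals^3)}$, and likewise with $\sigma,S^2$ replaced by $\sigma_P,\paraboloid$; hence the ratios $\norm{\widehat{g\sigma}}_{L^4}/\norm{g}_2$ and $\norm{g\sigma*g\sigma}_{\lt}^{1/2}/\norm{g}_2$ differ by the same universal factor for both surfaces, and $Q\ge\Pbest$ will follow once I show
\begin{equation} \label{liminfgoal}
\liminf_{\nu\to\infty}\,\frac{\norm{\widehat{g_\nu\sigma}}_{L^4(\reals^3)}}{\norm{g_\nu}_{\lt(S^2)}}\ \ge\ \frac{\norm{\widehat{\phi\sigma_P}}_{L^4(\reals^3)}}{\norm{\phi}_{\lt(\paraboloid,\sigma_P)}}
\end{equation}
and then take the supremum over $\phi\in C_c^\infty(\reals^2)$, which is dense in $\lt(\paraboloid,\sigma_P)$ and on which $\phi\mapsto\norm{\widehat{\phi\sigma_P}}_{L^4}/\norm{\phi}_2$ is continuous by the Tomas--Stein inequality for $\paraboloid$.

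To prove \eqref{liminfgoal} I would write out $\widehat{g_\nu\sigma}$ in the coordinates above, substitute $y'=\nu x'$, use $(1-|x'|^2)^{1/2}=1-\tfrac12|x'|^2+O(|x'|^4)$, and evaluate at the rescaled frequency $\xi=(\nu\eta',\nu^2\tau)$ to obtain
\begin{equation} \label{rescaledFT}
\nu\,e^{i\nu^2\tau}\,\widehat{g_\nu\sigma}(\nu\eta',\nu^2\tau)=\int_{\reals^2}e^{-iy'\cdot\eta'}\,e^{i|y'|^2\tau/2}\,e^{i\theta_\nu(y',\tau)}\,\phi(y')\,(1-\nu^{-2}|y'|^2)^{-1/2}\,dy',
\end{equation}
where $\theta_\nu(y',\tau)=O(\nu^{-2}|y'|^4\tau)$ comes from the $O(|x'|^4)$ term and tends to $0$, uniformly on the fixed compact $\support\phi$, for each fixed $\tau$. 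Dominated convergence in $y'$ then shows that the right side of \eqref{rescaledFT} converges, for every $(\eta',\tau)$, to $\int_{\reals^2}e^{-iy'\cdot\eta'}e^{i|y'|^2\tau/2}\phi(y')\,dy'=\widehat{\phi\sigma_P}(\eta',-\tau)$. On the other hand, the change of variables $\xi=(\nu\eta',\nu^2\tau)$ (with $d\xi=\nu^4\,d\eta'\,d\tau$) shows that $\norm{\widehat{g_\nu\sigma}}_{L^4(\reals^3)}^4$ equals the integral over $\reals^2\times\reals$ of the fourth power of the modulus of the left side of \eqref{rescaledFT}. Hence Fatou's lemma gives $\liminf_{\nu\to\infty}\norm{\widehat{g_\nu\sigma}}_{L^4(\reals^3)}^4\ge\int_{\reals^2}\int_{\reals}|\widehat{\phi\sigma_P}(\eta',-\tau)|^4\,d\tau\,d\eta'=\norm{\widehat{\phi\sigma_P}}_{L^4(\reals^3)}^4$, which together with $\norm{g_\nu}_2\to\norm{\phi}_2$ yields \eqref{liminfgoal}, and hence $Q\ge\Pbest$ and the corollary.

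The one step demanding genuine care is the rescaled identity \eqref{rescaledFT} together with the verification that its correction factors tend to $1$; this is a routine Taylor expansion, and it is exactly here that the equality of the curvatures of $S^2$ and $\paraboloid$ enters. I would stress that, because the corollary asserts only a lower bound for $\Sbest$, the one-sided estimate \eqref{liminfgoal} suffices, and it follows from Fatou's lemma and the pointwise limit of \eqref{rescaledFT}; there is no need to establish full $L^4$ convergence of $\widehat{g_\nu\sigma}$ under the rescaling, and in particular one avoids any uniform-in-$\nu$ control of the high-frequency behaviour of $\widehat{g_\nu\sigma}$, which would be the main technical difficulty in a two-sided statement. (Alternatively, by Foschi's theorem \cite{foschi} one may take $\phi$ to be a Gaussian, an extremizer for $\Pbest$, which renders the final density step unnecessary but changes nothing essential.)
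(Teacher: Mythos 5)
Your proof is correct and takes essentially the paper's own route: the factor $(3/2)^{1/4}$ comes from Lemma~\ref{lemma:32} exactly as in the paper, and the remaining ingredient is the comparison between cap-supported functions on $S^2$ and the paraboloid, which is also what the paper's argument rests on. The only difference is one of completeness: the paper simply selects a hemisphere-supported $f$ whose convolution ratio is within $\eps$ of $\Pbest$, asserting the existence of such trial functions (the relation $\scriptr\ge\scriptr_\paraboloid$ sketched in the introduction), whereas you prove that transplantation step in detail via the rescaled extension, pointwise convergence, Fatou's lemma, and density, and that verification is sound.
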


\paragraph{\bf Step 3.}
Step 2 leaves open many possibilities, the simplest of which is
that an extremizing sequence
might concentrate at a pair of antipodal points,
that is, $|f_\nu|^2$ might converge weakly
to a linear combination of two Dirac masses, at antipodal points
$z,-z$.
We will see that this scenario is the crux of the problem.
The crucial ingredient in excluding it
is an inequality $\Sbest > (3/2)^{1/4}\Pbest$.
%For $\scriptr$ there is another lower bound, sharper than that of Step 2.
We will give two independent proofs of this inequality. The first gives
a precise improvement:

\begin{lemma} \label{lemma:constantfunction}
\begin{equation}
\Sbest\ge 2^{1/4}\Pbest.
\end{equation}
\end{lemma}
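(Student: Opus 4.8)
The plan is to produce an explicit trial function on $S^2$ whose ratio $\norm{f\sigma*f\sigma}_{\lt}^{1/2}/\norm{f}_2$ is $2^{1/4}$ times the corresponding paraboloid ratio, and then invoke the already-established fact that every Gaussian is an extremizer for $\paraboloid$ (Foschi~\cite{foschi}, and~\cite{HZ},~\cite{hotstrichartz}). The key structural idea is that one should not concentrate a single Gaussian bump near the north pole $(0,0,1)$ — that would only recover $\Sbest\ge\Pbest$, as explained in the Introduction — but rather place \emph{symmetric} bumps near the two antipodal points $(0,0,\pm1)$ simultaneously. The gain from the curvature match between $S^2$ and $\paraboloid$ (which already yields $\Sbest\ge(3/2)^{1/4}\Pbest$ via Lemma~\ref{lemma:32}) should combine with an additional gain coming from cross terms in $f\sigma*f\sigma$ between the two bumps, and the claim is that in a suitable scaling limit these together upgrade the constant $(3/2)^{1/4}$ to $2^{1/4}$.

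Concretely, first I would fix a rescaling parameter: for small $\delta>0$, near the north pole use coordinates $x'\in\reals^2$ with $x=(x',(1-|x'|^2)^{1/2})$, and let $g_\delta$ be a Gaussian of width $\sim\delta$ in the $x'$ variable, supported (up to negligible tails, then genuinely truncated) in $\{x_3>\tfrac12\}$. Define $f_\delta(x)=2^{-1/2}g_\delta(x)+2^{-1/2}g_\delta(-x)$, the symmetrization from Lemma~\ref{lemma:32}, so $\norm{f_\delta}_2=\norm{g_\delta}_2$. Next I would compute $\norm{f_\delta\sigma*f_\delta\sigma}_{\lt(\reals^3)}^2$ by expanding $f_\delta\sigma*f_\delta\sigma$ into the self-convolution of the north bump, the self-convolution of the south bump, and the cross-convolutions. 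The two self-terms are supported near $(0,0,2)$ and $(0,0,-2)$ respectively, hence $L^2$-orthogonal to each other; each, after the change of variables $x'\mapsto\delta x'$ and a Taylor expansion of the sphere as $x_3=1-\tfrac12|x'|^2+O(|x'|^4)$, limits (as $\delta\to0$) to the paraboloid self-convolution $\tfrac12 g\sigma_P*g\sigma_P$ in rescaled coordinates, with the factor $\tfrac12$ from the $2^{-1/2}$ coefficients. The cross-terms $g_\delta\sigma * (g_\delta(-\,\cdot\,)\sigma)$ are supported near the origin in $\reals^3$; here I would need to check that they too contribute a term of the same order $\delta^{?}$ as the self-terms and are essentially orthogonal to them (their supports are disjoint for small $\delta$, being near $(0,0,0)$ versus near $(0,0,\pm 2)$), so that the squared norms simply add. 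Tallying the scaling powers of $\delta$ and the combinatorial constants — two self-terms each contributing $(\tfrac12)^2$ times the paraboloid quantity, plus two cross-terms each contributing another $(\tfrac12)^2$ times (an equal amount, by the curvature-matching computation) — should give $\norm{f_\delta\sigma*f_\delta\sigma}_{\lt}^2 \to 4\cdot(\tfrac12)^2\,\cdot(\text{const})\cdot\norm{g\sigma_P*g\sigma_P}_{\lt}^2$ while the denominator $\norm{f_\delta}_2^4$ matches $\norm{g}_{\lt(\paraboloid)}^4$; optimizing $g$ over Gaussians and taking $\delta\to0$ then yields $\Sbest^2\ge 2^{1/2}\Pbest^2$, i.e. $\Sbest\ge2^{1/4}\Pbest$.

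The main obstacle I anticipate is the analysis of the cross-convolution terms: unlike the self-convolution, $g_\delta\sigma$ convolved with the \emph{reflected} measure $g_\delta(-\,\cdot\,)\sigma$ is a convolution of a measure concentrating near $(0,0,1)$ with one concentrating near $(0,0,-1)$, so its local geometry near the origin of $\reals^3$ is governed by the \emph{sum} of the two tangent paraboloids — effectively a convolution of two Gaussians on transverse-ish paraboloids rather than the clean self-convolution. I would need a careful stationary-phase or direct change-of-variables argument to show that, in the $\delta\to0$ limit, this cross-term has exactly the same $\lt$-mass as a self-term (this is precisely where the curvature of $S^2$ being matched to that of $\paraboloid$, and the two bumps being antipodal rather than at a generic pair of points, is used). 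A secondary technical point is handling the truncation of the Gaussian tails to make $g_\delta$ genuinely supported in $\{x_3>\tfrac12\}$ while keeping all error terms $o(1)$ relative to the main term; this is routine given that Gaussians have rapidly decaying tails, but must be stated. Once the cross-term computation is pinned down, everything else is bookkeeping of powers of $\delta$ and constants, together with the citation that Gaussians extremize the paraboloid inequality so that the optimized trial ratio actually reaches $\Pbest$.
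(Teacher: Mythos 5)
There is a genuine gap, and it is fatal to the approach rather than a fixable technicality: the trial family you propose cannot produce the constant $2^{1/4}$. Your construction is exactly the antipodal symmetrization of Lemma~\ref{lemma:32}, $f=2^{-1/2}g+2^{-1/2}\tilde g$ with $g$ a concentrating bump, and the size of the cross term is not something to be hoped for from a stationary-phase analysis -- it is pinned down \emph{exactly} by the reflection identity \eqref{sameL2norms} of Lemma~\ref{lemma:switchconvolutionfactors}: for real $g$, $\norm{g\sigma*\tilde g\sigma}_{\lt}=\norm{g\sigma*g\sigma}_{\lt}$, with no asymptotic improvement available. Writing $\mu=g\sigma$ and using that the three pieces $\mu*\mu$, $\tilde\mu*\tilde\mu$, $\mu*\tilde\mu$ have essentially disjoint supports (near $2z_0$, $-2z_0$, $0$ respectively), the correct tally is $\norm{f\sigma*f\sigma}_{\lt}^2=\tfrac14\big(\norm{\mu*\mu}_{\lt}^2+\norm{\tilde\mu*\tilde\mu}_{\lt}^2+4\norm{\mu*\tilde\mu}_{\lt}^2\big)=\tfrac32\norm{\mu*\mu}_{\lt}^2$, exactly the factor $3/2$ of Corollary~\ref{cor:greater}. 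So in the concentration limit your trial functions give $\Sbest\ge(3/2)^{1/4}\Pbest$ and nothing more; to reach $2^{1/4}$ you would need $\norm{\mu*\tilde\mu}_{\lt}^2=\tfrac32\norm{\mu*\mu}_{\lt}^2$, contradicting the exact identity. (Your own bookkeeping also does not cohere: $4\cdot(\tfrac12)^2=1$, yet you conclude $\Sbest^2\ge2^{1/2}\Pbest^2$; the cross term enters with coefficient $2$ before squaring, giving $1$ after squaring, hence total $3/2$, not $2$.)

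The paper's proof of Lemma~\ref{lemma:constantfunction} goes in the opposite direction: instead of concentrating mass near a pair of antipodal points, it evaluates the ratio exactly for the completely spread-out function $f\equiv 1$, using $\sigma*\sigma(x)=a|x|^{-1}\chi_{|x|\le 2}$ to get $\norm{\sigma*\sigma}_{\lt}^2/\norm{\one}_{\lt(\sigma)}^4=a^2/2\pi$, and compares with the exact computation for the Gaussian $F=e^{-|x'|^2/2}$ on $\paraboloid$ (where $\mu*\mu\equiv a/2$ on its support), which gives $a^2/4\pi$; since Gaussians are known to extremize the paraboloid inequality, the ratio of these two numbers, $2$, yields $\Sbest^4\ge 2\,\Pbest^4$. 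Indeed the whole point of this lemma in the paper's scheme (Step 3) is that constants strictly beat the antipodal-concentration scenario, i.e.\ $\Sbest>(3/2)^{1/4}\Pbest$, which is precisely what rules out extremizing sequences concentrating at a pair of antipodal points; an argument built on that very concentration profile cannot prove it.
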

Equivalently, $\scriptr\ge 2^{1/4}\scriptr_\paraboloid$.
This is proved by exact computation of $\norm{f\sigma*f\sigma}_2$
for $f\equiv 1$.
We do not know whether constant functions are in fact extremal for
\eqref{inequalityR}, or equivalently, whether $\Sbest = 2^{1/4}\Pbest$.
Constants are indeed critical points of the associated functional,
and thus satisfy a (possibly) modified Euler-Lagrange equation
\eqref{eulerlagrange}, in which $\Sbest$ is replaced by $2^{1/4}\Pbest$.

An alternative proof that $\Sbest>(3/2)^{1/4}\Pbest$,
along perturbative lines, is given in \S\ref{section:calculation}.

\paragraph{\bf Step 4.}

\begin{definition}
A complex-valued function $f\in\lt(S^2)$
is said to be even if $f(-x)=\overline{f(x)}$
for almost every $x\in S^2$.
\end{definition}
We will be working almost exclusively with nonnegative
functions, for which this condition becomes
$f(-x)\equiv f(x)$.

\begin{definition}
Let $f\in\lt(S^2)$ be nonnegative.
The antipodally symmetric rearrangement $f_\star$
is the unique nonnegative element of $\lt(S^2)$ which satisfies
\begin{alignat}{2}
f_\star(-x)&=f_\star(x)
&&\text{ for all } x\in S^2,
\\
f_\star(x)^2 + f_\star(-x)^2 &= f(x)^2 + f(-x)^2
&&\text{ for all } x\in S^2.
\end{alignat}
\end{definition}

\begin{proposition} \label{prop:symmetrize}
For any nonnegative $f\in\lt(S^2)$,
\begin{equation}
\norm{f\sigma*f\sigma}_{\lt(\reals^3)}
\le
\norm{f_\star\,\sigma*f_\star\,\sigma}_{\lt(\reals^3)},
\end{equation}
with strict inequality unless $f=f_\star$ almost everywhere.
Consequently any extremizer for the inequality \eqref{inequalityR} satisfies
$|f(-x)|=|f(x)|$ for almost every $x\in S^2$.
\end{proposition}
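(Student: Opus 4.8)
The plan is to expand the $L^2$ norm of the twofold convolution, observe that it splits into a main term and a difference term, and show the difference term has a definite sign.

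\medskip

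\textbf{Step 1: Expand the norm.} Since $f\sigma*f\sigma$ is a function on $\reals^3$, we have $\norm{f\sigma*f\sigma}_{\lt}^2 = \iint\!\!\iint f(x)f(y)f(u)f(v)\,\delta(x+y-u-v)\,d\sigma(x)\,d\sigma(y)\,d\sigma(u)\,d\sigma(v)$, interpreted appropriately; equivalently, writing $F(\xi) = (f\sigma*f\sigma)(\xi)\ge 0$ (nonnegativity of $f$ gives $F\ge 0$), the quantity is $\int F^2$. The key is to organize the fourfold integral by grouping the four sphere variables into antipodal pairs. Writing each integration variable $x$ together with its antipode $-x$, and using that on $S^2$ the reflection $x\mapsto -x$ preserves $\sigma$, one can rewrite $\norm{f\sigma*f\sigma}_2^2$ as an integral over (unordered) pairs of the form $\{x,-x\}$ of a quantity built from $f(x)^2+f(-x)^2$ and the cross term $f(x)f(-x)$.

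\medskip

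\textbf{Step 2: Isolate the dependence on cross terms.} The upshot of Step 1 should be an identity of the schematic form
\begin{equation*}
\norm{f\sigma*f\sigma}_2^2 = A(f) + B(f),
\end{equation*}
where $A(f)$ depends on $f$ only through the pointwise quantities $f(x)^2+f(-x)^2$ — hence $A(f)=A(f_\star)$ by the defining property of $f_\star$ — while $B(f)$ is a sum of terms each carrying a factor $f(x)f(-x)$ or $f(u)f(v)$ etc., with a nonnegative kernel coming from the geometry of the sphere (the measure pushed forward under addition, which is a nonnegative object). The term $B(f)$ should be of the form $B(f) = \int\!\!\int K\cdot f(x)f(-x)\cdot(\text{similar factor})$ with $K\ge 0$. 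Since $0\le f(x)f(-x)\le \tfrac12(f(x)^2+f(-x)^2) = f_\star(x)f_\star(-x)$ pointwise (AM–GM, with equality iff $f(x)=f(-x)$), and since $f_\star$ saturates the bound, we get $B(f)\le B(f_\star)$. Adding, $\norm{f\sigma*f\sigma}_2^2 \le \norm{f_\star\sigma*f_\star\sigma}_2^2$. For the strictness clause: equality forces $f(x)f(-x)=f_\star(x)f_\star(-x)$ wherever the kernel $K$ is positive; since $K>0$ on a set of full measure (the convolution structure on $S^2$ is nondegenerate), this forces $f(x)=f(-x)$ a.e., i.e.\ $f=f_\star$.

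\medskip

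\textbf{Step 3: Deduce the extremizer symmetry.} If $f$ is an extremizer for \eqref{inequalityR}, then by \eqref{secondSversion} and Step~1 of the outline, $\norm{f\sigma*f\sigma}_2 = \Sbest^2\norm{f}_2^2$; by Step~2 of the present proof applied to $|f|$ (noting $\norm{f_\star}_2 = \norm{|f|}_2 = \norm{f}_2$ since the rearrangement preserves $\int f^2$), we get $\norm{|f|\sigma*|f|\sigma}_2 \le \norm{(|f|)_\star\,\sigma*(|f|)_\star\,\sigma}_2$, and the left side already equals the maximal value. Hence equality holds, so $|f| = (|f|)_\star$ a.e., which is precisely $|f(-x)|=|f(x)|$ a.e.

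\medskip

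\textbf{Main obstacle.} The crux is Step 1/Step 2: correctly performing the bookkeeping so that $\norm{f\sigma*f\sigma}_2^2$ genuinely separates into a part depending only on $f(x)^2+f(-x)^2$ and a part that is manifestly monotone in the cross terms $f(x)f(-x)$ with a \emph{nonnegative} coefficient. A priori the fourfold expansion has many cross terms, and one must check that when two of the four variables are grouped as an antipodal pair, the antipodal symmetry of $\sigma$ and the translation structure of the convolution conspire to make all the coefficients of $\prod f(x_i)f(-x_i)$-type monomials nonnegative — this uses that $\sigma*\sigma$ and its iterates are nonnegative measures and that $f\ge 0$. Verifying the positivity of the relevant kernel on a full-measure set, needed for the strictness statement, is the other delicate point.
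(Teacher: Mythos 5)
Your Step 3 is fine, but the heart of the argument (Steps 1--2) rests on a structural claim that is false, and it is exactly the point you flag as the ``main obstacle'' without resolving it. Expand $\norm{f\sigma*f\sigma}_2^2=\int f(a)f(b)f(c)f(d)\,d\lambda(a,b,c,d)$ with $\lambda$ supported on $\{a+b=c+d\}$. On this support the four points $a,b,c,d$ together with their antipodes are generically eight distinct points, so each monomial contains exactly one value of $f$ from each of four distinct antipodal pairs; monomials carrying a factor $f(x)f(-x)$ only arise on the set where two of the variables are antipodal (i.e. $a+b=c+d=0$), which has $\lambda$-measure zero because $f\sigma*f\sigma$ is absolutely continuous. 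Hence there is no decomposition $A(f)+B(f)$ in which the whole $f$-versus-$f_\star$ discrepancy sits in a nonnegative kernel integrated against cross products $f(x)f(-x)$, and the AM--GM step $f(x)f(-x)\le f_\star(x)f_\star(-x)$ has nothing to act on. Moreover the desired inequality is false monomial-by-monomial: normalizing $f(x)^2+f(-x)^2=1$ on the four relevant pairs, a single term $f(a)f(b)f(c)f(d)$ can equal $1$, whereas the symmetrized term is $(2^{-1/2})^4=\tfrac14$. A further sanity check that pair data cannot control the form: with $f(a)=\cos\varphi$, $f(-a)=\sin\varphi$, etc., the configurations $(\varphi,\psi,\alpha,\beta)=(0,0,0,0)$ and $(0,0,\tfrac\pi2,\tfrac\pi2)$ have identical pair sums and identical (vanishing) cross products, yet give different values of the relevant orbit sum ($1$ versus $0$).

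What the paper actually does, and what your outline is missing, is a grouping of monomials into orbits of a $48$-element symmetry group generated by commutativity of convolution, symmetry of the inner product, and the identity $\langle f_1\sigma*f_2\sigma,f_3\sigma*f_4\sigma\rangle=\langle f_1\sigma*\tilde f_3\sigma,\tilde f_2\sigma*f_4\sigma\rangle$ (Lemma~\ref{lemma:switchconvolutionfactors}). Each generic orbit sum, in the angle variables above, equals $8\Gamma(\varphi,\psi,\alpha,\beta)$ with
\begin{equation*}
\Gamma=\cos\varphi\cos\psi\cos\alpha\cos\beta+\sin\varphi\sin\psi\sin\alpha\sin\beta+\sin(\varphi+\psi)\sin(\alpha+\beta),
\end{equation*}
and the proposition reduces to the sharp trigonometric inequality $\Gamma\le\tfrac32$ with equality only at $\varphi=\psi=\alpha=\beta=\tfrac\pi4$ (which is the value configuration of $f_\star$, and also yields the strictness clause). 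That inequality is genuinely the crux of the proof and does not follow from the pointwise bound $f(x)f(-x)\le\tfrac12\big(f(x)^2+f(-x)^2\big)$; compare Lemma~\ref{lemma:32}, where passing from a one-sided function to its symmetrization gains a factor $\tfrac32$ in the squared norm even though all cross products of the original function vanish. So as written the proposal has a genuine gap precisely where the work lies.
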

\noindent
An equivalent formulation is that
$\norm{\widehat{f\sigma}}_4 \le \norm{\widehat{f_\star\,\sigma}}_4$.

This allows us to restrict attention from nonnegative functions
to even nonnegative functions throughout the discussion.
This simplification is more convenient than essential.

\paragraph{\bf Step 5.}
A first key step towards gaining control of near-extremals
has already been essentially accomplished by Moyua, Vargas, and Vega \cite{mvv}.

\begin{definition}
The cap $\scriptc=\scriptc(z,r)$ with center $z\in S^2$
and radius $r\in(0,1]$ is
the set of all points $y\in S^2$
which lie in the same hemisphere, centered at $z$, as $z$ itself,
and which satisfy $|\pi_{H_z}(y)|<r$,
where the subspace $H_z\subset\reals^3$ is the orthogonal complement of $z$
and $\pi_{H_z}$ denotes the orthogonal projection onto $H_z$.
\end{definition}

\begin{lemma} \label{lemma:mvv}
For any $\delta>0$ there exist
$C_\delta<\infty$ and $\eta_\delta>0$
with the following property.
If $f\in\lt(S^2)$ satisfies
$\norm{f\sigma*f\sigma}_2
\ge \delta^2 \Sbest^2\norm{f}_2^2$
then there exist a decomposition
$f=g+h$ and a cap $\scriptc$
satisfying
\begin{align}
&0\le |g|,|h|\le |f|,
\\
&g,h \text{ have disjoint supports},
\\
&|g(x)|\le C_\delta \norm{f}_2|\scriptc|^{-1/2}\chi_\scriptc(x)\ \forall x,
\\
&\norm{g}_2\ge\eta_\delta\norm{f}_2.
\end{align}
\end{lemma}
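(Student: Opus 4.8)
The strategy is to deduce this from the bilinear restriction/extension estimate of Moyua--Vargas--Vega \cite{mvv}, which states (in the form we need) that if $E_1,E_2\subset S^2$ are subsets of caps whose centers are separated by a definite angle, then
$$
\norm{\widehat{f_1\sigma}\cdot\widehat{f_2\sigma}}_{L^2(\reals^3)}
\le C\,(|\scriptc_1|\,|\scriptc_2|)^{\alpha}\,\norm{f_1}_{\lt}\norm{f_2}_{\lt}
$$
for $f_i$ supported on $E_i$, with a gain in the measures of the caps relative to the trivial bound; equivalently, an $L^2$ bound for $f_1\sigma*f_2\sigma$ with the same gain. First I would dyadically decompose $S^2$ into a Whitney-type family of pairs of caps: fix a collection of caps $\{\scriptc_j^k\}$ of radius $\sim 2^{-k}$, and for each scale $k$ let $\Pi_k$ be the set of pairs $(j,j')$ whose caps are $\sim 2^{-k}$-close but not $2^{-k-1}$-close. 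Then, writing $f_j^k = f\cdot\chi_{\scriptc_j^k}$ (with appropriate care so the pieces tile $S^2\times S^2$ once), one has
$$
f\sigma*f\sigma=\sum_k\sum_{(j,j')\in\Pi_k}f_j^k\sigma*f_{j'}^k\sigma,
$$
and the MVV bilinear estimate applied to each summand, combined with the almost-orthogonality of the frequency supports of $f_j^k\sigma*f_{j'}^k\sigma$ across $k$ and across $(j,j')$ within a fixed $k$, yields a bound of the shape
$$
\norm{f\sigma*f\sigma}_2^2\lesssim \sum_k 2^{-\beta k}\sum_{(j,j')\in\Pi_k}\norm{f_j^k}_2^2\norm{f_{j'}^k}_2^2
$$
for some $\beta>0$, hence $\norm{f\sigma*f\sigma}_2^2\lesssim \sup_{k,j}\big(2^{-\beta k}\norm{f_j^k}_2^2\big)\cdot\norm{f}_2^2$.

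Now comes the extraction step. The hypothesis $\norm{f\sigma*f\sigma}_2\ge\delta^2\Sbest^2\norm f_2^2$ forces, via the displayed inequality, the existence of a scale $k$ and an index $j$ with $2^{-\beta k}\norm{f_j^k}_2^2\ge c\,\delta^4\Sbest^4\norm f_2^2$; in particular $\norm{f_j^k}_2\ge c'\delta^2\norm f_2$, so this cap $\scriptc=\scriptc_j^k$ already supplies a piece of $f$ carrying a fixed fraction $\eta_\delta\sim\delta^2$ of the $\lt$ mass. To obtain the $L^\infty$-type control $|g(x)|\le C_\delta\norm f_2|\scriptc|^{-1/2}\chi_\scriptc$, I would further truncate: inside $\scriptc$, let $g$ be the restriction of $f$ to the set where $|f|\le M\norm f_2|\scriptc|^{-1/2}$ and put everything else (including all of $f$ outside $\scriptc$) into $h$. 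Since $\int_{\scriptc}|f|^2\le\norm f_2^2$, the portion of $f$ on $\scriptc$ exceeding height $M\norm f_2|\scriptc|^{-1/2}$ has $\lt$ mass at most $M^{-1}\norm f_2$ by Chebyshev; choosing $M=M_\delta$ large enough that $M^{-1}<\tfrac12 c'\delta^2$ guarantees $\norm g_2\ge\tfrac12 c'\delta^2\norm f_2=:\eta_\delta\norm f_2$. The remaining properties $0\le|g|,|h|\le|f|$ and disjointness of supports are immediate from the construction, and $C_\delta=M_\delta$.

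The main obstacle is making the orthogonality bookkeeping in the dyadic pigeonholing actually rigorous and quantitative: the MVV estimate is naturally stated for a pair of $\lambda$-separated caps, and one must verify that, after convolving, the supports of the various $f_j^k\sigma*f_{j'}^k\sigma$ (which live near the algebraic sum of the two caps) have bounded overlap both in $k$ and within each $\Pi_k$, so that the $\ell^2$ sum can be controlled — this is where the curvature of $S^2$ enters and where one must be slightly careful, since $S^2$ has no exact parabolic scaling. One should also confirm that the exponent $\beta$ coming out of MVV is strictly positive (it is; MVV give a power gain in the cap sizes), which is what makes the supremum over $(k,j)$ meaningful. A secondary, purely cosmetic point is arranging the caps $\{\scriptc_j^k\}$ so that the pieces $\{f_j^k\}$ have genuinely disjoint supports and sum to $f$ exactly rather than with bounded multiplicity; this can be handled by replacing the caps with an associated partition of $S^2$ into Borel sets of comparable size, at the cost of constants only. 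Once these points are in hand the lemma follows as above, with $\eta_\delta\sim\delta^2$ and $C_\delta$ a large constant depending on $\delta$ through the Chebyshev truncation.
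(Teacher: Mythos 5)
Your overall skeleton (invoke Moyua--Vargas--Vega, locate a single cap carrying a fixed fraction of the mass, then truncate in height) is the same as the paper's, but the quantity you pigeonhole is the wrong one, and this creates a genuine gap at the final step. From your Whitney/bilinear bookkeeping you extract only an $L^2$ concentration statement, $\norm{f\chi_\scriptc}_2\ge c'\delta^2\norm{f}_2$ for some cap $\scriptc$. The passage from this to the lemma via ``Chebyshev'' is false: Chebyshev's inequality bounds the \emph{measure} of the set $\{x\in\scriptc: |f(x)|>M\norm{f}_2|\scriptc|^{-1/2}\}$ by $M^{-2}|\scriptc|$, not the $L^2$ mass of $f$ on that set. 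If $f$ restricted to $\scriptc$ is a tall spike on a subset of measure $\ll M^{-2}|\scriptc|$, then \emph{all} of its $L^2$ mass lies above the cutoff, your truncated $g$ is identically zero, and no choice of $M_\delta$ rescues the lower bound $\norm{g}_2\ge\eta_\delta\norm{f}_2$. This is exactly the nontrivial content of the lemma: $L^2$ concentration on a cap is not enough, because the scale at which $\norm{f\chi_{\scriptc_j^k}}_2$ is large need not be a scale at which $f$ has height comparable to $|\scriptc_j^k|^{-1/2}$.

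The paper avoids this by working with sub-$L^2$ cap averages: Theorem~4.2 of \cite{mvv} is an $X_p$ bound with $p<2$, and Lemma~\ref{lemma:neckpain} refines it to $\norm{f}_{X_p}\le C\norm{f}_2\sup_{k,j}\Lambda_{k,j}(f)^\gamma$, where $\Lambda_{k,j}(f)$ is an $L^1$ average of $f$ over $\scriptc_k^j$ normalized against $|\scriptc_k^j|^{-1/2}\norm{f}_2$. A lower bound on $\Lambda_{k,j}$ gives $\int_\scriptc|f|\gtrsim\delta^{1/\gamma}|\scriptc|^{1/2}\norm{f}_2$, and then height truncation does work, because the part of $f$ above height $R$ has $L^1(\scriptc)$ norm at most $R^{-1}\norm{f}_2^2$ (this is the correct Chebyshev-type step, at the $L^1$--$L^2$ level), so the bounded piece retains half the $L^1$ mass and H\"older converts that into the desired $L^2$ lower bound. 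To repair your argument you would have to run the Whitney/bilinear summation so that the pigeonholed quantity is an $L^1$ or $L^p$ ($p<2$) cap average rather than $\norm{f_j^k}_2$ --- at which point you have essentially reconstructed the $X_p$ machinery of \cite{mvv}. (A smaller, fixable point: in your display $\sum_k 2^{-\beta k}\sum_{(j,j')}\norm{f_j^k}_2^2\norm{f_{j'}^k}_2^2\lesssim\sup_{k,j}(2^{-\beta k}\norm{f_j^k}_2^2)\norm{f}_2^2$ you must split the gain $2^{-\beta k}$ between the supremum and the sum over scales, since $\sum_k\sum_{(j,j')}\norm{f_{j'}^k}_2^2$ alone diverges in $k$.)
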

The first conclusion is of course redundant. If $f\ge 0$ then it follows that $g,h\ge 0$
almost everywhere.

Lemma~\ref{lemma:mvv} is a corollary of Theorem~4.2 of \cite{mvv}.
It can also be proved via arguments closely related to those in
\cite{quasiextremal}.

\paragraph{\bf Step 6.}
This step is related to the techniques used in \cite{christextremal}.

\begin{definition}
Let $\scriptb\subset\reals^2$ denote the unit ball.
To any cap of radius $\le 1$
is associated a rescaling map $\phi_\scriptc:\scriptb\leftrightarrow\scriptc$.
For $z=(0,0,1)$,
$\phi_\scriptc(y_1,y_2) = (ry_1,ry_2,(1-r^2|y|^2)^{1/2})$.
For general $z$,
define
$\psi_z(y)= r^{-1}L(\pi(y))$
where $\pi$ is again the orthogonal projection onto $H_z$,
$L:H_z\leftrightarrow \reals^2$ is an arbitrary linear isometry,
and $\phi_{\scriptc(z,r)}=\psi^{-1}$.
For small $r>0$, $\phi_{\scriptc(z,r)}$ is naturally defined on
$B(0,r^{-1})$, which it maps into a cap of radius $1$ in $S^2$.
\end{definition}

\begin{definition}
Define the pullbacks
\begin{equation}
\phi_\scriptc^*f(y) = r\cdot (f\circ\phi_\scriptc)(y)
\end{equation}
where $r$ is the radius of the cap $\scriptc$.
\end{definition}
This definition makes sense provided that $f$ is supported
in the cap of radius $1$ concentric with $\scriptc$.
These pullbacks preserve norms up to uniformly bounded factors
provided that $r\le r_0<1$;
$\norm{\phi_\scriptc^* f}_{\lt(\reals^2)}\asymp
\norm{f}_{\lt(S^2,\sigma)}$
with the ratio of these norms is bounded above and below by positive,
finite constants, uniformly in $f,r,z$.
For the sake of definiteness we set $r_0=\tfrac12$.

\begin{definition} \label{defn:normalized}
Let $\Theta:[1,\infty)\to(0,\infty)$
satisfy $\Theta(R)\to 0$ as $R\to\infty$.
A function $f\in\lt(S^2)$ is said to be upper normalized, with
gauge function $\Theta$, with respect to a cap
$\scriptc=\scriptc(z,r)\subset S^2$
of radius $r$ and center $z$ if
\begin{align}
\norm{f}_2&\le C<\infty,
\\
\label{eq:normalization1}
\int_{|f(x)|\ge Rr^{-1}}|f^2(x)|\,dx &\le \Theta(R)
\qquad\forall R\ge 1,
\\
\label{eq:normalization2}
\int_{|x-z|\ge Rr} |f^2(x)|\,dx &\le \Theta(R)
\qquad\forall R\ge 1.
\end{align}
%where
%\begin{gather}
%\text{$\Theta(R)\to 0$ as $R\to\infty$,}
%\\
%\text{$\Theta$ is independent of $z,r$.}
%\end{gather}
An even function $f$ is said to be upper even-normalized
with respect to $\Theta,\scriptc$
if $f$ can be decomposed as $f=f_+ + f_-$ where $f_-(x)\equiv \overline{f_+(-x)}$,
and
$f_+$ is upper normalized with respect to $\Theta,\scriptc$.

A function $f\in\lt(\reals^2)$
is said to be upper normalized with respect to the unit ball in $\reals^2$
if
$\norm{f}_2\le C<\infty$,
$\int_{|f(x)|\ge R}|f^2(x)|\,dx \le \Theta(R)$
for all $R\ge 1$,
and
$\int_{|x|\ge R} |f^2(x)|\,dx \le \Theta(R)$
for all $R\ge 1$.
\end{definition}
We will usually omit the phrase
``with gauge function $\Theta$'', and will say that a function
is upper normalized if it satisfies the required inequalities
with respect to some appropriate function $\Theta$ which
has been, in principle, specified earlier in the discussion.

\begin{definition}
A nonzero function $f\in\lt(S^2)$ is said to be $\delta$--nearly extremal
for the inequality \eqref{secondSversion}
if
\begin{equation}\label{DNE}
\norm{f\sigma*f\sigma}_{\lt(\reals^3)}\ge (1-\delta)^2\Sbest^2\norm{f}_2^2.
\end{equation}
\end{definition}

\begin{proposition} \label{prop:normalization}
There exists a function $\Theta:[1,\infty)\to(0,\infty)$
satisfying $\Theta(R)\to 0$ as $R\to\infty$ with the following property.
For any $\eps>0$ there exists $\delta>0$
such that any nonnegative even function $f\in\lt(S^2)$ satisfying
$\norm{f}_2=1$ which is $\delta$--nearly extremal
%$\norm{f\sigma*f\sigma}_2\ge (1-\delta)^2\Sbest^2$
may be decomposed as $f=F+G$ where
$F,G$ are even and nonnegative with disjoint supports,
$\norm{G}_2<\eps$,
and there exists a cap $\scriptc$
% =\scriptc(z,r)$ such that $F$ satisfies
such that $F$ is upper even-normalized with respect to $\scriptc$.
\end{proposition}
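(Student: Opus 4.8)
The plan is to isolate a single cap $\scriptc$ accounting for all but $\eps$ of the $L^2$ mass of $f$, with the amplitude and spatial tails of the remaining part controlled at the natural scale of $\scriptc$; then the decomposition $f=F+G$ and the normalization inequalities \eqref{eq:normalization1}--\eqref{eq:normalization2} follow by inspection. Lemma~\ref{lemma:mvv} already furnishes such a cap carrying a fixed fraction of the mass, together with an $L^\infty$ bound on that fraction; the substance of the argument is to upgrade ``a fixed fraction'' to ``all but $\eps$, with a universal rate of decay.'' By Proposition~\ref{prop:symmetrize} we have restricted to even $f$, and for even nonnegative $f$ it suffices to treat $f_+=f\cdot\chi_{\{x_3\ge0\}}$, since automatically $f=f_++f_-$ with $f_-(x)=f_+(-x)$ and upper even-normalization of the output is exactly upper normalization of its upper part. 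If the cap produced below has radius bounded below by an absolute constant, then $f$ already lives at unit scale and the normalization inequalities are essentially vacuous, so we may assume throughout that the relevant radii are small.

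\emph{Core cap and exhaustion.} Since $f$ is $\delta$--nearly extremal, $\norm{f\sigma*f\sigma}_2\ge(1-\delta)^2\Sbest^2$, which for small $\delta$ exceeds $\tfrac14\Sbest^2$; hence Lemma~\ref{lemma:mvv}, applied with a fixed parameter, gives absolute constants $C_0,\eta_0>0$, a cap $\scriptc_0=\scriptc(z_0,r_0)$, and a nonnegative $g_0\le f$ with $g_0\le C_0 r_0^{-1}\chi_{\scriptc_0}$, $\norm{g_0}_2\ge\eta_0$, and $f-g_0$ supported off $\support g_0$. Because $f$ is even and $\scriptc_0\cap(-\scriptc_0)=\emptyset$ for small $r_0$, replacing $g_0$ by $g_0+g_0(-\,\cdot\,)$ keeps $0\le g_0\le f$, makes $g_0$ even and supported in $\scriptc_0\cup(-\scriptc_0)$, and does not decrease its norm; rotating coordinates, take $z_0$ to be the north pole. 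Now iterate: apply Lemma~\ref{lemma:mvv} to the successive remainders $h_{j-1}=f-\sum_{i<j}g_i$ (again symmetrizing each extracted piece), obtaining even nonnegative $g_j$ with pairwise disjoint supports, caps $\scriptc_j=\scriptc(z_j,r_j)$, bounds $g_j\le C_0 r_j^{-1}\chi_{\scriptc_j}$, and $\norm{g_j}_2\ge\eta_0\norm{h_{j-1}}_2$, for as long as $h_{j-1}$ satisfies the hypothesis of Lemma~\ref{lemma:mvv}. As soon as it fails, the self-interaction of the current remainder is small; I would then argue via the bilinear estimate below---using that this remainder is transverse to the already-extracted concentrated pieces---that its mass is itself $O(\sqrt{\delta})$, so for $\delta$ small it may simply be placed in $G$. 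Otherwise each step removes at least an $\eta_0^2$ fraction of the remaining mass, so $\norm{h_k}_2^2\le(1-\eta_0^2)^k$ and after finitely many steps the leftover is $<\eps^2$.

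\emph{The caps essentially coincide.} This is the decisive step. Expanding $\norm{f\sigma*f\sigma}_2^2$ over the pieces $g_j\sigma$, every term is nonnegative and each is controlled by the Tomas--Stein/bilinear estimate; but when two caps $\scriptc_i,\scriptc_j$ are transverse or separated by many rescaled units---or have very different radii---the convolution $g_i\sigma*g_j\sigma$ occupies an almost-disjoint region of $\reals^3$ and obeys a strictly better bilinear bound, with a quantitative gain depending on the degree of separation; this is precisely the kind of refined estimate underlying Lemma~\ref{lemma:mvv}. Combined with the elementary fact that $a^4+b^4<(a^2+b^2)^2$ unless $ab=0$, near-extremality forces the ``stray'' mass---the mass not carried by caps lying, after a bounded recentering and up to a bounded ratio of radii, inside one cap $\scriptc_*=\scriptc(z_*,r_*)$ with $r_*\asymp r_0$---to be small, since otherwise $\norm{f\sigma*f\sigma}_2^2$ would be pushed strictly below $(1-\delta)^4\Sbest^4$. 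Quantifying, one fixes once and for all a function $\Theta$ with $\Theta(R)\to0$ such that, for $\delta$ below a threshold depending on $\eps$, all but $\eps^2$ of the mass lies in such a $\scriptc_*$ and, among the pieces near $\scriptc_*$, the total mass at rescaled distance $\ge R$ from $z_*$ is $\le\Theta(R)$.

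\emph{Assembly.} Take $\scriptc=\scriptc_*$ and let $F_+=\sum g_j^+$ over the finitely many extracted pieces whose caps lie within a bounded multiple of $\scriptc_*$, where $g_j^+:=g_j\cdot\chi_{\{x_3\ge0\}}$. By the previous step these have $r_j\asymp r_*$, so, their supports being disjoint, $F_+\le C r_*^{-1}$ pointwise, which makes \eqref{eq:normalization1} hold trivially for large $R$; $F_+$ is supported within $O(r_*)$ of $z_*$, and $\int_{|x-z_*|\ge Rr_*}F_+^2\le\Theta(R)$ for all $R\ge1$, which is \eqref{eq:normalization2}, after enlarging the common $\Theta$ to absorb the previous remark. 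Set $F=F_++F_+(-\,\cdot\,)$ and $G=f-F$: then $F,G$ are even, nonnegative, with disjoint supports, $F$ is upper even-normalized with respect to $\scriptc_*$ with gauge $\Theta$, and $G_+$ consists only of the stray mass, the diffuse remainder from the exhaustion, and the amplitude excess, so $\norm{G}_2^2\le 2\eps^2$; replacing $\eps$ by $\eps/\sqrt2$ at the outset completes the proof.

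\textbf{Main obstacle.} The heart of the matter is the ``caps coincide'' step: making the bilinear almost-orthogonality quantitative and, crucially, \emph{uniform in the scale} $r_j$. Since $S^2$ carries no exact dilation symmetry, the rescaled caps are only approximately flat, $g_i\sigma*g_j\sigma$ is only approximately described by the paraboloid model, and the separation gain together with the error terms must be controlled uniformly in the radii and in $\delta$; moreover the gauge $\Theta$ must be extracted in a way independent of the particular near-extremal. This is where the restriction-theoretic machinery behind Lemma~\ref{lemma:mvv}, together with an honest treatment of the absence of the exact scaling available for $\paraboloid$, must be deployed in earnest, and where the bulk of the technical work lies.
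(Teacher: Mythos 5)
Your overall route is the one the paper takes: iterate Lemma~\ref{lemma:mvv} on successive remainders (the paper's ``decomposition algorithm''), dispose of the terminal remainder via near-extremality (this is Lemma~\ref{lemma:nearextremal}), and then force every cap carrying non-negligible mass to lie at bounded distance, in the scale-invariant metric modulo the antipodal map, from the first cap, using smallness of $\norm{\chi_{\scriptc}\sigma*\chi_{\scriptc'}\sigma}_2$ for distant caps together with the strict concavity $a^4+b^4\le\max(a^2,b^2)\,(a^2+b^2)$. Two remarks on the step you defer as the ``main obstacle.'' First, the uniform-in-scale separation estimate is not where the difficulty lies: it is Lemma~\ref{lemma:distantcaps}, proved in the paper by elementary support and $L^\infty$ bounds for convolutions of characteristic functions of caps plus the reflection identity of Lemma~\ref{lemma:switchconvolutionfactors}; no rescaled-paraboloid approximation is needed. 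Second, your sketch of the ``caps coincide'' step omits how the pieces are grouped: pairwise cross-term bounds alone do not produce the loss $1-\eps^2$; the paper first splits the finite family of caps into two clusters separated by $\ge\lambda/2N$ via the pigeonhole Lemma~\ref{lemma:metricspace}, and only then applies the concavity argument to $F_1+F_2$.

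The genuine gap is the quantifier structure. The Proposition requires a single gauge $\Theta$, fixed before $\eps$, whereas every constant your construction produces --- the number of extraction steps, the constants $C_\delta$ from Lemma~\ref{lemma:mvv}, hence your pointwise bound $F_+\le Cr_*^{-1}$ and the ratio $r_j\asymp r_*$ --- depends on $\eps$. Writing ``one fixes once and for all a function $\Theta$'' asserts exactly what has to be proved; as written, your assembly yields upper normalization only with a gauge depending on $\eps$, which is strictly weaker than the statement. The paper obtains the uniformity from two ingredients absent from your proposal: (i) the lower bound $\norm{f_\nu}_2\ge\theta(\norm{G_\nu}_2)$ of Lemma~\ref{lemma:goodfnubound}, with $\theta$ independent of $\eps$, so that the distance bound $\lambda(\eta)$ and the sup bound $C(\eta)$ depend only on the mass threshold $\eta$ and not on $\eps$; and (ii) the choice of a slowly decaying $\eta(R)$ with $\Theta(R)=\gamma(\eta(R))$, carried out first only for $R$ in a compact range $[1,\bar R]$ with $\delta=\delta(\eps,\bar R)$ (Lemma~\ref{lemma:altnormalization}), and then upgraded to all $R\ge1$ by truncating $F$ where it exceeds $\bar R|\scriptc|^{-1/2}$ or lies farther than $\bar R r$ from the cap's center. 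Without this two-stage argument, or an equivalent device making $\Theta$ independent of $\eps$, the proof is incomplete at its decisive point.
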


The proof is a largely formal argument which rests on two inputs:
Lemma~\ref{lemma:mvv}, and the observation that for two caps $\scriptc,\scriptc'$,
$\norm{\chi_\scriptc\sigma*\chi_{\scriptc'}\sigma}_2\ll
|\scriptc|^{1/2}|\scriptc'|^{1/2}$
unless $\scriptc,\scriptc'$ have comparable radii and nearby centers.

\paragraph{\bf Step 7.}
This is the sole step which works only for nonnegative extremizing sequences.
It is also the most involved step of the argument.
%The following result is false if the hypothesis of nonnegativity is omitted.

\begin{proposition} \label{prop:precompactness}
Let $\{f_\nu\}\subset\lt(S^2)$ be an extremizing sequence
of nonnegative even functions for the inequality
\eqref{secondSversion}, satisfying $\norm{f_\nu}_2\equiv 1$.
Suppose that each $f_\nu$ is upper even-normalized with respect
to a cap $\scriptc_\nu=\scriptc(z_\nu,r_\nu)$, with constants uniform in $\nu$.
%Let $\{g_\nu\}$ be constructed as above.
%$\{g_\nu\}$ is equicontinuous.  That is,
Then for any $\eps>0$ there exists $C_\eps<\infty$ with the following property.
For every $\nu$, if $r_\nu\le \tfrac12$ then
$\phi_\nu^*(f_\nu)$ may be decomposed
as $\phi_\nu^*(f_\nu)=G_\nu+H_\nu$ where
\begin{gather}
\norm{H_\nu}_2<\eps,
\\
G_\nu \text{ is supported where $|x|\le C_\eps$,}
\\
\norm{G_\nu}_{C^1}\le C_\eps.
\end{gather}
If $r_\nu\ge \tfrac12$
then $f_\nu$ itself  may be decomposed as
$f_\nu=g_\nu+h_\nu$ where
$\norm{h_\nu}_2<\eps$
and
$\norm{g_\nu}_{C^1}\le C_\eps$.
\end{proposition}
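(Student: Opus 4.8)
The plan is to reduce everything to the case $r_\nu\le\tfrac12$ by pulling back to $\reals^2$; the case $r_\nu\ge\tfrac12$ is genuinely simpler, since then $f_\nu$ lives on a bounded portion of $S^2$ at a fixed scale, and the argument below can be run directly on $S^2$ with trivial modifications (no rescaling needed, the operator $f\mapsto f\sigma*f\sigma$ being already a fixed smoothing operator on the relevant cap). So assume $r_\nu\le\tfrac12$ and write $u_\nu=\phi_\nu^*(f_\nu)\in\lt(\reals^2)$. By the hypothesis that $f_\nu$ is upper even-normalized with respect to $\scriptc_\nu$, we may split $u_\nu=u_\nu^++u_\nu^-$ with $u_\nu^-(y)=\overline{u_\nu^+(-\lambda_\nu y)}$ for the appropriate reflection, and $u_\nu^+$ upper normalized with respect to the unit ball in the sense of Definition~\ref{defn:normalized}; by \eqref{eq:normalization1} and \eqref{eq:normalization2}, for any $\eps>0$ there is $R=R(\eps)$ so that truncating $u_\nu^+$ to $\{|y|\le R\}\cap\{|u_\nu^+|\le R\}$ costs less than $\eps/4$ in $\lt$. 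Thus the first two desired conclusions (small $\lt$ tail $H_\nu$, and $G_\nu$ supported in $|x|\le C_\eps$ and pointwise bounded) are immediate consequences of the normalization hypothesis alone; the entire content of the proposition is the $C^1$ bound $\norm{G_\nu}_{C^1}\le C_\eps$, i.e.\ an \emph{a priori} regularity estimate for near-extremals after the obvious truncations.

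To obtain the $C^1$ bound, I would exploit that $f_\nu$ is $\delta_\nu$-nearly extremal with $\delta_\nu\to0$, so that equality in the Cauchy--Schwarz/Plancherel chain underlying \eqref{secondSversion} is nearly attained; more robustly, I would use directly that for an extremizing sequence the convolution powers behave almost as for a true extremizer. The key identity is that, after rescaling, the bilinear form $f\sigma*f\sigma$ pulled back to $\reals^2$ is, on bounded sets and as $r_\nu\to0$, a small perturbation of the flat extension operator for the paraboloid $\paraboloid$ — this is precisely the curvature-matching that gives $\Sbest\ge(3/2)^{1/4}\Pbest$ and is discussed in the Introduction. Concretely, $\phi_\nu^*$ conjugates $\mu\mapsto\mu\sigma*\mu\sigma$ (localized to the concentric radius-$1$ cap) to an operator $T_\nu$ on $\lt(\reals^2)$ with $T_\nu\to T_{\paraboloid}$ in an appropriate operator-norm sense on functions supported in $B(0,R)$, where $T_{\paraboloid}g = g\sigma_P*g\sigma_P$. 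The near-extremality of $f_\nu$, combined with the strict inequality $\Sbest>(3/2)^{1/4}\Pbest$ from Lemma~\ref{lemma:constantfunction} (which rules out concentration/loss of mass to the flat limit), forces $u_\nu$ — after the harmless truncations — to nearly saturate a \emph{bounded, compact} piece of the problem. At that point I would run a bootstrapping argument on the (approximate) Euler--Lagrange relation \eqref{eulerlagrange}: the function $G_\nu$ satisfies, up to an $\eps$-error in a suitable norm, an equation of the form $(\text{smoothing operator})[G_\nu]= c_\nu G_\nu$ with $c_\nu$ bounded below, and since the relevant threefold convolution with two copies of surface measure gains two derivatives (in the rescaled, bounded regime), one upgrades $L^\infty\Rightarrow C^{0,\alpha}\Rightarrow C^1$ in finitely many steps, with constants depending only on $\eps$ (through $R(\eps)$ and the pointwise bound) and on the uniform near-extremality.

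Two technical points deserve care. First, the near-Euler--Lagrange equation holds only approximately, so the bootstrap must be formulated quantitatively: I would show that if $\norm{G_\nu}_{L^\infty}\le M$ and $\norm{H_\nu}_2<\eps$, then $\norm{G_\nu}_{C^{0,\alpha}}\le C(M,R,\alpha)+C\eps'$, iterate, and absorb errors — the point being that the gain of smoothing dominates the propagation of the $\eps$-errors as long as $\eps$ is chosen small depending on the target $C_\eps$. Second, one must ensure the rescaled operators $T_\nu$ genuinely smooth on the bounded set $B(0,R)$ uniformly as $r_\nu\to0$: this is a stationary-phase / oscillatory-integral estimate for $\widehat{\chi\sigma_\nu}$ where $\sigma_\nu$ is surface measure on the radius-$1$ cap in rescaled coordinates, and it is uniform precisely because the second fundamental form is nondegenerate with the curvature normalized to match $\paraboloid$.

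\textbf{Main obstacle.} The hard part will be the quantitative bootstrap: converting ``$f_\nu$ is $\delta_\nu$-nearly extremal with $\delta_\nu\to0$'' into a genuine elliptic-type regularity gain for the truncated profile $G_\nu$, uniformly in $\nu$, despite (i) the absence of exact scaling symmetry — so the operators $T_\nu$ vary with $\nu$ rather than being a single fixed operator — and (ii) the fact that the Euler--Lagrange equation is satisfied only up to controlled errors. Ensuring that the error terms in the bootstrap are genuinely lower-order (small in a norm that the smoothing can absorb) and do not accumulate over the finitely many iteration steps is where the real work lies; this is exactly the point where the strict inequality $\Sbest>(3/2)^{1/4}\Pbest$ is indispensable, as it is what prevents a portion of the mass of $G_\nu$ from escaping the compact regime and thereby decoupling from the regularizing mechanism.
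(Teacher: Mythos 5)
Your proposal correctly identifies that the only real content is the uniform $C^1$ bound after harmless truncations, but the mechanism you propose for it does not work, and it is not the paper's. You want to bootstrap an approximate Euler--Lagrange equation \eqref{eulerlagrange}. For an \emph{arbitrary} extremizing sequence there is no such approximate equation: near-maximality of the functional does not imply smallness of its gradient unless one first replaces the sequence by an Ekeland-type modification (not allowed here, since the decomposition is asserted for every $\nu$ of the given sequence) or already knows the sequence is close to the set of extremizers -- which is essentially the precompactness being proved, so the argument is circular. Moreover, even granting an approximate equation, the bootstrap would have to control errors in a norm that sees high-frequency oscillation, and that is exactly what your truncations do not address: upper normalization bounds spatial spread and height of $\phi_\nu^*(f_\nu)$ but places no constraint on oscillation, so a fixed fraction of the $\lt$ mass could sit at frequencies $\to\infty$, destroying any uniform $C^1$ bound for the truncated profile. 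Ruling this out is the entire point of the proposition, and your proposal offers no mechanism for it. You also lean on the strict inequality $\Sbest>(3/2)^{1/4}\Pbest$ as ``indispensable'' here; in the paper that inequality enters only in Step 8 (Proposition~\ref{prop:notsmall}, excluding $r_\nu\to 0$) and plays no role in Step 7, which is another sign the mechanism has been misidentified.

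The paper's proof runs differently: (i) nonnegativity plus upper normalization forces a universal lower bound on the low-frequency mass $\int_{|\xi|\le A}|\widehat{g_\nu}|^2$ (a Gaussian test-function argument -- the only use of nonnegativity in the whole analysis); (ii) a pigeonhole dichotomy (Lemma~\ref{lemma:fourierseparated}) says that either $\widehat{g_\nu}$ has uniform decay -- in which case a frequency truncation immediately yields the decomposition $G_\nu+H_\nu$ with the $C^1$ bound -- or, along a subsequence, the mass splits into low- and high-frequency blocks separated by a huge gap $S_\nu=s_\nu^3$; (iii) in the latter case the induced decomposition $f_\nu=F_\nu^0+F_\nu^\infty+F_\nu^\flat$ has a cross term $\norm{F_\nu^0\sigma*F_\nu^\infty\sigma}_2\to 0$, proved via the Fourier integral operator bound \eqref{FIObound} for the circle-averaging operators $T_\rho$, and then the elementary convexity estimate $\norm{F_\nu^0}_2^4+\norm{F_\nu^\infty}_2^4\le\max(\norm{F_\nu^0}_2^2,\norm{F_\nu^\infty}_2^2)$ gives $\limsup\norm{f_\nu\sigma*f_\nu\sigma}_2^2<\Sbest^4$, contradicting extremality. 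If you want to salvage your outline, the piece you must supply is precisely this exclusion of a persistent high-frequency component; the approximate-EL bootstrap cannot substitute for it.
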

\noindent Here $\phi_\nu^* = \phi_{\scriptc_\nu}^*$.

The idea is that if $g\in\lt(\reals^2)$ satisfies $\norm{g}_2\sim 1$,
if $g$ is upper normalized with respect to the unit ball,
and if $g$ is nonnegative, then $\int_{|\xi|\lesssim 1}|\widehat{g}(\xi)|^2\,d\xi$
is bounded below by a universal strictly positive constant.
If precompactness were to fail, then $g_\nu=\phi_\nu^*(f_\nu)$ would
have to satisfy
$\int_{|\xi|\ge \Lambda_\nu}|\widehat{g_\nu}(\xi)|^2\,d\xi\ge\eta>0$,
with $\limsup\Lambda_\nu=\infty$.
Thus in an appropriately rescaled sense, $f_\nu$ is a superposition of a slowly
varying part, plus a highly oscillatory part, with perhaps some intermediate portion.
For the bilinear expression $f\sigma*f\sigma$, we show that the cross term resulting
from the high and low frequency parts is small, and that this contradicts extremality.

An application of Rellich's lemma then yields:
\begin{corollary} \label{cor:rellich}
Let $\{f_\nu\}\subset\lt(S^2)$ be an extremizing
sequence of even nonnegative functions for the inequality \eqref{secondSversion},
which are upper even-normalized with respect to
a sequence of caps $\{\scriptc_\nu=\scriptc(z_\nu,r_\nu)\}$.
\newline
(i)
If $r_\nu\to 0$ then
$\{\phi_\nu^*(f_\nu)\} \text{ is precompact in } \lt(\reals^2)$.
\newline
(ii)
If $\liminf_{\nu\to\infty} r_\nu>0$ then
$\{f_\nu\} \text{ is precompact in } \lt(S^2)$.
\end{corollary}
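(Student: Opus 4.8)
The plan is to deduce the corollary from Proposition~\ref{prop:precompactness} essentially formally, using only the fact that a bounded sequence in $\lt$ is precompact if and only if it is uniformly tight in both physical and frequency space (up to arbitrarily small $\lt$ errors), together with Rellich's lemma to upgrade the quantitative bounds $\norm{G_\nu}_{C^1}\le C_\eps$ on a fixed compact set into actual $\lt$--compactness. Throughout, the guiding principle is the standard characterization of precompact subsets of $\lt(\reals^2)$: a bounded family $\{u_\nu\}$ is precompact if and only if for every $\eps>0$ there is a compact set $K$ and a radius $\Lambda$ such that $\norm{u_\nu - \one_K u_\nu}_2 < \eps$ and $\norm{\widehat{u_\nu} - \one_{|\xi|\le\Lambda}\widehat{u_\nu}}_2<\eps$ for all $\nu$.

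For part~(i), assume $r_\nu\to 0$; then $r_\nu\le\tfrac12$ for all large $\nu$, so Proposition~\ref{prop:precompactness} applies and writes $\phi_\nu^*(f_\nu)=G_\nu+H_\nu$ with $\norm{H_\nu}_2<\eps$, $G_\nu$ supported in $\{|x|\le C_\eps\}$, and $\norm{G_\nu}_{C^1}\le C_\eps$. Here I would use that $G_\nu = \one_{\{|x|\le C_\eps\}} G_\nu$ with $G_\nu$ bounded in $C^1$, hence bounded in the Sobolev space $W^{1,2}$ of that fixed ball; by Rellich's lemma the sequence $\{G_\nu\}$ is precompact in $\lt(\reals^2)$. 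Since a family that is within $\eps$ in $\lt$ of a precompact family is totally bounded up to $\eps$, and $\eps>0$ is arbitrary, a diagonal argument over a sequence $\eps_k\to 0$ shows $\{\phi_\nu^*(f_\nu)\}$ is itself totally bounded, hence precompact in $\lt(\reals^2)$. The only mild subtlety is bookkeeping: for each $\eps_k$ the decomposition and the constant $C_{\eps_k}$ change, so one passes to successive subsequences along which $G_\nu$ converges for each fixed $k$, then diagonalizes; the $\eps_k$--errors force the diagonal subsequence of $\phi_\nu^*(f_\nu)$ to be Cauchy in $\lt$.

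For part~(ii), assume $\liminf_{\nu\to\infty} r_\nu>0$. Passing to a subsequence we may assume $r_\nu\ge\tfrac12$ for all $\nu$ (and we may further assume $z_\nu\to z_\infty\in S^2$ by compactness of $S^2$, though this is not strictly needed). Then Proposition~\ref{prop:precompactness} gives directly $f_\nu=g_\nu+h_\nu$ with $\norm{h_\nu}_2<\eps$ and $\norm{g_\nu}_{C^1(S^2)}\le C_\eps$. Since $S^2$ is itself compact, the embedding $C^1(S^2)\hookrightarrow \lt(S^2)$ is compact (Rellich/Arzel\`a--Ascoli), so $\{g_\nu\}$ is precompact in $\lt(S^2)$; the same $\eps_k\to 0$ diagonalization as above then shows $\{f_\nu\}$ is precompact in $\lt(S^2)$. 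I expect the main — really the only — obstacle to be the diagonalization/uniformity bookkeeping across the countably many scales $\eps_k$, since Proposition~\ref{prop:precompactness} is applied anew for each $\eps_k$ with its own constant $C_{\eps_k}$ and its own decomposition; everything else is a direct invocation of Rellich's lemma on a fixed compact domain.
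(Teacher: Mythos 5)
Your overall strategy is exactly the one the paper intends (the paper dismisses the proof with ``an application of Rellich's lemma''), and part (i) is fine: the decomposition of Proposition~\ref{prop:precompactness} plus Arzel\`a--Ascoli/Rellich on the fixed ball $\{|x|\le C_\eps\}$, followed by the $\eps_k$-approximation and diagonal extraction, does yield precompactness of $\{\phi_\nu^*(f_\nu)\}$ in $\lt(\reals^2)$.

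There is, however, a genuine flaw in part (ii): the reduction ``passing to a subsequence we may assume $r_\nu\ge\tfrac12$'' is not justified. The hypothesis $\liminf_\nu r_\nu>0$ only gives $r_\nu\ge c$ for some $c>0$ which may well be smaller than $\tfrac12$ (e.g.\ $r_\nu\equiv\tfrac14$), in which case no subsequence satisfies $r_\nu\ge\tfrac12$ and the second branch of Proposition~\ref{prop:precompactness} is never available to you; your argument as written simply does not cover the regime $c\le r_\nu<\tfrac12$. Two routine repairs are possible. One is to observe that upper even-normalization is stable under enlarging the cap when the radius is bounded below: if $f_\nu$ is upper even-normalized with respect to $\scriptc(z_\nu,r_\nu)$ with gauge $\Theta$ and $c\le r_\nu\le\tfrac12$, then it is upper even-normalized with respect to $\scriptc(z_\nu,\tfrac12)$ with gauge $R\mapsto\Theta(2cR)$ (the set $\{|f_\nu|\ge 2R\}$ is contained in $\{|f_\nu|\ge 2cR\,r_\nu^{-1}\}$ and $\{|x\mp z_\nu|\ge R/2\}\subset\{|x\mp z_\nu|\ge Rr_\nu\}$), after which the $r\ge\tfrac12$ branch of the Proposition applies and your Rellich argument on $S^2$ goes through. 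Alternatively, on a subsequence with $c\le r_\nu\le\tfrac12$ use the first branch to get precompactness of the pullbacks $\phi_\nu^*(f_\nu)$ as in part (i), pass to a further subsequence with $z_\nu\to z_\infty$ and $r_\nu\to r_\infty\ge c$, and transfer convergence back to $\lt(S^2)$: since the radii are bounded below, the rescaling maps $\phi_{\scriptc_\nu}$ have uniformly bounded, convergent Jacobians, and the mass of $f_\nu$ outside a fixed neighborhood of $\pm z_\nu$ is uniformly small by the normalization \eqref{eq:normalization2}. Either way the conclusion holds, but as stated your proof of (ii) rests on a false dichotomy and needs one of these additional steps.
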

%There is an inconsequential abuse of notation here, since $\phi_\nu^*(f_\nu)$ is only defined
%for $r_\nu\le 1$.

\paragraph{\bf Step 8.}
% In addition to Proposition~\ref{prop:precompactness}, we will also have need of

\begin{proposition} \label{prop:notsmall}
Let $\{f_\nu\}$ be as in Proposition~\ref{prop:precompactness}.
Then $\liminf_{\nu\to\infty} r_\nu>0$.
\end{proposition}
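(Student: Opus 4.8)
\medskip
\noindent\emph{Proof strategy.}
The argument proceeds by contradiction. Assume $\liminf_{\nu}r_\nu=0$; I will deduce the bound $\Sbest^2\le(3/2)^{1/2}\Pbest^2$, which is incompatible with Lemma~\ref{lemma:constantfunction}, since $2^{1/2}>(3/2)^{1/2}$ and $\Pbest>0$. (One could instead invoke the strict inequality $\Sbest>(3/2)^{1/4}\Pbest$ furnished by the perturbative argument of \S\ref{section:calculation}.) Passing to a subsequence, I may assume $r_\nu\to0$, and then, using compactness of $S^2$, that $z_\nu$ converges. Since the inequality \eqref{secondSversion}, evenness, and the upper even-normalization property are all rotation invariant, applying to each $f_\nu$ a rotation carrying $z_\nu$ to the north pole $(0,0,1)$ leaves all the hypotheses in force (the requisite rotations may be taken to converge, so the uniform constants persist); thus I may assume $z_\nu\equiv(0,0,1)$ and $\scriptc_\nu=\scriptc((0,0,1),r_\nu)$, and I write $\phi_\nu=\phi_{\scriptc_\nu}$.

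\smallskip
The next step is to extract a limiting profile on the paraboloid. By Corollary~\ref{cor:rellich}(i), after passing to a further subsequence, $\phi_\nu^*(f_\nu)\to g$ in $\lt(\reals^2)$ for some $g$. Decompose $f_\nu=f_{\nu,+}+f_{\nu,-}$ as in the definition of upper even-normalized, so that $f_{\nu,-}(x)=f_{\nu,+}(-x)$ and $f_{\nu,+}$ is upper normalized with respect to $\scriptc_\nu$. Because $r_\nu\to0$, the inequality \eqref{eq:normalization2} forces the $\lt$ mass of $f_{\nu,+}$ to concentrate at $(0,0,1)$: the $\lt$ norm of $f_{\nu,+}$ outside any fixed neighborhood of $(0,0,1)$ is $o(1)$. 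Consequently $\langle f_{\nu,+},f_{\nu,-}\rangle\to0$, and since $\norm{f_{\nu,+}}_2=\norm{f_{\nu,-}}_2$ and $\norm{f_\nu}_2=1$ we obtain $\norm{f_{\nu,+}}_2^2\to\tfrac12$. The same estimates show that $\phi_\nu^*(f_{\nu,+})\to g$ in $\lt(\reals^2)$ (the contributions of $f_{\nu,-}$ and of the portion of $f_{\nu,+}$ outside the cap of radius $1$ concentric with $\scriptc_\nu$ being $o(1)$), whence $\norm g_2^2=\tfrac12$ (the pullback preserves the $\lt$ norm in the limit $r_\nu\to0$, the Jacobian density $\sqrt{1-r_\nu^2|y|^2}$ tending to $1$ on the region carrying the mass).

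\smallskip
Two reductions remain. First, for large $\nu$ the part of $f_{\nu,+}$ lying where $x_3\le\tfrac12$ has $\lt$ norm $o(1)$; since $f\mapsto\norm{f\sigma*f\sigma}_{\lt(\reals^3)}$ is, by boundedness of $f\mapsto\widehat{f\sigma}$ from $\lt(S^2)$ to $L^4(\reals^3)$, Lipschitz on bounded subsets of $\lt(S^2)$, applying Lemma~\ref{lemma:32} to $\sqrt2\,\bigl(f_{\nu,+}\cdot\chi_{\{x_3>1/2\}}\bigr)$ yields
\begin{equation*}
\norm{f_\nu\sigma*f_\nu\sigma}_{\lt(\reals^3)}=2\,(3/2)^{1/2}\,\norm{f_{\nu,+}\sigma*f_{\nu,+}\sigma}_{\lt(\reals^3)}+o(1).
\end{equation*}
Second --- and this is the technical heart of the proof --- I would establish the rescaling limit
\begin{equation*}
\norm{f_{\nu,+}\sigma*f_{\nu,+}\sigma}_{\lt(\reals^3)}\longrightarrow\norm{g\sigma_P*g\sigma_P}_{\lt(\reals^3)},
\end{equation*}
a quantitative two-sided form of the comparison underlying Corollary~\ref{cor:greater}. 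Changing variables by $x=\phi_\nu(y)$, and dually by an anisotropic rescaling of $\xi$ whose Jacobian is $r_\nu^{-4}$, one finds at corresponding points that $|\widehat{f_{\nu,+}\sigma}|=r_\nu\,\bigl|\widehat{(\phi_\nu^*f_{\nu,+})\sigma_P}\bigr|$ up to a Jacobian factor $1+O(r_\nu^2|y|^2)$ and a phase error $O(r_\nu^2|y|^4|\xi_3|)$ arising from $\sqrt{1-r_\nu^2|y|^2}=1-\tfrac12 r_\nu^2|y|^2+O(r_\nu^4|y|^4)$; truncating $\phi_\nu^*f_{\nu,+}$ to $|y|\le M$ --- whose discarded tail has $\lt$ mass tending to $0$ as $M\to\infty$ uniformly in $\nu$, by \eqref{eq:normalization2} --- then using $\phi_\nu^*f_{\nu,+}\to g$ on $\{|y|\le M\}$ together with boundedness of $h\mapsto\widehat{h\sigma_P}$ from $\lt(\reals^2)$ to $L^4(\reals^3)$, and finally letting $M\to\infty$, gives the limit. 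Combining the two reductions with $\norm{g\sigma_P*g\sigma_P}_2\le\Pbest^2\norm g_2^2=\tfrac12\Pbest^2$ and $\norm{f_\nu\sigma*f_\nu\sigma}_2\to\Sbest^2$ produces $\Sbest^2\le 2\,(3/2)^{1/2}\cdot\tfrac12\,\Pbest^2=(3/2)^{1/2}\Pbest^2$, the sought contradiction; hence $\liminf_{\nu}r_\nu>0$.

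\smallskip
The main obstacle is the rescaling limit: one must pass the $\lt(\reals^2)$ convergence $\phi_\nu^*f_{\nu,+}\to g$ through the nonlocal bilinear operator $h\mapsto h\sigma*h\sigma$ while simultaneously deforming the truncated cap onto its osculating paraboloid, and it is precisely the upper-normalization hypothesis that makes the truncation uniform in $\nu$. The remaining ingredients --- the reduction via Lemma~\ref{lemma:32} and the several tail estimates --- are routine, requiring only the Lipschitz dependence of $\norm{\widehat{f\sigma}}_4$ on $f\in\lt(S^2)$ and the normalization inequalities \eqref{eq:normalization1} and \eqref{eq:normalization2}.
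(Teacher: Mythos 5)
Your overall route is the same as the paper's (Step 8 together with Lemma~\ref{lemma:compareP}): pass to a subsequence with $r_\nu\to 0$, rotate to the north pole, use the antipodal splitting to extract the factor $(3/2)^{1/2}$ (your Lemma~\ref{lemma:32} reduction is exactly the identity used inside Lemma~\ref{lemma:compareP}), transfer the concentrating half to the paraboloid by the parabolic rescaling, bound by $\Pbest$, and contradict $\Sbest\ge 2^{1/4}\Pbest$ from Lemma~\ref{lemma:constantfunction}. The constants, the $o(1)$ truncations, and the final inequality $\Sbest^2\le(3/2)^{1/2}\Pbest^2$ all agree with the paper; the only cosmetic difference is that you extract a strong limit $g$ and claim a two-sided limit, while the paper compares the sequences directly and only needs a one-sided $\limsup$ inequality.

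There is, however, a genuine gap precisely at the step you call the technical heart. Truncating to $\{|y|\le M\}$ and using $\phi_\nu^*f_{\nu,+}\to g$ together with $\lt(\reals^2)\to L^4(\reals^3)$ boundedness of the paraboloid extension only controls errors coming from changing the \emph{input}; it does not control the discrepancy between the two \emph{operators}. For a fixed truncated input, the rescaled spherical extension and the paraboloid extension differ by a phase error of size $|t|\cdot O(r_\nu^2|y|^4)$ in the rescaled time variable, which is $o(1)$ only on compact subsets of $(x,t)$-space; so local comparison does not by itself give convergence of the global $L^4(\reals^3)$ norms. What is needed is a tightness statement uniform in $\nu$: the $L^4$ mass of both the rescaled $\widehat{\tilde f_\nu\sigma}$ and $\widehat{F_\nu\sigma_P}$ outside $\{|(x,t)|\le R\}$ must tend to $0$ as $R\to\infty$ uniformly in $\nu$. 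This is the paper's Lemma~\ref{lemma:uniformL4decay}, proved via stationary phase applied to smooth, compactly supported approximants together with the uniform $\lt\to L^4$ bounds for the rescaled operators, and it uses not only the spatial bound \eqref{eq:normalization2} but also the uniform Fourier-side decay of the pullbacks (the third inequality in \eqref{3uniform}, i.e.\ the first alternative of Lemma~\ref{lemma:fourierseparated}, which appears as a hypothesis of Lemma~\ref{lemma:compareP}). You do in fact have that ingredient available, since strong $\lt(\reals^2)$ convergence of $\phi_\nu^*(f_\nu)$ from Corollary~\ref{cor:rellich}(i) yields uniform decay of $\int_{|\xi|\ge R}|\widehat{\phi_\nu^*f_\nu}|^2\,d\xi$, but your sketch never invokes it, and the plan as written (truncate in $y$, pass to the limit on $\{|y|\le M\}$, let $M\to\infty$) does not close the argument. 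Once this uniform $L^4$ tail estimate is supplied, the remainder of your proof is correct and coincides with the paper's.
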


The proof of Proposition~\ref{prop:notsmall} proceeds by contradiction.
If $\{f_\nu\}$ satisfies $r_\nu\to 0$, then a rescaling and transference
argument can be used to a corresponding sequence of functions $\{\tilde f_\nu\}$
on $\paraboloid$, which is precompact in $\lt(\paraboloid)$.
In coordinates rescaled according to $r_\nu$, each $\tilde f_\nu$ is acted upon by an
adjoint Fourier restriction operator associated to a hypersurface
which depends on $r_\nu$, and which approaches $\paraboloid$ as $r_\nu\to 0$.
The precompactness of $\{\tilde f_\nu\}$ and convergence of these
hypersurfaces can be used to obtain a limit $F\in\lt(\paraboloid)$
which satisfies $\norm{\widehat{F\sigma_P}}_4/\norm{F}_2
= (3/2)^{-1/4}\lim_{\nu\to\infty} \norm{\widehat{f_\nu\sigma }}_4/\norm{f_\nu}_2$.
It follows that $\scriptr_\paraboloid\ge (3/2)^{-1/4}\scriptr$.
But this contradicts the inequality $\scriptr\ge 2^{1/4}\scriptr_\paraboloid$ of Step 3.

\begin{comment}
\paragraph{\bf Step 9.}
Corollary~\ref{cor:dual}
concerning the dual operator $L^{4/3}(\reals^3)\mapsto\lt(S^2)$
follows directly from Theorem~\ref{thm:main}.
By duality,
$\Sbest$ is also the optimal constant in the dual inequality.
Let $g_\nu\in L^{4/3}(\reals^3)$ and $f_\nu\in\lt(S^2)$
satisfy $\norm{g_\nu}_{4/3}=1$,
$\norm{f_\nu}_{\lt}=1$,
$f_\nu\ge 0$,
and
$\int\widehat{g_\nu}f_\nu\,d\sigma \to \Sbest$.
Then $\{f_\nu\}$ is an extremizing sequence
for the inequality \eqref{inequalityR},
so by Theorem~\ref{thm:main} there exists a subsequence,
again denoted $\{f_\nu\}$, which converges to an extremizer
$f\in\lt(S^2)$, and $\norm{f}_2=1$.
After passing to a further subsequence,
we have $g_\nu\rightharpoonup g$ for some $g\in L^{4/3}$,
in the sense that $\int_{\reals^3} g_\nu h\to \int_{\reals^3} gh$
for any $h\in L^4(\reals^3)$; in particular,
for $h=\widehat{f\sigma}$.
Moreover, this weak limit satisfies $\norm{g}_{4/3}\le \lim_{\nu\to\infty}
\norm{g_\nu}_{4/3}=1$.
Since $\int g\widehat{f\sigma}
= \lim_{\nu\to\infty} \int g_\nu\widehat{f\sigma}
= \lim_{\nu\to\infty} \int g_\nu\widehat{f_\nu\sigma}
=\Sbest$,
this forces $\norm{g}_{4/3}=1$
and $\norm{\widehat{g}}_{\lt(S^2)}=\Sbest$.
\end{comment}

\paragraph{\bf Conclusion.}
Extremizing sequences exist. We have shown that there exists an extremizing
sequence which consists of even, nonnegative functions. Such a sequence
is upper even-normalized with respect to a sequence of caps.
By Proposition~\ref{prop:notsmall},
the radii of these caps cannot tend to zero.
By Corollary~\ref{cor:rellich}, such a sequence has a
subsequence which converges in $\lt(S^2)$. The limit
of such a subsequence is obviously an extremal.

\begin{comment}
\paragraph{\bf Step 9.}
The smoothness of extremizers is a simple consequence of the generalized Euler-Lagrange
equation \eqref{eulerlagrange}.
A more general regularity result is established in \S\ref{section:smoothness}.
\end{comment}

\paragraph{\bf Not a Step.}
As explained above in Step 2,
the fundamental potential obstruction to the
precompactness of (nonnegative) extremizing sequences
was the possibility that
$|f_\nu|^2$
could converge weakly to a Dirac mass,
or to a sum of two Dirac masses at a pair of antipodal points.
The following result examines a natural one-parameter
family of candidate trial functions.

\begin{proposition} \label{prop:perturbative}
For all $\xi\in\reals^3$ with $|\xi|$ sufficiently large,
\begin{equation}
\norm{\widehat{e_\xi\sigma}}_{L^4(\reals^3)}>\scriptr_{\paraboloid}\norm{e_\xi}_{\lt(S^2)}.
\end{equation}
\end{proposition}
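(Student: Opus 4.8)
The plan is to evaluate the quantity $\norm{\widehat{e_\xi\sigma}}_4^4\big/\norm{e_\xi}_2^4$ almost explicitly and to read off its behavior as $|\xi|\to\infty$. By rotation invariance of both norms I may take $\xi=(0,0,R)$ with $R=|\xi|$. By Plancherel's theorem, $\norm{\widehat{e_\xi\sigma}}_4^4=(2\pi)^3\norm{e_\xi\sigma*e_\xi\sigma}_{2}^2$ — the passage from \eqref{inequalityR} to \eqref{secondSversion} — and similarly $\scriptr_\paraboloid=(2\pi)^{3/4}\Pbest$. The decisive point is that $e_\xi(x)e_\xi(y)=e^{\xi\cdot(x+y)}$, so the exponential factors through the convolution of measures,
\[
(e_\xi\sigma*e_\xi\sigma)(\zeta)=e^{\xi\cdot\zeta}\,(\sigma*\sigma)(\zeta).
\]
Using the elementary identity $(\sigma*\sigma)(\zeta)=2\pi|\zeta|^{-1}$ for $0<|\zeta|<2$ and $(\sigma*\sigma)(\zeta)=0$ for $|\zeta|>2$ (which follows from $\widehat\sigma(\zeta)=4\pi|\zeta|^{-1}\sin|\zeta|$ and $\widehat{\sigma*\sigma}=\widehat\sigma^{\,2}$), the computation collapses to one variable; passing to polar coordinates,
\[
\norm{e_\xi\sigma*e_\xi\sigma}_{2}^2
=4\pi^2\!\int_{|\zeta|<2}\frac{e^{2R\zeta_3}}{|\zeta|^2}\,d\zeta
=\frac{8\pi^3}{R}\operatorname{Shi}(4R),
\qquad
\norm{e_\xi}_2^2=\frac{2\pi\sinh(2R)}{R},
\]
where $\operatorname{Shi}(x)=\int_0^x t^{-1}\sinh t\,dt$ is the hyperbolic sine integral.

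Next I would insert the classical asymptotic expansion $\operatorname{Shi}(x)=\tfrac{e^x}{2x}\bigl(1+x^{-1}+O(x^{-2})\bigr)$ as $x\to\infty$ (obtained by repeated integration by parts in $\int x^{-1}e^x\,dx$), together with the exact identity $\sinh^2(2R)=\tfrac14 e^{4R}(1-e^{-4R})^2$. Substituting,
\[
\frac{\norm{\widehat{e_\xi\sigma}}_4^4}{\norm{e_\xi}_2^4}
=(2\pi)^3\,\frac{2\pi R\,\operatorname{Shi}(4R)}{\sinh^2(2R)}
=8\pi^4\Bigl(1+\frac{1}{4R}+O(R^{-2})\Bigr)\qquad(R\to\infty).
\]
In particular this ratio converges, with limit $8\pi^4$, and strictly exceeds $8\pi^4$ once $R$ is large.

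It remains to identify the limiting constant $8\pi^4$ with $\scriptr_\paraboloid^4$. This is supplied by the transference/comparison argument underlying $\scriptr\ge\scriptr_\paraboloid$, carried out in detail in the proof of Proposition~\ref{prop:notsmall}: in coordinates rescaled about $(0,0,1)$ at the natural scale $|\xi|^{-1/2}$, the normalized functions $e_\xi\big/\norm{e_\xi}_2$ are asymptotically a fixed Gaussian, and transferring to $\paraboloid$ shows that the limit just computed equals $\norm{\widehat{G\sigma_P}}_4^4\big/\norm{G}_2^4$ for that Gaussian $G$; by Foschi's theorem \cite{foschi} every Gaussian extremizes \eqref{Rparaboloiddefn}, so this quantity equals $\scriptr_\paraboloid^4$, whence $\scriptr_\paraboloid^4=8\pi^4$ (equivalently $\Pbest^4=\pi$, which is consistent with the value of $\norm{\sigma*\sigma}_2$ entering Lemma~\ref{lemma:constantfunction}). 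Combining the two displays, $\norm{\widehat{e_\xi\sigma}}_4^4\big/\norm{e_\xi}_2^4=\scriptr_\paraboloid^4\bigl(1+\tfrac14 R^{-1}+O(R^{-2})\bigr)$, which strictly exceeds $\scriptr_\paraboloid^4$ for all sufficiently large $|\xi|$.

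I do not expect a single hard step: the argument is a chain of routine reductions followed by a classical asymptotic expansion, and the positive $R^{-1}$ correction comfortably dominates both the $O(R^{-2})$ term and the exponentially small errors. The one ingredient that imports external input is the identification of $8\pi^4$ with $\scriptr_\paraboloid^4$: the transference argument by itself yields only the wrong-direction bound $\scriptr_\paraboloid^4\ge 8\pi^4$, so the sharp equality — needed precisely because the ratio tends to $8\pi^4$ from above — rests on Foschi's explicit determination of the extremizers for $\paraboloid$.
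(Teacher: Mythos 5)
Your argument is correct, but it reaches the conclusion by a genuinely different route from the paper. The paper proves Proposition~\ref{prop:perturbative} by a perturbative (first-variation) calculation in \S\ref{section:calculation}: it rescales $e_\xi$ about the north pole with $\eps=|\xi|^{-1}$, shows that $\Psi(\eps)=\log\big(\norm{\widehat{e_\xi\sigma}}_4^4/\norm{e_\xi}_2^4\big)$ extends smoothly to $\eps=0$ with $\Psi(0)=\log\scriptr_\paraboloid^4$, and computes $\partial_\eps\Psi|_{\eps=0}=\tfrac14>0$ via Gaussian integrals. You instead exploit the fact that $e_\xi(x)e_\xi(y)=e^{\xi\cdot(x+y)}$ collapses the bilinear expression, giving the exact formula $\norm{\widehat{e_\xi\sigma}}_4^4/\norm{e_\xi}_2^4=16\pi^4\,R\operatorname{Shi}(4R)/\sinh^2(2R)$ with $R=|\xi|$, and then invoke the classical asymptotics of $\operatorname{Shi}$; your expansion $8\pi^4\bigl(1+\tfrac1{4R}+O(R^{-2})\bigr)$ matches the paper's derivative $\tfrac14$ (with $\eps=1/R$), a reassuring consistency check, and your intermediate computations ($\norm{e_\xi\sigma*e_\xi\sigma}_2^2=8\pi^3R^{-1}\operatorname{Shi}(4R)$, $\norm{e_\xi}_2^2=2\pi R^{-1}\sinh(2R)$, $\sigma*\sigma=2\pi|\cdot|^{-1}\chi_{|\cdot|\le2}$) are all correct. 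Both proofs lean on Foschi's theorem at the same spot: since the ratio approaches its limit from above, one needs the exact value $\scriptr_\paraboloid^4=8\pi^4$, i.e.\ that Gaussians actually attain the paraboloid constant, and you flag this correctly. One simplification available to you: rather than routing the identification of the constant through the transference argument of Proposition~\ref{prop:notsmall} (which, as you state it, would still require the operator-convergence argument of Lemma~\ref{lemma:compareP} for this particular sequence), you can read off $\Pbest^4=a^2/(4\pi)=\pi$, hence $\scriptr_\paraboloid^4=(2\pi)^3\Pbest^4=8\pi^4$, directly from the Gaussian computation in the proof of Lemma~\ref{lemma:constantfunction}, with $a=2\pi$ evaluated in the local-maxima section. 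As for what each approach buys: your closed-form computation is shorter, fully explicit, and avoids the cutoffs, rescalings, and differentiability-in-$\eps$ justifications of the paper's argument; the paper's perturbative scheme is the one it expects to generalize to manifolds other than $S^2$, where no closed form like $\operatorname{Shi}$ is available.
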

When $\xi=(0,0,\lambda)$,
$e_\xi^2/\norm{e_\xi}_2^2$ does converge weakly  as $\lambda\to+\infty$
to a constant multiple of a Dirac mass at $(0,0,1)$.
Proposition~\ref{prop:perturbative} is proved
in \S\ref{section:calculation} via a perturbative calculation.

By taking the considerations of Step 2 involving even functions into account,
Proposition~\ref{prop:perturbative} provides an alternative route to
the essential comparison $\Sbest> (3/2)^{1/4}\Pbest$.
% with the essential supplementary factor $(3/2)^{1/4}$.
Although Proposition~\ref{prop:perturbative}
is not strictly necessary for the main lines of our proof,
the calculation which underlies it
will be useful in a generalization to manifolds other than $S^2$,
and it is reassuring to be freed of complete reliance on the
calculation, carried out in  Lemma~\ref{lemma:constantfunction},
of a single real number.

\section{Step $2$: $\Sbest\ge (3/2)^{1/4}\Pbest$}

\begin{comment}
Notation: For any measure $\mu$,
$\tilde\mu$ denotes the reflection of $\mu$ about the origin:
\[\int\varphi(x)\,d\tilde\mu(x) = \int \varphi(-x)\,d\mu(x).\]

Fact: For any two real measures $\mu,\nu$ in $\reals^d$,
\begin{equation} \label{reflectionidentity}
\norm{\mu*\tilde\nu}_{\lt(\reals^d)}^2
=
\norm{\mu*\nu}_{\lt(\reals^d)}^2.
\end{equation}
This is based on the identity
\[
\langle \mu*\nu,\lambda\rangle = \langle \mu,\tilde\nu*\lambda\rangle,
\]
which holds for any three nonnegative measures.
For absolutely continuous measures this is
a direct consequence of the Fubini/Tonelli identity,
and the general case follows from a limiting argument.
\qed
\end{comment}

Let $\tilde f(x)=f(-x)$.
Denote by $\langle F,G\rangle$
the pairing of two functions in $\lt(\reals^3)$.
\begin{lemma} \label{lemma:switchconvolutionfactors}
For any four real-valued functions $f_j\in\lt(S^2)$,
\begin{equation} \label{movingfactorsaround}
\langle f_1\sigma*f_2\sigma,\,f_3\sigma*f_4\sigma\rangle
=
\langle f_1\sigma*\tilde f_3\sigma,\,\tilde f_2\sigma*f_4\sigma\rangle
\end{equation}
and
\begin{equation} \label{sameL2norms}
\norm{f_1\sigma*f_2\sigma}_{\lt(\reals^3)}
= \norm{f_1\sigma*\tilde f_2\sigma}_{\lt(\reals^3)}
\end{equation}
\end{lemma}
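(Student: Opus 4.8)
The plan is to prove both identities by unwinding the convolutions as integrals over products of spheres and performing a change of variables. The key observation is that for any two functions $u,v\in\lt(S^2)$, the convolution $u\sigma*v\sigma$ evaluated at a point $\zeta\in\reals^3$ is $\int_{S^2\times S^2} u(x)v(y)\,\delta(\zeta-x-y)\,d\sigma(x)\,d\sigma(y)$ in a formal sense; more rigorously, I would work with the pairing against a test function, or simply work on the Fourier side, where $\widehat{u\sigma*v\sigma}=\widehat{u\sigma}\cdot\widehat{v\sigma}$. The cleanest route uses Plancherel: since $\widehat{f_j\sigma}$ are genuine functions (by the Tomas--Stein bound, each lies in $L^4$, and products of two lie in $L^2$), we have
\begin{equation*}
\langle f_1\sigma*f_2\sigma,\,f_3\sigma*f_4\sigma\rangle
= c\int_{\reals^3} \widehat{f_1\sigma}\,\widehat{f_2\sigma}\,\overline{\widehat{f_3\sigma}}\,\overline{\widehat{f_4\sigma}}\,d\xi.
\end{equation*}
Since the $f_j$ are real-valued, $\overline{\widehat{f_j\sigma}(\xi)} = \widehat{f_j\sigma}(-\xi) = \widehat{\tilde f_j\sigma}(\xi)$. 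Hence the integrand is $\widehat{f_1\sigma}\,\widehat{f_2\sigma}\,\widehat{\tilde f_3\sigma}\,\widehat{\tilde f_4\sigma}$, which is manifestly symmetric under permuting the four factors $f_1, f_2, \tilde f_3, \tilde f_4$. Regrouping as $(\widehat{f_1\sigma}\,\widehat{\tilde f_3\sigma})(\widehat{f_2\sigma}\,\widehat{\tilde f_4\sigma})$ and taking one inverse Fourier transform and one conjugate-inverse Fourier transform — again using reality to turn $\widehat{\tilde f_2\sigma}$-type conjugations into reflections — yields exactly $\langle f_1\sigma*\tilde f_3\sigma,\,\tilde f_2\sigma*f_4\sigma\rangle$. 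This gives \eqref{movingfactorsaround}.

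For \eqref{sameL2norms}, I would apply \eqref{movingfactorsaround} with $f_3=f_1$ and $f_4=f_2$: the left side becomes $\norm{f_1\sigma*f_2\sigma}_2^2$, and the right side becomes $\langle f_1\sigma*\tilde f_1\sigma,\,\tilde f_2\sigma*f_2\sigma\rangle$. This is not yet in the desired form, so instead I would apply \eqref{movingfactorsaround} with the substitution $f_1\to f_1$, $f_2\to\tilde f_2$, $f_3\to f_1$, $f_4\to\tilde f_2$, giving $\norm{f_1\sigma*\tilde f_2\sigma}_2^2 = \langle f_1\sigma*\tilde f_1\sigma,\,\tilde f_2\sigma * \widetilde{\tilde f_2}\,\sigma\rangle = \langle f_1\sigma*\tilde f_1\sigma,\, \tilde f_2\sigma* f_2\sigma\rangle$, which matches. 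Alternatively, and more directly, on the Fourier side $\norm{f_1\sigma*\tilde f_2\sigma}_2^2 = c\int |\widehat{f_1\sigma}|^2|\widehat{\tilde f_2\sigma}|^2\,d\xi$, and $|\widehat{\tilde f_2\sigma}(\xi)|=|\widehat{f_2\sigma}(-\xi)|$, so the integral is unchanged by $\xi\mapsto-\xi$ and equals $c\int|\widehat{f_1\sigma}|^2|\widehat{f_2\sigma}|^2\,d\xi = \norm{f_1\sigma*f_2\sigma}_2^2$. I would present this second argument as the proof of \eqref{sameL2norms} since it is self-contained.

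The main technical point to be careful about — the only real obstacle — is justifying that all these manipulations make sense: that $\widehat{f_j\sigma}$ are well-defined functions rather than mere distributions, that their pairwise products are in $L^2(\reals^3)$ so Plancherel applies, and that $f_j\sigma*f_k\sigma\in\lt(\reals^3)$. All of this follows from the Tomas--Stein inequality \eqref{inequalityR} together with Hölder: $\widehat{f_j\sigma}\in L^4$, so $\widehat{f_j\sigma}\,\widehat{f_k\sigma}\in L^2$, and by Plancherel $f_j\sigma*f_k\sigma\in\lt$ with $\norm{f_j\sigma*f_k\sigma}_2 = (2\pi)^{-3/2}\norm{\widehat{f_j\sigma}\,\widehat{f_k\sigma}}_2$. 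With this in hand, every step above is an honest application of Plancherel, Fubini, and the reality of the $f_j$, and there is no genuine difficulty; the identities are essentially bookkeeping about reflections once one is on the Fourier side.
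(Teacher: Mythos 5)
Your proof is correct, but it runs on the Fourier side, whereas the paper's own proof stays entirely on the physical side: the paper first reduces \eqref{movingfactorsaround} to nonnegative continuous $f_j$ (using the bilinear bound $\norm{f\sigma*g\sigma}_{\lt(\reals^3)}\le \Sbest^2\norm{f}_2\norm{g}_2$ to pass to general $\lt$ functions by limiting arguments), and for continuous data verifies the identity via the measure-theoretic definition of convolution and Fubini, through the intermediate identity $\langle f_1\sigma*f_2\sigma,\,F\rangle=\int(\tilde f_2\sigma*F)f_1\,d\sigma$; it then deduces \eqref{sameL2norms} from \eqref{movingfactorsaround} by the chain $\langle f_1\sigma*f_2\sigma,f_2\sigma*f_1\sigma\rangle=\langle f_1\sigma*\tilde f_2\sigma,\tilde f_2\sigma*f_1\sigma\rangle$. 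Your route instead uses Plancherel together with the reflection--conjugation identity $\overline{\widehat{f\sigma}(\xi)}=\widehat{f\sigma}(-\xi)=\widehat{\tilde f\sigma}(\xi)$ for real $f$, after which both identities are bookkeeping on the product $\widehat{f_1\sigma}\,\widehat{f_2\sigma}\,\widehat{\tilde f_3\sigma}\,\widehat{\tilde f_4\sigma}$; the Tomas--Stein bound guarantees each $\widehat{f_j\sigma}\in L^4$, so all pairwise products lie in $\lt(\reals^3)$ and Plancherel is legitimate. What each buys: your argument is shorter and makes the symmetry group behind \eqref{orbit} transparent, at the cost of invoking the $L^4$ restriction estimate and the identification of $\widehat{\mu*\nu}$ with $\widehat\mu\,\widehat\nu$ for finite measures; the paper's argument uses only Fubini for measures plus a density/limiting step, and so is independent of the Fourier transform machinery. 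One tiny point in your direct proof of \eqref{sameL2norms}: after the substitution $\xi\mapsto-\xi$ you also need $|\widehat{f_1\sigma}(-\xi)|=|\widehat{f_1\sigma}(\xi)|$, i.e.\ the reality of $f_1$ as well as of $f_2$ (equivalently, just note $|\widehat{\tilde f_2\sigma}|=|\overline{\widehat{f_2\sigma}}|=|\widehat{f_2\sigma}|$ pointwise, so no change of variables is needed); this is cosmetic, not a gap.
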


\begin{proof}
The inequality $\norm{f\sigma*g\sigma}_{\lt(\reals^3)}
\le \Sbest^2\norm{f}_{\lt(\sigma)}\norm{g}_{\lt(\sigma)}$
ensures that these quantities are well-defined,
and that the first identity holds for all $\lt$
functions provided that it holds for all nonnegative continuous functions $f_j$.
In that case $f_3\sigma*f_4\sigma(x)\le C|x|^{-1}$
for all $x\in\reals^3$, where $C<\infty$
depends on $f_3,f_4$, and $f_3\sigma*f_4\sigma$ is continuous
except at $x=0$.
For any $F\in C^0(\reals^3)$ and $f_j\in C^0(S^2)$,
\[\langle f_1\sigma*f_2\sigma,\,F\rangle
= \int (\tilde f_2\sigma*F)f_1\,d\sigma,\]
a consequence of the definition of convolution of measures
and Fubini's theorem.
Limiting arguments then lead to \eqref{movingfactorsaround}.

\eqref{sameL2norms} now follows:
\begin{multline*}
\norm{f_1\sigma*f_2\sigma}_{\lt(\reals^d)}^2
=\langle f_1\sigma*f_2\sigma,\,f_1\sigma*f_2\sigma\rangle
= \langle f_1\sigma*f_2\sigma,\,f_2\sigma*f_1\sigma\rangle
\\
= \langle f_1\sigma*\tilde f_2\sigma,\,\tilde f_2\sigma*f_1\sigma\rangle
= \langle f_1\sigma*\tilde f_2\sigma,\,f_1\sigma*\tilde f_2\sigma \rangle
= \norm{f_1\sigma*\tilde f_2\sigma}_{\lt}^2.
\end{multline*}
\end{proof}

\begin{proof}[Proof of Lemma~\ref{lemma:32}]
% ?? need to reconcile with Lemma as stated in current combined version
Let $\eps>0$. Choose $f\in \lt(S^2)$,
supported in an open hemisphere, satisfying
$\norm{f\sigma*f\sigma}_2^2\ge (\Pbest-\eps)^2\norm{f}_{\lt(S^2)}^2$.
By replacing $f$ by $|f|$, we may assume that $f\ge 0$.

Set $d\mu = f\,d\sigma$.
Let $g(x)=f(x)+f(-x)$ and $d\nu = g\,d\sigma = \mu+\tilde\mu$.
The two terms $f(x)$ and
$f(-x)$ have disjoint supports,
so
\[
\norm{g}_{\lt(S^2)}^2 = 2\norm{f}_{\lt(S^2)}^2.
\]

Now
\[\nu*\nu = (\mu+\tilde\mu)*(\mu+\tilde\mu)
=(\mu*\mu) + (\tilde\mu*\tilde\mu) + 2(\mu*\tilde\mu).\]
Therefore
\[
\norm{\nu*\nu}_{\lt(\reals^3)}^2
\ge  \norm{\mu*\mu}_{\lt}^2
+ \norm{\tilde\mu*\tilde\mu}_{\lt}^2
+ 4 \norm{\mu*\tilde\mu}_{\lt}^2.
\]
Now
$\norm{\mu*\tilde\mu}_{\lt}^2
= \norm{\mu*\mu}_{\lt}^2$, as shown above.
Thus we find that
\[
\norm{\nu*\nu}_{\lt(\reals^3)}^2
\ge  6\norm{\mu*\mu}_{\lt}^2,
\]
while
\[
\norm{\nu}_{\lt(S^2)}^2 = 2\norm{\mu}_{\lt(S^2)}^2.
\]
Squaring the last identity we find a ratio $\tfrac64=\tfrac32$. Thus
$\Sbest^4 \ge \tfrac32  \Pbest^4$.
\end{proof}

\begin{comment}
\begin{remark}
There is only one inequality in this argument,
and it becomes an equality if $\mu$ is supported in
a sufficiently small neighborhood of a pair of antipodal points.
This becomes significant in light of Proposition~\ref{prop:uniformcase}.
The two together reveal that the obvious candidates to extremize
the inequality for the sphere --- take approximate extremizers
for the paraboloid, paste onto the sphere near the north pole, and
paste a reflected copy near the south pole --- are in fact not
close to extremals for the sphere; they are off by a factor
of at least $[2/(3/2)]^{1/4}$.
\end{remark}
\end{comment}

\section{Step 3: $\Sbest\ge 2^{1/4}\Pbest$}

%{\it It is disconcerting to me that everything depends on
%the explicit calculations of this section. This should be
%rechecked $10^{10}$ times.}

\begin{proof}[Proof of Lemma~\ref{lemma:constantfunction}]
We will obtain a lower bound for $\Sbest$ by calculating
$\norm{f\sigma*f\sigma}_2^2$ for $f\equiv 1$.

Recall certain facts:
The unit ball in $\reals^3$ has volume $4\pi/3$:
expressing this as the volume within the region
$|x_3|^2\le 1-|x'|^2$ gives
\[
\int_{|x'|\le 1} 2(1-|x'|^2)^{1/2}\,dx'
= 2\int_0^1 2\pi (1-r^2)^{1/2}r\,dr.
\]
The derivative of $(1-r^2)^{3/2}$
is $-3r(1-r^2)^{1/2}$,
and $(1-r^2)^{3/2}\big|_0^1=-1$.

Therefore
\[\sigma(S^2)=\frac{d}{dr}\tfrac43 \pi r^3\big|_{r=1}=4\pi,\]
and the volume form in $\reals^3$ in polar coordinates is
\[r^2\,dr\,d\sigma(\theta).\]
%where $\sigma$ denotes surface measure on the unit sphere.

One calculates that
\[\sigma*\sigma(x) = a|x|^{-1}\chi_{|x|\le 2}\]
for a certain constant $a>0$. We will not need to evaluate $a$
(because it will cancel out at the very end of the calculation).
What we do need to know is that if we denote by $\mu$
the measure $dx'$ on the paraboloid $P=\{x\in\reals^3: x_3=\tfrac12|x'|^2\}$,
then
\[\mu*\mu(z) \equiv \tfrac{a}2\chi_\Omega\]
 where $\Omega$ denotes the support of $\mu*\mu$.
This factor of $\tfrac12$ in the definition of $P$ is required
to make the curvature of $P$ equal to the curvature of $S^2$;
one sees that they are equal by writing the equation for $S^2$
near the north pole as $x_3-1 = (1-|x'|^2)^{1/2}-1$
and Taylor expanding the right-hand side.
Note that the factor $a/2$ in the formula for $\mu*\mu$
agrees with the limit as $|x|\to 2$ of the function
$a/|x|$ which appears in the formula for $\sigma*\sigma$.
This asymptotic equality must hold since the two surfaces
have equal curvatures, hence the two convolutions must
agree on the diagonal of the maps $(x,y)\mapsto x+y$.
We will not prove that $\mu*\mu$ is constant on its support;
this is a reflection of the symmetry of the paraboloid
(including appropriate dilation symmetry)
and invariance of curvature under mappings of the
form $(x',x_3)\mapsto (x',x_3-L(x'))$
where $L:\reals^2\to\reals^1$ is linear.

The support of $\mu*\mu$ is
\[
\Omega=\{z: z_3>\tfrac14|z'|^2\}.
\]

It is known \cite{foschi},\cite{HZ}
that any Gaussian is
an extremizer for the paraboloid, and conversely.
Another proof that Gaussians extremize the inequality is in
\cite{hotstrichartz}.
Set
$F(x',x_3) = e^{-|x'|^2/2}\equiv e^{-x_3}$ on the paraboloid.
Observe that if $x+y=z\in\reals^3$,
then \[F(x)F(y) = e^{-x_3-y_3}=e^{-z_3}.\]
Therefore
\[
(F\mu*F\mu)(z) = \tfrac{a}2 e^{-z_3}\chi_{z_3>|z'|^2/4}.
\]
Consequently
\begin{multline*}
\norm{F\mu*F\mu}_2^2
=
\tfrac{a^2}4
\int_{z'\in\reals^2}\int_{z_3>|z'|^2/4}e^{-2z_3}\,dz
\\
=
\tfrac{a^2}4
\int_0^\infty 2\pi \int_{r^2/4}^\infty e^{-2s}\,ds\,r\,dr
\\
=
\tfrac{a^2}4
2\pi\int_0^\infty \tfrac12 e^{-r^2/2}\,r\,dr
=
\frac{\pi a^2}{4}.
\end{multline*}

On the other hand,
\begin{equation*}
\norm{\sigma*\sigma}_{\lt(\reals^3)}^2
=
\int_{|x|\le 2}a^2|x|^{-2}\,dx
= a^2 \int_0^2 r^{-2}\,4\pi r^2\,dr
= 4\pi a^2\int_0^2\,dr
= 8\pi a^2.
\end{equation*}

Meanwhile
\[
\norm{1}_{\lt(\sigma)}^2
= \sigma(S^2)=4\pi,
\]
and
\begin{equation*}
\norm{F}_{\lt(\mu)}^2
= \int_{\reals^2} e^{-2|x|^2/2} \,dx
=
\int_0^\infty e^{-r^2}2\pi r\,dr
= \pi.
\end{equation*}

Putting this all together,
\[
\frac{\norm{F\mu*F\mu}_2^2}
{\norm{F}_{\lt(\mu)}^4}
= \frac{a^2\pi/4}{\pi^2}
= \frac{a^2}{4\pi},
\]
while
\[
\frac{\norm{1\sigma*1\sigma}_2^2}
{\norm{1}_{\lt(\sigma)}^4}
= \frac{8\pi a^2}{(4\pi)^2}
= \frac{a^2}{2\pi}.
\]
The second ratio is equal to twice the first, as claimed.
\end{proof}

\section{Step 4: Symmetrization}

Proposition~\ref{prop:symmetrize} stated that for any dimension $d$,
$\norm{f\sigma*f\sigma}_{\lt(\reals^d)}
\le\norm{f_\star\,\sigma*f_\star\,\sigma}_{\lt(\reals^d)}$
for any nonnegative function $f\in\lt(S^{d-1})$,
where $f_\star$ denotes the even symmetrization of $f$.

\begin{comment}
% This was part of proof of the Prop, but is superfluous.
It suffices to prove that for any nonnegative function
$f\in\lt(\reals^d)$,
$\norm{f*f}_{\lt}^2 \le \norm{f_\star*f_\star}_{\lt}^2$,
where
$f_\star$ is the $\lt$-normalized antipodal symmetrization of
$f$:
\[
f_\star^2(x) = \tfrac12 f^2(x) + \tfrac12 f^2(-x).
\]
This satisfies
$f_\star(-x)\equiv f_\star(x)$
and
\[f_\star^2(x)+f_\star^2(-x)
\equiv f^2(x)+f^2(-x),
\]
and thus
\[\norm{f_\star}_{\lt(\reals^d)}^2 = \norm{f}_{\lt(\reals^d)}^2.\]
A limiting argument lets us pass from this version
to measures supported on $S^{d-1}$.
\end{comment}

\begin{proof}[Proof of Proposition~\ref{prop:symmetrize}]
Let $\sigma$ denote surface measure on $S^{d-1}$.
For $h\ge 0$,
\begin{equation} \label{fourfold}
\norm{h\sigma*h\sigma}_{\lt}^2
= \int h(a)h(b)h(c)h(d)\,d\lambda(a,b,c,d)
\end{equation}
for a certain nonnegative measure $\lambda$
which is supported on the set where $a+b=c+d$,
and which is invariant under the transformations
\begin{equation} \label{orbit}
\begin{aligned}
&(a,b,c,d)\mapsto (b,a,c,d),
\\
&(a,b,c,d)\mapsto (c,d,a,b),
\\
&(a,b,c,d)\mapsto (a,-c,-b,d)
\\
&(a,b,c,d)\mapsto (-a,-b,-c,-d).
\end{aligned}
\end{equation}
This invariance, which is essential to the discussion,
follows from the identities
\begin{align*}
f\sigma*g\sigma&=g\sigma*f\sigma,
\\
\langle f\sigma*g\sigma,h\sigma*k\sigma\rangle
&=
\langle h\sigma*k\sigma,f\sigma*g\sigma\rangle,
\\
\langle f\sigma*g\sigma,h\sigma*k\sigma\rangle
&= \langle f\sigma*\tilde h\sigma,\tilde g\sigma*k\sigma\rangle
\end{align*}
for arbitrary real-valued functions,
where $\tilde F(x)=F(-x)$.

Denote by $G$ the finite group of symmetries of $(\reals^d)^4$
which these generate.
$G$ has cardinality $48$. For exactly one of $a,-a$ appears;
suppose that $a$ appears.
There are $4$ places in which it can go.
$\pm b$ can then go into any of $3$ slots, but whether it is $+b$ or $-b$
is determined by which slot. There remain two slots into which $\pm c$ can go;
again, the $\pm$ sign is determined by the slot. $\pm d$ then goes into
the remaining slot, with the $\pm$ sign again determined.
The analysis is parallel if $-a$ appears.
Thus there are $2\times 4\times 3\times 2=48$ possibilities.

By the orbit of a point we mean its image under $G$;
by a generic point we mean one whose orbit has cardinality $48$.
In \eqref{fourfold}, it suffices to integrate only over all
{\em generic} $4$-tuples $(a,b,c,d)$ satisfying $a+b=c+d$,
since these form a set of full $\lambda$-measure.

To the orbit $\scripto$
we associate the functions
\begin{align*}
\scriptf(\scripto) &= \sum_{(a,b,c,d)\in\scripto}
f(a)f(b)f(c)f(d)
\\
\scriptf_\star(\scripto) &= \sum_{(a,b,c,d)\in\scripto}
f_\star(a)f_\star(b)f_\star(c)f_\star(d).
\end{align*}
Let $\Omega$ denote the set of all orbits of generic points.
We can write
\begin{align*}
\norm{f*f}_{\lt}^2 &= \int_{\Omega} \scriptf(\scripto)\
\,d\tilde\lambda(\scripto)
\\
\norm{f_\star*f_\star}_{\lt}^2 &= \int_{\Omega} \scriptf_\star(\scripto)
\,d\tilde\lambda(\scripto)
\end{align*}
for a certain nonnegative measure $\tilde\lambda$.
Therefore it suffices to prove that for any generic orbit $\scripto$,
\begin{equation} \label{oneorbitinequality}
\sum_{(a,b,c,d)\in\scripto} f(a)f(b)f(c)f(d)
\le
\sum_{(a,b,c,d)\in\scripto} f_\star(a)f_\star(b)f_\star(c)f_\star(d).
\end{equation}

Fix any generic ordered $4$-tuple $(a,b,c,d)$ satisfying $a+b=c+d$.
We prove \eqref{oneorbitinequality} for its orbit.
By homogeneity, it is no loss of generality to
assume that $f^2(a)+f^2(-a)=1$ and that the same holds
simultaneously for $b,c,d$.
Thus we may write
\[
f(a)=\cos(\varphi),
f(b)=\cos(\psi),
f(c)=\cos(\alpha),
f(d)=\cos(\beta)
\]
for some $\varphi,\psi,\alpha,\beta\in[0,\pi/2]$
with $f(-a)=\sin(\varphi), \dots f(-d)=\sin(\beta)$.
This means that
\[f_\star(x)=2^{-1/2}
\text{ for each $x\in\{\pm a, \pm b,\pm c,\pm d\}$.}\]

Now
\begin{align*}
\tfrac18\sum_{(a',b',c',d')\in\scripto} f(a')f(b')f(c')f(d')
&=
\cos(\varphi)\cos(\psi)\cos(\alpha)\cos(\beta)
\\
&\qquad
+
\sin(\varphi)\sin(\psi)\sin(\alpha)\sin(\beta)
\\
&\qquad
+
\cos(\varphi)\sin(\psi)
\cos(\alpha)\sin(\beta)
\\
&\qquad
+
\cos(\varphi)\sin(\psi)
\sin(\alpha)\cos(\beta)
\\
&\qquad
+
\sin(\varphi)\cos(\psi)
\cos(\alpha)\sin(\beta)
\\
&\qquad
+
\sin(\varphi)\cos(\psi)
\sin(\alpha)\cos(\beta)
\\
&=
\Gamma(\varphi,\psi,\alpha,\beta)
\end{align*}
where
\begin{multline*}
\Gamma(\varphi,\psi,\alpha,\beta)
=
\cos(\varphi)\cos(\psi)\cos(\alpha)\cos(\beta)
\\
+
\sin(\varphi)\sin(\psi)\sin(\alpha)\sin(\beta)
+
\sin(\varphi+\psi)\sin(\alpha+\beta).
\end{multline*}

Therefore the following lemma will complete
the proof of Proposition~\ref{prop:symmetrize}.
\end{proof}

\begin{lemma}
\begin{equation}
\max_{\varphi,\psi,\alpha,\beta\in[0,\pi/2]}\Gamma(\varphi,\psi,\alpha,\beta)
=\tfrac32.
\end{equation}
Moreover, this maximum value is attained only at
$(\tfrac\pi4,\tfrac\pi4,\tfrac\pi4,\tfrac\pi4)$.
\end{lemma}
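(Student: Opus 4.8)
The plan is to maximize $\Gamma(\varphi,\psi,\alpha,\beta)$ by first exploiting symmetry to reduce the number of variables, then reducing to a one-dimensional inequality. Observe that $\Gamma$ is symmetric under swapping the pair $(\varphi,\psi)$ with $(\alpha,\beta)$, and separately symmetric in $\varphi\leftrightarrow\psi$ and in $\alpha\leftrightarrow\beta$. The key structural fact is that $\Gamma$ depends on $(\varphi,\psi)$ only through the two quantities $s=\sin(\varphi+\psi)$ and the pair $(\cos\varphi\cos\psi,\sin\varphi\sin\psi)$; and $\cos\varphi\cos\psi=\tfrac12(\cos(\varphi-\psi)+\cos(\varphi+\psi))$, $\sin\varphi\sin\psi=\tfrac12(\cos(\varphi-\psi)-\cos(\varphi+\psi))$. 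So writing $u=\varphi+\psi$, $v=\varphi-\psi$, $x=\alpha+\beta$, $y=\alpha-\beta$, we have
\begin{equation*}
\Gamma=\tfrac12\cos v\cos y(1+\cos u\cos x)-\tfrac12\cos v\cos y\,\text{(correction)}+\cdots
\end{equation*}
so more cleanly: $\cos\varphi\cos\psi\cos\alpha\cos\beta+\sin\varphi\sin\psi\sin\alpha\sin\beta=\tfrac14(\cos v+\cos u)(\cos y+\cos x)+\tfrac14(\cos v-\cos u)(\cos y-\cos x)=\tfrac12(\cos v\cos y+\cos u\cos x)$. Hence
\begin{equation*}
\Gamma=\tfrac12\cos v\cos y+\tfrac12\cos u\cos x+\sin u\sin x.
\end{equation*}

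With this reduction the maximization is transparent. Since $\varphi,\psi\in[0,\pi/2]$ we have $v=\varphi-\psi\in[-\pi/2,\pi/2]$, so $\cos v\ge 0$ is maximized, equal to $1$, exactly when $v=0$, i.e. $\varphi=\psi$; similarly $\cos y\le 1$ with equality iff $\alpha=\beta$. Since the coefficient $\tfrac12$ of $\cos v\cos y$ is positive, any maximizer must have $v=y=0$, and then $\Gamma=\tfrac12+\tfrac12\cos u\cos x+\sin u\sin x$ where now $u=2\varphi\in[0,\pi]$ and $x=2\alpha\in[0,\pi]$. It remains to show $\tfrac12\cos u\cos x+\sin u\sin x\le 1$ on $[0,\pi]^2$, with equality only at $u=x=\pi/2$. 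Writing this as $\cos u\cos x+2\sin u\sin x\le 2$: by Cauchy–Schwarz, $\cos u\cos x+2\sin u\sin x\le(\cos^2 u+ \tfrac12\cdot\text{...})$— more simply, $\cos u\cos x\le\tfrac12(\cos^2u+\cos^2x)$ and $2\sin u\sin x\le\sin^2u+\sin^2x+(\sin^2u+\sin^2x)=$ hmm; instead just note $\tfrac12\cos u\cos x+\sin u\sin x\le\tfrac12|\cos u\cos x|+|\sin u\sin x|\le\tfrac12(|\cos u|+|\cos x|)\cdot\tfrac12(\ldots)$. The cleanest finish: since $\sin u\sin x\le\tfrac12(\sin^2u+\sin^2x)$ and $\cos u\cos x\le\tfrac12(\cos^2u+\cos^2x)$, we get $\tfrac12\cos u\cos x+\sin u\sin x\le\tfrac14(\cos^2u+\cos^2x)+\tfrac12(\sin^2u+\sin^2x)=\tfrac14(1+\sin^2u)+\tfrac14(1+\sin^2x)-\tfrac14(\ldots)$; this gives $\le\tfrac12+\tfrac14(\sin^2u+\sin^2x)\le\tfrac12+\tfrac12=1$, so $\Gamma\le\tfrac32$. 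Equality in the last chain forces $\sin^2u=\sin^2x=1$, hence $u=x=\pi/2$; tracking back, $\varphi=\psi=\alpha=\beta=\pi/4$.

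The main obstacle, such as it is, is organizational rather than deep: one must verify the trigonometric identity that collapses the first two terms of $\Gamma$ into $\tfrac12(\cos v\cos y+\cos u\cos x)$, and then be careful about the equality analysis, since two separate inequalities (the $\cos v\cos y$ step and the final AM–GM/Cauchy–Schwarz step) are used and both must be equalities simultaneously. One should also note that the constraints $\varphi,\ldots,\beta\in[0,\pi/2]$ are used twice: once to guarantee $\cos v,\cos y\ge 0$ (so that forcing them to $1$ genuinely increases $\Gamma$), and implicitly in that $u,x$ range over $[0,\pi]$, which is where the bound $\sin^2 u\le 1$ is attained in the interior. No boundary case analysis is needed beyond this, since the argument produces the maximizer directly.
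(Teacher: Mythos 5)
Your proposal is correct and follows essentially the same route as the paper: your identity $\Gamma=\tfrac12\cos v\cos y+\tfrac12\cos u\cos x+\sin u\sin x$ (with $u=\varphi+\psi$, $v=\varphi-\psi$, $x=\alpha+\beta$, $y=\alpha-\beta$) is exactly the content of the paper's product-to-sum estimates $\cos\varphi\cos\psi\le\tfrac12(1+\cos(\varphi+\psi))$, $\sin\alpha\sin\beta\le\tfrac12(1-\cos(\alpha+\beta))$, which likewise force $\varphi=\psi$ and $\alpha=\beta$ before a one-line bound in the sum variables. The only difference is the finish — you use $ab\le\tfrac12(a^2+b^2)$ and $\sin^2\le1$ where the paper rewrites the remaining expression via $\cos(u-x)$ and $\cos(u+x)$ — and you should add the trivial evaluation $\Gamma(\tfrac\pi4,\tfrac\pi4,\tfrac\pi4,\tfrac\pi4)=\tfrac32$ to confirm the bound is attained.
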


Since \[\Gamma(\tfrac\pi4,\tfrac\pi4,\tfrac\pi4,\tfrac\pi4)
= 1+(1/\sqrt{2})^4+(1/\sqrt{2})^4 = \tfrac32,\]
the maximum value of $\Gamma$ is at least $\tfrac32$.
This point corresponds to the values taken by $f_\star$.
Compare this with $\Gamma(0,0,0,0)=1$, which represents the
extreme case when $f$ vanishes at one of each pair of antipodal points;
this ratio $(3/2)/1$ is the same $3/2$ which appears
in Corollary~\ref{cor:greater}.

\begin{proof}
% This is Shuanglin's proof.
We write $\Gamma $ as
\begin{align*}
\Gamma&=\cos(\phi+\psi)\cos(\alpha+\beta)+\sin(\phi+\psi)\sin(\alpha+\beta)\\
&\qquad +\cos\phi\cos\psi\sin\alpha\sin\beta+\sin\phi\sin\psi\cos\alpha\cos\beta\\
&=\cos\bigl((\phi+\psi)-(\alpha+\beta)\bigr)\\
&\qquad +\cos\phi\cos\psi\sin\alpha\sin\beta+\sin\phi\sin\psi\cos\alpha\cos\beta.
\end{align*}
Now
\begin{align*}
\cos\phi\cos\psi &=\frac {\cos(\phi+\psi)+\cos(\phi-\psi)}{2} \le \frac {1+\cos(\phi+\psi)}{2},\\
\sin\alpha\sin\beta &=\frac {-\cos(\alpha+\beta)+\cos(\alpha-\beta)}{2}
\le \frac {1-\cos(\alpha+\beta)}{2}
\end{align*}
with equality only if $\phi=\psi$ and $\alpha=\beta$,
and there are similar identities for $\sin\phi\sin\psi$ and $\cos\alpha\cos\beta$.
Therefore
\begin{align*}
\Gamma &\le \cos\bigl((\phi+\psi)-(\alpha+\beta)\bigr)
\\
&+ \tfrac 14\bigl(1+\cos(\phi+\psi)\bigr)\bigl(1-\cos(\alpha+\beta)\bigr)
+\tfrac 14\bigl(1-\cos(\phi+\psi)\bigr)\bigl(1+\cos(\alpha+\beta)\bigr)\\
&=\cos\bigl((\phi+\psi)-(\alpha+\beta)\bigr)
+\tfrac 12\bigl(1-\cos(\phi+\psi)\cos(\alpha+\beta)\bigr)\\
&=\cos\bigl((\phi+\psi)-(\alpha+\beta)\bigr)
-\tfrac12 \big(\cos\bigl((\phi+\psi)+(\alpha+\beta)\bigr)+
\cos\bigl((\phi+\psi)-(\alpha+\beta)\bigr)\big)+\tfrac {1}{2}\\
&=\tfrac12\big(\cos\bigl((\phi+\psi)-(\alpha+\beta)\bigr)-
\cos\bigl((\phi+\psi)+(\alpha+\beta)\bigr)\big)+\tfrac 12\\
&\le \tfrac 32.
\end{align*}

The value $\tfrac32$ can only be attained if
all inequalities in this derivation are equalities.
Equality in the final inequality forces $\phi+\psi+\alpha+\beta=\pi$
and $\phi+\psi=\alpha+\beta$. Together with the equalities
$\phi=\psi$ and $\alpha=\beta$ already noted, these force
$ \phi=\psi=\alpha=\beta =\frac \pi 4$.
%Hence $$ \max_{[0,\frac \pi 2]^4} \Gamma(\phi,\psi,\alpha,\beta)
%=\Gamma (\frac \pi 4,\frac \pi 4,\frac \pi 4,\frac \pi 4)=\frac 32.$$
\end{proof}

\section{Step 5: Big pieces of caps}

In this section we prove Lemma~\ref{lemma:mvv}.
While we are ultimately interested in establishing strong structural
control of near-extremal functions, here we establish a weak connection
between functions satisfying modest lower bounds
$\norm{\widehat{f\sigma}}_4\ge\delta\norm{f}_2$,
with $\delta>0$ arbitrarily small,
and characteristic functions of caps.

For each integer $k\ge 0$ choose a maximal subset $\{z_k^j\}\subset S^2$
satisfying
$|z_k^j-z_k^i|\ge 2^{-k}$ for all $i\ne j$.
Then for any $x\in S^2$ there exists $z_k^i$ such that $|x-z_k^i|\le 2^{-k}$;
otherwise $x$ could be adjoined to $\{z_k^j\}$, contradicting maximality.
Therefore the caps $\scriptc_k^j=\scriptc(z_k^j,2^{-k+1})$
cover $S^2$ for each $k$, and there exists $C<\infty$
such that for any $k$, no point of $S^2$ belongs to
more than $C$ of the caps $\scriptc_k^j$. $C$ is independent of $k$.

For $p\in [1,\infty)$, the $X_p$ norm is defined by
\begin{equation}
\norm{f}_{X_p}^4
= \sum_{k=0}^\infty
\sum_j
2^{-4k}
% |\scriptc_k^j|^2
\big(
|\scriptc_k^j|^{-1}\int_{\scriptc_k^j}|f|^p
\big)^{4/p}.
\end{equation}
The factor $2^{-4k}$ can alternatively be written as $|\scriptc_k^j|^2$.

Define also
\begin{equation}
\Lambda_{k,j}(f)
= \big(|\scriptc_k^j|^{-1}\int_{\scriptc_k^j}|f| \big)
\big(|\scriptc_k^j|^{-1}\int_{S^2}|f|^2 \big)^{-1/2}.
\end{equation}
By H\"older's inequality,
\begin{equation}
\Lambda_{k,j}(f)\le
\big(|\scriptc_k^j|^{-1}\int_{\scriptc_k^j}|f|^2 \big)^{1/2}
\big(|\scriptc_k^j|^{-1}\int_{S^2}|f|^2 \big)^{-1/2}
= \norm{f}_{\lt(\scriptc_k^j)}/\norm{f}_{\lt(S^2)}
\le 1.\end{equation}

It is shown in Lemma~4.4 of \cite{mvv} that
$\lt\subset X_p$ for any $p<2$.
We will exploit the following refinement,
which is very closely related to a result in B\'egout and Vargas \cite{begoutvargas},
and whose somewhat tedious proof is deferred to \S\ref{section:neckpain}.
\begin{lemma} \label{lemma:neckpain}
For any $p\in[1,2)$
there exist $C<\infty$ and $\gamma>0$
such that for any $f\in\lt(S^2)$,
\begin{equation}
\norm{f}_{X_p} \le C\norm{f}_2
\sup_{k,j}  \big(\Lambda_{k,j}(f)\big)^\gamma
\end{equation}
\end{lemma}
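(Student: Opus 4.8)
The plan is to deduce Lemma~\ref{lemma:neckpain} from the single scale-invariant estimate
\begin{equation}\label{eq:keyLambda}
\sum_{k,j}\Lambda_{k,j}(f)^s\ \le\ C(s)\qquad\text{for every }s>2\text{ and every }0\ne f\in\lt(S^2).
\end{equation}
For $s=4$ this is exactly the inclusion $\lt\subset X_1$, since $|\scriptc_k^j|^{-1}\int_{\scriptc_k^j}|f|=\Lambda_{k,j}(f)\,|\scriptc_k^j|^{-1/2}\norm{f}_2$ gives $\norm{f}_{X_1}^4=\norm{f}_2^4\sum_{k,j}\Lambda_{k,j}(f)^4$; the content of \eqref{eq:keyLambda} is the range $2<s<4$. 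Granting it, here is how I would finish. By homogeneity assume $f\ge 0$ and $\norm{f}_2=1$, and set $\Lambda_*=\sup_{k,j}\Lambda_{k,j}(f)$. Fix $p\in[1,2)$, choose $p_0\in(p,2)$, and let $\theta\in(0,1]$ be defined by $\tfrac1p=\theta+\tfrac{1-\theta}{p_0}$, i.e.\ $\theta=\tfrac{p_0-p}{p(p_0-1)}$ (so $\theta=1$ exactly when $p=1$). On each cap, Hölder's inequality for the normalized measure $|\scriptc_k^j|^{-1}\,dx$ gives
\[
|\scriptc_k^j|^{-1}\int_{\scriptc_k^j}|f|^p\ \le\ \Big(|\scriptc_k^j|^{-1}\int_{\scriptc_k^j}|f|\Big)^{\theta p}\Big(|\scriptc_k^j|^{-1}\int_{\scriptc_k^j}|f|^{p_0}\Big)^{(1-\theta)p/p_0}.
\]
Raising to the power $4/p$, multiplying by $|\scriptc_k^j|^2$, and substituting $|\scriptc_k^j|^{-1}\int_{\scriptc_k^j}|f|=\Lambda_{k,j}(f)\,|\scriptc_k^j|^{-1/2}$, the powers of $|\scriptc_k^j|$ recombine and one obtains
\[
|\scriptc_k^j|^2\Big(|\scriptc_k^j|^{-1}\int_{\scriptc_k^j}|f|^p\Big)^{4/p}\ \le\ \Lambda_{k,j}(f)^{4\theta}\,\Big(|\scriptc_k^j|^2\big(|\scriptc_k^j|^{-1}\int_{\scriptc_k^j}|f|^{p_0}\big)^{4/p_0}\Big)^{1-\theta}.
\]
Summing over $(k,j)$ and applying Hölder's inequality for series with exponents $\tfrac1\theta$ and $\tfrac1{1-\theta}$,
\[
\norm{f}_{X_p}^4\ \le\ \Big(\sum_{k,j}\Lambda_{k,j}(f)^4\Big)^{\theta}\,\norm{f}_{X_{p_0}}^{4(1-\theta)}\ \le\ C\Big(\sum_{k,j}\Lambda_{k,j}(f)^4\Big)^{\theta},
\]
the last step being Lemma~4.4 of \cite{mvv}, valid since $p_0<2$. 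Finally, for any chosen $s\in(2,4)$, using $\Lambda_{k,j}(f)\le\Lambda_*$ and \eqref{eq:keyLambda},
\[
\sum_{k,j}\Lambda_{k,j}(f)^4\ =\ \sum_{k,j}\Lambda_{k,j}(f)^{4-s}\,\Lambda_{k,j}(f)^{s}\ \le\ \Lambda_*^{\,4-s}\sum_{k,j}\Lambda_{k,j}(f)^{s}\ \le\ C(s)\,\Lambda_*^{\,4-s}.
\]
Combining, $\norm{f}_{X_p}^4\le C\,\Lambda_*^{(4-s)\theta}$, which is Lemma~\ref{lemma:neckpain} with $\gamma=(4-s)\theta/4$ (and $\norm{f}_2$ reinstated by homogeneity).

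The main obstacle is the proof of \eqref{eq:keyLambda}; this is the ``somewhat tedious'' ingredient, and it is where the methods of \cite{begoutvargas} and the bilinear estimate of \cite{mvv} enter. With $\norm{f}_2=1$ its left side equals $\sum_{k}2^{ks}\sum_{j}\big(\int_{\scriptc_k^j}|f|\big)^{s}$. The naive bound $\int_{\scriptc_k^j}|f|\le|\scriptc_k^j|^{1/2}\norm{f}_{L^2(\scriptc_k^j)}$, combined with the bounded overlap of $\{\scriptc_k^j\}_j$ at each fixed scale, controls the $j$-sum scale by scale but leaves a sum over $k$ that diverges. The point is that the scales $k$ at which some cap $\scriptc_k^j$ carries more than a fixed fraction of $\norm{f}_2^2$ form, in a suitable nested sense, a sparse set, while at the remaining scales the contribution decays geometrically. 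To quantify this I would decompose the pairs $(k,j)$ according to the dyadic size of $\Lambda_{k,j}(f)$ (equivalently of $\int_{\scriptc_k^j}|f|^2$), bound the number of caps in each size-and-scale class by a Whitney-type decomposition of the diagonal together with the almost-orthogonality/bilinear estimate of \cite{mvv} (Theorem~4.2 and the proof of Lemma~4.4), and then sum the resulting geometric series. The hypothesis $s>2$ is precisely what makes that final summation converge, and it is sharp: for $f=|\scriptc|^{-1/2}\chi_\scriptc$ with $\scriptc$ a cap of radius $2^{-k_*}$ one finds $\sum_{k,j}\Lambda_{k,j}(f)^{s}\asymp (1-2^{\,2-s})^{-1}$, which is independent of $k_*$ but blows up as $s\downarrow 2$.
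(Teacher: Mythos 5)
Your reduction is fine as far as it goes: H\"older on each cap between the $L^1$ and $L^{p_0}$ averages, H\"older in $(k,j)$ with exponents $1/\theta$ and $1/(1-\theta)$, the bound $\norm{f}_{X_{p_0}}\le C\norm{f}_2$ from Lemma~4.4 of \cite{mvv}, and the factorization $\sum_{k,j}\Lambda_{k,j}^4\le \Lambda_*^{4-s}\sum_{k,j}\Lambda_{k,j}^s$ do combine to give the lemma with $\gamma=(4-s)\theta/4$, \emph{provided} one knows the key estimate $\sum_{k,j}\Lambda_{k,j}(f)^s\le C(s)$ (for $\norm{f}_2=1$) for some $s\in(2,4)$. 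But that estimate is exactly where all the content of the lemma lies, and you have not proved it. It is strictly stronger than Lemma~4.4 of \cite{mvv}, which, as you yourself observe, is precisely the case $s=4$; nor does it follow from the $X_p$ inclusions for $1<p<2$, since those control only $\ell^4$-type sums of $L^p$ cap averages. The sketch you offer for it --- dyadic classification of the pairs $(k,j)$ by the size of $\Lambda_{k,j}$, a Whitney-type decomposition, and ``the almost-orthogonality/bilinear estimate'' of \cite{mvv} --- is not an argument, and the appeal to the bilinear restriction estimate is misdirected: the claim is a purely real-variable statement about averages of an $L^2$ function over caps, and no restriction-theoretic input enters it (none enters the paper's proof of this lemma either).

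The statement you need does appear to be true, and your packaging of the remainder of the proof around it is arguably cleaner than the paper's; but proving it requires work of essentially the same kind and length as the argument you are deferring. For instance, for $f=\chi_E$ one can show, by splitting at the scale where $2^{-2k}\approx|E|$, that $\sum_{k,j}\big(|\scriptc_k^j|^{-1/2}|E\cap\scriptc_k^j|\big)^s\le C_s|E|^{s/2}$ for $s>2$ (a special case of the paper's \eqref{charsetbound}); by layer cake this yields your estimate for $f$ in the Lorentz space $L^{2,1}$, and one must then bridge the gap from $L^{2,1}$ to $L^2$, e.g.\ by real interpolation with the trivial $\ell^\infty$ bound valid on $L^{2,\infty}$, or by the hands-on summation over dyadic levels that the paper performs (its H\"older step with exponents $8,\tfrac87$). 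The paper avoids your intermediate claim altogether by retaining the factor $\max_i|E\cap\scriptc_k^i|/(|E|+|\scriptc_k^i|)$ already at the characteristic-function stage and extracting the power of $\sup_{k,j}\Lambda_{k,j}$ from it directly. Until you supply a proof of your key estimate along some such lines, the proposal establishes Lemma~\ref{lemma:neckpain} only conditionally, so there is a genuine gap at the crux. (A minor point: your identity $\norm{f}_{X_1}^4=\norm{f}_2^4\sum_{k,j}\Lambda_{k,j}^4$ holds only up to universal constants, since $2^{-4k}\sim|\scriptc_k^j|^2$ rather than equality; this is harmless.)
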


Thus $\norm{f}_{X_p}\le C_p\norm{f}_2$ for any $f\in\lt(S^2)$.
Moreover, when the $X_p$ norm is not significantly smaller than
the $\lt$ norm, $\sup_{k,j}\Lambda_{k,j}(f)$ cannot be small.

Moyua, Vargas, and Vega \cite{mvv} have proved
\begin{proposition}
There exist $C<\infty$ and $p\in (1,2)$
such that for any $f\in\lt(S^2)$,
\begin{equation}
\norm{\widehat{f\sigma}}_{L^4(\reals^3)}
\le C\norm{f}_{X_p}.
\end{equation}
\end{proposition}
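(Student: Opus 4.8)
The plan is to establish the inequality
\[
\norm{\widehat{f\sigma}}_{L^4(\reals^3)}\le C\norm{f}_{X_p}
\]
for a suitable $p\in(1,2)$ by exploiting the equivalence
$\norm{\widehat{f\sigma}}_4^2\asymp\norm{f\sigma*f\sigma}_{\lt(\reals^3)}$
(from Step 1) and a bilinear Fourier-restriction estimate for transverse caps on $S^2$.
First I would expand $f=\sum_{k,j}f_{k,j}$ relative to a partition of $S^2$ subordinate to the caps $\{\scriptc_k^j\}$, with the normalization that $f_{k,j}$ is supported in $\scriptc_k^j$ and the overlap of the caps at each fixed scale is bounded. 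Then
\[
f\sigma*f\sigma=\sum_{(k,j),(k',j')}f_{k,j}\sigma*f_{k',j'}\sigma,
\]
and I would split the sum according to whether the two caps are \emph{comparable} (same dyadic scale up to a bounded factor, nearby centers) or \emph{transverse}. The diagonal/comparable part is harmless and is controlled by a crude $\lt$ bound after using almost-orthogonality of the frequency supports; the transverse part is where the curvature gain lives.

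For the transverse interactions the key input is the bilinear adjoint-restriction estimate for separated pieces of the sphere: if $\scriptc,\scriptc'$ are caps of radius $\sim 2^{-k}$ that are $\gtrsim 2^{-k}$-separated, then
\[
\norm{\widehat{g\sigma}\ \widehat{h\sigma}}_{\lt(\reals^3)}
\le C\,2^{\theta k}\,\norm{g}_{\lt(\scriptc)}\norm{h}_{\lt(\scriptc')}
\]
for some fixed $\theta>0$ — this is exactly the ingredient extracted by Moyua–Vargas–Vega \cite{mvv} from the Fourier restriction theory for transverse subsets of $S^2$ (a bilinear improvement over the linear $\lt\to L^4$ bound). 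Summing these transverse estimates over all pairs of caps, inserting the $2^{-4k}$ weights that define the $X_p$ norm, and using Cauchy–Schwarz in the pair indices together with the bounded-overlap property of the cap family, one arrives at
\[
\norm{f\sigma*f\sigma}_{\lt}\le C\Big(\sum_{k,j}|\scriptc_k^j|^2\norm{f}_{\lt(\scriptc_k^j)}^4\Big)^{1/2}
= C\norm{f}_{X_2}^2,
\]
and then the passage from $X_2$ to $X_p$ with $p<2$ is achieved by interpolating the transverse bilinear estimate with the trivial $L^1\to L^\infty$ estimate for $\widehat{g\sigma}$, which trades a small power of the $\lt$ mass on each cap for an $L^1$ average and shifts the exponent below $2$; this is precisely where one must take $p$ strictly less than $2$ to absorb the exponential loss $2^{\theta k}$.

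The main obstacle is organizing the double sum over cap pairs so that the exponential factors $2^{\theta k}$ from the bilinear estimate are defeated by the dyadic weights $2^{-4k}$ and the summability in the off-diagonal separation parameter, without losing the bounded-overlap structure when one passes to a Littlewood–Paley-type decomposition of $f\sigma*f\sigma$ in the output variable. Concretely, one decomposes the output according to the dyadic size of $|x|$ (equivalently of the gap between the two caps), checks that $f_{k,j}\sigma*f_{k',j'}\sigma$ is then concentrated in a predictable annulus, and uses this spatial localization to regain orthogonality in the sum over $(k,j,k',j')$; the bookkeeping is tedious but standard. I would treat the comparable-cap contribution separately by a direct application of the linear Tomas–Stein bound on each cap followed by Cauchy–Schwarz, which loses nothing since there the $X_2$ and $X_p$ norms already dominate. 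This is stated as a proposition attributed to \cite{mvv}, so in the paper itself the argument may simply be cited; the sketch above indicates how the bilinear restriction machinery and the dyadic summation combine to yield it.
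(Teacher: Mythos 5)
The paper itself offers no argument for this proposition: it is quoted directly from Moyua--Vargas--Vega \cite{mvv} (essentially Theorem~4.2 there), so the only question is whether your sketch would actually deliver the statement, and as written it would not. The decisive gap is the passage from $X_2$ to $X_p$ with $p<2$. Since a bounded number of caps at the coarsest scale cover $S^2$, one has $\norm{f}_{X_2}\gtrsim\norm{f}_2$, so your intermediate inequality $\norm{f\sigma*f\sigma}_{\lt}\le C\norm{f}_{X_2}^2$ is an immediate consequence of the Tomas--Stein bound and carries no information; on the other hand H\"older gives $\norm{f}_{X_p}\le\norm{f}_{X_2}$, so the entire content of the proposition is precisely the refinement to some $p<2$, which is the step you dispatch in one sentence. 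The mechanism you propose for it --- interpolating the transverse bilinear $\lt\times\lt\to\lt$ estimate with the trivial $L^1\times L^1\to L^\infty$ bound --- does not work as stated: bilinear interpolation between those endpoints moves the output exponent away from $\lt$ (to $L^q$ with $q>2$), whereas reassembling $\norm{\widehat{f\sigma}}_4^2=\norm{(\widehat{f\sigma})^2}_2$ from the Whitney pieces by almost-orthogonality requires each product $\widehat{f_{k,j}\sigma}\,\widehat{f_{k,j'}\sigma}$ to be estimated in $\lt$. What is needed is a bilinear estimate whose right-hand side already involves cap-normalized $L^p$ averages with $p<2$ while the left-hand norm stays $\lt$; producing that refined estimate (in the spirit of Bourgain's refined restriction estimates, and the actual content of \cite{mvv}) is the real work, and it does not follow formally from the two bounds you cite.

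Two smaller misstatements point the same way. For Whitney pairs --- caps of radius $\sim 2^{-k}$ separated by $\sim 2^{-k}$ --- the Cauchy--Schwarz/transversality estimate $\norm{g\sigma*h\sigma}_2\le\norm{\sigma_{\scriptc}*\sigma_{\scriptc'}}_\infty^{1/2}\norm{g}_2\norm{h}_2$ is scale-invariant: there is no loss $2^{\theta k}$ to be ``defeated,'' and the weights $2^{-4k}=|\scriptc_k^j|^2$ in the $X_p$ norm are exactly the normalization of the cap averages, not spare room for absorbing exponential factors; the genuine gain in the bilinear estimate is in the separation parameter at a fixed scale, not in $k$. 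If your intention is simply to cite \cite{mvv}, as the paper does, that is legitimate; but as a proof sketch, the argument given collapses to the trivial $X_2$ statement and leaves the $p<2$ improvement unproved.
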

This result contains Lemma~\ref{lemma:mvv} by an elementary argument,
but we give the details for the sake of completeness.

\begin{proof}[Proof of Lemma~\ref{lemma:mvv}]
Let $\delta>0$.
Let $0\ne f\in\lt(S^2)$ and suppose that $\norm{\widehat{f\sigma}}_{L^4(\reals^3)}
\ge\delta\norm{f}_2$. For convenience, normalize so that $\norm{f}_2=1$.
The hypothesis, combined with the Proposition and the above lemma, yields
\begin{equation}
\sup_{k,j}\Lambda_{k,j}(f)\ge c\delta^{1/\gamma}.
\end{equation}
Fix $k,j$ such that $\Lambda_{k,j}(f)\ge \tfrac12 c \delta^{1/\gamma}$.
Henceforth write $\scriptc=\scriptc_k^j$.
Thus
\begin{equation}
\int_\scriptc |f|
\ge c_0\delta^{1/\gamma} |\scriptc|^{1/2}
\end{equation}
where $c_0>0$ is a constant independent of $f$.

Let $R\ge 1$.
Define $E=\{x\in\scriptc: |f(x)|\le R\}$.
Set $g = f\chi_E$ and $h = f-f\chi_E$.
Then $g,h$ have disjoint supports, $g+h=f$,
$g$ is supported on $\scriptc$,
and $\norm{g}_\infty\le R$.
Now
$|h(x)|\ge R$ for almost every $x\in\scriptc$
for which $h(x)\ne 0$, so
\begin{equation}
\int_\scriptc |h|
\le R^{-1}\int_\scriptc|h|^2
\le R^{-1}\norm{f}_2^2
= R^{-1}.
\end{equation}
Define $R$ by
$R^{-1}=
\tfrac12 c_0\delta^{1/\gamma} |\scriptc|^{1/2}$.
Then
%\begin{equation}R=c\delta^{-t}|\scriptc|^{-1/2}\end{equation}
%for certain positive constants $c,t$,
%and
\begin{equation}
\int_\scriptc |g| = \int_\scriptc|f|-\int_\scriptc|h|
\ge
\tfrac12 c_0\delta^{1/\gamma} |\scriptc|^{1/2}.
\end{equation}
By H\"older's inequality, since $g$ is supported on $\scriptc$,
\begin{equation}
\norm{g}_2 \ge |\scriptc|^{-1+\frac12}\norm{g}_{L^1(\scriptc)}
\ge c\delta^{1/\gamma}
= c\delta^{1/\gamma}\norm{f}_2.
\end{equation}
Thus the decomposition $f=g+h$ satisfies the conclusions of
Lemma~\ref{lemma:mvv}, with $\eta_\delta$ proportional to $\delta^{1/\gamma}$,
and $C_\delta$ proportional to $\delta^{-1/\gamma}$.
\end{proof}

\section{Analytic preliminaries}

\subsection{On near-extremals}
\begin{lemma} \label{lemma:nearextremal}
%There exists a constant $c_0>0$ with the following property.
Let $f=g+h\in\lt(S^2)$.
Suppose that $g\perp h$, $g\ne 0$,
% $\norm{h}_2\le\norm{g}_2$
and that $f$ is $\delta$--nearly extremal
for some $\delta\in (0,\tfrac14]$.
Then
\begin{equation} \label{nearextremalconclusion}
\frac{ \norm{h}_2 }{ \norm{f}_2 }
\le
C\max\Big(
\frac{ \norm{h\sigma*h\sigma}_2^{1/2} }{ \norm{h}_2 }\ ,
\
\delta^{1/2}
\Big).
\end{equation}
Here $C<\infty$ is a constant independent of $g,h$.
\end{lemma}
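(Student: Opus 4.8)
The plan is to expand $\norm{f\sigma*f\sigma}_2$ according to $f=g+h$, bound the two diagonal terms by the bilinear form of the Tomas--Stein inequality, bound the cross term by a Cauchy--Schwarz estimate that improves on the naive bilinear bound, and then read off the size of $\norm{h}_2$ from the near-extremality hypothesis via an elementary one-variable inequality.

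First I would normalize so that $\norm{f}_2=1$ and write $a=\norm{g}_2$, $b=\norm{h}_2$, so that $a^2+b^2=1$ by the orthogonality $g\perp h$. We may assume $h\ne0$ (otherwise the conclusion is trivial), so that $\beta:=\norm{h\sigma*h\sigma}_2^{1/2}/\norm{h}_2$ is a well-defined number in $(0,\Sbest]$, and the goal becomes $b\le C\max(\beta,\delta^{1/2})$. Expanding $f\sigma*f\sigma=g\sigma*g\sigma+2\,g\sigma*h\sigma+h\sigma*h\sigma$, applying the triangle inequality in $\lt(\reals^3)$, and invoking near-extremality \eqref{DNE} gives
\begin{equation*}
(1-\delta)^2\Sbest^2\ \le\ \norm{g\sigma*g\sigma}_2+2\norm{g\sigma*h\sigma}_2+\norm{h\sigma*h\sigma}_2 .
\end{equation*}

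The heart of the argument is the estimate for the cross term. By Plancherel's theorem and the Cauchy--Schwarz inequality (one may also deduce this from \eqref{movingfactorsaround}--\eqref{sameL2norms} of Lemma~\ref{lemma:switchconvolutionfactors} when $g,h$ are real, as they are in our applications),
\begin{equation*}
\norm{g\sigma*h\sigma}_2^2\ \le\ \norm{g\sigma*g\sigma}_2\,\norm{h\sigma*h\sigma}_2 .
\end{equation*}
Combined with $\norm{g\sigma*g\sigma}_2\le\Sbest^2 a^2$ and $\norm{h\sigma*h\sigma}_2=\beta^2 b^2$, this yields $\norm{g\sigma*h\sigma}_2\le\Sbest a\,\beta b$. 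The decisive gain over the naive bound $\norm{g\sigma*h\sigma}_2\le\Sbest^2ab$ is the extra factor $\beta/\Sbest$: the cross term is small precisely when $\norm{h\sigma*h\sigma}_2$ is small relative to $\norm{h}_2^2$. Substituting the three bounds,
\begin{equation*}
(1-\delta)^2\Sbest^2\ \le\ \Sbest^2a^2+2\Sbest a\beta b+\beta^2 b^2\ =\ (\Sbest a+\beta b)^2 ,
\end{equation*}
so, taking nonnegative square roots (valid since $\delta\le\tfrac14$) and dividing by $\Sbest$, with $t:=\beta/\Sbest\le1$ we get $1-\delta\le a+tb$.

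Finally, since $a=\sqrt{1-b^2}\le 1-\tfrac12 b^2$, the relation $1-\delta\le a+tb$ forces $\tfrac12 b^2\le\delta+tb$, i.e.\ $b^2-2tb-2\delta\le0$, whence $b\le t+\sqrt{t^2+2\delta}\le 2t+\sqrt2\,\delta^{1/2}$. Undoing the normalization, $\norm{h}_2/\norm{f}_2=b\le 2\Sbest^{-1}\beta+\sqrt2\,\delta^{1/2}\le C\max(\beta,\delta^{1/2})$ with $C$ depending only on $\Sbest$, which is the assertion. I expect the cross-term estimate to be the only nonroutine point: when $a\approx1$ the naive bilinear bound controls $2\norm{g\sigma*h\sigma}_2$ only by $O(b)$, which is too weak to force $b$ small, so the whole lemma hinges on extracting the additional factor $\beta$ — which, as above, comes for free from Plancherel and Cauchy--Schwarz; everything else is bookkeeping.
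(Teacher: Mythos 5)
Your proposal is correct and is essentially the paper's own argument: the cross-term bound $\norm{g\sigma*h\sigma}_2^2\le\norm{g\sigma*g\sigma}_2\,\norm{h\sigma*h\sigma}_2$ that you obtain from Plancherel and Cauchy--Schwarz is exactly what underlies the triangle inequality for $f\mapsto\norm{f\sigma*f\sigma}_2^{1/2}$ (a constant multiple of $\norm{\widehat{f\sigma}}_4$), which is the step the paper invokes, and both routes lead to the same bound $\norm{f\sigma*f\sigma}_2\le\big(\Sbest\norm{g}_2+\beta\norm{h}_2\big)^2$ followed by near-extremality. The endgame differs only in bookkeeping: the paper normalizes $\norm{g}_2=1$ and argues by cases on $\eta=\beta/\Sbest$, while you normalize $\norm{f}_2=1$ and solve a quadratic in $\norm{h}_2$, with the same conclusion.
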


\begin{proof}
The inequality is invariant under multiplication of $f$
by a positive constant, so we may assume without loss of generality that $\norm{g}_2=1$.
We may assume that $\norm{h}_2>0$, since otherwise the conclusion is trivial.
Define $y=\norm{h}_2$
and
\begin{equation}\eta = \norm{h\sigma*h\sigma}_2^{1/2}/\Sbest\norm{h}_2.\end{equation}
%$y\le 1$ since $\norm{h}_2\le\norm{g}_2$.
If $\eta>\tfrac12$
then \eqref{nearextremalconclusion} holds trivially
with $C=2/\Sbest$,
for the left-hand side cannot exceed $1$ since $f=g+h$ with $g\perp h$.

Since $\norm{f\sigma*f\sigma}_2^{1/2}$ is a constant multiple of
$\norm{\widehat{f\sigma}}_4$,
the functional $f\mapsto \norm{f\sigma*f\sigma}_2^{1/2}$ satisfies the triangle inequality.
Therefore
\begin{equation}
(1-\delta)^4\Sbest^4\norm{f}_2^4
\le
\norm{f\sigma*f\sigma}_2^2
\le \big(
\norm{g\sigma*g\sigma}_2^{1/2}
+\norm{h\sigma*h\sigma}_2^{1/2}
\big)^4
\le \Sbest^4
(1+\eta y)^4.
\end{equation}
Since $g\perp h$,
$\norm{f}_2^2=1+y^2$ and therefore
\begin{equation}
(1-\delta)(1+y^2)^{1/2}
\le 1+\eta y.
\end{equation}
Squaring gives
\begin{equation}
(1-2\delta)(1+y^2)\le 1+2\eta y + \eta^2y^2.
\end{equation}
Since $\delta\in(0,\tfrac14]$ and $\eta\le\tfrac12$,
\begin{equation}
\tfrac12 y^2\le 2\delta + 2\eta y + \eta^2y^2
\le 2\delta + 2\eta y + \tfrac14 y^2
\end{equation}
whence either $y^2\le 16\delta$ or $y\le 16\eta$.

Substituting the definitions of $y,\eta$,
and majorizing $\norm{h}_2/\norm{f}_2$
by $\norm{h}_2/\norm{g}_2$, yields the stated conclusion.
\end{proof}

\subsection{Simple bilinear convolution estimates}

%Here we discuss some material which will be used in the proof
%of Lemma~\ref{lemma:smallcrossterm}.

\begin{lemma} \label{lemma:outerbrute}
Let $f\in\lt(S^2)$ be nonnegative,
and satisfy $\norm{f}_2\le 1$.
Let $z\in S^2$ and $\eps>0$.
Let $R\ge 1$ and $0<\rho\le 1$.
Then for any $R\in\reals^+$,
\begin{equation}
\norm{f\sigma*f\sigma}_{\lt(\{|x|>2-\eps\})}
\le CR^2\eps^{1/2}\rho
+C\big(\int_{f(x)\ge R} f^2(x)\,dx\big)^{1/2}
+ C\big(\int_{|x-z|\ge \rho} f^2(x)\,dx \big)^{1/2}.
% ??
\end{equation}
\end{lemma}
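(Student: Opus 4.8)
The plan is to estimate $f\sigma*f\sigma$ on the region $\{|x|>2-\eps\}$ by decomposing $f$ according to the two normalization failures that appear on the right-hand side. First I would write $f = f_1 + f_2 + f_3$ where $f_1 = f\chi_{\{|x-z|<\rho,\ f(x)<R\}}$, $f_2 = f\chi_{\{f(x)\ge R\}}$, and $f_3 = f\chi_{\{|x-z|\ge\rho,\ f(x)<R\}}$. Since $f\sigma*f\sigma$ is bilinear and all pieces are nonnegative, $f\sigma*f\sigma = \sum_{i,j} f_i\sigma*f_j\sigma$, and by the Tomas--Stein inequality \eqref{secondSversion} every term involving $f_2$ is controlled by $\Sbest^2\norm{f_2}_2\norm{f}_2\le \Sbest^2\big(\int_{f\ge R}f^2\big)^{1/2}$, and similarly every term involving $f_3$ is controlled by $\Sbest^2\big(\int_{|x-z|\ge\rho}f^2\big)^{1/2}$. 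So those account for the last two terms in the statement, and the whole problem reduces to bounding $\norm{f_1\sigma*f_1\sigma}_{\lt(\{|x|>2-\eps\})}$, where $f_1$ is supported in the cap $\{|x-z|<\rho\}$ and satisfies $0\le f_1\le R$.

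For this main term the key geometric point is that $|x_1+x_2|$ can only be close to $2$ when $x_1,x_2\in S^2$ are both close to a common point $\omega$ with $x_1+x_2$ nearly parallel to $\omega$; more precisely $|x_1+x_2|^2 = 2+2x_1\cdot x_2$, so $|x_1+x_2|>2-\eps$ forces $x_1\cdot x_2 > 1-\eps$, i.e. $|x_1-x_2|^2 < 2\eps$. Thus the support of $f_1\sigma*f_1\sigma$ inside $\{|x|>2-\eps\}$ is contained in $\{x : x = x_1+x_2,\ x_1,x_2\in\support(f_1),\ |x_1-x_2|<(2\eps)^{1/2}\}$, which (using that $f_1$ lives in a cap of radius $\rho$ about $z$) is a set of diameter $O(\rho)$ near $2z$ and, in the radial direction, of thickness $O(\eps)$. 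I would bound $f_1\sigma*f_1\sigma$ pointwise by $R^2\,(\sigma*\sigma)$ and then estimate $\norm{\sigma*\sigma}_{\lt}$ over this thin region. Recalling from Step~3 that $\sigma*\sigma(x) = a|x|^{-1}\chi_{|x|\le 2}$ is bounded by a constant on $\{|x|>1\}$, we get $\norm{f_1\sigma*f_1\sigma}_{\lt(\{|x|>2-\eps\})}\le CR^2 |E|^{1/2}$ where $E$ is the region just described, and $|E| \lesssim \rho^2\cdot\eps$ gives $\norm{\cdot}_{\lt}\lesssim R^2\rho\eps^{1/2}$, matching the first term.

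The step I expect to be the genuine (if modest) obstacle is making the volume estimate $|E|\lesssim \rho^2\eps$ clean: one must check that the constraint $|x_1-x_2|<(2\eps)^{1/2}$ together with $x_1,x_2$ in a $\rho$-cap really does confine $x=x_1+x_2$ to a slab of the stated dimensions, and in particular that the radial extent is $O(\eps)$ rather than $O(\eps^{1/2})$ — this uses $|x|^2 = 2+2x_1\cdot x_2 \ge 2(2-\eps)^2/2$ wait, more directly $|x|\in(2-\eps,2]$ already gives radial thickness exactly $\eps$, while the two angular directions are each of size $O(\rho)$ because both $x_1$ and $x_2$ lie within $\rho$ of $z$ and hence $x$ lies within $2\rho$ of $2z$ in those directions. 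Once the region $E\subset\{|x|\in(2-\eps,2]\}\cap B(2z,C\rho)$ is identified, $|E|\le C\rho^2\eps$ is immediate by Fubini in polar-type coordinates, and combining the three contributions completes the proof. Alternatively, one can avoid the pointwise bound by $\sigma*\sigma$ and instead use the elementary estimate $\norm{u\sigma*v\sigma}_{\lt}\le \norm{u}_\infty\norm{v}_\infty\norm{\sigma*\sigma}_{\lt(E)}$ with $E$ as above; either route is routine given the explicit formula for $\sigma*\sigma$ recalled in Step~3.
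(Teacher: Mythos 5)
Your proposal is correct and follows essentially the same route as the paper: the paper splits $f=g+h$ with $g=f\chi_{\{|x-z|\le\rho,\ f<R\}}$ (your $f_1$) and $h$ the remainder, controls all terms involving $h$ by the general bound $\norm{F\sigma*G\sigma}_{\lt(\reals^3)}\le C\norm{F}_2\norm{G}_2$, and bounds the main term pointwise by $R^2\sigma*\sigma\le CR^2|x|^{-1}$ on its support $\{|x-2z|<2\rho\}\cap\{2-\eps<|x|\le 2\}$, whose volume $\lesssim\rho^2\eps$ yields the $CR^2\rho\eps^{1/2}$ term. Your three-piece decomposition and the digression about $|x_1-x_2|\lesssim\eps^{1/2}$ are inessential variations; the support-plus-shell argument you settle on is exactly the paper's.
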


\begin{proof}
Decompose $f=g+h$ where
$g,h$ are nonnegative,
\[
\norm{h}_2
\le
\big(\int_{f(x)\ge R} f^2(x)\,dx\big)^{1/2}
+ \big(\int_{|x-z|\ge \rho} f^2(x)\,dx \big)^{1/2}
\]
and $\norm{g}_2\le 1$,
$\norm{g}_\infty \le R$,
and $g$ is supported on $\{x\in S^2: |x-z|\le \rho\}$.
Then
\[
g\sigma*g\sigma(x)\le R^2 \sigma*\sigma(x)
\le CR^2|x|^{-1}
\]
for $|x|<2$, and $=0$ otherwise.
Moreover, $g\sigma* g\sigma$ is supported
in
$\{x: |x-2z|<2\rho \}$.
The $\lt(\reals^3)$ norm of $|x|^{-1}$ over the intersection
of this region with $\{x:|x|>2-\eps\}$ is $\le C\rho\eps^{1/2}$.
This gives the bound $CR^2\rho\eps^{1/2}$ for
$\norm{g\sigma*g\sigma}_2$. Since $\norm{g}_2\le 1$,
the general inequality
\[
\norm{F\sigma*G\sigma}_{\lt(\reals^3)}
\le C\norm{F}_2\norm{G}_2
\]
gives the required bound for both $g\sigma*h\sigma$
and $h\sigma* h\sigma$.
\end{proof}

\begin{corollary} \label{cor:neglectlargerx}
Let $\{f_\nu\}$ be a sequence of real-valued functions which
are upper even-normalized above with respect to a sequence
of caps $\scriptc_\nu$ of radii $r_\nu$.
If
\begin{equation}\delta_\nu/r_\nu^2\to 0,\end{equation}
then
\begin{equation}
\int_{|x|>2-\delta_\nu} (|f_\nu|\sigma*|f_\nu|\sigma)^2\,dx
\to 0 \text { as } \nu\to\infty.
\end{equation}
\end{corollary}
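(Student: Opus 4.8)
The plan is to deduce the statement from Lemma~\ref{lemma:outerbrute} by unpacking the even normalization and exploiting the symmetry of the shell $\{|x|>2-\delta_\nu\}$ under $x\mapsto -x$. Two harmless reductions come first: replacing the gauge $\Theta$ by $R\mapsto\sup_{s\ge R}\Theta(s)$ leaves all hypotheses intact, so I may assume $\Theta$ is non-increasing; and since the assertion concerns a sequence, it suffices, after passing to a subsequence, to treat the cases $r_\nu\to 0$ and $\inf_\nu r_\nu>0$ separately. Writing $f_\nu=f_{\nu,+}+f_{\nu,-}$ as in the definition of upper even-normalization, with $f_{\nu,+}$ upper normalized relative to $\scriptc_\nu=\scriptc(z_\nu,r_\nu)$ and $|f_{\nu,-}(x)|=|f_{\nu,+}(-x)|$, one has $|f_\nu|\le|f_{\nu,+}|+|f_{\nu,-}|$, so with $\mu_\nu:=|f_{\nu,+}|\sigma$ and $\tilde\mu_\nu:=|f_{\nu,-}|\sigma$ (the reflection of $\mu_\nu$ through the origin), monotonicity of convolution of nonnegative measures gives
\[
|f_\nu|\sigma*|f_\nu|\sigma\ \le\ \mu_\nu*\mu_\nu+2\,\mu_\nu*\tilde\mu_\nu+\tilde\mu_\nu*\tilde\mu_\nu .
\]
Because $\{|x|>2-\delta_\nu\}$ is invariant under $x\mapsto -x$ and $\tilde\mu_\nu*\tilde\mu_\nu$ is the reflection of $\mu_\nu*\mu_\nu$, the first and third terms have equal $\lt$ norm over that set. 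Hence it is enough to show $\norm{\mu_\nu*\mu_\nu}_{\lt(\{|x|>2-\delta_\nu\})}\to 0$ and $\norm{\mu_\nu*\tilde\mu_\nu}_{\lt(\{|x|>2-\delta_\nu\})}\to 0$.

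Suppose first $r_\nu\to 0$. Apply Lemma~\ref{lemma:outerbrute} to the nonnegative function $|f_{\nu,+}|$ (its conclusion holds equally, with $C$ depending on the normalization constant, when $\norm{f}_2$ is merely bounded) with $z=z_\nu$, $\eps=\delta_\nu$, and $R=R_\nu$, $\rho=\rho_\nu$ chosen so that $R_\nu r_\nu\to\infty$, $\rho_\nu/r_\nu\to\infty$, $\rho_\nu\le\tfrac12$, and $R_\nu^2\rho_\nu\delta_\nu^{1/2}\to 0$; concretely $R_\nu=(\delta_\nu/r_\nu^2)^{-\alpha}r_\nu^{-1}$ and $\rho_\nu=\min\bigl(\tfrac12,\,(\delta_\nu/r_\nu^2)^{-\alpha}r_\nu\bigr)$ with $\alpha\in(0,\tfrac16)$ work, since then $R_\nu^2\rho_\nu\delta_\nu^{1/2}\le C(\delta_\nu/r_\nu^2)^{1/2-3\alpha}$ using $r_\nu^{-1}\delta_\nu^{1/2}=(\delta_\nu/r_\nu^2)^{1/2}$. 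With $\Theta$ non-increasing, \eqref{eq:normalization1} and \eqref{eq:normalization2} bound the last two terms of Lemma~\ref{lemma:outerbrute} by $C\Theta(R_\nu r_\nu)^{1/2}+C\Theta(\rho_\nu/r_\nu)^{1/2}\to 0$, giving $\norm{\mu_\nu*\mu_\nu}_{\lt(\{|x|>2-\delta_\nu\})}\to 0$. For the mixed term, run the decomposition used inside the proof of Lemma~\ref{lemma:outerbrute}: write $|f_{\nu,+}|=g_\nu+h_\nu$ with $g_\nu$ supported in $\{|x-z_\nu|\le\rho_\nu\}$, $\norm{g_\nu}_\infty\le R_\nu$, and $\norm{h_\nu}_2\le\Theta(\rho_\nu/r_\nu)^{1/2}+\Theta(R_\nu r_\nu)^{1/2}\to 0$. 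Then $\supp(g_\nu\sigma*\tilde g_\nu\sigma)\subseteq\{|x|\le 2\rho_\nu\}\subseteq\{|x|\le 1\}$, which is disjoint from $\{|x|>2-\delta_\nu\}$ once $\delta_\nu<1$; and the three remaining terms of $\mu_\nu*\tilde\mu_\nu=(g_\nu+h_\nu)\sigma*(\tilde g_\nu+\tilde h_\nu)\sigma$ each carry a factor $h_\nu$ or $\tilde h_\nu$, so have $\lt(\reals^3)$ norm $\le C\norm{g_\nu}_2\norm{h_\nu}_2\to 0$ by the bilinear bound $\norm{F\sigma*G\sigma}_2\le C\norm{F}_2\norm{G}_2$. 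This settles the first case.

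Suppose instead $\inf_\nu r_\nu>0$; this case is more elementary and does not invoke Lemma~\ref{lemma:outerbrute} directly. Since $r_\nu^{-1}$ is bounded, \eqref{eq:normalization1} shows that $\{|f_{\nu,+}|^2\}$, hence by reflection $\{|f_{\nu,-}|^2\}$ and therefore $\{|f_\nu|^2\}$, is a uniformly integrable family. So $|f_\nu|=g_\nu+h_\nu$ with $\norm{g_\nu}_\infty\le M_\nu$ and $\norm{h_\nu}_2\to 0$, where $M_\nu\to\infty$ may be taken slowly enough that also $M_\nu^2\delta_\nu^{1/2}\to 0$ (possible since $\delta_\nu\to 0$, as $\delta_\nu=o(r_\nu^2)$ with $r_\nu\le 1$). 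Then $g_\nu\sigma*g_\nu\sigma\le M_\nu^2\,\sigma*\sigma\le CM_\nu^2|x|^{-1}\chi_{\{|x|<2\}}$, whose $\lt$ norm over the shell $\{2-\delta_\nu<|x|<2\}$ is $\le CM_\nu^2\delta_\nu^{1/2}\to 0$, while the other terms in $|f_\nu|\sigma*|f_\nu|\sigma\le g_\nu\sigma*g_\nu\sigma+2g_\nu\sigma*h_\nu\sigma+h_\nu\sigma*h_\nu\sigma$ have $\lt(\reals^3)$ norm $\le C\norm{h_\nu}_2\to 0$.

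I expect the only real difficulty to be the calibration of the two free parameters in Lemma~\ref{lemma:outerbrute}: its leading term forces $R^2\rho\ll\delta_\nu^{-1/2}$, which at the natural scales $R\sim r_\nu^{-1}$, $\rho\sim r_\nu$ is precisely the hypothesis $\delta_\nu/r_\nu^2\to 0$, whereas the tail terms force $R\gg r_\nu^{-1}$ and $\rho\gg r_\nu$, and the constraint $\rho\le 1$ makes this simultaneously possible only when $r_\nu\to 0$. That is exactly why the regime $\inf_\nu r_\nu>0$ must be isolated and handled by uniform integrability in place of spatial localization.
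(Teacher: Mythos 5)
Your argument is correct, and it is essentially the route the paper intends: the statement is labelled a corollary of Lemma~\ref{lemma:outerbrute} with no written proof, and your application of that lemma to $|f_{\nu,+}|$ with $R_\nu r_\nu\to\infty$, $\rho_\nu/r_\nu\to\infty$ and $R_\nu^2\rho_\nu\delta_\nu^{1/2}\to 0$ (possible precisely because $\delta_\nu/r_\nu^2\to 0$) is the intended calibration. The two points you add beyond the paper's terse statement --- the antipodal cross term $\mu_\nu*\tilde\mu_\nu$, killed by the support observation $\supp(g_\nu\sigma*\tilde g_\nu\sigma)\subset\{|x|\le 2\rho_\nu\}$ plus the bilinear $L^2$ bound, and the regime $\inf_\nu r_\nu>0$, where the constraint $\rho\le 1$ blocks the direct application and you substitute uniform integrability --- are exactly the details needed to make the corollary hold in the stated generality, and they are handled correctly.
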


\begin{lemma} \label{lemma:innerbrute}
Let $f\in\lt(S^2)$ be a function which is upper even-normalized with respect
to a cap $\scriptc$ of radius $r$. Then
for all $R\ge 1$,
\begin{equation}
\int_{R^{1/2}r\le |x| \le 2-Rr^2} |(f\sigma*f\sigma)(x)|^2\,dx
\le \Psi(R)
\end{equation}
where $\Psi(R)\to 0$ as $R\to\infty$,
and $\Psi$ depends only on the function $\Theta$ in the normalization
inequalities \eqref{eq:normalization1},\eqref{eq:normalization2},
not on $r$.
\end{lemma}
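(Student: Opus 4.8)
The plan is to exploit the fact that, for an upper even-normalized function, the mass of $f^2$ lives essentially on the cap $\scriptc=\scriptc(z,r)$ and on its antipode $-\scriptc$, at height $\lesssim r^{-1}$, with tails controlled by $\Theta$. First I would decompose $f = f_{\rm in} + f_{\rm out}$ where $f_{\rm in}$ is the restriction of $f$ to $\{|x-z|\le Mr\}\cup\{|x+z|\le Mr\}$ intersected with $\{|f|\le Mr^{-1}\}$, for a large parameter $M=M(R)$ to be chosen; by the normalization inequalities \eqref{eq:normalization1} and \eqref{eq:normalization2}, $\norm{f_{\rm out}}_2 \le C\Theta(cM)$. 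The general bilinear bound $\norm{F\sigma*G\sigma}_2 \le C\norm{F}_2\norm{G}_2$ then handles every product involving $f_{\rm out}$, contributing $O(\Theta(cM))$ to the $L^2$ norm over the whole annulus. So it remains to estimate $f_{\rm in}\sigma*f_{\rm in}\sigma$ on the region $R^{1/2}r \le |x| \le 2 - Rr^2$.

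Next, split $f_{\rm in} = a + b$ where $a$ is supported near $z$ and $b = \tilde a$ (up to the even decomposition $f_+ + f_-$ already available) is supported near $-z$; each satisfies $\norm{a}_\infty, \norm{b}_\infty \le Mr^{-1}$ and is supported in a cap of radius $Mr$. The convolution $f_{\rm in}\sigma*f_{\rm in}\sigma$ then has three pieces: $a\sigma*a\sigma$, supported in $\{|x-2z|\lesssim Mr\}$; $b\sigma*b\sigma$, supported in $\{|x+2z|\lesssim Mr\}$; and the cross term $2a\sigma*b\sigma$, supported in $\{|x|\lesssim Mr\}$. For the first two, since $|2z|=2$, the support lies in $\{|x| \ge 2 - cMr\} \subset \{|x| > 2 - Rr^2\}$ once $Mr < cRr^2$ — but that is false for small $r$ unless $M \lesssim Rr$, which we cannot assume. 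So instead I bound $\norm{a\sigma*a\sigma}_{L^2(\{|x|\le 2-Rr^2\})}$ directly: using $a\sigma*a\sigma(x) \le (Mr^{-1})^2 \sigma*\sigma(x) \le CM^2 r^{-2}|x|^{-1}$ and the fact that $a\sigma*a\sigma$ is supported in a ball of radius $CMr$ about $2z$, the portion with $|x| \le 2-Rr^2$ lies in a lens-shaped region whose volume is $\lesssim (Mr)^2 \cdot (Mr^2)$ when $Rr^2 \lesssim Mr$, giving an $L^2$ norm that is a negative power of $R$ times a power of $M$; the crucial point is that $|x|^{-1}$ is bounded there and the region is thin in the radial direction. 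The cross term $a\sigma*b\sigma$ is supported in $\{|x|\lesssim Mr\}$, hence its contribution to the annulus $\{|x|\ge R^{1/2}r\}$ again forces $|x| \asymp$ something between $R^{1/2}r$ and $Mr$; here I would use the elementary estimate $\norm{a\sigma*b\sigma}_{L^2(\{|x|\ge R^{1/2}r\})}$, bounding the density of $a\sigma*b\sigma$ pointwise and noting the volume of $\{R^{1/2}r \le |x| \le Mr\}$ is $\lesssim (Mr)^3$, with the density $\lesssim M^2 r^{-2}\cdot r = M^2 r^{-1}$ coming from the two $\sigma$-factors near antipodal caps whose sum-set is a neighborhood of the origin. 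A rescaling by $r$ reduces all of this to a fixed computation on caps of radius $M$ in a unit sphere, where the bounds are explicit in $M$.

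The main obstacle is the bookkeeping of the two parameters $R$ and $M$: one needs $M \to \infty$ to make $\norm{f_{\rm out}}_2$ small via $\Theta$, but $M$ too large spoils the $M$-power losses in the core estimates on the thin lens and on $\{|x|\le Mr\}$. The resolution is to choose $M = M(R)$ growing slowly, e.g. $M = \log R$ or $M = R^{1/100}$, so that the core contributions are bounded by (positive power of $M$) $\times$ (negative power of $R$) $\to 0$ while simultaneously $\Theta(cM)\to 0$; this defines $\Psi(R)$ as the maximum of these two expressions, manifestly depending only on $\Theta$ and not on $r$. One technical care: the constraint $R^{1/2}r \le |x| \le 2 - Rr^2$ is nonempty only when $Rr^2 + R^{1/2}r < 2$, i.e. for $r$ small relative to $R$; for the complementary range of $r$ the annulus is empty and the estimate is vacuous, so no uniformity is lost.
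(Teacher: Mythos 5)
Your overall architecture is the right one and matches the paper's: restrict $f$ to caps of radius $Ar$ about $\pm z$ with a truncation parameter $A=M(R)$, control every term containing the tail by the bilinear bound $\norm{F\sigma*G\sigma}_2\le C\norm{F}_2\norm{G}_2$ (cost $O(\Theta(cM))$), and note the annulus is empty unless $Rr^2=O(1)$. But the treatment of the core terms $a\sigma*a\sigma$, $b\sigma*b\sigma$ has a genuine gap, and it stems from using the wrong support estimate. For $y_1,y_2\in\scriptc(z,Mr)$ one has $|y_1+y_2|^2=4-|y_1-y_2|^2$, hence $|y_1+y_2|\ge 2-C(Mr)^2$: the support bound is \emph{quadratic} in the cap radius, not the linear $2-cMr$ you wrote. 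With the quadratic bound, choosing $M\le cR^{1/2}$ makes $a\sigma*a\sigma$ and $b\sigma*b\sigma$ vanish identically on $\{|x|\le 2-Rr^2\}$, and simultaneously the antipodal cross term, supported in $\{|x|\le CMr\}$, vanishes on $\{|x|\ge R^{1/2}r\}$; only tail terms survive and $\Psi(R)=C\Theta(cR^{1/2})^2$ works. This is exactly the paper's proof, and your slowly growing choice $M=R^{1/100}$ would then be perfectly adequate with no further estimates needed.

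Having discarded the support route, you instead try to bound $\norm{a\sigma*a\sigma}_{L^2(\{|x|\le 2-Rr^2\})}$ by a pointwise bound times a volume, claiming an answer of the form (power of $M$)$\times$(negative power of $R$). That claim is not substantiated and, as far as I can see, is false for this argument: the pointwise bound is $a\sigma*a\sigma\le \norm{a}_\infty^2\,\sigma*\sigma\lesssim M^2r^{-2}$ near $|x|\approx 2$, the overlap of the support with $\{|x|\le 2-Rr^2\}$ has tangential extent $\lesssim Mr$ and radial thickness $\lesssim M^2r^2$, so the product is $O(M^{\,\text{const}})$ with all powers of $r$ cancelling and no factor of $R^{-\epsilon}$ anywhere; your stated lens thickness $Mr^2$ is also not justified. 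The deeper point is that no size estimate can rescue this term: for a genuine near-extremal bump, $\norm{a\sigma*a\sigma}_2\sim\norm{a}_2^2\sim 1$, so its smallness on the annulus can only come from its support missing the annulus, which forces the quadratic estimate and the choice $M\lesssim R^{1/2}$. The same issue affects your cross-term computation (the $L^\infty\times$volume bound gives $O(M^{\,\text{const}})$, not something tending to $0$); there, too, the correct move is simply that the support $\{|x|\lesssim Mr\}$ misses $\{|x|\ge R^{1/2}r\}$ once $CM\le R^{1/2}$.
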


\begin{proof}
It suffices to prove this for $r$ small, $R$ large,
and $Rr^2$ uniformly bounded.
Let $\scriptc=\scriptc(z,r)$ have center $z\in S^2$.
Let $A\in[1,\infty)$ and decompose
$f = g_+ + h_+ +g_- + h_-$
where $g_+,g_-$ are supported respectively in
$\scriptc(z,Ar)$ and $\scriptc(-z,Ar)$,
$\norm{h_+}_2\le\Theta(A)$ and
$\norm{h_-}_2\le\Theta(A)$, where
$\Theta(A)\to 0$ as $A\to\infty$.

Expand $f\sigma*f\sigma$ as a sum of the resulting $16$ terms.
The terms
$g_+\sigma*g_+\sigma$ and $g_-\sigma*g_-\sigma$
are supported where $|x|>2-CA^2r^2$.
If we choose $A$ so that $CA^2<R$
then these vanish identically in the region
$|x|\le 2-Rr^2$.
The (two) terms $g_+\sigma*g_-\sigma$ are supported where $|x|\le CAr$.
Therefore they also contribute nothing, provided that $CAr\le R^{1/2}r$.

Each of the remaining terms involves at least one factor of $h_+$ or of $h_-$.
Since $\norm{F\sigma*G\sigma}_{\lt(\reals^3)}\le C\norm{F}_2\norm{G}_2$
for all $F,G\in\lt(S^2)$,
and since $g_\pm,h_\pm=O(1)$ in $\lt(S^2)$ norm,
each of these terms is $O(\norm{h_\pm}_2)$.
Therefore
\begin{equation}
\int_{R^{1/2}r\le |x| \le 2-Rr^2} |f\sigma*f\sigma(x)|^2\,dx
\le C\Theta(A)^2
\end{equation}
for any $A$ which satisfies $CA^2< R$.
This completes the proof,
provided that $Rr^2=O(1)$.
\end{proof}

The set of all caps can be made into a metric space.
Define the distance $\rho$ from $\scriptc(y,r)$ to $\scriptc(y',r')$
to be the Euclidean distance from $(y/r,\log(1/r))$ to $(y'/r',\log(1/r'))$
in $\reals^3\times\reals^+$.
Note that for instance when $r=r'$, the distance is $r^{-1}|y-y'|$,
so this distance has the natural scaling.
If $y=y'$, then the distance is $|\log(r/r')|$; this has the natural property
that it depends only on the {\em ratio} of the two radii.
The definition ensures that this is truly a metric.

For any metric space $(X,\rho)$ and any equivalence relation
$\equiv$ on $X$, the function $\varrho([x],[y])
=\inf_{x'\in[x],y'\in[y]}\rho(x',y')$
is a metric on the set of equivalence classes $X/\equiv$.
Let $\scriptm$ be the set of all caps $\scriptc\subset S^2$
modulo the equivalence relation $\scriptc\equiv -\scriptc$,
where $-\scriptc=\{-z: z\in\scriptc\}$.
Then the following defines a metric on $\scriptm$.
\begin{definition} \label{defn:capsmetricspace}
For any two caps $\scriptc,\scriptc'\subset S^2$,
\begin{equation}
\varrho([\scriptc],[\scriptc'])
= \min(\rho(\scriptc,\scriptc'),\rho(-\scriptc,\scriptc'))
\end{equation}
where $[\scriptc]$ denotes the equivalence class
$[\scriptc]=\{\scriptc,-\scriptc\}\in\scriptm$.
\end{definition}
We will also write $\varrho(\scriptc,\scriptc')=\varrho([\scriptc],[\scriptc'])$.

\begin{lemma} \label{lemma:distantcaps} \label{lemma:twocaps}
For any $\eps>0$ there exists $\rho<\infty$ such that
\begin{equation}
\norm{\chi_{\scriptc}\sigma*\chi_{\scriptc'}\sigma}_{L^2(\reals^3)}
<\eps|\scriptc|^{1/2}|\scriptc'|^{1/2}
\end{equation}
whenever
\begin{equation*}\varrho(\scriptc,\scriptc')>\rho.\end{equation*}
\end{lemma}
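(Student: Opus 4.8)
The plan is to bound $\norm{\chi_\scriptc\sigma*\chi_{\scriptc'}\sigma}_{L^2(\reals^3)}$ directly from the geometry of the convolution of two spherical cap measures. The inputs are: this convolution is supported in the Minkowski sum $\scriptc+\scriptc'$; the pointwise bound $\chi_\scriptc\sigma*\chi_{\scriptc'}\sigma\le\sigma*\sigma$, with $\sigma*\sigma(x)\asymp|x|^{-1}$ on $\{0<|x|<2\}$ (see the proof of Lemma~\ref{lemma:constantfunction}); and the finer observation that the density of $\sigma*\sigma$ at a point $x$ lives, with uniform weight, on the circle $\Gamma_x=\{a\in S^2:\ x-a\in S^2\}$, of which only an arc of length $\lesssim\operatorname{diam}\scriptc$ can meet $\scriptc$.

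First I would dispose of several reductions. Write $\scriptc=\scriptc(z,r)$, $\scriptc'=\scriptc(z',r')$, $\vartheta=\angle(z,z')$. By the identity $\norm{f_1\sigma*f_2\sigma}_2=\norm{f_1\sigma*\widetilde{f_2}\sigma}_2$ of Lemma~\ref{lemma:switchconvolutionfactors}, and since $\widetilde{\chi_{\scriptc'}}=\chi_{-\scriptc'}$ while $[\scriptc']=[-\scriptc']$ in the metric space of Definition~\ref{defn:capsmetricspace}, replacing $\scriptc'$ by $-\scriptc'$ affects neither the left side, nor $|\scriptc'|$, nor the hypothesis; so I am free to pass to whichever of $\scriptc',-\scriptc'$ is convenient at any stage. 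Using the symmetry of both sides in $(\scriptc,\scriptc')$, a partition of the larger cap into boundedly many caps of radius $\le\tfrac14$ when necessary (applying the reflection identity and the triangle inequality in $L^2$ to each piece), and one further choice of representative, I reduce to the case $r\le r'\le\tfrac14$ and $\vartheta\le\tfrac\pi2$. Since two caps both of whose radii are bounded below lie at bounded $\varrho$-distance, after fixing $\rho$ large (in terms of $\eps$) I may also assume $r$ is as small as I wish. Unwinding what it means for $\varrho(\scriptc,\scriptc')$ to be large then leaves exactly two possibilities: (a) the ratio $r'/r$ is large; or (b) $r\asymp r'$ and the rescaled separation $\vartheta/r$ is large.

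For the main estimate, fix $x\in\scriptc+\scriptc'$; then $|x|=2\cos(\vartheta/2)+O(r')$ is bounded below, so $|x|^{-1}\asymp1$, and (in the ranges that arise) $\Gamma_x$ has radius comparable to $\vartheta$. This gives $\chi_\scriptc\sigma*\chi_{\scriptc'}\sigma(x)\lesssim\min(r/\vartheta,\,1)$. Moreover $\scriptc+\scriptc'$ lies in a neighbourhood of the translated patch $z+\scriptc'$ (area $\asymp r'^2$) whose thickness, governed by the \emph{smaller} cap $\scriptc$, is $\lesssim r\vartheta+r^2$, so $\operatorname{vol}(\scriptc+\scriptc')\lesssim r'^2(r\vartheta+r^2)$. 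Multiplying volume by the square of the supremum gives $\norm{\chi_\scriptc\sigma*\chi_{\scriptc'}\sigma}_2^2\lesssim r^3r'^2/\max(\vartheta,r)$, which is $<\eps^2|\scriptc|\,|\scriptc'|\asymp\eps^2r^2r'^2$ in case (b), and in case (a) whenever $\vartheta\gtrsim r'$. The one remaining configuration — case (a) with $\vartheta\lesssim r'$, that is, $\scriptc$ essentially inside $\scriptc'$ — I would handle by integrating fibre-by-fibre over $z+\scriptc'$: parametrising $x$ by its position $x'$ along the patch ($|x'|\lesssim r'$) together with a transverse coordinate ranging over an interval of length $\lesssim r(r+|x'|)$, a computation of arclengths and Jacobians yields $\chi_\scriptc\sigma*\chi_{\scriptc'}\sigma(x)\lesssim\min(r/|x'|,1)$ and hence
\[
\norm{\chi_\scriptc\sigma*\chi_{\scriptc'}\sigma}_2^2\ \lesssim\ \int_{|x'|\lesssim r'}\min(r/|x'|,1)^2\,r(r+|x'|)\,dx'\ \lesssim\ r^3r'.
\]
In every case $\norm{\chi_\scriptc\sigma*\chi_{\scriptc'}\sigma}_2\lesssim\bigl(r/\max(\vartheta,r')\bigr)^{1/2}|\scriptc|^{1/2}|\scriptc'|^{1/2}$, and $\max(\vartheta,r')/r\to\infty$ as $\varrho(\scriptc,\scriptc')\to\infty$ under the standing normalisations; choosing $\rho$ large enough then yields the lemma.

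I expect the only genuine obstacle to be the last configuration, a small cap near the centre of a large one: there the support $\scriptc+\scriptc'$ is not small relative to $|\scriptc|^{1/2}|\scriptc'|^{1/2}$, so the gain must come entirely from the decay of $\chi_\scriptc\sigma*\chi_{\scriptc'}\sigma$ away from the point $z+z'$, made quantitative through the fibre-wise analysis above, with the geometric constants kept uniform in $\scriptc,\scriptc'$. The other cases, and the reductions via Lemma~\ref{lemma:switchconvolutionfactors}, are routine.
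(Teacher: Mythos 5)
Your argument is correct, and in its overall architecture it is the same as the paper's: after the reduction via the reflection identity of Lemma~\ref{lemma:switchconvolutionfactors} (which is exactly how the paper disposes of nearly antipodal pairs), you run a sup-times-support-volume estimate in the same three regimes, and in the two ``separated'' regimes ($r\asymp r'$ with $\vartheta/r$ large, and $r\ll r'$ with $\vartheta\gtrsim r'$) your bounds $\norm{\chi_\scriptc\sigma*\chi_{\scriptc'}\sigma}_2^2\lesssim (r/\vartheta)\,|\scriptc|\,|\scriptc'|$ reproduce the paper's estimates $C(r/|z-\tilde z|)^{1/2}$ and $C(\tilde r/|z-\tilde z|)^{1/2}$ with identical numerology, only derived through the circle $\Gamma_x$ rather than by asserting the $L^\infty$ and support bounds directly. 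The one genuine divergence is the nested configuration ($r\ll r'$, $\vartheta\lesssim r'$): the paper excises from the large cap a concentric cap of intermediate radius $10\,r'^{3/4}r^{1/4}$, controls the excised piece by its $L^2$ norm and the remainder by a sup-times-volume bound, obtaining a gain $(r/r')^{1/4}$; you instead keep everything and integrate the pointwise decay $\min(r/|x'|,1)$ against the fibre thickness $r(r+|x'|)$ over the large patch, obtaining the sharper gain $(r/r')^{1/2}$ -- both amply sufficient, yours slightly cleaner in that it avoids choosing an excision scale. Two small cautions for a full write-up: the thickness of $\scriptc+\scriptc'$ is in general $\lesssim r(\vartheta+r')$, not $r\vartheta+r^2$, and the relation $\rho(\Gamma_x)\asymp\vartheta$ requires $\vartheta\ge C_0(r+r')$; both claims are valid precisely in the regime $\vartheta\gtrsim r'$ where you invoke them, so the constant defining the split between your two subcases of case (a) must be fixed accordingly, as your sketch implicitly does.
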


\begin{proof}
Let $\scriptc=\scriptc(z,r)$, $\tilde\scriptc=\scriptc(\tilde z,\tilde r)$.
Set $f=|\scriptc|^{-1/2}\chi_{\scriptc}\le Cr^{-1}\chi_{\scriptc}$,
$\tilde f=|\tilde\scriptc|^{-1/2}\chi_{\tilde\scriptc}\le C\tilde r^{-1}\chi_{\tilde\scriptc}$.
Without loss of generality, $\tilde r\le r$.
We may suppose that $\tilde r\ll 1$; otherwise
the caps are not far apart.
We will also assume at first that no points are nearly antipodal,
that is, that $|x+\tilde x|\ge \delta$
for all $x\in\scriptc$ and $\tilde x\in\tilde\scriptc$,
for some fixed constant $\delta>0$;  we will return to this point later.

Consider first the case where $r\sim\tilde r$.
Then we may assume that $|z-\tilde z|\ge 10r$, say.
Then $f\sigma*\tilde f\sigma$
has $L^\infty$ norm $\le Cr^{-2} \cdot r/|z-\tilde z|$,
and is supported in a three-dimensional cylinder
whose base has radius $Cr$
and whose height is $\le Cr^2 + Cr|z-\tilde z|\le Cr|z-\tilde z|$.
The volume of this cylinder is $\le Cr^3 |z-\tilde z|$.
In all,
\begin{equation}
\norm{f\sigma*\tilde f\sigma}_{\lt(\reals^3)}
\le Cr^{-1}|z-\tilde z|^{-1} \cdot r^{3/2}|z-\tilde z|^{1/2}
= C\big( r/|z-\tilde z|\big)^{1/2},
\end{equation}
which is small precisely when the caps are far apart.

Consider next the case where $\tilde r\ll r$,
and still $|z-\tilde z|\ge 10r$.
Then the $L^\infty$ norm is $\le Cr^{-1}\tilde r^{-1} \cdot \tilde r|z-\tilde z|^{-1}$.
The support is contained in a tubular neighborhood of a (translated) cap of
radius $Cr$; this tubular neighborhood has width $\le C\tilde r |z-\tilde z|$.
Hence the volume of the support is
$\le Cr^2\tilde r|z-\tilde z|$.
Consequently
\begin{multline}
\norm{f\sigma*\tilde f\sigma}_{\lt(\reals^3)}
\le Cr^{-1}\tilde r^{-1}|z-\tilde z|^{-1}
\cdot
r\tilde r^{1/2}|z-\tilde z|^{1/2}
\\
=
C\big(\tilde r/|z-\tilde z|\big)^{1/2}
\le C(\tilde r/r)^{1/2}.
\end{multline}

Consider next the case where $\tilde r\ll r$,
and $|z-\tilde z|\le 10r$.
It suffices to replace $f$ by its restriction $F$ to
the complement of the cap $\tilde\scriptc^\star$
centered at $\tilde z$ of radius $10r^{3/4}\tilde r^{1/4}$;
for
\begin{equation}
\norm{f-F}_2\le Cr^{-1}r^{3/4}\tilde r^{1/4} = C(\tilde r/r)^{1/4}\ll 1.
\end{equation}
$F\sigma*\tilde f\sigma$ is supported in a region of volume
$\le Cr^3\tilde r$,
and as is easily verified,
\begin{equation}
\norm{F\sigma*\tilde f\sigma}_\infty
\le Cr^{-1}\tilde r^{-1}\cdot \big(\tilde r/r^{3/4}\tilde r^{1/4} \big)
= Cr^{-7/4}\tilde r^{-1/4}.
\end{equation}
Therefore
\begin{equation}
\norm{ F\sigma*\tilde f\sigma }_2
\le
Cr^{-7/4}\tilde r^{-1/4}
\cdot
(r^3\tilde r)^{1/2}
=Cr^{-1/4}\tilde r^{1/4}\ll 1.
\end{equation}

It only remains to handle caps which are nearly antipodal. But this follows
from the non-antipodal case by the identity
\begin{equation}
\norm{f\sigma\,*\,g\sigma}_2 = \norm{f^\star\sigma\,*\,g\sigma}_2
\end{equation}
where $f^\star(x)\equiv \overline{f(-x)}$.
\end{proof}

\subsection{Fourier integral operators}

Here we discuss another ingredient required for the proof of Lemma~\ref{lemma:smallcrossterm},
certain estimates which rely on cancellation, in contrast to those in the preceding section.

For $0<\rho\lesssim 1$
define $T_\rho:\lt(S^2)\to \lt(S^2)$ by
\begin{equation}
T_\rho f(x) = \int f(y)\,d\mu_{x,\rho}(y)
\end{equation}
where $\mu_{x,\rho}$ is arc length measure on the circle
$\{y\in S^2: |y-x|=\rho\}$,
normalized to be a probability measure.

Let $\Delta$ denote the spherical Laplacian.
\begin{lemma}
\begin{equation} \label{FIObound}
\norm{T_\rho f}_{\lt(S^2)}
\le C\norm{(I-\rho^2\Delta)^{-1/4}f}_{\lt(S^2)}
\end{equation}
uniformly for all $\rho>0$ and all $f\in\lt(S^2)$.
\end{lemma}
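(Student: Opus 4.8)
The plan is to diagonalize $T_\rho$ using the decomposition of $\lt(S^2)$ into the spaces $\scripth_\ell$ of spherical harmonics of degree $\ell$. Since $T_\rho$ commutes with the rotation group $SO(3)$, it preserves each $\scripth_\ell$ and acts there as multiplication by a scalar $\lambda_\ell(\rho)$. To identify this scalar I would apply $T_\rho$ to a zonal harmonic $y\mapsto P_\ell(y\cdot e)$, where $P_\ell$ is the Legendre polynomial normalized by $P_\ell(1)=1$, and evaluate at $x=e$: on the circle $\{y\in S^2:|y-e|=\rho\}$ the inner product $y\cdot e$ is the constant $1-\rho^2/2=\cos\theta_\rho$, where $\theta_\rho=2\arcsin(\rho/2)$ is the intrinsic radius, so, since $\mu_{e,\rho}$ is a probability measure, $\lambda_\ell(\rho)=P_\ell(1-\rho^2/2)$. (This is the addition theorem for spherical harmonics.) Meanwhile $(I-\rho^2\Delta)^{-1/4}$ acts on $\scripth_\ell$ as multiplication by $(1+\rho^2\ell(\ell+1))^{-1/4}$, since $\Delta$ has eigenvalue $-\ell(\ell+1)$ there. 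Expanding $f=\sum_\ell f_\ell$ with $f_\ell\in\scripth_\ell$ and using orthogonality of the $\scripth_\ell$ in $\lt(S^2)$, the inequality \eqref{FIObound} becomes equivalent to the single pointwise multiplier bound
\[
|P_\ell(1-\rho^2/2)|\ \le\ C\,\big(1+\rho^2\ell(\ell+1)\big)^{-1/4}
\qquad\text{for all }\ell\ge 0 .
\]

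The next step is to prove this bound. Here I would use that $T_\rho$ is defined and used only for $\rho\lesssim 1$, so we may fix an upper bound $\rho\le\rho_0$ with $\rho_0\le 1$; then $\theta_\rho\in(0,\pi/3]$, so that $\theta_\rho\asymp\rho$, and moreover $1+\rho^2\ell(\ell+1)\asymp(1+\rho\ell)^2\asymp(1+\ell\theta_\rho)^2$ for every $\ell$. The bound therefore reduces to the classical estimate
\[
|P_\ell(\cos\theta)|\ \le\ C\,\min\!\big(1,\,(\ell\theta)^{-1/2}\big)
\qquad(\,0<\theta\le\tfrac\pi2,\ \ell\ge 1\,),
\]
since this gives $|P_\ell(\cos\theta_\rho)|\le C(1+\ell\theta_\rho)^{-1/2}\le C(1+\rho^2\ell(\ell+1))^{-1/4}$. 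The elementary bound $|P_\ell|\le 1$ on $[-1,1]$ disposes of the range $\ell\theta\le 1$, and for $\ell\theta\ge 1$ one may invoke the Laplace--Heine asymptotic $P_\ell(\cos\theta)=\sqrt{2/(\pi\ell\sin\theta)}\,\cos\big((\ell+\tfrac12)\theta-\tfrac\pi4\big)+O((\ell\sin\theta)^{-3/2})$, uniform on compact subintervals of $(0,\pi)$.

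I expect the square-root gain in $\ell$ in the last display to be the \emph{main obstacle}: bounding Laplace's integral $P_\ell(\cos\theta)=\tfrac1\pi\int_0^\pi(\cos\theta+i\sin\theta\cos\phi)^\ell\,d\phi$ by the modulus of its integrand loses a full power of $\sin\theta$ and is too weak as $\theta\to 0$, so the sharp rate genuinely exploits the oscillation. If a self-contained argument is wanted in place of the Laplace--Heine formula, I would put Legendre's equation into Sturm--Liouville form by setting $u(\theta)=(\sin\theta)^{1/2}P_\ell(\cos\theta)$, which satisfies $u''+\big((\ell+\tfrac12)^2+\tfrac14(\sin\theta)^{-2}\big)u=0$; the energy $E(\theta)=u'(\theta)^2+\big((\ell+\tfrac12)^2+\tfrac14(\sin\theta)^{-2}\big)u(\theta)^2$ then has $E'(\theta)=-\tfrac12(\cos\theta)(\sin\theta)^{-3}u(\theta)^2\le 0$ on $(0,\pi/2]$, hence is nonincreasing there. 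Evaluating it at $\theta=\ell^{-1}$ — where $|u|\lesssim\ell^{-1/2}$ trivially and $|u'|\lesssim\ell^{1/2}$ using Markov's inequality $\|P_\ell'\|_{C^0[-1,1]}\le\tfrac12\ell(\ell+1)$ — gives $E(\ell^{-1})\lesssim\ell$, hence $u(\theta)^2\le E(\theta)/(\ell+\tfrac12)^2\lesssim\ell^{-1}$ and thus $|P_\ell(\cos\theta)|\lesssim(\ell\sin\theta)^{-1/2}$ for all $\theta\in[\ell^{-1},\pi/2]$. Combining this with the reduction of the first paragraph completes the proof; the complementary range $\theta\in[\pi/2,\pi)$, which is not needed here since $\rho\le\rho_0\le 1$, would follow from $P_\ell(\cos(\pi-\theta))=(-1)^\ell P_\ell(\cos\theta)$. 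One should note that the restriction to bounded $\rho$ (equivalently, $\theta_\rho$ bounded away from $\pi$) built into the definition of $T_\rho$ is genuinely used: for $\theta_\rho$ near $\pi$ the multiplier $P_\ell(\cos\theta_\rho)$ need not be small while $(1+\rho^2\ell(\ell+1))^{-1/4}\asymp\ell^{-1/2}$.
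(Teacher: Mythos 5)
Your proof is correct, but it proceeds along a genuinely different route from the paper. The paper treats $T_\rho$ as a Fourier integral operator: it checks that the phase $\Phi_\rho(x,y)=|x-y|^2-\rho^2$ satisfies the rotational-curvature nondegeneracy condition \eqref{FIOmatrixcondition}, invokes Sogge's theorem that such an averaging operator is an FIO of order $-1/2$ and hence smoothing of order $1/2$ on the $\lt$-Sobolev scale, and then obtains uniformity in $\rho$ by rescaling to the limiting operator of convolution with arc length on $S^1\subset\reals^2$, the rescaling being what produces the factor $\rho^2$ in front of $\Delta$. You instead diagonalize exactly: rotation invariance plus the addition theorem give that $T_\rho$ acts on degree-$\ell$ spherical harmonics by $P_\ell(1-\rho^2/2)=P_\ell(\cos\theta_\rho)$, while $(I-\rho^2\Delta)^{-1/4}$ acts by $(1+\rho^2\ell(\ell+1))^{-1/4}$, so the lemma reduces to the scalar bound $|P_\ell(\cos\theta)|\le C(1+\ell\theta)^{-1/2}$ for $\theta$ bounded away from $\pi$, which you verify either by the classical Legendre asymptotics or by the self-contained Sturm--Liouville energy argument (your transformed equation for $u=(\sin\theta)^{1/2}P_\ell(\cos\theta)$, the sign of $E'$, and the evaluation of $E$ at $\theta=\ell^{-1}$ via Markov's inequality are all correct). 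Your route is more elementary and makes the uniformity in $\rho$ completely explicit, with no rescaling or limiting argument, and your closing remark that the restriction $\rho\lesssim 1$ in the definition of $T_\rho$ is genuinely needed (the multiplier is not small near the antipodal degeneration) is accurate; its drawback is that it exploits the exact $SO(3)$ symmetry of $S^2$, whereas the paper's FIO argument requires only the curvature condition and is the one that transfers to the more general hypersurfaces the authors indicate an interest in. One cosmetic point: the lemma's ``for all $\rho>0$'' should be read, as you do, within the range $0<\rho\lesssim 1$ for which $T_\rho$ was defined and is used (via $\rho(t)^2+(t/2)^2=1$).
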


\begin{proof}[Sketch of proof]
There are three elements in the proof of \eqref{FIObound}.

(i) Consider any fixed $\rho\in(0,2)$.
Define $\Phi_\rho(x,y) = |x-y|^2-\rho^2$.
Then the $3\times 3$ matrix
\begin{equation} \label{FIOmatrixcondition}
\begin{pmatrix}
0 & \partial\Phi_\rho/\partial x \\
\partial\Phi_\rho/\partial y & \partial^2\Phi_\rho/\partial x\partial y
\end{pmatrix}
\end{equation}
is nonsingular for any $(x,y)$ satisfying $\Phi_\rho(x,y)=0$.
This is a straightforward computation,
easily done by taking advantage of rotational symmetry
to reduce to a computation of Taylor expansions
about $x=(0,0,1)$ and $y=(\cos(\theta),0,\sin(\theta))$.

(ii)
$T_\rho$ is defined by integration against
a smooth density on $\{(x,y)\in S^2\times S^2:
\Phi_\rho(x,y)=0\}$.
As discussed on pages 188-9 of \cite{sogge},
the nonsingularity of the matrix \eqref{FIOmatrixcondition}
implies that
$T_\rho$ is a Fourier integral operator of
order $-(n-1)/2=-1/2$ on $S^n=S^2$.
Any such operator is smoothing of order $1/2$
in the scale of $\lt$ Sobolev spaces \cite{sogge}.

(iii) If $T_\rho$ is rewritten with appropriate
normalizations in coordinates adapted to
any cap $\scriptc(z,\rho)$, then the inequality holds
uniformly in $\rho$.
The only issue here is as $\rho\to 0$, but plainly in that
situation there is a limiting operator on $\reals^2$,
$f\mapsto \int_{S^1}f(x-y)\,d\mu(y)$ where
$\mu$ is arc length measure on $S^1\subset\reals^2$.
This limiting operator is again a Fourier integral
operator of order $-1/2$. It follows that the bounds
are uniform after rescaling.
Reversal of the rescaling introduces the factor
$\rho^2$ to $\Delta$ in the inequality.
\end{proof}

The operators $T_\rho$ are related to our bilinear convolutions:
For $f\in\lt(S^2)$ and $x\in\reals^3$ satisfying $0<|x|<2$,
\begin{equation}
(f\sigma*\sigma)(x) = c|x|^{-1} T_\rho f(x/|x|)
\end{equation}
where
\begin{equation}
\rho^2 + |x/2|^2=1.
\end{equation}
Define $e_\xi(x) = e^{ix\cdot\xi}$, for $x\in \reals^3$  and $\xi\in\complex$
(and in particular for $x\in S^2$). There is the more general identity
\begin{equation} \label{switchcharacter}
(f\sigma*e_{i\xi}\sigma)(x) =
e_{i\xi}(x)
\Big(e_{-i\xi}f\sigma \ * \ \sigma \Big)(x)
=
c|x|^{-1}
e_{i\xi}(x)
T_\rho(e_{-i\xi}f)(x).
\end{equation}

Suppose that $g\in\lt(S^2)$ takes the form
$g(x) = \int_H a(\xi)e_{i\xi}(x)\,d\nu(\xi)$
where $H\subset\reals^3$ is a two-dimensional subspace,
$\nu$ is Lebesgue measure on $H$,
and $a\in\lt(H)$.
Then
\begin{equation}
(f\sigma*g\sigma)(x)
= c|x|^{-1} \int_H a(\xi) e_{i\xi}(x)
T_\rho(e_{-i\xi}f)(x)\,d\nu(\xi).
\end{equation}
For $t\in(0,2)$ define $\rho(t)>0$ by
\begin{equation}\rho(t)^2+ (t/2)^2=1.\end{equation}
Then for any interval $I\subset(0,2)$,
\begin{equation} \label{Iversion}
\begin{split}
\int_{|x|\in I} |(f\sigma*g\sigma)(x)|^2\,dx
&\le C
\int_{I}
t^{-2}
\Big\| \int_{H} |a(\zeta)| \cdot|T_{\rho(t)}(e_{-i\zeta}f)| \,d\zeta
\Big\|_{\lt(S^2)}^2
t^2\,dt
\\
&=C
\int_{I}
\Big\| \int_{H} |a(\zeta)| \cdot|T_{\rho(t)}(e_{-i\zeta}f)| \,d\zeta
\Big\|_{\lt(S^2)}^2
\,dt.
\end{split}
\end{equation}

\subsection{Fourier coefficient estimates in terms of the spherical Laplacian}

The following routine lemma is convenient because it provides an
intrinsic characterization of expressions which arise in the analysis.
The proof relies on the machinery of pseudodifferential operators,
and is left to the reader.

\begin{lemma} \label{lemma:DeltaversusFT}
Let $\scriptc$ be a cap of radius $\varrho\le \tfrac12$.
Let $\phi$ be the rescaling map associated with $\scriptc$.
Let $f$ be supported in $\scriptc\cup (-\scriptc)$.
% Let $F = \phi^* f\in\lt(\reals^2)$.
Then for any $t\in\reals$ and $0<r\le \varrho$,
\begin{equation}
C^{-1}
\norm{(I-r^2\Delta)^{t/2}f}_{\lt(S^2)}^2
\le
\int_{\reals^2} |\widehat{\phi^* f}(\xi)|^2
(1+|r\varrho^{-1}\xi|^2)^{t}
\,d\xi
\le C
\norm{(I-r^2\Delta)^{t/2}f}_{\lt(S^2)}^2.
\end{equation}
% ??? Do I need any other combinations of $r,\varrho$??
Here $C\in(0,\infty)$ depends on $t$ but not on $f,r,\varrho,\scriptc$.
\end{lemma}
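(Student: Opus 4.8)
The plan is to transfer the inequality, via the rescaling chart $\phi$, from the sphere to $\reals^2$, and there to recognize both sides as parameter-dependent Sobolev norms attached to elliptic operators with comparable symbols.

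\emph{Reduction to a single cap.} By rotation invariance of $\Delta$, of the antipodal map, and of the conclusion, we may center $\scriptc$ at the north pole $z_0=(0,0,1)$. In the situations where this lemma is applied $f$ is even, so its restriction to $-\scriptc$ is determined by its restriction to $\scriptc$, and $(I-r^2\Delta)^{t/2}$ commutes with $x\mapsto -x$; thus it suffices to treat a single cap, provided the interaction between $\scriptc$ and $-\scriptc$ is controlled. The family $\{(I-r^2\Delta)^{t/2}:\ 0<r\le\tfrac12\}$ is a bounded family in the parameter-dependent pseudodifferential calculus on $S^2$, hence uniformly pseudolocal with gain in the parameter: the Schwartz kernel of $(I-r^2\Delta)^{t/2}$ is $O(r^N)$ for every $N$ at distance $\gtrsim 1$ from the diagonal. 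Since $\dist(\scriptc,-\scriptc)\gtrsim 1$ while $r\le\varrho\le\tfrac12$, the cross contributions between the two caps are $O(r^N\norm{f}_2)$ and may be discarded. It therefore suffices to prove, for $g:=\phi^*f$ supported in $\scriptb$ and with $\lambda:=r\varrho^{-1}\in(0,1]$, that $\int_{\reals^2}|\widehat g(\xi)|^2(1+|\lambda\xi|^2)^t\,d\xi$ is comparable to $\norm{(I-r^2\Delta)^{t/2}f}_{\lt(\scriptc,\sigma)}^2$, with constants uniform in $\lambda$ and $\varrho$.

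\emph{Transfer to $\reals^2$.} In the coordinates $y\in\scriptb$ given by $\phi$ one has $\phi^*(\text{round metric})=\varrho^2 h_\varrho$, where $h_\varrho$ is a Riemannian metric on $\scriptb$ depending smoothly on $\varrho\in[0,\tfrac12]$, equal to the Euclidean metric at $\varrho=0$, and lying in a bounded, uniformly elliptic family (all derivatives of all orders of its coefficients bounded independently of $\varrho$). Consequently $\Delta$ transfers to $\varrho^{-2}\Delta_{h_\varrho}$, the Laplace--Beltrami operator of $h_\varrho$; the volume densities are mutually comparable; and $\phi^*$ distorts $\lt$ norms by uniformly bounded factors. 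Extending $h_\varrho$ to a metric on all of $\reals^2$ in the same bounded family and Euclidean outside $2\scriptb$ (harmless, since $g$ is supported in $\scriptb$), the right-hand side above becomes comparable, uniformly in $\lambda,\varrho$, to $\norm{(I-\lambda^2\Delta_{h_\varrho})^{t/2}g}_{\lt(\reals^2)}^2$, while the left-hand side equals exactly $\norm{(I-\lambda^2\Delta_{\reals^2})^{t/2}g}_{\lt(\reals^2)}^2$, with $\Delta_{\reals^2}$ the flat Laplacian. So the lemma reduces to: $(I-\lambda^2\Delta_{h_\varrho})^{t/2}$ and $(I-\lambda^2\Delta_{\reals^2})^{t/2}$ define equivalent norms, uniformly for $\lambda\in(0,1]$ and $h_\varrho$ in our bounded family.

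\emph{Comparison of the two operators.} Both $P_{h_\varrho}:=I-\lambda^2\Delta_{h_\varrho}$ and $P_0:=I-\lambda^2\Delta_{\reals^2}$ are elliptic of order $2$ in the parameter-dependent (equivalently, semiclassical with $h=\lambda$) calculus on $\reals^2$, with principal symbols $1+\lambda^2|\xi|^2_{h_\varrho(y)}$ and $1+\lambda^2|\xi|^2$, which are comparable uniformly because $h_\varrho$ is comparable to the Euclidean metric. Their complex powers $P^{t/2}$ are parameter-dependent pseudodifferential operators of order $t$ with comparable symbols; hence $P_{h_\varrho}^{t/2}P_0^{-t/2}$ and its inverse are parameter-dependent operators of order $0$, bounded on $\lt(\reals^2)$ uniformly in the parameters by the Calder\'on--Vaillancourt theorem, whose constant depends only on finitely many seminorms of the symbol, all uniformly controlled here. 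Composing with these bounded operators converts one norm into the other up to a uniform constant, completing the proof.

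\emph{Main obstacle.} The difficulty is exactly this uniformity in the two parameters simultaneously: one must genuinely work in a parameter-dependent pseudodifferential calculus and verify that the whole family $\{h_\varrho\}_{\varrho\le1/2}$, and hence the operators built from it together with their complex powers, lies in a bounded set of symbols, so that ellipticity, the parametrix construction, the symbol of $P^{t/2}$ for all real $t$, and the $\lt$ bound are all uniform in $\varrho$ and $\lambda$. The remaining ingredients — the local formula for $\phi^*(\text{round metric})$, the pseudolocal decoupling of the two antipodal caps, and the comparison of volume densities — are routine.
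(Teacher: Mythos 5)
The paper offers no proof of this lemma: it is declared routine, said to rest on ``the machinery of pseudodifferential operators,'' and left to the reader. Your proposal fleshes out exactly that route --- transfer to $\reals^2$ by the rescaling chart, a parameter-dependent (semiclassical) calculus uniform in the two parameters $\varrho$ and $\lambda=r\varrho^{-1}$, and comparison of the two parameter-elliptic operators through their powers --- and as a blueprint it is sound. Your reading that the statement implicitly concerns even $f$ is also the right one: for $f$ supported only in $-\scriptc$ the middle quantity vanishes while the outer ones do not, and in every application the functions are even.

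Two points deserve sharpening. For the first inequality no pseudolocality is needed at all: writing $f=f_++f_-$ with $f_\pm$ supported in $\pm\scriptc$, Cauchy--Schwarz and evenness give $\norm{(I-r^2\Delta)^{t/2}f}_2^2\le 4\norm{(I-r^2\Delta)^{t/2}f_+}_2^2$, so that direction reduces immediately to the single-cap estimate. Where you do invoke pseudolocality to discard cross-cap (and off-chart) contributions, the bound $O(r^N\norm{f}_2)$ you state is not sufficient when $t<0$: there $\norm{(I-r^2\Delta)^{t/2}f_+}_2$ can be smaller than any fixed power of $r$ times $\norm{f_+}_2$ (let $f_+$ oscillate at frequency much larger than every power of $r^{-1}$), so an additive error of size $O(r^N\norm{f}_2^2)$ cannot in general be absorbed into the main term. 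What is needed, and what the parameter-dependent calculus does provide, is pseudolocality measured in the $r$-dependent Sobolev norms: for cutoffs $\chi_1,\chi_2$ whose supports are separated by distance $\gtrsim 1$ one has $\norm{(I-r^2\Delta)^{M/2}\chi_1(I-r^2\Delta)^{t}\chi_2(I-r^2\Delta)^{M/2}}_{L^2\to L^2}=O(r^\infty)$ for every $M$; taking $M=-t$ bounds the cross term $\langle (I-r^2\Delta)^{t}f_+,f_-\rangle$ by $O(r^\infty)\norm{(I-r^2\Delta)^{t/2}f_+}_2\norm{(I-r^2\Delta)^{t/2}f_-}_2$, which can then be discarded. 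With this refinement (and its analogue when you localize the transferred operator to the chart), your argument closes; the remaining ingredients --- the formula for the pulled-back metric, uniform ellipticity of the family $h_\varrho$, and $L^2$-boundedness of the order-zero quotient with uniformly controlled symbol seminorms --- are as you describe.
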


\section{Step 6A: A decomposition algorithm}
The following iterative procedure may be applied
to any nonnegative function $f\in\lt(S^2)$ of positive norm.

\medskip
\noindent{\bf Decomposition algorithm.}
Initialize by setting $G_0=f$, and $\eps_0=1/2$.

Step $\nu$: The inputs for step $\nu$ are a nonnegative function
$G_\nu\in\lt(S^2)$ and a positive number $\eps_\nu$.
Its outputs are functions $f_\nu,G_{\nu+1}$
and nonnegative numbers $\eps_\nu^\star,\eps_{\nu+1}$.
If $\norm{G_\nu\sigma*G_\nu\sigma}_2=0$ then $G_\nu=0$ almost everywhere. The
algorithm then terminates, and we
define  $\eps_\nu^\star=0$, $f_\nu=0$,
and $G_\mu=f_\mu=0$, $\eps_\mu=0$ for all $\mu>\nu$.

If $0<\norm{G_\nu\sigma*G_\nu\sigma}_2<\eps_\nu^2 \Sbest^2\norm{f}_2^2$
then replace $\eps_\nu$ by $\eps_\nu/2$; repeat until the first time that
$\norm{G_\nu\sigma*G_\nu\sigma}_2\ge \eps_\nu^2 \Sbest^2\norm{f}_2^2$.
Define $\eps_\nu^\star$ to be this value of $\eps_\nu$.
Then
\begin{equation}
(\eps_\nu^\star)^2 \Sbest^2\norm{f}_2^2
\le\norm{G_\nu\sigma*G_\nu\sigma}_2
\le 4(\eps_\nu^\star)^2 \Sbest^2\norm{f}_2^2.
\end{equation}

Apply Lemma~\ref{lemma:mvv} to obtain a cap $\scriptc_\nu$
and a decomposition $G_\nu=f_\nu + G_{\nu+1}$
with disjointly supported nonnegative summands satisfying
$f_\nu \le C_{\nu} \norm{f}_2
|\scriptc_\nu|^{-1/2}\chi_{\scriptc_\nu}$,
and
$\norm{f_\nu}_2\ge\eta_{\nu}\norm{f}_2$.
Here $C_\nu,\eta_\nu$ are bounded above and below, respectively,
by quantities which depend only on
$\norm{G_\nu\sigma*G_\nu\sigma}_2^{1/2}/\norm{G_\nu}_2
\ge\eps_\nu^\star$.
Define $\eps_{\nu+1}=\eps_\nu^\star$, and move on to step $\nu+1$.
\qed

\medskip
It is important for our application to observe
that if $f$ is even then at every step, $f_\nu$ may likewise be chosen to be even.
The upper bound for $f_\nu$ then becomes
\begin{equation}
f_\nu\le C_\nu |\scriptc|^{-1/2}
\chi_{\scriptc\cup-\scriptc}.
\end{equation}
Henceforth the algorithm will be applied only to even functions, and
we will always choose all $f_\nu$ to be even.

\medskip

If the algorithm terminates at some finite step $\nu$, then
a finite decomposition $f = \sum_{k=0}^\nu f_k$ results.

\begin{lemma}
Let $f\in\lt(S^2)$ be a nonnegative function with positive norm.
If the decomposition algorithm never terminates for $f$,
then $\eps_\nu^\star\to 0$ as $\nu\to\infty$,
and
$\sum_{\nu=0}^N f_\nu\to f$ in $\lt$ as $N\to\infty$.
\end{lemma}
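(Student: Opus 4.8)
The plan is to establish, in order, three consequences of the non-termination hypothesis: the extracted pieces $f_\nu$ tend to $0$ in $\lt$; the thresholds $\eps_\nu^\star$ tend to $0$; and the residuals $G_\nu$ tend to $0$ in $\lt$. The last of these is precisely the asserted convergence $\sum_{k=0}^N f_k\to f$.

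I would begin by recording the orthogonality built into the algorithm. At step $\nu$ one has $G_\nu=f_\nu+G_{\nu+1}$ with $f_\nu,G_{\nu+1}\ge 0$ and disjointly supported, so in particular $0\le G_{\nu+1}\le G_\nu$ pointwise. Iterating, $f_0,\dots,f_\nu,G_{\nu+1}$ have pairwise disjoint supports and telescope to $G_{\nu+1}=f-\sum_{k=0}^\nu f_k$, whence $\norm f_2^2=\sum_{k=0}^\nu\norm{f_k}_2^2+\norm{G_{\nu+1}}_2^2$. Thus $\sum_{k\ge 0}\norm{f_k}_2^2\le\norm f_2^2<\infty$, so $\norm{f_\nu}_2\to 0$. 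Since $G_\nu$ decreases pointwise to some $G_\infty$ with $0\le G_\infty\le f$, dominated convergence gives $G_\nu\to G_\infty$ in $\lt$; also $\norm{f-\sum_{k=0}^N f_k}_2=\norm{G_{N+1}}_2\to\norm{G_\infty}_2$. So it remains to prove $\eps_\nu^\star\to 0$ and then $G_\infty=0$.

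For the first: $\{\eps_\nu^\star\}$ is non-increasing, since $\eps_{\nu+1}=\eps_\nu^\star$ and step $\nu+1$ only halves this number (finitely often), so it converges to some $\eps_\infty\ge 0$; suppose for contradiction $\eps_\infty>0$. From the algorithm $\norm{G_\nu\sigma*G_\nu\sigma}_2\ge(\eps_\nu^\star)^2\Sbest^2\norm f_2^2\ge\eps_\infty^2\Sbest^2\norm f_2^2$ for all $\nu$; combining this with $\norm{G_\nu\sigma*G_\nu\sigma}_2\le\Sbest^2\norm{G_\nu}_2^2$ and $\norm{G_\nu}_2\le\norm f_2$ yields both $\norm{G_\nu}_2\ge\eps_\infty\norm f_2$ and $\norm{G_\nu\sigma*G_\nu\sigma}_2\ge\eps_\infty^2\Sbest^2\norm{G_\nu}_2^2$, i.e. each $G_\nu$ satisfies the hypothesis of Lemma~\ref{lemma:mvv} with $\delta=\eps_\infty$. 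The gain constant $\eta_\delta$ in Lemma~\ref{lemma:mvv} may be taken non-decreasing in $\delta$ (a hypothesis with a larger $\delta$ is stronger), so the decomposition produced by the algorithm has $\norm{f_\nu}_2\ge\eta_{\eps_\infty}\norm{G_\nu}_2\ge\eta_{\eps_\infty}\eps_\infty\norm f_2>0$, a positive lower bound independent of $\nu$. This contradicts $\norm{f_\nu}_2\to 0$; hence $\eps_\infty=0$.

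Finally, the algorithm's companion estimate $\norm{G_\nu\sigma*G_\nu\sigma}_2\le 4(\eps_\nu^\star)^2\Sbest^2\norm f_2^2$ together with $\eps_\nu^\star\to 0$ forces $\norm{G_\nu\sigma*G_\nu\sigma}_2\to 0$. The bilinear map $(F,G)\mapsto F\sigma*G\sigma$ is bounded on $\lt(S^2)$, so $F\mapsto\norm{F\sigma*F\sigma}_2$ is continuous on $\lt$; since $G_\nu\to G_\infty$ in $\lt$ we conclude $\norm{G_\infty\sigma*G_\infty\sigma}_2=\lim_\nu\norm{G_\nu\sigma*G_\nu\sigma}_2=0$. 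By Plancherel this says $\widehat{G_\infty\sigma}\equiv 0$, so the measure $G_\infty\sigma$ vanishes and $G_\infty=0$ $\sigma$-a.e. Therefore $\norm{f-\sum_{k=0}^N f_k}_2=\norm{G_{N+1}}_2\to 0$. The step requiring the most care is the one showing $\eps_\nu^\star\to 0$: the point is to convert the assumption $\eps_\infty>0$ into a genuine lower bound for the residual norms $\norm{G_\nu}_2$, which is what lets Lemma~\ref{lemma:mvv} be applied with a single, $\nu$-independent, strictly positive gain; the remainder is bookkeeping with disjoint supports and continuity of the convolution.
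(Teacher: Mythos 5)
Your proof is correct and follows essentially the same route as the paper: disjoint supports give $\sum_\nu\norm{f_\nu}_2^2<\infty$, the lower bound $\norm{f_\nu}_2\gtrsim\eta(\eps_\nu^\star)\norm{f}_2$ from Lemma~\ref{lemma:mvv} forces $\eps_\nu^\star\to 0$, and then $\norm{G_\nu\sigma*G_\nu\sigma}_2\to 0$ combined with the monotone pointwise limit $G$ yields $G\sigma*G\sigma\equiv 0$, hence $G\equiv 0$ and $\norm{G_N}_2\to 0$. The only (harmless) differences are that you derive $\norm{G_\nu\sigma*G_\nu\sigma}_2\to 0$ from the algorithm's upper bound $4(\eps_\nu^\star)^2\Sbest^2\norm{f}_2^2$ rather than from $\norm{f_\nu}_2\to 0$ via Lemma~\ref{lemma:mvv}, and you pass to the limit using $\lt$-continuity of the bilinear form instead of pointwise monotonicity of the convolution.
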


\begin{proof}
Assume without loss of generality that $\norm{f}_2=1$.
The functions $f_\nu$ have disjoint supports and hence are pairwise orthogonal,
and $\sum_\nu f_\nu\le f$, so
$\sum_\nu\norm{f_\nu}_2^2\le\norm{f}_2^2$.
Since the sequence $\eps_\nu^\star$ is nonincreasing
and $\norm{f_\nu}_2/\norm{f}_2$ is bounded below by a function of $\eps_\nu^\star$,
this forces $\eps_\nu^\star\to 0$.

The second conclusion is equivalent to $\norm{G_N}_2\to 0$.
$\norm{f_\nu}_2$ is bounded below, according to Lemma~\ref{lemma:mvv},
by a function of  $\norm{G_\nu\sigma*G_\nu\sigma}_2$.
Since $\sum_\nu\norm{f_\nu}_2^2<\infty$,
$\norm{f_\nu}_2\to 0$ and therefore
$\norm{G_\nu\sigma*G_\nu\sigma}_2\to 0$.
By construction, $G_{\nu+1}(x)\le G_\nu(x)$ for every $x\in S^2$,
so $G(x)=\lim_{\nu\to\infty} G_\nu(x)$ exists and
$\norm{G\sigma*G\sigma}_2\le \norm{G_\nu\sigma*G_\nu\sigma}_2$
for all $\nu$. Thus $G\sigma*G\sigma\equiv 0$, so $G\equiv 0$.
This forces $\norm{G_\nu}_2\to 0$, by the dominated convergence
theorem.
\end{proof}

\medskip
For general $f$, this decomposition may be highly inefficient.
But if $f$ is nearly extremal for the inequality \eqref{secondSversion}
then more useful properties hold.
\begin{lemma} \label{lemma:goodfnubound}
There exists a continuous function $\theta:(0,1]\to(0,\infty)$
such that
for any $\eps>0$ there exists $\delta>0$
such that for any $\delta$-nearly extremal nonnegative function $f\in\lt(S^2)$
satisfying $\norm{f}_2=1$,
the functions $f_\nu,G_\nu$ associated by the decomposition algorithm
to $f$ satisfy
\begin{equation}
\norm{f_\nu}_2\ge \theta(\norm{G_\nu}_2)
\text{ for any index $\nu$ such that } \norm{G_\nu}_2\ge\eps.
\end{equation}
\end{lemma}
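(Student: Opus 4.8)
The plan is to exploit the near-extremality of $f$ through Lemma~\ref{lemma:nearextremal}, with the remainder $G_\nu$ in the role of the ``small'' summand, and then to feed the resulting lower bound on $\norm{G_\nu\sigma*G_\nu\sigma}_2/\norm{G_\nu}_2^2$ back into the lower bound on the extracted piece $f_\nu$ that Lemma~\ref{lemma:mvv} supplies inside the algorithm. The one substantive point is that near-extremality of $f$ forces exactly the quantitative lower bound needed; everything else is bookkeeping of which constants depend on $\eps$.

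Fix an index $\nu$ with $\norm{G_\nu}_2\ge\eps$; since the sequence $\norm{G_\mu}_2$ is nonincreasing, the algorithm has not terminated by step $\nu$. The functions $f_0,\dots,f_{\nu-1}$ and $G_\nu$ have pairwise disjoint supports, because for $\mu<\nu$ one has $G_\nu\le G_{\mu+1}$ pointwise while $f_\mu$ and $G_{\mu+1}$ are disjointly supported; hence $g:=\sum_{\mu<\nu}f_\mu$ and $h:=G_\nu$ are orthogonal and $f=g+h$. For $\nu\ge1$, $g\ne0$: indeed $f$ being $\delta$--nearly extremal with $\delta\le\tfrac14$ forces $\norm{f\sigma*f\sigma}_2\ge(1-\delta)^2\Sbest^2\ge\tfrac9{16}\Sbest^2>0$, whence the first step of the algorithm produces $\norm{f_0}_2\ge c_0$ for an absolute constant $c_0>0$. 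Applying Lemma~\ref{lemma:nearextremal} to $f=g+h$ (legitimate since $\delta\le\tfrac14$) and using $\norm{f}_2=1$ gives
\[
\norm{G_\nu}_2\ \le\ C\max\Big(\frac{\norm{G_\nu\sigma*G_\nu\sigma}_2^{1/2}}{\norm{G_\nu}_2},\ \delta^{1/2}\Big).
\]
I would then choose $\delta=\delta(\eps)\in(0,\tfrac14]$ small enough that $C\delta^{1/2}<\eps$. Since $\norm{G_\nu}_2\ge\eps$, the maximum above cannot be realized by its second term, so $\norm{G_\nu\sigma*G_\nu\sigma}_2^{1/2}\ge C^{-1}\norm{G_\nu}_2^2$; equivalently the parameter $\delta_\nu:=\norm{G_\nu\sigma*G_\nu\sigma}_2^{1/2}/(\Sbest\norm{G_\nu}_2)$, which lies in $(0,1]$ by \eqref{secondSversion}, satisfies $\delta_\nu\ge c\,\norm{G_\nu}_2$ with $c=(C\Sbest)^{-1}$.

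Finally, the construction of $f_\nu$ in the algorithm is an application of Lemma~\ref{lemma:mvv} to $G_\nu$, so $\norm{f_\nu}_2\ge\eta_{\delta_\nu}\norm{G_\nu}_2$; by the proof of Lemma~\ref{lemma:mvv} the quantity $\eta_\delta$ may be taken to be a fixed positive constant times a positive power of $\delta$, hence continuous and nondecreasing in $\delta$, so $\eta_{\delta_\nu}\ge\eta_{c\norm{G_\nu}_2}$ and
\[
\norm{f_\nu}_2\ \ge\ \norm{G_\nu}_2\,\eta_{\,c\,\norm{G_\nu}_2}\ =:\ \theta\big(\norm{G_\nu}_2\big).
\]
The function $\theta(s)=s\,\eta_{cs}$ is continuous and strictly positive on $(0,1]$ and depends only on the absolute constants occurring in Lemmas~\ref{lemma:mvv} and \ref{lemma:nearextremal}; after replacing it by $\min(\theta,c_0)$ it also covers the case $\nu=0$, where $\norm{G_0}_2=1$ and $\norm{f_0}_2\ge c_0$. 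The point to emphasize is that $\theta$ does not depend on $\eps$ — only the threshold $\delta$ does. I expect no serious obstacle; the only non-routine step is recognizing that Lemma~\ref{lemma:nearextremal}, applied with $h=G_\nu$, delivers precisely the inherited near-extremality bound that drives the argument.
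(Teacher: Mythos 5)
Your argument is correct and is exactly the proof the paper intends: the paper dismisses this lemma as "a direct consequence of Lemmas~\ref{lemma:mvv} and \ref{lemma:nearextremal}," and your write-up supplies precisely that reasoning (the same dichotomy from Lemma~\ref{lemma:nearextremal} with $h=G_\nu$, $g=f_0+\cdots+f_{\nu-1}$ that the paper spells out in its proof of Lemma~\ref{lemma:decompuniformdecay}, fed back into the $\eta_\delta\propto\delta^{1/\gamma}$ bound of Lemma~\ref{lemma:mvv}), including the correct observations that $\theta$ is independent of $\eps$ and that the case $\nu=0$ is handled by the absolute lower bound on $\norm{f_0}_2$.
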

This is a direct consequence of Lemmas~\ref{lemma:mvv} and
\ref{lemma:nearextremal}.
It is essential for applications below that $\theta$ be independent of $\eps$.

If $f$ is nearly extremal, then the norms of $f_\nu,G_\nu$
enjoy upper bounds independent of $f$, for all except very large $\nu$.
\begin{lemma} \label{lemma:decompuniformdecay}
There exist a sequence of positive constants $\gamma_\nu\to 0$
and a function $N:(0,\tfrac12]\to\integers^+$ satisfying
$N(\delta)\to\infty$ as $\delta\to 0$
such that for any nonnegative $f\in\lt(S^2)$,
if $f$ is $\delta$--nearly extremal then
the quantities $\eps_\nu^\star$ obtained when the decomposition algorithm is applied to
$f$ satisfy
\begin{align}
\eps_\nu^\star\le\gamma_\nu  &\text{ for all } \nu\le N(\delta).
\\
\norm{G_\nu}_2\le\gamma_\nu\norm{f}_2 &\text{ for all } \nu\le N(\delta).
\\
\norm{f_\nu}_2\le\gamma_\nu\norm{f}_2 &\text{ for all } \nu\le N(\delta).
\end{align}
\end{lemma}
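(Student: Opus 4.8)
The plan is to reduce all three bounds to a single estimate on $\norm{G_\nu}_2$, to build the universal decay sequence $\gamma_\nu$ out of the function $\theta$ of Lemma~\ref{lemma:goodfnubound}, and then to prove $\norm{G_\nu}_2\le\gamma_\nu\norm{f}_2$ by induction on $\nu$, choosing $\delta$ small enough in terms of how far the induction must be pushed.

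First I would normalize $\norm{f}_2=1$; this is legitimate because the decomposition algorithm and all three claimed inequalities are homogeneous in $f$. The inequality $\norm{G_\nu\sigma*G_\nu\sigma}_2\ge(\eps_\nu^\star)^2\Sbest^2\norm{f}_2^2$ recorded in the algorithm, combined with \eqref{secondSversion} applied to $G_\nu$, gives $\eps_\nu^\star\le\norm{G_\nu}_2$; and since $G_\nu=f_\nu+G_{\nu+1}$ has nonnegative, disjointly supported summands, $f_\nu\le G_\nu$ pointwise, so $\norm{f_\nu}_2\le\norm{G_\nu}_2$ and $\norm{G_{\nu+1}}_2\le\norm{G_\nu}_2$. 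Hence it suffices to produce a fixed sequence $\gamma_\nu\to0$ and a function $N:(0,\tfrac12]\to\integers^+$ with $N(\delta)\to\infty$ as $\delta\to0$ such that $\norm{G_\nu}_2\le\gamma_\nu$ for every $\nu\le N(\delta)$ whenever $f$ is $\delta$--nearly extremal with $\norm{f}_2=1$.

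Next I would construct $\gamma_\nu$. Let $\theta$ be the function of Lemma~\ref{lemma:goodfnubound}; after replacing $\theta(s)$ by $\min(\theta(s),s)$ — harmless since $\norm{f_\nu}_2\le\norm{G_\nu}_2$ always — we may assume $\theta(s)\le s$ on $(0,1]$. Put $\mu(u)=\min_{u/2\le t\le u}\theta(\sqrt{t})^2$, which is continuous and positive on $(0,1]$ and satisfies $\mu(u)\le u$, and define $a_0=1$, $a_{\nu+1}=a_\nu-\tfrac12\mu(a_\nu)$. Then $a_{\nu+1}\in[\tfrac12 a_\nu,a_\nu)$, so the $a_\nu$ form a strictly decreasing sequence in $(0,1]$, and $a_\nu\to0$, for otherwise $a_\nu-a_{\nu+1}=\tfrac12\mu(a_\nu)$ would stay bounded below by a positive constant ($\mu$ being bounded below on compact subsets of $(0,1]$). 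Set $\gamma_\nu=\sqrt{a_\nu}$, so $\gamma_0=1$ and $\gamma_\nu$ decreases to $0$.

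Finally, the induction. For each $N$, Lemma~\ref{lemma:goodfnubound} applied with $\eps=\gamma_N$ yields a threshold $\delta_N>0$; replacing $\delta_N$ by $\min_{M\le N}\delta_M$ we may assume $\delta_N$ is non-increasing, and we set $N(\delta)=\max\{N:\delta\le\delta_N\}$, which tends to $\infty$ as $\delta\to0$. Fix $\delta$--nearly extremal $f$ with $\norm{f}_2=1$ and $\delta\le\delta_N$; I claim $\norm{G_\nu}_2^2\le a_\nu$ for all $\nu\le N$. The base case is $\norm{G_0}_2^2=1=a_0$. Assume $\norm{G_\nu}_2^2\le a_\nu$ with $\nu+1\le N$. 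If $\norm{G_\nu}_2^2\le a_{\nu+1}$ then $\norm{G_{\nu+1}}_2^2\le\norm{G_\nu}_2^2\le a_{\nu+1}$. Otherwise $\norm{G_\nu}_2^2\in(a_{\nu+1},a_\nu]\subseteq[\tfrac12 a_\nu,a_\nu]$, so $\norm{G_\nu}_2\ge\sqrt{a_{\nu+1}}\ge\sqrt{a_N}=\gamma_N$; since $\delta\le\delta_N$, Lemma~\ref{lemma:goodfnubound} with $\eps=\gamma_N$ applies to the index $\nu$ and gives $\norm{f_\nu}_2\ge\theta(\norm{G_\nu}_2)$, whence $\norm{f_\nu}_2^2\ge\mu(a_\nu)=2(a_\nu-a_{\nu+1})$. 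Using orthogonality of $f_\nu$ and $G_{\nu+1}$, $\norm{G_{\nu+1}}_2^2=\norm{G_\nu}_2^2-\norm{f_\nu}_2^2\le a_\nu-2(a_\nu-a_{\nu+1})\le a_{\nu+1}$, closing the induction; the case in which the algorithm has already terminated before step $\nu$ is trivial, since then $G_\nu=0$. The three asserted bounds now follow from the reduction in the second paragraph. I do not expect a serious analytic obstacle here; the delicate point is conceptual — everything rests on the \emph{uniformity} built into Lemma~\ref{lemma:goodfnubound}, namely that $\theta$ is independent of $\eps$, which is what makes $\gamma_\nu$ a single universal sequence, and the choice of the interval $[a_\nu/2,a_\nu]$ together with the factor $\tfrac12$ in the recursion is exactly what is needed for both cases of the inductive step to close.
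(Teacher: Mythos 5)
Your proof is correct, and it runs on the same two underlying inputs as the paper's argument (Lemma~\ref{lemma:mvv} and Lemma~\ref{lemma:nearextremal}), but packaged differently: you invoke them only through Lemma~\ref{lemma:goodfnubound} (legitimate, since that lemma precedes this one and does not depend on it), whereas the paper reuses the two facts directly. Your initial reduction ($\eps_\nu^\star\le\norm{G_\nu}_2/\norm{f}_2$ via \eqref{secondSversion}, and $\norm{f_\nu}_2\le\norm{G_\nu}_2$, $\norm{G_{\nu+1}}_2\le\norm{G_\nu}_2$ from disjoint supports) is exactly the paper's. After that the mechanisms diverge: the paper picks $\gamma_\nu\to 0$ abstractly, decaying slowly enough that $\nu\gamma_\nu^2\rho(c_0\gamma_\nu^2)>1$, defines $N(\delta)$ as the largest $N$ with $\gamma_N\ge C_1\delta^{1/2}$ (the constant from the alternative $\norm{G_\nu}_2\le C_1\delta^{1/2}\norm{f}_2$ of Lemma~\ref{lemma:nearextremal}), and derives a contradiction by pigeonhole: if $\norm{G_\nu}_2>\gamma_\nu\norm{f}_2$ for some $\nu\le N(\delta)$, then all $\norm{f_\mu}_2^2$ with $\mu\le\nu$ share the lower bound $\gamma_\nu^2\rho(c_0\gamma_\nu^2)\norm{f}_2^2$, which is incompatible with $\sum_\mu\norm{f_\mu}_2^2\le\norm{f}_2^2$. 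You instead construct $\gamma_\nu=\sqrt{a_\nu}$ by the recursion $a_{\nu+1}=a_\nu-\tfrac12\mu(a_\nu)$ built from the modulus $\theta$, and prove $\norm{G_\nu}_2^2\le a_\nu$ by a two-case induction; the choice of the window $[a_\nu/2,a_\nu]$ in the definition of $\mu$ and the factor $\tfrac12$ in the recursion are exactly what make both cases close, and your argument, like the paper's, lives or dies on the uniformity of the modulus ($\theta$ independent of $\eps$ for you, $\rho,c_0,C_1$ independent of $\delta$ for the paper). What the paper's route buys is an explicit $N(\delta)$ with visible $\delta^{1/2}$ dependence; what yours buys is modularity (Lemma~\ref{lemma:goodfnubound} is used as a black box) and a constructive recipe for $\gamma_\nu$, at the cost that $N(\delta)$ is defined only through the unquantified thresholds $\delta_N$. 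Two cosmetic points, neither a gap and both shared in spirit with the paper's own looseness at large $\delta$: cap your definition, e.g.\ $N(\delta)=\max\{N\le\lceil 1/\delta\rceil:\delta\le\delta_N\}$, so that it is a finite positive integer even when $\{N:\delta\le\delta_N\}$ is infinite, and note that for $\delta$ near $\tfrac12$ the set may be empty, which only matters for the (irrelevant) regime of large $\delta$.
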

This holds whether or not the algorithm terminates for $f$.

\begin{proof}
\begin{equation}
\Sbest^2\norm{G_\nu}_2^2
\ge\norm{G_\nu\sigma*G_\nu\sigma}_2
\ge \eps_\nu^\star{}^2\Sbest^2\norm{f}_2^2
=\Big(\eps_\nu^\star{}^2\norm{f}_2^2/\norm{G_\nu}_2^2\Big)\Sbest^2\norm{G_\nu}_2^2,
\end{equation}
so $\eps_\nu^\star\le \norm{G_\nu}_2/\norm{f}_2$.
Thus the second conclusion implies the first.
Since $\norm{f_{\nu}}_2\le\norm{G_\nu}_2$, it also implies the third.

We recall two facts. Firstly,
Lemma~\ref{lemma:nearextremal}, applied to $h=G_\nu$ and $g=f_0+\cdots+f_{\nu-1}$,
asserts that there are constants $c_0,C_1\in\reals^+$ such that whenever
$f\in\lt$ is $\delta$-nearly extremal,
either
$\norm{G_\nu\sigma*G_\nu\sigma}_2\ge c_0\norm{G_\nu}_2^4\norm{f}_2^{-2}$,
or
$\norm{G_\nu}_2\le C_1\delta^{1/2}\norm{f}_2$.
Secondly,
according to Lemma~\ref{lemma:mvv},
there exists a nondecreasing function $\rho:(0,\infty)\to(0,\infty)$
satisfying $\rho(t)\to 0$ as $t\to 0$
such that for every nonzero $f\in\lt$ and any $\nu$,
if $\norm{G_\nu\sigma*G_\nu\sigma}_2\ge t\norm{G_\nu}_2^2$
then $\norm{f_\nu}_2^2\ge\rho(t)\norm{G_\nu}_2^2$.

%Let $c_0$ be a small constant to be specified below.
Choose a sequence $\{\gamma_\nu\}$ of positive numbers which tends
monotonically to zero, but does so sufficiently slowly to satisfy
\begin{equation}\nu\gamma_\nu^2\rho(c_0\gamma_\nu^2)>1 \text{ for all $\nu$.}\end{equation}
Define $N(\delta)$ to be the largest integer satisfying
\begin{equation}\gamma_{N(\delta)}\ge C_1\delta^{1/2}.\end{equation}
$N(\delta)\to\infty$ as $\delta\to 0$ because $\gamma_\nu>0$ for all $\nu$.

Let $f,\delta$ be given.
Suppose that $\nu\le N(\delta)$.
We argue by contradiction, supposing that
$\norm{G_\nu}_2>\gamma_\nu\norm{f}_2$.
Then by definition of $N(\delta)$,
$\norm{G_\nu}_2>C_1\delta^{1/2}\norm{f}_2$.
By the above dichotomy,
\begin{equation}
\norm{G_\nu\sigma*G_\nu\sigma}_2\ge c_0\norm{G_\nu}_2^4\norm{f}_2^{-2}
\ge c_0\gamma_\nu^2\norm{G_\nu}_2^2.
\end{equation}
%This constant $c>0$ does not depend upon the unspecified constant $c_0$,
%neither directly, nor indirectly through the definition of $N(\delta)$.

By the second fact reviewed above,
\begin{equation}
\norm{f_\nu}_2^2
\ge\rho(c_0\gamma_\nu^2)\norm{G_\nu}_2^2
\ge\gamma_\nu^2\rho(c_0\gamma_\nu^2)\norm{f}_2^2.
\end{equation}
Since $\norm{G_\mu}_2\ge\norm{G_\nu}_2$
for all $\mu\le\nu$,
the same lower bound follows for $\norm{f_\nu}_2^2$
for all $\mu\le\nu$.
Since the functions $f_\mu$ are pairwise orthogonal,
$\sum_{\mu\le\nu}\norm{f_\mu}_2^2\le\norm{f}_2^2$,
and consequently
$\nu\gamma_\nu^2\rho(c_0\gamma_\nu^2)\le 1$,
a contradiction.
\end{proof}

The next lemma also follows directly from the decomposition algorithm coupled with
Lemma~\ref{lemma:mvv}.
\begin{lemma}
\label{lemma:Galsosmall}
For any $\eps>0$ there exist $\delta_\eps>0$ and $C_\eps<\infty$ such that
for every $\delta_\eps$--nearly extremal nonnegative function $f\in\lt$,
the functions $f_\nu,G_\nu$ associated to $f$ by the decomposition algorithm
satisfy
\newline
(i)
%$\norm{f\sigma*f\sigma}_2\ge (1-\delta_\eps)^2\Sbest^2\norm{f}_2^2$
For any $\nu$,
if $\norm{G_\nu}_2\ge\eps\norm{f}_2$ then
there exists a cap $\scriptc_\nu\subset S^2$ such that
\begin{equation}
f_\nu\le C_\eps\norm{f}_2|\scriptc_\nu|^{-1/2}\chi_{\scriptc_\nu\cup-\scriptc_\nu}.
\end{equation}
%Note that it is possible for $\norm{f_\nu}_2/\norm{f}_2$
%to be arbitrarily small for any $\nu$.
\newline
(ii)
If
$\norm{G_\nu}_2\ge\eps\norm{f}_2$
then
$\norm{f_\nu}_2\ge\delta_\eps\norm{f}_2$.
% $\norm{f\sigma*f\sigma}_2^{1/2}\ge(1-\delta)\Sbest\norm{f}_2$,
\end{lemma}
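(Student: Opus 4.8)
The plan is a bookkeeping argument that ties together the decomposition algorithm, Lemma~\ref{lemma:mvv}, and Lemma~\ref{lemma:nearextremal}. The key point is that near-extremality of $f$ keeps the bilinear ratio $\norm{G_\nu\sigma*G_\nu\sigma}_2^{1/2}/\norm{G_\nu}_2$ bounded below at every index $\nu$ at which $\norm{G_\nu}_2$ is not small, and this lower bound in turn forces the constants attached to the $\nu$-th decomposition step to depend on $\eps$ alone.

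Both the hypothesis and the conclusions scale correctly under $f\mapsto\lambda f$, so I normalize $\norm{f}_2=1$ and fix $\eps>0$. Let $\nu$ be an index with $\norm{G_\nu}_2\ge\eps$. If $\nu=0$ then $G_0=f$ is $\delta_\eps$--nearly extremal, so $\norm{G_0\sigma*G_0\sigma}_2^{1/2}/\norm{G_0}_2\ge(1-\delta_\eps)\Sbest$. If $\nu\ge1$ I would apply Lemma~\ref{lemma:nearextremal} with $h=G_\nu$ and $g=\sum_{\mu<\nu}f_\mu$; these are orthogonal because the $f_\mu$ have pairwise disjoint supports and $G_\nu=f-\sum_{\mu<\nu}f_\mu$ is disjoint from each of them, and $g\ne0$ since the algorithm did not terminate before step~$\nu$. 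Taking $\delta_\eps\le\tfrac14$, Lemma~\ref{lemma:nearextremal} yields
\[
\eps\ \le\ \norm{G_\nu}_2\ \le\ C\,\max\Bigl(\frac{\norm{G_\nu\sigma*G_\nu\sigma}_2^{1/2}}{\norm{G_\nu}_2}\,,\ \delta_\eps^{1/2}\Bigr),
\]
and shrinking $\delta_\eps$ further so that $C\delta_\eps^{1/2}<\eps$ forces the first term to dominate. In either case there is a constant $c(\eps)>0$, depending only on $\eps$, with $\norm{G_\nu\sigma*G_\nu\sigma}_2\ge c(\eps)^2\Sbest^2\norm{G_\nu}_2^2$.

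Next I would read off (i) and (ii) from Lemma~\ref{lemma:mvv}. The decomposition $G_\nu=f_\nu+G_{\nu+1}$ produced at step~$\nu$ of the algorithm is an application of Lemma~\ref{lemma:mvv} to the function $G_\nu$, whose hypothesis holds, by the last estimate, with parameter $c(\eps)$; moreover, as its proof shows, the constants $C_\delta$ and $\eta_\delta$ in Lemma~\ref{lemma:mvv} may be taken nonincreasing, respectively nondecreasing, in $\delta$. Hence, using that the algorithm (run only on even functions henceforth) chooses $f_\nu$ even and supported in $\scriptc_\nu\cup-\scriptc_\nu$, together with $\norm{G_\nu}_2\le\norm{f}_2$, the cap $\scriptc_\nu$ and the summand $f_\nu$ satisfy
\[
f_\nu\ \le\ C_{c(\eps)}\,\norm{f}_2\,|\scriptc_\nu|^{-1/2}\,\chi_{\scriptc_\nu\cup-\scriptc_\nu}
\qquad\text{and}\qquad
\norm{f_\nu}_2\ \ge\ \eta_{c(\eps)}\,\norm{G_\nu}_2\ \ge\ \eta_{c(\eps)}\,\eps\,\norm{f}_2 .
\]
Setting $C_\eps:=C_{c(\eps)}$ proves (i), and shrinking $\delta_\eps$ once more so that $\delta_\eps\le\eta_{c(\eps)}\eps$ proves (ii); alternatively, (ii) is immediate from Lemma~\ref{lemma:goodfnubound}, since the function $\theta$ there is continuous and strictly positive on $[\eps,1]$.

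There is no genuinely hard step; the one place demanding care is making sure that none of the constants secretly depends on the auxiliary threshold $\eps_\nu^\star$ used inside the algorithm rather than on $\eps$. This is handled by observing that, in the regime $\norm{G_\nu}_2\ge\eps$, the inequalities $(\eps_\nu^\star)^2\Sbest^2\le\norm{G_\nu\sigma*G_\nu\sigma}_2\le4(\eps_\nu^\star)^2\Sbest^2$ together with the lower bound just obtained force $\eps_\nu^\star$ itself to be bounded below by a positive function of $\eps$, so the parameter with which the algorithm invokes Lemma~\ref{lemma:mvv} at step~$\nu$ is controlled from below in terms of $\eps$, and the monotonicity of $C_\delta,\eta_\delta$ then finishes the argument.
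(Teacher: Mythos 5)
Your argument is correct and is precisely the fleshed-out version of what the paper intends: the paper disposes of this lemma with the remark that it "follows directly from the decomposition algorithm coupled with Lemma~\ref{lemma:mvv}", the missing ingredient being exactly the lower bound on $\norm{G_\nu\sigma*G_\nu\sigma}_2^{1/2}/\norm{G_\nu}_2$ that Lemma~\ref{lemma:nearextremal} supplies (as in the paper's Lemma~\ref{lemma:goodfnubound}). Your attention to the monotonicity of $C_\delta,\eta_\delta$ and to the lower bound on $\eps_\nu^\star$ in the regime $\norm{G_\nu}_2\ge\eps\norm{f}_2$ covers the only points requiring care, so nothing further is needed.
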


\section{Step 6B: A geometric property of the decomposition}

We have established inequalities concerning the $\lt$ norms
of the functions $f_\nu,G_\nu$ which the decomposition algorithm yields,
based on quite general principles and a single analytic fact,
Lemma~\ref{lemma:mvv}, concerning the particular inequality which we are studying.
We next establish an additional inequality of a geometric nature,
based on a single additional fact,
the weak interaction of distant caps in the sense of Lemma~\ref{lemma:twocaps}.

\begin{lemma} \label{lemma:metricspace}
In any metric space, for any $N,r$, any finite set
$S$ of cardinality $N$ and diameter equal to $r$
may be partitioned into two disjoint nonempty subsets $S=S'\cup S''$
such that
$\distance(S',S'')\ge r/2N$.
Moreover, given two points $s',s''\in S$ satisfying
$\distance(s',s'')=r$, this partition can be constructed
so that $s'\in S'$ and $s''\in S''$.
\end{lemma}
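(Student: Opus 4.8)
The plan is to produce the partition by a greedy ``growing a ball'' argument around one of the two distinguished points. Let $s',s''\in S$ be the given pair with $\dist(s',s'')=r$. For $t\ge 0$ let $B(t)=\{x\in S:\dist(x,s')\le t\}$ be the closed ball of radius $t$ about $s'$ intersected with $S$. Consider the $N-1$ values $t_j=\dist(s',x_j)$ where $x_1,\dots,x_{N-1}$ are the points of $S\setminus\{s'\}$, listed in increasing order of distance from $s'$; note $t_{N-1}=r$ since the diameter is $r$ and $\dist(s',s'')=r$ already realizes it. I would look at the $N-1$ ``gaps'' $g_j$ between consecutive radii, including the gap from $0$ to the smallest $t_j$. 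Since these gaps lie inside $[0,r]$ and there are at most $N-1$ of them, the largest gap has length $\ge r/(N-1)\ge r/2N$ (in fact $\ge r/(N-1)$, which is more than enough). Pick a radius $\rho$ lying strictly inside that largest gap, set $S'=B(\rho)$ and $S''=S\setminus S'$.

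Then $S'$ is nonempty because $s'\in S'$, and $S''$ is nonempty because $s''\notin B(\rho)$ (as $\rho<r=\dist(s',s'')$). Every point of $S'$ is at distance $\le$ the lower end of the gap from $s'$, and every point of $S''$ is at distance $\ge$ the upper end of the gap from $s'$, so by the triangle inequality $\dist(S',S'')\ge g_j\ge r/(N-1)\ge r/2N$. This gives both the separation bound and the requirement $s'\in S'$, $s''\in S''$. If instead only $N,r$ are given with no distinguished pair, choose any $s',s''\in S$ realizing the diameter (which exists since $S$ is finite) and run the same argument.

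There is essentially no obstacle here; the only point requiring a word of care is the pigeonhole step: the radii $0=\dist(s',s')<t_1\le\cdots\le t_{N-1}=r$ partition $[0,r]$ into at most $N-1$ intervals of total length $r$, so one has length $\ge r/(N-1)$, and one must choose $\rho$ in its interior so that strict inequalities place $s''$ in $S''$. (When several $t_j$ coincide the corresponding gaps are zero and are simply skipped; this only helps.) Everything else is the triangle inequality. The bound $r/2N$ in the statement is deliberately not sharp, which gives ample slack.
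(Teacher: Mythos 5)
Your proof is correct and takes essentially the same route as the paper's: both grow balls about $s'$, locate a radial gap by pigeonhole, cut the set along a sphere inside that gap, and finish with the triangle inequality. The only difference is cosmetic --- the paper uses $2N$ equally spaced annuli of width $r/2N$ and finds an empty one, while you take the largest gap among the sorted distances from $s'$, which even yields the slightly better separation $r/(N-1)$.
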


\begin{proof}
Consider the metric balls $B_k$ centered at $s'$
of radii $kr/2N$ for $k=1,2,\cdots, 2N$.
By the pigeonhole principle, there exists $k$ such that
$(B_{k+1}\setminus B_k)\cap S=\emptyset$. Set $S'=B_k\cap S$,
$S'' = S\setminus S'$. The triangle inequality
yields the conclusion.
\end{proof}

\begin{lemma} \label{lemma:controlgeometry}
For any $\eps>0$ there exist $\delta>0$ and $\lambda<\infty$
% and $\lambda<\infty$ depending only on $\eps'$,
such that for any $0\le f\in\lt(S^2)$
which is  $\delta$--nearly extremal,
%$\norm{f\sigma*f\sigma}_2\ge (1-\delta)^2\Sbest^2\norm{f}_2^2$,
the summands $f_\nu$ produced by the decomposition algorithm
and the associated caps $\scriptc_\nu$ satisfy
\begin{equation}
\varrho(\scriptc_j,\scriptc_k)\le \lambda
\text{ whenever
$\norm{f_j}_2\ge\eps\norm{f}_2$ and $\norm{f_k}_2\ge\eps\norm{f}_2$. }
\end{equation}
\end{lemma}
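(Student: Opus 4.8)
The plan is to argue by contradiction, assuming that two summands $f_j,f_k$ with comparably large norm are produced with associated caps that are metrically far apart, and then to show that this forces the near-extremal $f$ to be a sum of two pieces that interact too weakly with each other (and with the rest) to be near-extremal. We work with even nonnegative $f$ normalized so $\norm{f}_2=1$. Fix $\eps>0$ and let $\delta$ be small, to be chosen. Let $S$ be the set of indices $\nu$ with $\norm{f_\nu}_2\ge\eps$; by Lemma~\ref{lemma:decompuniformdecay} combined with orthogonality of the $f_\nu$, the cardinality $N=\#S$ is bounded above by a constant $N_0=N_0(\eps)$ independent of $f$ (indeed $\sum_\nu\norm{f_\nu}_2^2\le 1$ forces $N\le\eps^{-2}$). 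Suppose for contradiction that there are $j,k\in S$ with $\varrho(\scriptc_j,\scriptc_k)>\lambda$ for some large $\lambda$ to be specified.

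Apply Lemma~\ref{lemma:metricspace} to the finite subset $\{\scriptc_\nu:\nu\in S\}$ of the metric space $(\scriptm,\varrho)$, with the two distinguished points $\scriptc_j,\scriptc_k$: this partitions $S=S'\cup S''$ with $j\in S'$, $k\in S''$, and $\varrho(\scriptc_{\nu'},\scriptc_{\nu''})\ge\lambda/2N\ge\lambda/2N_0$ for all $\nu'\in S'$, $\nu''\in S''$. Now decompose $f=F'+F''+E$, where $F'=\sum_{\nu\in S'}f_\nu$, $F''=\sum_{\nu\in S''}f_\nu$, and $E$ collects the remaining $f_\nu$ together with the tail $\lim_N G_N$; by Lemma~\ref{lemma:goodfnubound} and the bound on $N$, one has $\norm{E}_2\le\eps'$ where $\eps'\to 0$ as $\eps\to 0$ (more precisely, every index $\nu\notin S$ contributing to $E$ either has $\norm{G_\nu}_2<\eps$, which controls the remaining mass via $G_\nu$'s decreasing pointwise convergence, or $\norm{f_\nu}_2<\eps$). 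The three pieces $F',F'',E$ are pairwise orthogonal, so $\norm{F'}_2^2+\norm{F''}_2^2+\norm{E}_2^2=1$, and $\norm{F'}_2\ge\norm{f_j}_2\ge\eps$, $\norm{F''}_2\ge\norm{f_k}_2\ge\eps$; in particular both $F',F''$ are nonzero.

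The key estimate is that $\norm{F'\sigma*F''\sigma}_2$ is small. By Lemma~\ref{lemma:Galsosmall}(i), each $f_\nu$ with $\nu\in S$ satisfies $f_\nu\le C_\eps|\scriptc_\nu|^{-1/2}\chi_{\scriptc_\nu\cup-\scriptc_\nu}$, so $F'\le C_\eps\sum_{\nu\in S'}|\scriptc_\nu|^{-1/2}\chi_{\scriptc_\nu\cup-\scriptc_\nu}$ and similarly for $F''$. Expanding $\norm{F'\sigma*F''\sigma}_2$ and using the pointwise bound $|f\sigma*g\sigma|\le|f|\sigma*|g|\sigma$ together with the triangle inequality, this is dominated by $C_\eps^2\sum_{\nu'\in S',\,\nu''\in S''}$ of terms of the form $\norm{\chi_{\pm\scriptc_{\nu'}}\sigma*\chi_{\pm\scriptc_{\nu''}}\sigma}_2\cdot|\scriptc_{\nu'}|^{-1/2}|\scriptc_{\nu''}|^{-1/2}$; since $\varrho$ identifies a cap with its antipode, each such factor is $<\eta|\scriptc_{\nu'}|^{1/2}|\scriptc_{\nu''}|^{1/2}$ by Lemma~\ref{lemma:twocaps} once $\lambda/2N_0$ exceeds the threshold $\rho(\eta)$ furnished by that lemma. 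There are at most $N_0^2$ summands, so $\norm{F'\sigma*F''\sigma}_2\le C_\eps^2 N_0^2\,\eta =: \beta$, which can be made as small as we wish by choosing $\lambda$ large (after $\eps$ is fixed). By Lemma~\ref{lemma:switchconvolutionfactors}, $\norm{F'\sigma*\tilde F''\sigma}_2=\norm{F'\sigma*F''\sigma}_2\le\beta$ as well, so all the mixed inner products $\langle F'\sigma*F'\sigma,\,F''\sigma*F''\sigma\rangle$, $\langle F'\sigma*F'\sigma,\,F'\sigma*F''\sigma\rangle$, etc., appearing when we expand $\norm{(F'+F'')\sigma*(F'+F'')\sigma}_2^2$ are $O(\beta)$ by Cauchy–Schwarz applied in the form $|\langle A\sigma*B\sigma,\,C\sigma*D\sigma\rangle|=|\langle A\sigma*\tilde C\sigma,\,\tilde B\sigma*D\sigma\rangle|\le\norm{A\sigma*\tilde C\sigma}_2\norm{B\sigma*\tilde D\sigma}_2$, choosing the pairing so that one factor is $F'$-type and the other $F''$-type and hence small.

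Finally we derive the contradiction. On one hand, near-extremality and the triangle inequality for $F=F'+F''+E$ give $\norm{f\sigma*f\sigma}_2^{1/2}\ge(1-\delta)\Sbest$. On the other, writing $f=(F'+F'')+E$ and using $\norm{E\sigma*E\sigma}_2^{1/2}\le\Sbest\norm{E}_2\le\Sbest\eps'$, the triangle inequality reduces matters to bounding $\norm{(F'+F'')\sigma*(F'+F'')\sigma}_2$. Expanding this squared norm, the pure terms contribute $\norm{F'\sigma*F'\sigma}_2^2+\norm{F''\sigma*F''\sigma}_2^2+2\norm{F'\sigma*F''\sigma}_2^2\le\Sbest^4(\norm{F'}_2^4+\norm{F''}_2^4)+O(\beta^2)$, and all remaining (mixed) terms are $O(\beta)$ by the previous paragraph. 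With $a=\norm{F'}_2^2,\ b=\norm{F''}_2^2$ we have $a+b\le 1$ and $a,b\ge\eps^2$, so $a^2+b^2\le(a+b)^2-2ab\le 1-2\eps^4$. Hence $\norm{(F'+F'')\sigma*(F'+F'')\sigma}_2\le\Sbest^2(1-2\eps^4)^{1/2}+O(\beta^{1/2})$, and therefore $\norm{f\sigma*f\sigma}_2^{1/2}\le\Sbest(1-2\eps^4)^{1/4}+O(\beta^{1/4})+\Sbest\eps'$. Choosing first $\eps$ (hence $\eps'$) as in the lemma, then $\lambda$ large enough that $\beta$ is negligible, then $\delta$ small enough that $(1-\delta)>(1-2\eps^4)^{1/4}+O(\beta^{1/4})+\eps'$, this contradicts $(1-\delta)\Sbest\le\norm{f\sigma*f\sigma}_2^{1/2}$. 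Thus no such $j,k$ exist, i.e.\ $\varrho(\scriptc_j,\scriptc_k)\le\lambda$ for all $j,k\in S$, which is the assertion.

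The main obstacle is the bookkeeping in the bilinear expansion of paragraph three: one must be careful that \emph{every} cross term, including the ones of the form $\langle F'\sigma*F'\sigma,\ F'\sigma*F''\sigma\rangle$ that are not manifestly products of two small factors, can be rewritten via Lemma~\ref{lemma:switchconvolutionfactors} so as to extract a factor $\norm{F'\sigma*\tilde F''\sigma}_2$ and hence be controlled by $\beta$; the antipodal identification built into the metric $\varrho$ is exactly what makes Lemma~\ref{lemma:twocaps} applicable to the $\pm\scriptc_\nu$ pairs, and this is why the algorithm was set up with even summands.
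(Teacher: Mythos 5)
Your overall strategy is the right one and matches the paper's: argue by contradiction, split the caps into two metrically separated families via Lemma~\ref{lemma:metricspace}, make the cross terms small using the $L^\infty\times$cap bounds of Lemma~\ref{lemma:Galsosmall} together with Lemma~\ref{lemma:twocaps} and the switching identity of Lemma~\ref{lemma:switchconvolutionfactors}, and then exploit that $\norm{F'}_2^4+\norm{F''}_2^4$ is strictly less than $\norm{f}_2^4$ when both pieces carry mass $\ge\eps^2$. The gap is in how you dispose of the remainder $E=\sum_{\nu:\norm{f_\nu}_2<\eps}f_\nu$. You handle $E$ by the triangle inequality, which costs an additive error $\Sbest\norm{E}_2$ against a gain that is only of size $\eps^4$ (your $a^2+b^2\le 1-2\eps^4$); so the argument needs $\norm{E}_2\lesssim\eps^4$. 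But the only available control on $E$ is qualitative: by Lemma~\ref{lemma:goodfnubound} (whose lower bound $\theta$ ultimately comes from Lemma~\ref{lemma:mvv}, with $\norm{f_\nu}_2\gtrsim\norm{G_\nu}_2^{1+1/\gamma}$), summands of norm $<\eps$ can only begin once $\norm{G_\nu}_2\lesssim\eps^{\gamma/(1+\gamma)}$, so the best one gets is $\norm{E}_2\le\eps'$ with $\eps'$ comparable to a \emph{small} power of $\eps$ — far larger than $\eps^4$, and not improvable by shrinking $\delta$ or enlarging $\lambda$. Your closing step ``choosing first $\eps$ (hence $\eps'$)'' also reverses the quantifiers: $\eps$ is given in the lemma, so you cannot tune it to make $\eps'$ beat $\eps^4$; even the harmless reduction to small $\eps$ does not change the relative sizes. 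Hence the final inequality $(1-\delta)>(1-2\eps^4)^{1/4}+O(\beta^{1/4})+\eps'$ cannot be guaranteed.

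The paper avoids this by partitioning a different collection: it keeps \emph{all} summands $f_0,\dots,f_N$ where $N$ is the first index with $\norm{G_{N+1}}_2<\eps^3$, not just those with $\norm{f_\nu}_2\ge\eps$. For every $\nu\le N$ one has $\norm{G_\nu}_2\ge\eps^3$, so Lemma~\ref{lemma:Galsosmall} still furnishes the cap bounds $f_\nu\le C_\eps|\scriptc_\nu|^{-1/2}\chi_{\scriptc_\nu\cup-\scriptc_\nu}$ for all these indices, and Lemma~\ref{lemma:decompuniformdecay} bounds their number by $M_\eps$; the two distinguished indices $j_0,k_0$ automatically lie in $[0,N]$. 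The metric-space partition is then applied to all $N+1$ caps, the cross terms are small exactly as in your third paragraph, and the discarded piece is only $G_{N+1}$, of norm $<\eps^3$, which is negligible against the (larger) gain $\eps^2$ coming from $\norm{F_1}_2^4+\norm{F_2}_2^4\le 1-\eps^2$. If you restructure your decomposition this way — thresholding on $\norm{G_\nu}_2$ rather than on the individual $\norm{f_\nu}_2$ — the rest of your argument goes through essentially verbatim.
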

Here $\varrho$ is the distance between $\scriptc_j\cup-\scriptc_j$
and $\scriptc_k\cup-\scriptc_k$, as defined in Definition~\ref{defn:capsmetricspace}.

\begin{proof}
It suffices to prove this for all sufficiently small $\eps$.
%Let $\eps>0$ be small.
% ???
Let $f$ be a nonnegative $\lt$ function which satisfies
$\norm{f}_2=1$ and
is $\delta$--nearly extremal
for a sufficiently small $\delta=\delta(\eps)$,
and let $\{G_\nu,f_\nu\}$ be associated to $f$ via the decomposition algorithm.
Set $F = \sum_{\nu=0}^N f_\nu$.

Suppose that  $\norm{f_{j_0}}_2\ge\eps$ and $\norm{f_{k_0}}_2\ge\eps$.
Let $N$ be the smallest integer such that $\norm{G_{N+1}}_2<\eps^3$.
Since $\norm{G_\nu}_2$ is a nonincreasing function of $\nu$,
and since $\norm{f_\nu}_2\le\norm{G_\nu}_2$,
necessarily $j_0,k_0\le N$.
Moreover, by Lemma~\ref{lemma:decompuniformdecay},
there exists $M_\eps<\infty$ depending only on $\eps$ such that
$N\le M_\eps$.
By Lemma~\ref{lemma:Galsosmall}, if $\delta$ is chosen to be a sufficiently small
function of $\eps$ then since $\norm{G_\nu}_2\ge\eps^3$ for all $\nu\le N$,
$f_\nu\le \theta(\eps) |\scriptc|^{-1/2}\chi_{\scriptc\cup-\scriptc}$
for all such $\nu$, where $\theta$ is a continuous, strictly positive
function on $(0,1]$.

Now let $\lambda<\infty$ be a large quantity to be specified.
It suffices to show that if $\delta(\eps)$ is sufficiently
small,
an assumption that $\varrho(\scriptc_j,\scriptc_k)>\lambda$
implies an upper bound,  which depends only on $\eps$, for $\lambda$.

Lemma~\ref{lemma:metricspace} yields a decomposition
$F = F_1+F_2=\sum_{\nu\in S_1}f_\nu + \sum_{\nu\in S_2}f_\nu$
where $[0,N]=S_1\cup S_2$ is a partition of $[0,N]$,
$j_0\in S_1$, $k_0\in  S_2$,
and
$\varrho(\scriptc_j,\scriptc_k) \ge \lambda/2N\ge \lambda/2M_\eps$
for all $j\in S_1$ and $k\in S_2$.
Certainly
$\norm{F_1}_2\ge\norm{f_{j_0}}_2\ge \eps$ and similarly $\norm{F_2}_2\ge\eps$.
The convolution cross term satisfies
\begin{equation}
\norm{F_1\sigma*F_2\sigma}_2
\le \sum_{j\in S_1}\sum_{k\in S_2}
\norm{f_j\sigma*f_k\sigma}_2
\le M_\eps^2\gamma(\lambda/2M_\eps)\theta(\eps)^2,
\end{equation}
where $\gamma(\lambda)\to 0$ as $\lambda\to\infty$
by Lemma~\ref{lemma:distantcaps}.
Therefore
\begin{equation}
\begin{split}
\norm{F\sigma*F\sigma}_2^2
&\le \norm{F_1\sigma*F_1\sigma}_2^2
+ \norm{F_2\sigma*F_2\sigma}_2^2
+C\norm{f}_2^2\norm{F_1\sigma*F_2\sigma}_2
\\
&\le \Sbest^4\norm{F_1}_2^4
+ \Sbest^4\norm{F_2}_2^4
+ M_\eps^2\gamma(\lambda/2M_\eps)\theta(\eps)^2.
\end{split}
\end{equation}
Since $F_1,F_2$ have disjoint supports,
$\norm{F_1}_2^2+\norm{F_2}_2^2\le\norm{f}_2^2 = 1$ and consequently
\begin{equation}
\norm{F_1}_2^4
+ \norm{F_2}_2^4
\le \max\big(\norm{F_1}_2^2, \norm{F_2}_2^2\big)\cdot
\big(\norm{F_1}_2^2+\norm{F_2}_2^2 \big)
\le (1-\eps^2)\cdot 1\le 1-\eps^2.
\end{equation}
Thus
\begin{equation}
\norm{F\sigma*F\sigma}_2^2
\le \Sbest^4(1-\eps^2)
+ M_\eps^2\gamma(\lambda/2M_\eps)\theta(\eps)^2.
\end{equation}

Therefore
\begin{equation}\begin{split}
(1-\delta)^2\Sbest^2\le \norm{f\sigma*f\sigma}_2
&\le \norm{F\sigma*F\sigma}_2
+ C\norm{f}_2\norm{f-F}_2
\\
&\le \norm{F\sigma*F\sigma}_2
+C\eps^3,
\end{split}\end{equation}
so by transitivity
\begin{equation}
(1-\delta)^4\Sbest^4
\le C\eps^3
+ \Sbest^4(1-\eps^2)
+ M_\eps^2\gamma(\lambda/2M_\eps)\theta(\eps)^2.
\end{equation}
Since $\gamma(t)\to 0$ as $t\to\infty$,
for all sufficiently small $\eps>0$
this implies an upper bound, which depends only on $\eps$, for $\lambda$,
as was to be proved.
\end{proof}

\section{Step 6C: Upper bounds for extremizing sequences}

\begin{comment}
\begin{proposition} \label{prop:normalization}
There exists a function $\Theta:[1,\infty)\to(0,\infty)$
satisfying $\Theta(R)\to 0$ as $R\to\infty$ with the following property.
For any $\eps>0$ there exists $\delta>0$
such that any nonnegative even function $f$ satisfying
$\norm{f}_2=1$ which is $\delta$--nearly extremal
%$\norm{f\sigma*f\sigma}_2\ge (1-\delta)^2\Sbest^2$
may be decomposed as $f=F+G$ where
$F,G$ are even and nonnegative with disjoint supports,
$\norm{G}_2<\eps$,
and there exists a cap $\scriptc$
% =\scriptc(z,r)$ such that $F$ satisfies
such that $F$ is upper even-normalized with respect to $\scriptc$.
\end{proposition}

\medskip
{\it This statement should be moved to the Outline of Proof section.
The definition of upper (even-)normalized should also be moved there.}
\medskip
\end{comment}

Proposition~\ref{prop:normalization}
states that any nearly extremal function satisfies
appropriately scaled upper bounds relative to some cap.
It is convenient for the proof to first observe that
a superficially weaker statement implies the version stated.
\begin{lemma} \label{lemma:altnormalization}
There exists a function $\Theta:[1,\infty)\to(0,\infty)$
satisfying $\Theta(R)\to 0$ as $R\to\infty$ with the following property.
For any $\eps>0$ and $\bar R\in[1,\infty)$ there exists $\delta>0$
such that any nonnegative even function $f$ satisfying
$\norm{f}_2=1$ which is $\delta$--nearly extremal
%$\norm{f\sigma*f\sigma}_2\ge (1-\delta)^2\Sbest^2$
may be decomposed as $f=F+G$ where
$F,G$ are even and nonnegative with disjoint supports,
$\norm{G}_2<\eps$,
and there exists a cap $\scriptc=\scriptc(z,r)$
% =\scriptc(z,r)$ such that $F$ satisfies
such that for any $R\in[1,\bar R]$,
\begin{align} \label{Theta1}
&\int_{\min(|x-z|,|x+z|)\ge Rr}F^2(x)\,dx
\le\Theta(R),
\\
\label{Theta2}
&\int_{F(x)\ge Rr^{-1}}F^2(x)\,dx
\le\Theta(R).
\end{align}
\end{lemma}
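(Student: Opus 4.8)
The plan is to feed $f$ into the decomposition algorithm of Step~6A, to retain in $F$ only the first several pieces it produces, and to take $\scriptc$ to be a bounded dilate of one of the associated caps; the quantitative input comes entirely from Lemmas~\ref{lemma:goodfnubound}, \ref{lemma:Galsosmall}, and \ref{lemma:controlgeometry}. (In the degenerate regime where the caps below are not small the conclusion is immediate, so I assume throughout that the relevant caps have radius $\le\tfrac12$.)

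Given $\eps$ and $\bar R$, fix a threshold $\eps_0\in(0,\eps]$, to be chosen small in terms of $\eps$ and $\bar R$, and take $\delta$ small. Run the algorithm on $f$ to obtain $f=\sum_\nu f_\nu$ with caps $\scriptc_\nu=\scriptc(z_\nu,r_\nu)$ and remainders $G_\nu$, and let $N_0$ be the least index with $\norm{G_{N_0}}_2<\eps_0$. Since $\norm{G_\nu}_2\ge\eps_0$ for $\nu<N_0$, Lemma~\ref{lemma:goodfnubound} bounds each such $\norm{f_\nu}_2$ below by a positive constant depending only on $\eps_0$, so $N_0\le M(\eps_0)$ for some finite $M(\eps_0)$. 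Put $G:=G_{N_0}$ and $F:=f-G=\sum_{\nu<N_0}f_\nu$; these are even, nonnegative, and disjointly supported, with $\norm{G}_2<\eps_0\le\eps$. By Lemma~\ref{lemma:Galsosmall}, after shrinking $\delta$, one has $f_\nu\le C_{\eps_0}|\scriptc_\nu|^{-1/2}\chi_{\scriptc_\nu\cup-\scriptc_\nu}$ and $\norm{f_\nu}_2\ge\delta_{\eps_0}$ for all $\nu<N_0$; and by Lemma~\ref{lemma:controlgeometry}, after shrinking $\delta$ again, the caps $\scriptc_\nu$ with $\nu<N_0$ all lie within $\varrho$-distance $\lambda(\eps_0)$ of one another. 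Unwinding Definition~\ref{defn:capsmetricspace}, after replacing some $\scriptc_\nu$ by $-\scriptc_\nu$ the radii $r_\nu$ become comparable up to the factor $e^{\lambda(\eps_0)}$ and the centers $z_\nu$ cluster near a single point $z\in S^2$. I then set $r:=K\max_\nu r_\nu$ with $K=K(\lambda(\eps_0))$ large enough that $\scriptc_\nu\cup-\scriptc_\nu\subset\{x\in S^2:\min(|x-z|,|x+z|)<r/2\}$ for every $\nu<N_0$, and take $\scriptc:=\scriptc(z,r)$. Then $F$ is supported where $\min(|x-z|,|x+z|)<r/2$, so the left side of \eqref{Theta1} vanishes for all $R\ge1$ and \eqref{Theta1} holds for any positive gauge.

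It remains to establish \eqref{Theta2}, and this is the substance of the argument. The crude pointwise bound $F\le C_{\eps_0}e^{\lambda(\eps_0)}K\,r^{-1}$ shows the left side of \eqref{Theta2} vanishes for $R$ beyond some $A(\eps_0)$ and is $\le\norm{F}_2^2\le1$ for $1\le R<A(\eps_0)$; since $A(\eps_0)\to\infty$ as $\eps_0\to0$ this alone cannot yield a gauge independent of $\eps$, and it is precisely here that the auxiliary parameter $\bar R$ and the freedom to choose $\delta$ depending on it enter, since one must upgrade the bound $\le1$ to $\le\Theta(R)$ on $1\le R\le\bar R$. As $\{f_\nu\ge Rr^{-1}\}$ is empty unless $r_\nu\le C_{\eps_0}r/R$, this reduces to showing that the $L^2$ mass of $F$ borne by its pieces at scales much smaller than $r$ tends to $0$, uniformly in $f$, as that scale ratio grows, provided $\delta$ is small. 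I would prove this by combining near-extremality with the weak interaction of distant caps: whenever the (boundedly many) scales of the pieces of $F$ exhibit a wide gap, split $F=F'+F''$ there, so that every cap of $F'$ and every cap of $F''$ are far apart in the metric $\varrho$; by Lemma~\ref{lemma:twocaps} the cross-convolutions $\norm{f_j\sigma*f_k\sigma}_2$ with $j$ indexing $F'$ and $k$ indexing $F''$ are then tiny, and hence so is every mixed term in the expansion of $\norm{F\sigma*F\sigma}_2^2$, the only delicate one being $\langle F'\sigma*F'\sigma,\,F''\sigma*F''\sigma\rangle$, which Lemma~\ref{lemma:switchconvolutionfactors} rewrites as a pairing of two small convolutions. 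Consequently $\norm{F\sigma*F\sigma}_2^2$ is within a small error of $\norm{F'\sigma*F'\sigma}_2^2+\norm{F''\sigma*F''\sigma}_2^2\le\Sbest^4(\norm{F'}_2^4+\norm{F''}_2^4)$, which falls short of $\Sbest^4\norm{F}_2^4$ by $2\Sbest^4\norm{F'}_2^2\norm{F''}_2^2$; near-extremality of $F$ (inherited from that of $f$ because $\norm{G}_2<\eps_0$) then forces $\norm{F'}_2^2\norm{F''}_2^2$ to be small, and since whichever part retains the largest-radius cap keeps mass $\ge\delta_{\eps_0}^2$, the other part has small mass. Iterating the splitting across all wide gaps confines all but a small amount of the mass of $F$ to a bounded window of scales and delivers \eqref{Theta2} with a universal gauge on $[1,\bar R]$. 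I expect the main obstacle to be the bookkeeping of constants: one must choose $\eps_0$ and the gap-width so that the total mass discarded in these splittings stays below $\eps$, which requires controlling how $M(\eps_0),\delta_{\eps_0},C_{\eps_0},\lambda(\eps_0)$ degenerate as $\eps_0\to0$ and balancing them against the gap-widths that make Lemma~\ref{lemma:twocaps} effective. Assembling the two displays then completes the proof.
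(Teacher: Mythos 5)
There is a genuine gap, and it begins with your choice of the cap. You take $r=K\max_\nu r_\nu$ with $K=K(\lambda(\eps_0))$, an $\eps$-dependent inflation factor, so that \eqref{Theta1} becomes vacuous. But this makes \eqref{Theta2} unprovable with a gauge $\Theta$ independent of $\eps$ and $\bar R$: the threshold height $Rr^{-1}$ is then lower than the natural height scale $r_0^{-1}$ of the dominant piece $f_0$ by the factor $r/r_0\ge K(\lambda(\eps_0))$, which blows up as $\eps_0\to 0$. A near-extremal $f$ concentrated at a single scale (an upper-normalized function with $\norm{f}_2=1$ supported essentially on a cap of radius $Cr_0$ necessarily carries a definite fraction of its mass at height $\ge c\,r_0^{-1}$, by Cauchy--Schwarz on the low-level set) then satisfies $\int_{F\ge Rr^{-1}}F^2\gtrsim 1$ for all $R\le c\,r/r_0$, so for any fixed gauge one can choose $\eps$ small and $\bar R$ large to produce an $R\in[1,\bar R]$ violating \eqref{Theta2}. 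The cap must be pinned to the natural scale with \emph{universal} constants; the paper takes $\scriptc=\scriptc_0$, the cap of the first piece $f_0$, whose mass is bounded below by a universal constant (Lemma~\ref{lemma:mvv} applied with $\delta\sim\tfrac12$), and then must prove \eqref{Theta1} nontrivially rather than by fiat.

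The second, deeper missing ingredient is the construction of the universal gauge itself. Your wide-gap splitting (cross terms small by Lemma~\ref{lemma:twocaps}, then near-extremality forces one side to be small) is essentially a re-derivation of Lemma~\ref{lemma:controlgeometry}, which you could simply cite; but the bound it yields for the fine-scale mass is in terms of $\delta$, the gap width, and the $\eps_0$-dependent constants $M_{\eps_0},C_{\eps_0},\delta_{\eps_0},\lambda(\eps_0)$, not in terms of $R$ alone, and you never explain how ``scale ratio at least $\rho$'' converts into ``mass at most $\Theta(R)$'' with a rate independent of $\eps,\bar R$. The paper's mechanism is precisely this conversion: for each $R$ one keeps only the pieces of mass $\ge\eta(R)$ for a slowly decaying threshold $\eta$, absorbs the remaining pieces into the gauge via the algorithm's tail bound $\norm{F-\scriptf}_2<\gamma(\eta)$ (Lemmas~\ref{lemma:goodfnubound} and \ref{lemma:Galsosmall}(ii)), sets $\Theta(R)=\gamma(\eta(R))$, and chooses $\eta(R)\to0$ so slowly that $\lambda(\eta(R))$ and $C(\eta(R))\lambda(\eta(R))$ are dominated by $R$, with $\delta$ small depending on $\eps$ and on the compact range $[1,\bar R]$. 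Relatedly, you have misidentified the bookkeeping target: the delicate constraint is not that the discarded mass stay below $\eps$ (that part you already have), but that the mass lying beyond distance $Rr$ or above height $Rr^{-1}$ be below the \emph{fixed} $\Theta(R)$ for every $R\in[1,\bar R]$; without the $R$-dependent threshold your estimates cannot deliver this.
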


\begin{comment}
There exists $\Theta$ such that
for any $\eps>0$ and $\bar R<\infty$ there exist $\delta=\delta(\eps,\bar R)$
such that for any $f$ satisfying the hypothesis,
there exist a cap $\scriptc=\scriptc(z,r)$ and a decomposition $f=F+G$
with the desired properties, except that the inequalities \eqref{eq:normalization1}
and \eqref{eq:normalization2}
in the definition of upper normalization are required to
hold for $F$ only for all $R\in [1,\bar R]$, rather than for all $R\ge 1$.
Suppose that this weaker conclusion holds. Assuming as we may that
the function $\Theta$ of Definition~\ref{defn:normalized} is strictly decreasing,
define $\bar R(\eps)$ for all sufficiently small $\eps>0$
by $\Theta(\bar R(\eps))=\eps$.
\end{comment}

\begin{proof}[Proof that Lemma~\ref{lemma:altnormalization}
implies Proposition~\ref{prop:normalization}]
Let $\Theta$ be the function promised by the lemma.
Let $\eps,f$ be given, and assume without loss of generality
that $\eps$ is small.
Assuming as we may that $\Theta$ is a continuous, strictly
decreasing function, define
$\bar R = \bar R(\eps)$ by the equation $\Theta(\bar R)=\eps^2/2$.
Let $\scriptc=\scriptc(z,r)$
and $\delta=\delta(\eps,\bar R(\eps))$ along with $F,G$
satisfy the conclusions of the lemma relative to $\eps,\bar R(\eps)$.
Define $\chi$ to be the characteristic function
of the set of all $x\in S^2$ which satisfy either
$\min(|x-z|,|x+z|)\ge \bar R r$,
or $F(x)>\bar R|\scriptc|^{-1/2}$.
Redecompose $f=\tilde F + \tilde G$
where $\tilde F = (1-\chi)F$ and $\tilde G = G + \chi F$.
Then $\norm{\tilde G}_2<2\eps$,
while $\tilde F$ satisfies the required inequalities.
For instance,
if $R\le \bar R$ then
$\int_{\tilde F(x)\ge R|\scriptc|^{-1/2}} \tilde F(x)^2\,dx
\le 
\int_{F(x)\ge R|\scriptc|^{-1/2}} F(x)^2\,dx
\le\Theta(R)$,
while the integrand vanishes if $R>\bar R$.
\end{proof}

\begin{proof}[Proof of Lemma~\ref{lemma:altnormalization}]
Let $\eta:[1,\infty)\to(0,\infty)$ be a function to be chosen below,
satisfying $\eta(t)\to 0$ as $t\to\infty$.
This function will not depend on the quantity $\bar R$.

Let $\bar R\ge 1$, $R\in[1,\bar R]$, and $\eps>0$ be given.
Let $\delta=\delta(\eps,\bar R)>0$ be a small quantity to be chosen below.
Let $0\le f\in\lt(S^2)$ be even and $\delta$--nearly extremal.
% $\norm{f\sigma*f\sigma}_2\ge (1-\delta)^2\Sbest^2\norm{f}_2^2$.
It is no loss of generality to normalize so that $\norm{f}_2=1$.

Let $\{f_\nu\}$ be the sequence of functions obtained by applying the decomposition
algorithm to $f$.
%Since $f$ is even,
%clearly each step can be carried out so that $f_k$ is likewise even.
Choose $\delta=\delta(\eps)>0$ sufficiently small and $M=M(\eps)$ sufficiently large
to guarantee that $\norm{G_{M+1}}_2<\eps/2$
and that $f_\nu,G_\nu$ satisfy all conclusions of
Lemma~\ref{lemma:Galsosmall} and Lemma~\ref{lemma:decompuniformdecay}
for $\nu\le M$. Set $F = \sum_{\nu=0}^M f_\nu$.
Then $\norm{f-F}_2=\norm{G_{M+1}}_2<\eps/2$.

Let $N\in\{0,1,2,\cdots\}$ be the minimum of $M$,
and the smallest number such that $\norm{f_{N+1}}_2<\eta$.
$N$ is majorized by a quantity which depends only on $\eta$.
Set $\scriptf=\scriptf_N=\sum_{k=0}^N f_k$.
It follows from Lemma~\ref{lemma:Galsosmall}, part (ii), that
\begin{equation} \label{eq:fminusscriptFsmall}
\norm{F-\scriptf}_2<\gamma(\eta)
\text{ where $\gamma(\eta)\to 0$ as $\eta\to 0$.}
\end{equation}
This function $\gamma$ is independent of $\eps,\bar R$.

To prove the lemma,
we must produce an appropriate cap $\scriptc=\scriptc(z,r)$,
and must establish the existence of $\Theta$.
%such that, for all $R\in[1,\bar R]$,
%\begin{equation}\label{UENF} \begin{aligned}
%&\int_{\min(|x-z|,|x+z|)>Rr} F^2(x)\,dx\le\Theta(R)
%\\
%&\int_{F(x)\ge R|\scriptc|^{-1/2}}F^2(x)\,dx \le\Theta(R).
%\end{aligned}\end{equation}
To do the former is simple:
%Therefore by orthogonality, $\norm{\sum_{k=0}^M f_k}_2\ge c$ for all $M$.
To $f_0$ is associated a cap $\scriptc_0=\scriptc(z_0,r_0)$ such that
$f_0\le C|\scriptc_0|^{-1/2}(\chi_{\scriptc_0\cup -\scriptc_0})$.
$\scriptc=\scriptc_0$ is the required cap.
Note that by Lemma~\ref{lemma:mvv},
$\norm{f_0}\ge c$ for some positive universal constant $c$.

Suppose that functions $R\mapsto\eta(R)$ and $R\mapsto\Theta(R)$
are chosen so that
\begin{gather}
\label{eq:Thetarequirement1}
\eta(R)\to 0 \text{ as } R\to\infty
\\
\label{eq:Thetarequirement2}
\gamma(\eta(R))\le \Theta(R) \text{ for all $R$.}
\end{gather}
Then by \eqref{eq:fminusscriptFsmall},
$F-\scriptf$ already satisfies the desired inequalities in
$\lt(S^2)$, so
it suffices to show that $\scriptf(x)\equiv 0$
whenever $\min(|x-z|,|x+z|)>Rr_0$,
and that $\norm{\scriptf}_\infty\le R|\scriptc_0|^{-1/2}$.

Each summand satisfies
$f_k\le C(\eta)|\scriptc_k|^{-1/2}\chi_{\scriptc_k\cup -\scriptc_k}$
where $C(\eta)<\infty$ depends only on $\eta$,
and in particular, $f_k$ is supported in $\scriptc_k\cup-\scriptc_k$.
$\norm{f_k}_2\ge\eta$ for all $k\le N$, by definition of $N$.
Therefore by Lemma~\ref{lemma:controlgeometry},
there exists a function $\eta\mapsto\lambda(\eta)<\infty$,
such that
if $\delta$ is sufficiently small as a function of $\eta$ then
$\varrho(\scriptc_k,\scriptc_0)\le \lambda(\eta)$ for all $k\le N$.
This is needed for $\eta=\eta(R)$ for all $R$ in the compact set
$[1,\bar R]$, so such a $\delta$ may be chosen as a function
of $\bar R$ alone; conditions already imposed on $\delta$ above
make it a function of both $\eps,\bar R$.

In the region of all $x\in S^2$ satisfying
$\min(|x-z_0|,|x+z_0|)>Rr_0$,
either $f_k\equiv 0$, or $\scriptc_k$ has radius $\ge \tfrac14 Rr_0$,
or the center $z_k$ of $\scriptc_k$
satisfies $\max(|z_k-z_0|,|z_k+z_0|)\ge \tfrac14Rr_0$.
Choose a function $R\mapsto\eta(R)$ which tends to $0$
sufficiently slowly as $R\to\infty$ to ensure that $\lambda(\eta(R))\to\infty$
sufficiently slowly that
the latter
two cases would
contradict the inequality $\varrho(\scriptc_k,\scriptc_0)\le \lambda$,
and therefore cannot arise.
Then $\scriptf(x)\equiv 0$ when $\min(|x-z_0|,|x+z_0|)>Rr_0$.

With the function $\eta$ specified, $\Theta$ can be defined by
\begin{equation}\label{Thetadefn}\Theta(R)=\gamma(\eta(R)).\end{equation}
Then
\eqref{Theta1} holds for all $R\in[1,\bar R]$.

We claim next that
$\norm{\scriptf}_\infty<R|\scriptc_0|^{-1/2}$
if $R$ is sufficiently large as a function of $\eta$.
Indeed, because the summands $f_k$ have pairwise disjoint
supports, it suffices to control $\max_{k\le N}\norm{f_k}_\infty$.
Again, by Lemma~\ref{lemma:Galsosmall},
$\norm{f_k}_\infty\le C(\eta)|\scriptc_k|^{-1/2}$.
If $\eta(R)$ is chosen to tend to zero sufficiently slowly as $R\to\infty$
to ensure that $C(\eta(R))\lambda(\eta(R))<R$ for all $k\le N$,
%as well as sufficiently slowly that the above proof of \eqref{eq:Thetarequirement1} applies,
then inequality \eqref{Theta2}
holds provided that $\Theta$ is defined by \eqref{Thetadefn}.

The final function $\eta$ must be chosen to tend to zero slowly enough
to satisfy the requirements of these proofs of both \eqref{Theta1} and \eqref{Theta2}.
\end{proof}

\section{Preliminaries for Step 7}

Let a sequence of functions $\{g_\nu\}\subset\lt(\reals^2)$ satisfy
$g_\nu\ge 0$,
$\norm{g_\nu}_2\to 1$,
\begin{align}
\label{spatiallocalization}
&\int_{|x|\ge R} g_\nu(x)^2\,dx \le \Theta(R),
\\
\label{higher}
&\int_{g_\nu(x)\ge R} g_\nu(x)^2\,dx\le\Theta(R),
\end{align}
where
$\Theta(R)\to 0 \text{ as } R\to\infty$
uniformly in $\nu$.
Thus $g_\nu$ is upper normalized with respect to the unit ball $\scriptb\subset\reals^2$.
%\begin{equation}
%\label{higher}
%\int_{g_\nu(x)\ge R} g_\nu(x)^2\,dx\le\Theta(R)
%\end{equation}
%where $\Psi(t)/t^2\to\infty$ as $t\to +\infty$.
This prevents $g_\nu^2$ from converging
weakly to a Dirac mass; it forces any weak limit of $g_\nu^2$ to
be absolutely continuous, and to satisfy the same inequality involving $\Theta$.
In the proof of our main theorem, this situation arises with
$g_\nu=\phi_{\scriptc_\nu}^*(F_\nu)$ where $\{f_\nu\}$ is an extremizing sequence
with a decomposition $f_\nu=F_\nu+G_\nu$ satisfying $\norm{G_\nu}_2\to 0$,
and $F_\nu$ is upper even-normalized relative to a cap $\scriptc_\nu$.

The following simple lemma is the only place in the analysis where the nonnegativity
of an extremizing sequence is used.
\begin{lemma}
If $\{g_\nu\}$ satisfies the hypotheses listed above, then
for any $A>0$ there exists $c>0$ such that for all $\nu$,
\begin{equation}
\int_{|\xi|\le A}|\widehat{g_\nu}(\xi)|^2\,d\xi\ge c.
\end{equation}
\end{lemma}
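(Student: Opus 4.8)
The plan is to argue by contradiction, using weak sequential compactness in $\lt(\reals^2)$ together with the two normalization hypotheses and, crucially, nonnegativity. Observe first that for $g_\nu\not\equiv 0$ the integral $\int_{|\xi|\le A}|\widehat{g_\nu}(\xi)|^2\,d\xi$ is strictly positive, since its vanishing would force $\widehat{g_\nu}$ to vanish a.e.\ on $\{|\xi|\le A\}$, which (as the argument below shows) forces $g_\nu\equiv 0$; hence it suffices to show $\liminf_{\nu\to\infty}\int_{|\xi|\le A}|\widehat{g_\nu}|^2>0$. Suppose not. Then there is a sequence $\nu_k\to\infty$ along which $\int_{|\xi|\le A}|\widehat{g_{\nu_k}}(\xi)|^2\,d\xi\to 0$, while still $\norm{g_{\nu_k}}_2\to 1$. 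Since $\{g_{\nu_k}\}$ is bounded in $\lt(\reals^2)$, after passing to a further subsequence we may assume $g_{\nu_k}\rightharpoonup g$ weakly in $\lt$. Testing against nonnegative $\lt$ functions gives $g\ge 0$; and testing against any $\psi\in\lt$ whose Fourier transform is supported in $\{|\xi|\le A\}$, using Plancherel and Cauchy--Schwarz, gives $\langle g,\psi\rangle=\lim_k\langle g_{\nu_k},\psi\rangle=0$. Hence $\widehat g$ vanishes a.e.\ on $\{|\xi|\le A\}$.

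The next point is that a nonnegative function whose Fourier transform vanishes on a neighborhood of the origin must be zero. To see this, fix $\psi\in\mathcal S(\reals^2)$ real-valued with $\widehat\psi$ a nonzero function supported in $\{|\xi|\le A/2\}$, and set $K=\psi^2\ge 0$; then $\widehat K=(2\pi)^{-2}\widehat\psi*\widehat\psi$ is supported in $\{|\xi|\le A\}$, so by Parseval $\int g\,K=(2\pi)^{-2}\int \widehat g\,\overline{\widehat K}=0$. Since $g,K\ge 0$ this forces $g=0$ a.e.\ on $\{K>0\}$; and $\{K=0\}$ is the zero set of the nonzero real-analytic function $\psi$, hence has Lebesgue measure zero. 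Therefore $g\equiv 0$, i.e.\ $g_{\nu_k}\rightharpoonup 0$.

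The main work is to upgrade this weak convergence to strong convergence, which contradicts $\norm{g_{\nu_k}}_2\to 1$; this is the step where nonnegativity and \emph{both} normalization inequalities are used together, and I expect it to be the crux. For a fixed ball $B\subset\reals^2$ and any $R\ge 1$, split $\int_B g_{\nu_k}^2=\int_{B\cap\{g_{\nu_k}<R\}}g_{\nu_k}^2+\int_{B\cap\{g_{\nu_k}\ge R\}}g_{\nu_k}^2\le R\int_B g_{\nu_k}+\Theta(R)$, the last term coming from the amplitude bound $\int_{\{g_{\nu_k}\ge R\}}g_{\nu_k}^2\le\Theta(R)$. Since $\chi_B\in\lt$ and $g_{\nu_k}\rightharpoonup 0$, we have $\int_B g_{\nu_k}\to 0$, so $\limsup_k\int_B g_{\nu_k}^2\le\Theta(R)$ for every $R$, hence $\int_B g_{\nu_k}^2\to 0$. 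Combining with the spatial bound $\int_{|x|\ge r}g_{\nu_k}^2\le\Theta(r)$ yields $\limsup_k\norm{g_{\nu_k}}_2^2\le\Theta(r)$ for every $r$, and letting $r\to\infty$ gives $\norm{g_{\nu_k}}_2\to 0$, the desired contradiction. The essential mechanism is that the only information the failed conclusion provides — via weak convergence — is control of the first moments $\int_B g_{\nu_k}$, and the level-set hypothesis is precisely what converts this into control of $\int_B g_{\nu_k}^2$; dropping either nonnegativity or either normalization inequality breaks the chain. A purely quantitative variant — writing $g_\nu=h_\nu+b_\nu$ with $h_\nu=g_\nu\chi_{\{|x|<R,\ g_\nu<R\}}$, noting $\widehat{h_\nu}(0)=\norm{h_\nu}_1$ is bounded below and $h_\nu$ compactly supported so $\widehat{h_\nu}$ is equicontinuous at $0$ — exhibits the same mechanism but only gives $\int_{|\xi|\le A}|\widehat{g_\nu}|^2\ge c(R)-C\Theta(R)$, which need not be positive when $\Theta$ decays slowly; this is why I would carry out the soft argument above rather than the quantitative one.
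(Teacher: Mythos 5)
Your proof is correct, but it takes a genuinely different route from the paper's. The paper argues directly and quantitatively: it pairs $g_\nu$ with the Gaussian $\varphi_t(y)=e^{-t|y|^2/2}$, bounds $\int g_\nu\varphi_t$ from below by $\tfrac12 e^{-tR^2/2}\rho^{-1}$ using nonnegativity together with \emph{both} normalization hypotheses (restricting the integral to the set where $|y|\le R$ and $g_\nu\le\rho$, with $R,\rho$ fixed so that $\Theta(R)+\Theta(\rho)\le\tfrac12$), rewrites $\int g_\nu\varphi_t$ via Plancherel as a Gaussian average of $\widehat{g_\nu}$, controls the contribution of $\{|\xi|\ge A\}$ by Cauchy--Schwarz, and finally sets $t=A^2/\gamma$ with $\gamma$ large; this produces an explicit constant $c$ depending only on $A$ and the gauge function $\Theta$, hence uniform over \emph{all} functions sharing the same normalization, not merely over the given sequence. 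You instead run a soft contradiction argument: extract a weak limit $g$ along a hypothetical bad subsequence, show $\widehat{g}$ vanishes a.e.\ on $\{|\xi|\le A\}$, use nonnegativity (via the kernel $K=\psi^2$ with $\widehat\psi$ supported in $\{|\xi|\le A/2\}$, plus the real-analyticity of $\psi$ from Paley--Wiener) to force $g\equiv 0$, and then upgrade weak to strong convergence through the truncation $g_\nu^2\le Rg_\nu+g_\nu^2\chi_{\{g_\nu\ge R\}}$ on balls, which is exactly where nonnegativity and both $\Theta$-bounds enter, contradicting $\norm{g_{\nu_k}}_2\to 1$. Each step checks out: the reduction to the liminf (using strict positivity of each individual term), the identification of the weak limit, the measure-zero zero-set argument, and the strong-convergence upgrade; like the paper, you tacitly assume each $g_\nu\not\equiv 0$, which is harmless. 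The trade-off: your argument avoids all computation and is conceptually transparent, but the constant it yields is ineffective and attached to the particular sequence, whereas the paper's proof delivers the ``universal'' constant (depending only on $\Theta$ and $A$) promised in the heuristic description of Step 7, a uniformity that is convenient if the lemma is to be reused across families of functions sharing one gauge function. Your closing remark correctly diagnoses why a naive quantitative splitting gives only $c(R)-C\Theta(R)$; the paper circumvents precisely this by introducing the extra Gaussian scale $t$, chosen after $R,\rho$ are fixed.
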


\begin{proof}
Let $g\in\lt(\reals^2)$ be a nonnegative
function which satisfies $\norm{g}_2=1$
and the inequalities \eqref{spatiallocalization},\eqref{higher}.
For $t>0$ let $\varphi_t(y)=e^{-t|y|^2/2}$.
Then
\begin{equation}
\int g\varphi_t\,dy
= (2\pi)^{-2}\int \widehat{g}(\xi)\widehat{\varphi_t}(\xi)\,d\xi
= (2\pi)^{-1}t^{-1}\int \widehat{g}(\xi)e^{-|\xi|^2/2t}\,d\xi.
\end{equation}
For any $R,\rho\ge 1$
let $S=\{y: |y|\le R \text{ and } g(y)\le\rho\}$.
Provided that $R,\rho$ are chosen to be sufficiently large
that $\Theta(R)+\Theta(\rho)\le\tfrac12$,
\begin{align*}
\int_{\reals^2} g\varphi_t\,dy
&\ge
e^{-tR^2/2}\int_{S} g(y)\,dy
\\
&\ge
e^{-tR^2/2}\rho^{-1}\int_{S} g^2(y)\,dy
\\
&=
e^{-tR^2/2}\rho^{-1}\big( \norm{g}_2^2- \int_{\reals^2\setminus S} g^2(y)\,dy\big)
\\
&\ge \tfrac12 e^{-tR^2/2}\rho^{-1}
\end{align*}
for any $t>0$.
On the other hand,
% for any $A\ge t^{1/2}$,
by Cauchy-Schwarz
\begin{align*}
\int_{|\xi|\ge A} |\widehat{g}(\xi)|\,t^{-1}e^{-|\xi|^2/2t}\,d\xi
&\le
\pi^{1/2} t^{-1}
\norm{\widehat{g}}_2
\big(\int_{r=A}^\infty e^{-r^2/t}2r\,dr\big)^{1/2}
\\
& =
\pi^{1/2} t^{-1}
\big(t \int_{s=A^2/t}^\infty e^{-s}\,ds\big)^{1/2}
\\
& =
\pi^{1/2}t^{-1/2}
e^{-A^2/2t}.
\end{align*}
Cauchy-Schwarz also gives
\begin{equation*}
\begin{aligned}
\int_{|\xi|\le A}
|\widehat{g}(\xi)|\,t^{-1} e^{-|\xi|^2/2t}\,d\xi
&\le
\big(\int_{|\xi|\le A}|\widehat{g}(\xi)|^2\,d\xi \big)^{1/2}
(2\pi)^{1/2}\big(
\int_0^\infty t^{-2}e^{-r^2/t}\,r\,dr
\big)^{1/2}
\\
&=
\pi^{1/2}
t^{-1/2}
\big(\int_{|\xi|\le A}|\widehat{g}(\xi)|^2\,d\xi \big)^{1/2}.
\end{aligned}
\end{equation*}
Therefore
\begin{align*}
\pi^{1/2}t^{-1/2}
\Big(\int_{|\xi|\le A} |\widehat{g}(\xi)|^2\,d\xi\Big)^{1/2}
&\ge
\int_{\reals^2}
\widehat{g}(\xi)t^{-1} e^{-|\xi|^2/2t}\,d\xi
- \int_{|\xi|\ge A} |\widehat{g}(\xi)|\,t^{-1} e^{-|\xi|^2/2t}\,d\xi
\\
&\ge \pi e^{-tR^2/2}\rho^{-1}
- \pi^{1/2} t^{-1/2} e^{-A^2/2t}.
\end{align*}
Now substitute $t = A^2/\gamma$  where $\gamma=\gamma(A) \ge 1$ to obtain
\begin{equation}
\pi^{1/2}\gamma^{1/2}A^{-1}
\big(\int_{|\xi|\le A} |\widehat{g}(\xi)|^2\,d\xi\big)^{1/2}
\ge
\pi e^{-A^2 R^2/2\gamma}\rho^{-1}
- \pi^{1/2} \gamma^{1/2} A^{-1} e^{-\gamma/2}.
\end{equation}
$R,\rho$ have already been fixed, independent of $A$.
As all three of these quantities remain fixed and $\gamma\to\infty$,
this last lower bound tends
to $\pi \rho^{-1} -0>0$.
Thus choosing $\gamma$ sufficiently large yields the desired lower bound.
\end{proof}
% marker

\begin{lemma} \label{lemma:fourierseparated}
Let $c_0>0$.
Let $\{g_\nu\}$ be any sequence of functions in $\lt(\reals^2)$
satisfying $\norm{g_\nu}_{\lt}=1$ and
$\int_{|\xi|\le 1}|\widehat{g_\nu}(\xi)|^2\,d\xi\ge c_0$.
Then either there exists a function $\theta:[1,\infty)\to(0,\infty)$
satisfying
\begin{equation}\theta(s)\to 0 \text{ as } s\to\infty\end{equation}
such that
\begin{equation}
\int_{|\xi|\ge s} |\widehat{g_\nu}(\xi)|^2\,d\xi
\le\theta(s)
\qquad\text{ for all } s\in[1,\infty) \text{ and all $\nu$,}
\end{equation}
or there exist a subsequence $\nu_k\to\infty$
and real constants $\delta>0$, $\eps_k>0$, and $S_k\ge s_k\ge 1$
% along with a function $\Theta$
such that
$s_k\to\infty$,
$\eps_k\to 0$,
$S_k = s_k^3$,
\begin{align}
&\int_{|\xi|\le s_k}
|\widehat{g_{\nu_k}}(\xi)|^2\,d\xi
\ge\delta
\\
&\int_{|\xi|\ge S_k}
|\widehat{g_{\nu_k}}(\xi)|^2\,d\xi
\ge\delta
\\
&\int_{s_k\le|\xi|\le S_k}
|\widehat{g_{\nu_k}}(\xi)|^2\,d\xi
<\eps_k.
\end{align}
\end{lemma}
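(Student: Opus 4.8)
The plan is to treat this as a concentration-compactness dichotomy for the tail functions $T_\nu(s):=\int_{|\xi|\ge s}|\widehat{g_\nu}(\xi)|^2\,d\xi$. Each $T_\nu$ is nonincreasing in $s$, is bounded above by $M:=\sup_\nu\int_{\reals^2}|\widehat{g_\nu}|^2<\infty$, and for each fixed $\nu$ tends to $0$ as $s\to\infty$ by dominated convergence. Hence $\bar T(s):=\sup_\nu T_\nu(s)$ is nonincreasing and has a limit $L\ge0$ as $s\to\infty$. If $L=0$, the first alternative holds: $\theta(s):=\bar T(s)+2^{-s}$ is strictly positive, tends to $0$, and satisfies $T_\nu(s)\le\theta(s)$ for all $\nu$. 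So from now on I would assume $L>0$, and set $\delta:=\tfrac12\min(L,c_0)>0$.

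The hypothesis $L>0$ is used only through the fact that $\bar T(w)\ge L>\delta$ for \emph{every} scale $w$, so for each $w$ there is an index $\nu$ with $T_\nu(w)\ge\delta$. I would choose $R_k:=2^k$ and $N_k:=k$, put $W_k:=R_k^{3^{N_k}}$, and for each $k$ select $\nu_k$ with $T_{\nu_k}(W_k)\ge\delta$. Since $W_k\to\infty$ while $T_\nu(W_k)\to0$ for each fixed $\nu$, no index can be selected at infinitely many stages; passing to a subsequence in $k$ one may therefore assume $\nu_k\to\infty$.

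Now fix $k$ and run along the chain of scales $\tau_j:=R_k^{3^j}$, $j=0,1,\dots,N_k$, for which $\tau_0=R_k$, $\tau_{N_k}=W_k$, and $\tau_{j+1}=\tau_j^{3}$. The numbers $T_{\nu_k}(\tau_j)$ decrease in $j$ and $T_{\nu_k}(\tau_{N_k})\ge\delta$, so $T_{\nu_k}(\tau_j)\ge\delta$ for all $j\le N_k$. The annulus masses $A_j:=T_{\nu_k}(\tau_j)-T_{\nu_k}(\tau_{j+1})\ge0$, $0\le j<N_k$, sum to at most $M$, so the pigeonhole principle yields some $j_k<N_k$ with $A_{j_k}\le M/N_k$. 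Set $s_k:=\tau_{j_k}$, $S_k:=s_k^{3}=\tau_{j_k+1}$, and $\eps_k:=M/N_k+2^{-k}$. Then $s_k\ge R_k\to\infty$ and $\eps_k\to0$; the mass on $\{|\xi|\le s_k\}$ is $\ge\int_{|\xi|\le1}|\widehat{g_{\nu_k}}|^2\ge c_0\ge\delta$; the mass on $\{|\xi|\ge S_k\}$ equals $T_{\nu_k}(\tau_{j_k+1})\ge\delta$ since $j_k+1\le N_k$; and the mass on $\{s_k\le|\xi|\le S_k\}$ equals $A_{j_k}\le M/N_k<\eps_k$. These are precisely the conclusions of the second alternative, with $S_k=s_k^{3}$.

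The only point requiring a little care is the extraction guaranteeing $\nu_k\to\infty$; everything else is monotonicity of $T_\nu$ together with a pigeonhole over the logarithmically-spaced chain $R_k^{3^j}$, whose length $N_k$ may be sent to infinity precisely because the failure of the first alternative furnishes, at the arbitrarily high scale $W_k$, an index carrying tail mass at least $\delta$ there.
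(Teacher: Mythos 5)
Your proposal is correct and follows essentially the same route as the paper's proof: negate the uniform tail-decay alternative to obtain indices carrying Fourier mass at least $\delta$ beyond arbitrarily large scales, then pigeonhole over a chain of scales related by cubing, whose length grows, to find a transition annulus of small mass, the low-frequency bound coming directly from the hypothesis $\int_{|\xi|\le 1}|\widehat{g_\nu}(\xi)|^2\,d\xi\ge c_0$. Anchoring the chain at $2^k$ even makes the requirement $s_k\to\infty$ explicit, a point the paper's pigeonhole over $j\in\{1,\dots,\nu-1\}$ leaves implicit.
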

In this lemma,
$\delta$ is permitted, in principle,
to depend on $\{g_\nu\}$,
and $\eps_k,s_k$ are permitted to depend on
$\{g_\nu\}$ and on $k$ in an arbitrary manner,
provided only that they satisfy the stated conditions.

\begin{proof}
Define a sequence $\rho_1,\rho_2,\cdots$
by $\rho_1=2$ and by induction, $\rho_{j+1}=\rho_j^{3}$.
If the conclusion does not hold, then after passing to a subsequence and
renumbering, we have
\begin{equation}
\int_{|\xi|\ge \rho_\nu}
|\widehat{g_{\nu}}(\xi)|^2\,d\xi
\ge\delta
\text{ for all } \nu.
\end{equation}
Consider a large $\nu$. Since
\begin{equation}
\sum_{j=1}^{\nu-1}
\int_{\rho_j\le |\xi|\le \rho_{j+1}}
|\widehat{g_{\nu}}(\xi)|^2\,d\xi
\le (2\pi)^2\norm{g_\nu}_2^2\le (2\pi)^2
\end{equation}
and there are $\nu-1$ summands,
there must exist $j(\nu)$ satisfying
\begin{equation}
\int_{\rho_j\le |\xi|\le \rho_{j+1}}
|\widehat{g_{\nu}}(\xi)|^2\,d\xi
\le C\nu^{-1}.
\end{equation}
It suffices to set $s_\nu=\rho_{j(\nu)}$,
$S_\nu=\rho_{j(\nu)+1}=s_\nu^3$,
and $\eps_\nu = C\nu^{-1}$.
\end{proof}

\section{Step 7: Precompactness after rescaling}

Let $\{f_\nu\}$ be an even nonnegative extremizing sequence,
uniformly upper even-normalized with respect to caps $\scriptc_\nu$.
Set $g_\nu=\phi_\nu^* (f_\nu)$,
where $\phi_\nu$ is the rescaling map associated to $\scriptc_\nu$.
Suppose that $r_\nu\to 0$.
If the first conclusion of Lemma~\ref{lemma:fourierseparated}
holds, then we obtain the conclusion of
part (i) of Proposition~\ref{prop:precompactness}. If not, then after passing
to a subsequence,
$\{g_\nu\}$ satisfies the conclusions of the second alternative
of Lemma~\ref{lemma:fourierseparated}.

\begin{comment}
$g_\nu$ won't have compact support, but by virtue of \eqref{spatiallocalization}
(and because $s_k\to\infty$ and we have used a $C^\infty_0$ function $\zeta$
to define $\scriptf_M$)
we may assume that $g_\nu$ does have uniform compact support, by subtracting off
a sequence of functions whose $\lt$ norms are less than any preassigned small constant.
I'll ignore this point for now, and simply assume uniform compact support.
\end{comment}

Split
\begin{equation}
g_\nu=g_\nu^0+g_\nu^\infty + g_\nu^\flat
\end{equation}
where
\begin{gather}
\norm{g_\nu^0}_2\ge\delta,
\\
\norm{g_\nu^\infty}_2\ge\delta,
\\
\norm{g_\nu^\flat}_2<\eps_\nu,
\\
\widehat{g_\nu^0}(\xi) \text{ is supported where } |\xi|\le 2s_\nu,
\\
\widehat{g_\nu^\infty}(\xi) \text{ is supported where } |\xi|\ge \tfrac12 S_\nu,
\\
g_\nu^0,g_\nu^\infty \text{ are upper normalized with respect to } \scriptb,
\\
\eps_\nu\to 0 \text{ as } \nu\to\infty.
\end{gather}
Here $\delta>0$ is a certain constant independent of $\nu$,
and $\scriptb$ denotes the unit ball in $\reals^2$.
This splitting is accomplished
via an appropriate $C^\infty$ three term partition of unity
in the Fourier space $\reals^2_\xi$.

Write $\scriptc_\nu=\scriptc(z_\nu,r_\nu)$.
The above decomposition of $g_\nu=\phi_\nu^*(f_\nu)$
induces a corresponding decomposition
\begin{equation}
f_\nu = F_\nu^0+F_\nu^\infty + F_\nu^\flat
\end{equation}
where
all three summands are real-valued and even, and
for all sufficiently large $\nu$,
\begin{align}
&\text{$F_\nu^0,F_\nu^\infty,F_\nu^\flat$ are upper even-normalized
with respect to $\scriptc_\nu$,}
\\
&\norm{F_\nu^\flat}_2\to 0 \text{ as } \nu\to\infty,
\\
&\norm{F_\nu^0}_2\ge\delta/2,
\\
&\norm{F_\nu^\infty}_2\ge\delta/2,
\\
&\text{$F_\nu^0$ and $F_\nu^\infty$ are supported
in $\scriptc(z_\nu,\tfrac12)$.}
\end{align}
Moreover:
\begin{lemma} \label{lemma:Fnufourierbounds}
The decomposition $f_\nu=F_\nu^0+F_\nu^\infty+F_\nu^\flat$ may
be carried out so that the above conditions are satisfied,
and moreover, for certain constants $C,C_N<\infty$,
the summands
$F_\nu^0,F_\nu^\infty$ admit representations
\begin{equation}\label{F-Repres}
F_\nu^0(y) = \int_{H_\nu} a_\nu^{0,\pm}(\xi) e^{iy\cdot\xi}\,d\xi,
\qquad
F_\nu^\infty(y) = \int_{H_\nu} a_\nu^{\infty,\pm}(\xi) e^{iy\cdot\xi}\,d\xi
\end{equation}
where the representations with $+$ signs are valid for $y\in\scriptc_\nu$,
and those with minus signs $-$ signs are valid for $y\in-\scriptc_\nu$,
with Fourier coefficients $a_\nu^{\pm},a_\nu^{\infty,\pm}$ satisfying
\begin{alignat}{2}
\label{FsFT1}
\int_{r_\nu |\xi|\le  S_\nu/4} |{a_\nu^{\infty,\pm}}(\xi)|^2\,d\xi
&\le C S_\nu^{-1} \qquad&&\text{ for all } \nu
\\
\label{FsFT2}
\int_{r_\nu |\xi|\ge 4 s_\nu} |{a_\nu^{0,\pm}}(\xi)|^2\,d\xi
&\le C_N s_\nu^{-N} \qquad &&\text{ for all } \nu, \text{ for any } N<\infty
\end{alignat}
where
$a_\nu^{0,\pm}(-\xi)\equiv \overline{a_\nu^{0,\pm}(\xi)}$
and
$a_\nu^{\infty,\pm}(-\xi)\equiv \overline{a_\nu^{\infty,\pm}(\xi)}$.
\end{lemma}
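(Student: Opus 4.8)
The plan is to obtain the representations \eqref{F-Repres} by pulling the Fourier inversion formula on $\reals^2$ back through the rescaling map $\phi_\nu$, and to read off the decay estimates \eqref{FsFT1}--\eqref{FsFT2} from the Fourier supports of $g_\nu^0,g_\nu^\infty$ together with the effect of the single spatial cutoff that is needed to make $F_\nu^0,F_\nu^\infty$ compactly supported.

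First I would fix the three-term partition of unity $1=\psi^0+\psi^\flat+\psi^\infty$ in $\reals^2_\xi$ to consist of \emph{radial} functions, with $\psi^0$ smooth, supported in $\{|\xi|\le 2s_\nu\}$ and $\equiv 1$ on $\{|\xi|\le s_\nu\}$, and with $\psi^\infty$ supported in $\{|\xi|\ge\tfrac12 S_\nu\}$ and $\equiv 1$ on $\{|\xi|\ge S_\nu\}$. Radiality forces $g_\nu^0=(\psi^0\widehat{g_\nu})^\vee$ and $g_\nu^\infty=(\psi^\infty\widehat{g_\nu})^\vee$ to be real-valued, since $g_\nu$ is, and this yields the asserted Hermitian symmetry of the Fourier coefficients below. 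The summands on $S^2$ are produced by pullback, cutoff and even reflection: fix a smooth cutoff $\chi_\nu$ adapted to the cap $\scriptc(z_\nu,\tfrac12)$, equal to $1$ on $\scriptc(z_\nu,\tfrac14)$, put $F_{\nu,+}^0=\chi_\nu\cdot(\phi_\nu^*)^{-1}(g_\nu^0)$ on the radius-one cap concentric with $\scriptc_\nu$, set $F_\nu^0(x)=F_{\nu,+}^0(x)+F_{\nu,+}^0(-x)$, define $F_\nu^\infty$ in the same way, and let $F_\nu^\flat=f_\nu-F_\nu^0-F_\nu^\infty$. That this choice is compatible with the properties recorded before the lemma is routine: the pullback preserves $\lt$ norms up to uniform factors, the cutoff $\chi_\nu$ discards only an $\lt$ mass $\le\Theta(\tfrac14 r_\nu^{-1})\to 0$ by \eqref{eq:normalization2}, and $\norm{g_\nu^\flat}_2<\eps_\nu\to 0$.

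The heart of the matter is a change of variables. Writing $\scriptc(z_\nu,\rho)=\phi_\nu(B(0,\rho r_\nu^{-1}))$ and using that $\chi_\nu$ is radial, one gets $\chi_\nu\circ\phi_\nu(y)=\chi(r_\nu y)$ for a fixed radial bump $\chi$ independent of $\nu$, so that on the radius-one cap
\[
F_{\nu,+}^0(x)=r_\nu^{-1}g_\nu^{0,c}(\psi_\nu(x)),\qquad
g_\nu^{0,c}:=\chi(r_\nu\,\cdot\,)\,g_\nu^0\in C^\infty_0(\reals^2),
\]
where $\psi_\nu(x)=\phi_\nu^{-1}(x)=r_\nu^{-1}L_\nu(\pi_\nu(x))$, with $\pi_\nu$ the orthogonal projection onto $H_\nu:=H_{z_\nu}$ and $L_\nu:H_\nu\to\reals^2$ a linear isometry; likewise $g_\nu^{\infty,c}:=\chi(r_\nu\,\cdot\,)g_\nu^\infty$, but only in $\lt$ with compact support, since $g_\nu^\infty$ need not be smooth. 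Inserting the Fourier inversion formula for $g_\nu^{0,c}$, using $\psi_\nu(x)\cdot\xi=r_\nu^{-1}L_\nu(\pi_\nu(x))\cdot\xi=r_\nu^{-1}\pi_\nu(x)\cdot L_\nu^{-1}\xi=r_\nu^{-1}\,x\cdot L_\nu^{-1}\xi$ (the middle step because $L_\nu$ is an isometry, the last because $x-\pi_\nu(x)\perp H_\nu\ni L_\nu^{-1}\xi$), and substituting $\eta=r_\nu^{-1}L_\nu^{-1}\xi\in H_\nu$, one arrives at \eqref{F-Repres} on $\scriptc_\nu$ with $a_\nu^{0,+}(\eta)=(2\pi)^{-2}r_\nu\,\widehat{g_\nu^{0,c}}(r_\nu L_\nu\eta)$; for $g_\nu^\infty$ the same identity holds in the $\lt$ sense, with $a_\nu^{\infty,+}(\eta)=(2\pi)^{-2}r_\nu\,\widehat{g_\nu^{\infty,c}}(r_\nu L_\nu\eta)$. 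The minus-sign representations on $-\scriptc_\nu$ follow since there $F_\nu^0(x)=F_{\nu,+}^0(-x)$, giving $a_\nu^{0,-}(\eta)=a_\nu^{0,+}(-\eta)$; the Hermitian symmetry $a_\nu^{0,\pm}(-\eta)=\overline{a_\nu^{0,\pm}(\eta)}$ is immediate from the reality of $g_\nu^{0,c}$, and similarly for the $\infty$ coefficients.

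Finally, \eqref{FsFT1}--\eqref{FsFT2} reduce, by the inverse substitution $\zeta=r_\nu L_\nu\eta$, to estimating $\int_{|\zeta|\ge 4s_\nu}|\widehat{g_\nu^{0,c}}(\zeta)|^2\,d\zeta$ and $\int_{|\zeta|\le S_\nu/4}|\widehat{g_\nu^{\infty,c}}(\zeta)|^2\,d\zeta$. Since $\widehat{g_\nu^{0,c}}$ is a constant multiple of $\widehat{g_\nu^0}*K_\nu$ with $K_\nu(\xi)=r_\nu^{-2}\widehat\chi(\xi/r_\nu)$ and $\widehat\chi$ Schwartz, and since $\widehat{g_\nu^0}$ is supported in $\{|\xi|\le 2s_\nu\}$, on the set $\{|\zeta|\ge 4s_\nu\}$ only the piece $K_\nu\,\one_{\{|w|\ge 2s_\nu\}}$ of the kernel contributes; Young's inequality together with the rapid decay $\int_{|u|\ge M}|\widehat\chi(u)|\,du\le C_N M^{-N}$ of the Schwartz tail then gives
\[
\int_{|\zeta|\ge 4s_\nu}|\widehat{g_\nu^{0,c}}(\zeta)|^2\,d\zeta
\le C\norm{g_\nu^0}_2^2\Big(\int_{|u|\ge 2s_\nu/r_\nu}|\widehat\chi(u)|\,du\Big)^2
\le C_N\,s_\nu^{-N},
\]
using $r_\nu\le 1$ and $\norm{g_\nu^0}_2\le C$; this is \eqref{FsFT2}. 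The same argument for $g_\nu^\infty$, whose Fourier transform lives in $\{|\xi|\ge\tfrac12 S_\nu\}$, bounds $\int_{|\zeta|\le S_\nu/4}|\widehat{g_\nu^{\infty,c}}(\zeta)|^2\,d\zeta$ by $C_N S_\nu^{-N}$ for every $N$, in particular by $CS_\nu^{-1}$, which is \eqref{FsFT1}. I do not expect a substantive obstacle here; the one mild subtlety is that, because $g_\nu^\infty$ is merely $\lt$, the representation \eqref{F-Repres} for $F_\nu^\infty$ is an $\lt$-identity rather than a pointwise one — harmless, as it enters Step 7 only through the $\lt$ estimate \eqref{Iversion}. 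The bulk of the work is bookkeeping: keeping $L_\nu$ and the two-plane $H_\nu$ straight through the substitutions, and verifying that the re-defined summands still meet the previously listed conditions.
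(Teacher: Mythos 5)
Your proposal is correct and follows essentially the same route the paper indicates: the paper leaves the details to the reader but prescribes exactly this construction — a $C^\infty$ frequency partition of unity applied to $g_\nu$, pullback through $\phi_\nu$ in projection coordinates so that $e^{iy\cdot\xi}$, $\xi\in H_\nu$, sees only $\pi_{H_\nu}(y)$, multiplication by a smooth cutoff depending on $z_\nu$ but not $r_\nu$ with the resulting errors absorbed into $F_\nu^\flat$, and evenness to relate the $\pm$ representations. Your derivation of \eqref{FsFT1}--\eqref{FsFT2} from the Fourier supports of $g_\nu^0,g_\nu^\infty$ together with the Schwartz tail of the rescaled cutoff kernel is the intended ``routine'' argument (and in fact gives a stronger bound than the stated $CS_\nu^{-1}$), so no substantive gap remains.
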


Details of the routine proof of this lemma are left to the reader.
Note that orthogonal projection of $\scriptc_\nu$ to $H_\nu$
is a bijection between open subsets of $S^2$ and of $H_\nu^2$, which
may be identified with $\reals^2$.
Thus $e^{iy\cdot\xi}$ depends only on the projection of $y\in S^2$
onto $H_\nu$ in these expressions. Since $(y_1,y_2,-y_3)$
has the same projection as $(y_1,y_2,y_3)$, the two hemispheres of
$S^2$ require different, though related, representations.
One cannot simply employ a dilation of $H_\nu$
to convert the inverse Fourier transform representations of $g_\nu^0,g_\nu^\infty$
into the desired representations of $F_\nu^0,F_\nu^\infty$ respectively,
because the resulting $F_\nu^0,F_\nu^\infty$ would not have compact supports
when regarded as functions with domains $H_\nu$,
and hence could not be regarded as functions with domains $S^2$.
Therefore dilations of $g_\nu^0,g_\nu^\infty$ must be multiplied
by smooth cutoff functions, which depend on the centers $z_\nu$
of $\scriptc_\nu$ but not on the radii $r_\nu$.
%Because $F_\nu^0,F_\nu^\infty$ are even, and because any two antipodal
%points $\pm x\in S^2$ have the
This introduces extra terms, which are incorporated into $F_\nu^\flat$.
As $r_\nu\to 0$, these extra terms tend to zero in $\lt(S^2)$. The
other conclusions follow readily.
Because $F_\nu^0,F_\nu^\infty$ are even functions,
it may of course be arranged that $a_\nu^{0,-}(-\xi)\equiv a_\nu^{0,+}(\xi)$,
and likewise for $a_\nu^{\infty,\pm}$.

% ???

As $\nu\to\infty$,
\begin{equation}
\norm{f_\nu\sigma*f_\nu\sigma}_2
\le
\norm{ \big(F_\nu^0\sigma*F_\nu^0\sigma\big) +
\big(F_\nu^\infty\sigma*F_\nu^\infty\sigma\big)}_2
+ 2\norm{F_\nu^0\sigma*F_\nu^\infty\sigma}_2
+o(1)
\end{equation}
where $o(1)$ denotes a function which tends to zero as $\nu\to\infty$.
Applying the triangle inequality to the first term does not lead to
a useful bound. Instead,
\begin{equation*}
\begin{aligned}
\norm{
\big(F_\nu^0\sigma*F_\nu^0\sigma \big)&+
\big(F_\nu^\infty\sigma*F_\nu^\infty\sigma\big)}_2^2
\\
&\le
\norm{ F_\nu^0\sigma*F_\nu^0\sigma}_2^2
+ \norm{ F_\nu^\infty\sigma*F_\nu^\infty\sigma}_2^2
+ 2\big|\big\langle F_\nu^0\sigma*F_\nu^0\sigma,\
F_\nu^\infty\sigma*F_\nu^\infty\sigma\big\rangle\big|
\\
&=
\norm{ F_\nu^0\sigma*F_\nu^0\sigma}_2^2
+ \norm{ F_\nu^\infty\sigma*F_\nu^\infty\sigma}_2^2
+ 2\big|\big\langle F_\nu^0\sigma*F_\nu^\infty\sigma,\
F_\nu^0\sigma*F_\nu^\infty\sigma\big\rangle\big|
\end{aligned}
\end{equation*}
since $F_\nu^0,F_\nu^\infty$ are real and even.
Therefore, since $F_\nu^0,F_\nu^\infty$
have uniformly bounded $\lt$ norms,
\begin{equation} \label{precross}
\norm{f\sigma*f\sigma}_2^2
\le
\norm{ F_\nu^0\sigma*F_\nu^0\sigma}_2^2
+ \norm{ F_\nu^\infty\sigma*F_\nu^\infty\sigma}_2^2
+ C\norm{F_\nu^0\sigma*F_\nu^\infty\sigma}_2
+o(1).
\end{equation}

The following key lemma will be proved below.
\begin{lemma} \label{lemma:smallcrossterm}
Let $F_\nu^0,F_\nu^\infty$ be upper even-normalized
with respect to a sequence of caps of radii $\le \tfrac12$.
Assume that $F_\nu^0,F_\nu^\infty$
admit Fourier representations satisfying the inequalities
of Lemma~\ref{lemma:Fnufourierbounds}. Then
\begin{equation}
\norm{F_\nu^0\sigma*F_\nu^\infty\sigma}_{\lt(\reals^3)}\to 0.
\end{equation}
\end{lemma}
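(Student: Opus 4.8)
The plan is to estimate $\norm{F_\nu^0\sigma*F_\nu^\infty\sigma}_{\lt(\reals^3)}^2$ after localizing in $|x|$. Write $f=F_\nu^0$, $g=F_\nu^\infty$, fix a large parameter $R$ to be sent to $\infty$ last, and first use \eqref{FsFT1}, \eqref{FsFT2} together with the elementary bound $\norm{F\sigma*G\sigma}_{\lt(\reals^3)}\le C\norm{F}_2\norm{G}_2$ to discard the parts of $g$ and $f$ whose Fourier coefficients violate $r_\nu|\zeta|\ge S_\nu/4$, respectively $r_\nu|\zeta|\le 4s_\nu$; these have $\lt(S^2)$ norms $\le CS_\nu^{-1/2}$, respectively $\le C_N s_\nu^{-N}$, so their convolution products with the complementary factor are negligible. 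Then split $\reals^3=\{|x|<R^{1/2}r_\nu\}\cup\{R^{1/2}r_\nu\le|x|\le 2-Rr_\nu^2\}\cup\{|x|>2-Rr_\nu^2\}$. On the inner and outer pieces the arguments establishing Lemma~\ref{lemma:innerbrute}, Lemma~\ref{lemma:outerbrute} and Corollary~\ref{cor:neglectlargerx} apply verbatim to the bilinear cross term — they use only that the two factors are upper even-normalized with respect to one common cap, never that they agree — and bound the corresponding contributions by $\Psi(R)+o_\nu(1)$ with $\Psi(R)\to 0$.

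On the bulk region $I=[R^{1/2}r_\nu,\,2-Rr_\nu^2]$ I would invoke \eqref{Iversion} with the representation \eqref{F-Repres} of $g$, treating the two hemispheres separately. The decisive observation is that for $t\in I$ one has $\rho(t)^2=1-(t/2)^2\ge\tfrac12 Rr_\nu^2$, hence $\rho(t)\ge(R/2)^{1/2}r_\nu$; since the surviving Fourier coefficients of $g$ live where $|\zeta|\ge S_\nu/(4r_\nu)$, this yields $\rho(t)|\zeta|\ge cR^{1/2}S_\nu=cR^{1/2}s_\nu^3$, which is enormous. Because $f$ has Fourier support in $\{r_\nu|\xi|\le 4s_\nu\}$, the function $e_{-i\zeta}f$ is, modulo an error of $\lt(S^2)$ norm $\le C_N s_\nu^{-N}$ furnished by \eqref{FsFT2}, concentrated at spherical frequencies comparable to $|\zeta|$, so Lemma~\ref{lemma:DeltaversusFT} permits replacing $(I-\rho(t)^2\Delta)^{-1/4}$ by the scalar $(1+\rho(t)|\zeta|)^{-1/2}$; the Fourier integral operator bound \eqref{FIObound} then gives $\norm{T_{\rho(t)}(e_{-i\zeta}f)}_{\lt(S^2)}\le C(1+\rho(t)|\zeta|)^{-1/2}\norm{f}_2+C_N s_\nu^{-N}$, uniformly over the relevant $t$ and $\zeta$, and in particular a supremum that is $O\bigl((R^{1/2}S_\nu)^{-1/2}\bigr)+o_\nu(1)$, hence small.

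The technical heart, and the step I expect to fight with, is to convert this uniform pointwise-in-$\zeta$ smallness into a bound for $\norm{\int_{H_\nu}a_\nu^{\infty,\pm}(\zeta)\,(F_\nu^0\sigma*e_{i\zeta}\sigma)\,d\zeta}_{\lt(\{|x|\in I\})}$. Simply inserting the above bound into \eqref{Iversion} and integrating the resulting majorant does not close, because $a_\nu^{\infty,\pm}$ lies only in $\lt(H_\nu)$, not in $L^1(H_\nu)$, and $H_\nu$ is two-dimensional and unbounded. The cure is to retain the cancellation in $\zeta$: one shows that the family $\{F_\nu^0\sigma*e_{i\zeta}\sigma:|\zeta|\ge S_\nu/(4r_\nu)\}$ is almost orthogonal in $\lt(\{|x|\in I\})$ — a stationary phase analysis of \eqref{switchcharacter} shows that $\langle F_\nu^0\sigma*e_{i\zeta}\sigma,\,F_\nu^0\sigma*e_{i\zeta'}\sigma\rangle$ decays rapidly once $|\zeta-\zeta'|$ exceeds the scale $\sim s_\nu/r_\nu$ dictated by the Fourier support of $F_\nu^0$, while the diagonal entries, integrated over $t\in I$, are $O(|\zeta|^{-1})=O(r_\nu/S_\nu)$ — and then applies a Schur test, the cutoffs defining $I$ being used to further restrict the relevant range of $|\zeta-\zeta'|$. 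Here the cubic relation $S_\nu=s_\nu^3$ of Lemma~\ref{lemma:fourierseparated}, equivalently the divergence of the frequency gap $S_\nu/s_\nu=s_\nu^2$, is precisely what supplies the room needed for the product of these gains to beat the $\zeta$-volume factors. As a parallel check I would also pursue the alternative in which the identities of Lemma~\ref{lemma:switchconvolutionfactors} and the evenness of $F_\nu^0,F_\nu^\infty$ are used to rewrite $\norm{F_\nu^0\sigma*F_\nu^\infty\sigma}_{\lt(\reals^3)}^2=\langle F_\nu^0\sigma*F_\nu^0\sigma,\,F_\nu^\infty\sigma*F_\nu^\infty\sigma\rangle$ and one exploits, via Plancherel on $\reals^3$, that away from the neighborhoods of $\{|x|=0\}$ and $\{|x|=2\}$ removed above — where $\sigma*\sigma$ ceases to be smooth — the two self-convolutions have essentially disjoint spatial frequency content, the first at $|\xi|\lesssim s_\nu/r_\nu$ and the second at $|\xi|\gtrsim s_\nu^3/r_\nu$.
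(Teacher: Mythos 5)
There is a genuine gap, and it lies in where your spatial decomposition places the difficulty. Both factors are upper even-normalized with respect to the \emph{same} cap $\scriptc(z_\nu,r_\nu)$, so after splitting each into $\pm$ hemispherical pieces, the $(+,+)$ and $(-,-)$ parts of the cross convolution are (up to $\Theta$-tails) supported in $\{|x|\ge 2-CA^2r_\nu^2\}$, and the $(+,-)$ parts in $\{|x|\le CAr_\nu\}$; moreover, by Lemma~\ref{lemma:switchconvolutionfactors} the $(+,-)$ pieces have the \emph{same} $\lt$ norm as the $(+,+)$ piece. Thus essentially all of the $\lt$ mass of $F_\nu^0\sigma*F_\nu^\infty\sigma$ sits in exactly the two regions, $\{|x|>2-Rr_\nu^2\}$ and $\{|x|<R^{1/2}r_\nu\}$, that you propose to discard ``verbatim'' by Lemma~\ref{lemma:outerbrute} and Corollary~\ref{cor:neglectlargerx}. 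Those results do not apply there: Corollary~\ref{cor:neglectlargerx} requires a shell of width $\delta_\nu$ with $\delta_\nu/r_\nu^2\to 0$, whereas your outer shell has width $Rr_\nu^2$ with $R$ fixed (plugging $\eps\sim Rr_\nu^2$, $\rho\sim Ar_\nu$, height $\sim Ar_\nu^{-1}$ into Lemma~\ref{lemma:outerbrute} gives $\sim A^3R^{1/2}$, no smallness), and nothing you cite controls the inner ball at all. Conversely, the bulk region $[R^{1/2}r_\nu,\,2-Rr_\nu^2]$, where you deploy the Fourier integral operator machinery, is already $\Psi(R)$-small by the Lemma~\ref{lemma:innerbrute} argument, so the cancellation between high and low frequencies — the whole content of the lemma — is being exercised where it is not needed and omitted where it is. Note also that your ``decisive observation'' $\rho(t)\ge (R/2)^{1/2}r_\nu$ is precisely what fails on the shell near $|x|=2$ where the work must be done: there $\rho(t)\gtrsim(2-t)^{1/2}$ can be of order $r_\nu$ or smaller, and the paper copes by first reducing to the $(+,+)$ piece, restricting to an annulus $2-\delta_\nu^*\le|x|\le2-\delta_\nu$ with $\delta_\nu/r_\nu^2\to0$, $\delta_\nu^*/r_\nu^2\to\infty$, and then integrating the $\rho(t)^{-1}$ singularity in $t$, the net gain coming from $s_\nu^2S_\nu^{-1}\le s_\nu^{-1}$.

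A second, independent problem is the one you yourself flag: you expand the \emph{high}-frequency factor $F_\nu^\infty$ in its Fourier representation, so the coefficients $a_\nu^{\infty,\pm}$ live on the unbounded set $\{r_\nu|\zeta|\ge S_\nu/4\}$ and your per-$\zeta$ gain cannot be summed without the almost-orthogonality/Schur-test argument you only sketch; as it stands this is a missing proof, not a technicality. The paper's arrangement avoids the issue entirely: it expands the \emph{low}-frequency factor, whose coefficients are supported in $\{r_\nu|\zeta|\le 4s_\nu\}$, a set of measure $\sim s_\nu^2r_\nu^{-2}$, so Minkowski plus Cauchy--Schwarz suffice, and it applies $T_{\rho(t)}$ to $h_\zeta=e_{-i\zeta}F_\nu^{\infty,+}$, where \eqref{FsFT1} supplies the $S_\nu^{-1}$ smoothing gain that beats the volume factor because $S_\nu=s_\nu^3$. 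If you reorganize your argument along these two lines — do the oscillatory estimate on the thin annulus near $|x|=2$ after reducing to the $(+,+)$ piece, and expand the low-frequency factor rather than the high-frequency one — it becomes the paper's proof; in its present form it does not establish the lemma.
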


\begin{corollary}
The second alternative cannot hold in Lemma~\ref{lemma:fourierseparated}.
\end{corollary}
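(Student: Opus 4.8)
The plan is to reach a contradiction with the extremality of $\{f_\nu\}$. Suppose, along a subsequence still written $\{f_\nu\}$, that the second alternative of Lemma~\ref{lemma:fourierseparated} holds. Carry out the splitting $g_\nu=g_\nu^0+g_\nu^\infty+g_\nu^\flat$ and the induced decomposition $f_\nu=F_\nu^0+F_\nu^\infty+F_\nu^\flat$ described above, with all the stated properties: $F_\nu^0,F_\nu^\infty$ are real, even, upper even-normalized with respect to $\scriptc_\nu$, obey $\norm{F_\nu^0}_2,\norm{F_\nu^\infty}_2\ge\delta/2$, and admit the Fourier representations of Lemma~\ref{lemma:Fnufourierbounds}, while $\norm{F_\nu^\flat}_2\to0$. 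The whole reason for separating the low-frequency piece $g_\nu^0$ from the high-frequency piece $g_\nu^\infty$ is that the mixed convolution $F_\nu^0\sigma*F_\nu^\infty\sigma$ becomes negligible; this is precisely Lemma~\ref{lemma:smallcrossterm}, which I take as given.

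First I would record that the three pieces are almost orthogonal in $\lt$. Since $s_\nu\to\infty$ and $S_\nu=s_\nu^3$, for large $\nu$ the Fourier supports $\{|\xi|\le 2s_\nu\}$ of $g_\nu^0$ and $\{|\xi|\ge\tfrac12 S_\nu\}$ of $g_\nu^\infty$ are disjoint, so $g_\nu^0\perp g_\nu^\infty$ in $\lt(\reals^2)$. Together with $\norm{g_\nu^\flat}_2<\eps_\nu\to0$ and the fact that the rescaling $\phi_\nu^*$ is asymptotically an isometry as $r_\nu\to0$ on the spatially localized functions at hand, and with the remarks following Lemma~\ref{lemma:Fnufourierbounds} (the discarded cutoff errors are absorbed into $F_\nu^\flat$ and vanish in $\lt$), this yields
\begin{equation*}
\norm{F_\nu^0}_2^2+\norm{F_\nu^\infty}_2^2\le\norm{f_\nu}_2^2+o(1)=1+o(1).
\end{equation*}

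Next I would combine \eqref{precross} with Lemma~\ref{lemma:smallcrossterm} and the basic inequality $\norm{F\sigma*F\sigma}_2\le\Sbest^2\norm{F}_2^2$, applied to $F=F_\nu^0$ and $F=F_\nu^\infty$, to get
\begin{equation*}
\norm{f_\nu\sigma*f_\nu\sigma}_2^2\le\Sbest^4\big(\norm{F_\nu^0}_2^4+\norm{F_\nu^\infty}_2^4\big)+o(1).
\end{equation*}
Since $\{f_\nu\}$ is extremizing for \eqref{secondSversion} with $\norm{f_\nu}_2\equiv1$, the left side tends to $\Sbest^4$. Writing $a_\nu=\norm{F_\nu^0}_2^2$ and $b_\nu=\norm{F_\nu^\infty}_2^2$, we have $a_\nu,b_\nu\ge\delta^2/4$ and $a_\nu+b_\nu\le1+o(1)$; by the strict convexity of $t\mapsto t^2$, the maximum of $a^2+b^2$ over $\{a,b\ge\delta^2/4,\ a+b\le1\}$ is attained at a vertex and is at most $1-c$ for some $c=c(\delta)>0$, so $a_\nu^2+b_\nu^2\le1-c+o(1)$. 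Feeding this back gives $\Sbest^4\le\Sbest^4(1-c)+o(1)$, which fails for large $\nu$. This contradiction shows the second alternative of Lemma~\ref{lemma:fourierseparated} cannot occur.

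I expect the one step that takes real work to be Lemma~\ref{lemma:smallcrossterm} itself, whose proof is deferred and contains all the oscillatory-integral/Fourier-integral-operator estimates; everything in the corollary's proof is then routine. Within the present argument the only point needing a little care is the norm bookkeeping behind $\norm{F_\nu^0}_2^2+\norm{F_\nu^\infty}_2^2\le1+o(1)$, i.e.\ that the passage from $g_\nu^0,g_\nu^\infty$ on $\reals^2$ to $F_\nu^0,F_\nu^\infty$ on $S^2$ loses only $o(1)$ in $\lt$ norm as $r_\nu\to0$.
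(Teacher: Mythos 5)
Your argument is correct and is essentially the paper's own proof: both take Lemma~\ref{lemma:smallcrossterm} as given, combine it with \eqref{precross} and the bound $\norm{F\sigma*F\sigma}_2\le\Sbest^2\norm{F}_2^2$, use the near-orthogonality $\norm{F_\nu^0}_2^2+\norm{F_\nu^\infty}_2^2\le 1+o(1)$ coming from \eqref{FsFT1}--\eqref{FsFT2} and $\norm{F_\nu^\flat}_2\to 0$, and then exploit the lower bounds $\ge\delta/2$ to force $\norm{F_\nu^0}_2^4+\norm{F_\nu^\infty}_2^4\le 1-c$, contradicting extremality. Your convexity/vertex argument for this last step is just a cosmetic variant of the paper's estimate $a^2+b^2\le\max(a,b)\,(a+b)\le(1-\rho)(1+o(1))$.
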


\begin{proof}
Assume Lemma~\ref{lemma:smallcrossterm}.
Then  by \eqref{precross},
\begin{align*}
\norm{f\sigma*f\sigma}_2^2
&\le
\norm{ F_\nu^0\sigma*F_\nu^0\sigma}_2^2
+ \norm{ F_\nu^\infty\sigma*F_\nu^\infty\sigma}_2^2
+o(1)
\\
&\le {\mathbf S}^4\norm{F_\nu^0}_2^4
+
{\mathbf S}^4\norm{F_\nu^\infty}_2^4
+o(1).
\end{align*}

Since $S_\nu/s_\nu\to\infty$ and $\norm{F_\nu^\flat}_2\to 0$, it follows easily from
\eqref{FsFT1},\eqref{FsFT2} that
\begin{equation}
\norm{F_\nu^0}_2^2+\norm{F_\nu^\infty}_2^2
\le (1+o(1))\norm{f_\nu}_2^2=1+o(1).
\end{equation}
Since $\min\big(\norm{F_\nu^0}_2,\norm{F_\nu^\infty}_2\big)\ge\delta/2$,
this forces
\begin{equation}
\max\big( \norm{F_\nu^0}_2^2,\,\, \norm{F_\nu^\infty}_2^2 \big) \le 1-\rho
\end{equation}
for all sufficiently large $\nu$, for some $\rho>0$
independent of $\nu$.
It follows that
\begin{align*}
{\mathbf S}^4
\norm{F_\nu^0}_{\lt(\sigma)}^4
+
{\mathbf S}^4
\norm{F_\nu^\infty}_{\lt(\sigma)}^4
&\le {\mathbf S}^4
\Big( \norm{F_\nu^0}_{\lt(\sigma)}^2 + \norm{F_\nu^\infty}_{\lt(\sigma)}^2 \Big)
\max\big( \norm{F_\nu^0}_2^2,\,\, \norm{F_\nu^\infty}_2^2 \big)
\\
&\le {\mathbf S}^4
(1+o(1))(1-\rho).
\end{align*}
We conclude that
\begin{equation}
\limsup_{\nu\to\infty}
\norm{f_\nu\sigma*f_\nu\sigma}_{\lt(\reals^3)}^2
% \le (1-\rho/2){\mathbf S}^4
< {\mathbf S}^4,
\end{equation}
contradicting the assumption that $\{f_\nu\}$ was an extremizing sequence.
\end{proof}

Combining the above results, the proof of Proposition~\ref{prop:precompactness},
in the case when $r_\nu\to 0$,
is complete modulo the proof of Lemma~\ref{lemma:smallcrossterm}.

\section{Step 8: Excluding small caps}

In this section we prove Proposition~\ref{prop:notsmall},
assuming the case $r_\nu\le \tfrac12$ of Proposition~\ref{prop:precompactness}.
Thus we need
to prove that the radii $r_\nu$ of the caps $\scriptc_\nu$
associated to an extremizing sequence $\{f_\nu\}$
of positive even functions cannot tend to zero.

\begin{lemma} \label{lemma:compareP}
Let $\{f_\nu\}$ be any sequence of real-valued, even functions on $S^2$
satisfying $\norm{f_\nu}_{\lt}=1$.
Suppose that $f_\nu$ is upper even-normalized with respect
to a cap $\scriptc_\nu=\scriptc(z_\nu,r_\nu)$, uniformly in $\nu$.
Suppose that the sequence of pullbacks $\phi_\nu^*(f_\nu)$
satisfies the first alternative in the conclusion of
Lemma~\ref{lemma:fourierseparated}.
Suppose that $r_\nu\to 0$.
Then there exists a sequence of functions $F_\nu:\paraboloid\to\reals$
satisfying $\norm{F_\nu}_2\to 1$
such that
\begin{equation}
\limsup_{\nu\to\infty}
\norm{F_\nu\sigma_P*F_\nu\sigma_P}_2
\ge (3/2)^{-1/2}
\limsup_{\nu\to\infty}
\norm{f_\nu\sigma*f_\nu\sigma}_2.
\end{equation}
\end{lemma}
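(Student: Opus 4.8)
The plan is to transfer the problem to the paraboloid by rescaling the small cap $\scriptc_\nu$ to the unit ball, in three moves: split off the north‑polar half of $f_\nu$ and use the $3/2$‑identity of Lemma~\ref{lemma:32}; reinterpret its pullback as a function on $\paraboloid$; and pass to the limit $r_\nu\to 0$, in which the rescaled sphere flattens to $\paraboloid$.

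\medskip
First, rotate so that $z_\nu=(0,0,1)$ for every $\nu$. Write $f_\nu=f_\nu^++f_\nu^-$ as in the definition of upper even‑normalization, with $f_\nu^+$ upper normalized relative to $\scriptc_\nu$ and $f_\nu^-(x)=f_\nu^+(-x)$. Since $r_\nu\to 0$, taking $R=r_\nu^{-1}$ in \eqref{eq:normalization2} (and using $\{x_3\le 1/2\}\subset\{|x-(0,0,1)|\ge 1\}$) gives $\norm{f_\nu^+\chi_{\{x_3\le 1/2\}}}_2\to 0$. Put $g_\nu=f_\nu^+\chi_{\{x_3>1/2\}}$ and $\tilde g_\nu(x)=g_\nu(-x)$; then $g_\nu$ is supported in the hemisphere $\{x_3>1/2\}$, $g_\nu$ and $\tilde g_\nu$ have disjoint supports, and $\norm{f_\nu-(g_\nu+\tilde g_\nu)}_2\to 0$. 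Applying Lemma~\ref{lemma:32} with $g:=2^{1/2}g_\nu$ and setting $G_\nu:=2^{1/2}g_\nu$, we obtain $\norm{G_\nu}_2=\norm{g_\nu+\tilde g_\nu}_2\to 1$ and $\norm{(g_\nu+\tilde g_\nu)\sigma*(g_\nu+\tilde g_\nu)\sigma}_2=(3/2)^{1/2}\norm{G_\nu\sigma*G_\nu\sigma}_2$. Since $h\mapsto h\sigma*h\sigma$ is Lipschitz on bounded subsets of $\lt(S^2)$ (by Tomas--Stein and bilinearity, as in the proof of Lemma~\ref{lemma:nearextremal}),
\begin{equation*}
\norm{f_\nu\sigma*f_\nu\sigma}_2=(3/2)^{1/2}\norm{G_\nu\sigma*G_\nu\sigma}_2+o(1).
\end{equation*}
Define $F_\nu:=\phi_\nu^*G_\nu$, viewed as an element of $\lt(\paraboloid,\sigma_P)$ via the graph parametrization $y'\mapsto(y',\tfrac12|y'|^2)$ (under which $\sigma_P=dy'$). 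Because $G_\nu$ lives in a fixed hemispherical cap but is concentrated where $x_3\to 1$, one has $\norm{F_\nu}_{\lt(\sigma_P)}^2=\norm{\phi_\nu^*G_\nu}_{\lt(\reals^2)}^2=\norm{G_\nu}_{\lt(\sigma)}^2+o(1)\to 1$. Thus $\norm{F_\nu}_2\to 1$, and it remains to prove
\begin{equation} \label{eq:FGcompare}
\norm{F_\nu\sigma_P*F_\nu\sigma_P}_2=\norm{G_\nu\sigma*G_\nu\sigma}_2+o(1),
\end{equation}
which combined with the displayed identity yields (in fact equality in) the assertion.

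\medskip
Here the hypotheses enter. The first alternative of Lemma~\ref{lemma:fourierseparated} is exactly uniform frequency localization of $\{\phi_\nu^*f_\nu\}$; together with uniform spatial localization (upper normalization) and $\lt$‑boundedness, Rellich's compactness criterion makes $\{\phi_\nu^*f_\nu\}$ precompact in $\lt(\reals^2)$. Since $f_\nu^-$ contributes a negligible amount near the north pole and $\norm{f_\nu^+\chi_{\{x_3\le 1/2\}}}_2\to 0$, we have $F_\nu=2^{1/2}\phi_\nu^*f_\nu+o(1)$, so $\{F_\nu\}$ is precompact as well. It therefore suffices to prove \eqref{eq:FGcompare} along an arbitrary subsequence on which $F_\nu\to F$ in $\lt(\reals^2)$. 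Fix a large $R$ and a fixed smooth cutoff $\chi_R$; with $F_\nu^R=\chi_RF_\nu$ and $G_\nu^R$ the corresponding truncation of $G_\nu$, the localization bounds give $\norm{F_\nu-F_\nu^R}_2+\norm{G_\nu-G_\nu^R}_{\lt(\sigma)}\lesssim\Theta(cR)^{1/2}+o(1)$, with the resulting changes in both sides of \eqref{eq:FGcompare} of the same size. Translating the north pole to the origin and applying the anisotropic dilation $\delta_\nu(u_1,u_2,u_3)=(r_\nu^{-1}u_1,r_\nu^{-1}u_2,r_\nu^{-2}u_3)$, a direct computation identifies $\mu_\nu:=r_\nu^{-1}(\delta_\nu)_*\big((\tau_{(0,0,1)})_*(G_\nu^R\sigma)\big)$ with the measure $(\beta_\nu F_\nu^R)\sigma_\nu$, where $\sigma_\nu$ denotes $dy'$ on $P_\nu:=\delta_\nu(S^2-(0,0,1))=\{(y',\psi_\nu(y'))\}$ in the graph parametrization, $\beta_\nu(y')=(1-r_\nu^2|y'|^2)^{-1/2}$, and $\psi_\nu\to-\tfrac12|\cdot|^2$ in $C^\infty$ on $\{|y'|\le 2R\}$. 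Since $|\det\delta_\nu|=r_\nu^{-4}$, the $\lt(\reals^3)$ norm of a pushed‑forward measure scales by $r_\nu^{2}$, whence $\norm{\mu_\nu*\mu_\nu}_2=\norm{G_\nu^R\sigma*G_\nu^R\sigma}_2$; after a reflection in the $y_3$‑axis (which preserves $\lt$‑norms of convolutions) the $P_\nu$ converge to $\paraboloid$ on the relevant bounded region.

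\medskip
The remaining step --- letting $r_\nu\to 0$ --- is the crux. Transferring Tomas--Stein on $S^2$ through $\delta_\nu$ (using that $\phi_\nu^*$ is asymptotically an isometry on functions supported in $\scriptc((0,0,1),2Rr_\nu)$) gives a uniform bound $\norm{h\sigma_\nu*h\sigma_\nu}_2\le C_R\norm{h}_2^2$ for $h$ supported in $\{|y'|\le 2R\}$; combined with $\beta_\nu F_\nu^R\to F^R:=\chi_RF$ in $\lt$, this gives $\norm{\mu_\nu*\mu_\nu-(F^R\sigma_\nu)*(F^R\sigma_\nu)}_2\to 0$. For the now‑fixed density $F^R$, $\widehat{F^R\sigma_\nu}\to\widehat{F^R\sigma_P}$ pointwise (dominated convergence, $F^R\in L^1$), the family $\{\widehat{F^R\sigma_\nu}\}$ is bounded in $L^4(\reals^3)$ (uniform Tomas--Stein) and uniformly $L^4$‑tight: approximating $F^R$ in $\lt$ by $\varphi\in C^\infty_c$ and applying stationary/non‑stationary phase on the surfaces $P_\nu$, which for large $\nu$ have uniformly non‑vanishing curvature on the bounded region in question, yields $|\widehat{\varphi\sigma_\nu}(\eta)|\le C_\varphi(1+|\eta|)^{-1}$ uniformly in $\nu$, while the tail from $F^R-\varphi$ is absorbed by uniform Tomas--Stein. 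Hence $(F^R\sigma_\nu)*(F^R\sigma_\nu)\to(F^R\sigma_P)*(F^R\sigma_P)$ in $\lt(\reals^3)$, and $(F^R\sigma_P)*(F^R\sigma_P)=(F_\nu^R\sigma_P)*(F_\nu^R\sigma_P)+o(1)$ since $F_\nu^R\to F^R$. Chaining these identities gives $\norm{G_\nu^R\sigma*G_\nu^R\sigma}_2=\norm{F_\nu^R\sigma_P*F_\nu^R\sigma_P}_2+o(1)$; undoing the truncation and letting $R\to\infty$ yields \eqref{eq:FGcompare} along the subsequence, and then along the full sequence by the usual subsequence argument. Together with the first displayed identity this gives $\norm{F_\nu\sigma_P*F_\nu\sigma_P}_2=(3/2)^{-1/2}\norm{f_\nu\sigma*f_\nu\sigma}_2+o(1)$, and in particular the claimed inequality of $\limsup$s. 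The only genuine difficulties are the joint continuity of the bilinear convolution as the hypersurface degenerates --- dealt with via the fixed‑limit trick made available by precompactness, plus the uniform oscillatory‑integral bounds for the $P_\nu$ --- and the bookkeeping of the anisotropic rescaling and of the factor $(3/2)^{1/2}$; the rest is routine.
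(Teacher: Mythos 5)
Your proposal is correct, and its skeleton coincides with the paper's: rotate the cap to the north pole, peel off one antipodal half and use the $3/2$--identity of Lemma~\ref{lemma:32}, rescale anisotropically at scale $r_\nu$, and identify the limiting problem with the paraboloid; your bookkeeping of the factor $(3/2)^{1/2}$, of the Jacobian factors $\beta_\nu$, and of the behaviour of $\norm{\mu*\mu}_{\lt}$ under the dilation $\delta_\nu$ all check out. Where you genuinely diverge is in how the flattening limit $r_\nu\to 0$ is executed. The paper does not use precompactness inside this proof: it truncates $\tilde f_\nu$ to a shrinking cap of radius $r_\nu^{1/2}$, rewrites the rescaled spherical extension as the paraboloid extension of $F_\nu$ against a multiplier $h_\nu(t,y)\to 1$ in $C^N$ on compact sets, proves a uniform $L^4$--tail bound (Lemma~\ref{lemma:uniformL4decay}) directly from the three uniform localization bounds \eqref{3uniform} --- the hypothesis (first alternative of Lemma~\ref{lemma:fourierseparated}) enters only as the uniform Fourier tail --- and then compares the two oscillatory integrals on compact sets via the uniformly bounded operators $T_\nu,T$ and a density argument. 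You instead convert the hypothesis into precompactness of the pullbacks (Kolmogorov--Riesz, as in Corollary~\ref{cor:rellich}), freeze the density at the strong limit $F^R$, and prove convergence of the varying-surface extensions of that fixed density by pointwise convergence plus uniform stationary/non-stationary phase decay for the surfaces $P_\nu$ and the uniformly transferred Tomas--Stein bound for the tails; your fixed-hemisphere truncation followed by a fixed-$R$ cutoff replaces the paper's $r_\nu^{1/2}$-cap truncation. Both routes rest on the same uniform inputs (uniform extension bounds for the rescaled surfaces, uniformly nondegenerate phases); yours buys a conceptually cleaner ``fixed-limit'' reduction at the price of invoking compactness, while the paper's multiplier comparison shows that only the uniform frequency decay is really needed, and your version delivers asymptotic equality rather than merely the stated inequality of $\limsup$'s (as the paper's computation does implicitly). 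The points you leave routine --- uniform $C^N$ control of $\psi_\nu$, the uniform phase estimates, and the remark that on compact frequency sets $|\widehat{F^R\sigma_\nu}|\le\norm{F^R}_{L^1}$ so pointwise convergence upgrades to local $L^4$ convergence --- are genuinely routine and are treated with comparable brevity in the paper.
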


\begin{proof}[Proof of Proposition~\ref{prop:notsmall}]
Let $\{f_\nu\}$ be an extremizing sequence of nonnegative even functions
for the inequality \eqref{secondSversion} satisfying $\norm{f_\nu}_2=1$.
There exists a sequence of caps $\scriptc_\nu=\scriptc(z_\nu,r_\nu)$
such that each $f_\nu$ is upper even-normalized with respect to $\scriptc_\nu$.
We must prove that $\liminf_{\nu\to\infty} r_\nu>0$.

If not, then by passing to a subsequence we may assume that $r_\nu \to 0$.
By Proposition~\ref{prop:precompactness}, the sequence of pullbacks
$g_\nu=\phi_\nu^*(f_\nu)$ is precompact in $\lt(\reals^2)$.
Thus the hypotheses of Lemma~\ref{lemma:compareP} are satisfied,
so there exists a sequence of functions $F_\nu\in\lt(\paraboloid)$
satisfying its conclusions.

Now
$\norm{F_\nu\sigma_P*F_\nu\sigma_P}_2
\le {\mathbf P}^2\norm{F_\nu}_{\lt(\paraboloid)}^2$
by the definition of ${\mathbf P}$.
Consequently
\begin{equation}
\limsup_{\nu\to\infty} \norm{f_\nu\sigma*f_\nu\sigma}_2
\le (3/2)^{1/2}{\mathbf P}^2.
\end{equation}
The left-hand side tends to ${\mathbf S}^2$ since
$\{f_\nu\}$ is an extremizing sequence for \eqref{secondSversion},
so ${\mathbf S}^2\le (3/2)^{1/2}{\mathbf P}^2$,
contradicting the inequality ${\mathbf S}\ge 2^{1/4}{\mathbf P}$
of Lemma~\ref{lemma:constantfunction}.
\end{proof}

\begin{proof}[Proof of Lemma~\ref{lemma:compareP}]
Write $\scriptc_\nu=\scriptc(z_\nu,r_\nu)$.
Decompose $2^{1/2}f_\nu(x)=\tilde f_\nu(x)+\tilde f_\nu(-x)
+ f_\nu^\flat(x)$
where $\tilde f_\nu$ is real,
$\tilde f_\nu$ is supported in $\scriptc(z_\nu,r_\nu^{1/2})$,
$\norm{f_\nu^\flat}_2\to 0$,
and the functions $\phi_\nu^*(\tilde f_\nu)$
satisfy the first alternative of the
conclusions of Lemma~\ref{lemma:fourierseparated}, uniformly in $\nu$.
% marker

Since $f_\nu$ is even and $\norm{f_\nu}_2=1$,
we have $\norm{\tilde f_\nu}_2\to 1$ as $\nu\to\infty$.
Moreover $g_\nu (x) = \tilde f_\nu(x) + \tilde f_\nu(-x)$ satisfies
\begin{equation}
\norm{g_\nu\sigma*g_\nu\sigma}_2^2/\norm{g_\nu}_2^4
\equiv
\tfrac32
\norm{\tilde f_\nu\sigma*\tilde f_\nu\sigma}_2^2/\norm{\tilde f_\nu}_2^4,
\end{equation}
and therefore
%since $\{f_\nu\}$ is an extremizing sequence
%with $\norm{\tilde f_\nu}_2\to 1$
%and $\norm{f_\nu^\flat}_2\to 0$,
\begin{equation}
\lim_{\nu\to\infty}
\norm{\tilde f_\nu\sigma*\tilde f_\nu\sigma}_2^2
= (3/2)^{-1}
\lim_{\nu\to\infty}
\norm{f_\nu\sigma*f_\nu\sigma}_2^2.
\end{equation}

By rotation symmetry, we may suppose that $z_\nu=(0,0,1)$ for all $\nu$.
Define
$F_\nu:\paraboloid\to[0,\infty)$
by
\begin{equation}
F_\nu(y,|y|^2/2)
= r_\nu\tilde f_\nu\big(r_\nu y,(1-r_\nu^2|y|^2)^{1/2}\big)
\end{equation}
for $y\in\reals^2$.
$F_\nu$ will also be regarded as an element of $\lt(\reals^2,\,dy)$
by $F_\nu(y)=F_\nu(y,|y|^2/2)$.
Then $\norm{F_\nu}_{\lt(\paraboloid,\sigma_P)}
=\norm{F_\nu}_{\lt(\reals^2)}\to 1$ as $\nu\to\infty$.
%Therefore $\{F_\nu\}$ is precompact in $\lt(\reals^2)$.
%Replace it by a convergent subsequence.

It remains to prove that
\begin{equation}
\limsup_{\nu\to\infty}
\norm{\widehat{F_\nu\sigma_P}}_{L^4(\reals^3)}^4
\ge
\limsup_{\nu\to\infty}
\norm{\widehat{\tilde f_\nu\sigma}}_{L^4(\reals^3)}^4.
\end{equation}
We have
\begin{equation} \label{3uniform}
\int_{|y|\ge R}F_\nu(y)^2\,dy
+
\int_{F_\nu(y)\ge R}F_\nu(y)^2\,dy
+
\int_{|\xi|\ge R}|\widehat{F_\nu}(\xi)|^2\,d\xi\ \ \longrightarrow 0
\end{equation}
as $R\to\infty$, uniformly in $\nu$.

%Therefore after passage to a subsequence,
%there exists $F\in\lt(\reals^2)$ such that
%\begin{equation}
%\norm{F_\nu-F}_\lt(\reals^2)\to 0 \text{ as } \nu\to\infty.
%\end{equation}
%These {\it a priori} bounds, compled with the boundedness
%of the adjoint Fourier restriction operator from $\lt(\paraboloid,\sigma_P)$
%to $L^4(\reals^3)$, imply that

Thus we must compare
$\widehat{F_\nu\sigma_P}(x,t)=\int e^{-ix\cdot y-it|y|^2/2}F_\nu(y)\,dy$
with
\begin{equation}
\begin{aligned}
\widehat{\tilde f_\nu\sigma}(x,t)
&=\int_{\reals^2}
e^{-ix\cdot v-it(1-|v|^2)^{1/2}}\tilde f_\nu(v,(1-|v|^2)^{1/2})\,d\sigma(v,(1-|v|^2)^{1/2})
\\
&=\int_{\reals^2}
e^{-ix\cdot v-it(1-|v|^2)^{1/2}}\tilde f_\nu(v,(1-|v|^2)^{1/2})\,(1-|v|^2)^{-1/2}\,dv.
\end{aligned}
\end{equation}
In the latter integral substitute
$v=r_\nu y$ to obtain
\begin{align*}
r_\nu^{-1}
\widehat{\tilde f_\nu \sigma}&(r_\nu^{-1} x, - r_\nu^{-2} t)
\\
&=
r_\nu^{-1} r_\nu^2 \int_{\reals^2} e^{-ix\cdot y+itr_\nu^{-2}(1-r_\nu^2|y|^2)^{1/2}}
\tilde f_\nu(r_\nu y,(1-r_\nu^2|y|^2)^{1/2})\,(1-r_\nu^2|y|^2)^{-1/2}\,dy
\\
&=
\int_{\reals^2} e^{-ix\cdot y+itr_\nu^{-2}(1-r_\nu^2|y|^2)^{1/2}}F_\nu(y)
(1-r_\nu^2|y|^2)^{-1/2} \,dy
\\
&=
e^{itr_\nu^{-2}}
\int_{\reals^2} e^{-ix\cdot y-it|y|^2/2}
F_\nu(y)
h_\nu(t,y)
\,dy
\end{align*}
where
\begin{align*}
h_\nu(t,y)
&=
e^{it \psi_{\nu}(y)}
(1-r_\nu^2|y|^2)^{-1/2}
\\
\psi_\nu(y) &= -r_\nu^{-2}+|y|^2/2 + r_\nu^{-2} (1-r_\nu^2|y|^2)^{1/2}.
\end{align*}
Thus
\begin{align*}
\norm{\widehat{\tilde f_\nu\sigma}}_4^4
&=
\int_\reals\int_{\reals^2}
\big|r_\nu^{-1}\widehat{\tilde f_\nu\sigma}(r_\nu^{-1}x, - r_\nu^{-2}t)\big|^4\,dx\,dt
\\
&=
\Big\|
%e^{-itr_\nu^{-2}}
\int_{\reals^2} e^{-ix\cdot y-it|y|^2/2}
F_\nu(y)
h_\nu(t,y)
\,dy
\Big\|_{L^4(\reals^3)}^4.
\end{align*}
It will be important that
on any compact subset of $\reals^1_t\times\reals^2_{y}$,
\begin{equation} \label{htendsto1}
\text{$h_\nu(t,y)\to 1$ in the $C^N$ norm as $\nu\to\infty$, for all $N<\infty$.}
\end{equation}

Define
\begin{align}
u_\nu(x,t)
&=
\int_{\reals^2} e^{-ix\cdot y-it|y|^2/2}F_\nu(y)
\,h_\nu(t,y)\,dy
\\
\tilde u_\nu(x,t)
&=
\int e^{-ix\cdot y -it|y|^2/2}F_\nu(y)\,dy.
\end{align}

\begin{lemma} \label{lemma:uniformL4decay}
\begin{align}
&\int_{|(x,t)|\ge R}
|u_\nu(x,t)|^4\,dx\,dt\to 0
\text{ as } R\to\infty \text{ uniformly in } \nu.
\\
&\int_{|(x,t)|\ge R}|\tilde u_\nu(x,t)|^4\,dx\,dt
%&\int_{|(x,t)|\ge R}|\widehat{F_\nu\sigma_P}(x,t)|^4\,dx\,dt
\to 0 \text{ as }R\to\infty \text{ uniformly in }\nu.
\end{align}
\end{lemma}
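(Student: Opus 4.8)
Here is a plan to prove Lemma~\ref{lemma:uniformL4decay}.

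The plan is to recognize both $u_\nu$ and $\tilde u_\nu$ as adjoint Fourier restriction operators and then to exploit the uniform localization \eqref{3uniform} of the amplitudes $F_\nu$. First I would record the two identities at hand: $\tilde u_\nu=\widehat{F_\nu\sigma_P}$, while, writing $e^{-it|y|^2/2}h_\nu(t,y)=e^{-it\Phi_\nu(y)}(1-r_\nu^2|y|^2)^{-1/2}$ with
\[\Phi_\nu(y)=\tfrac12|y|^2-\psi_\nu(y)=r_\nu^{-2}\bigl(1-(1-r_\nu^2|y|^2)^{1/2}\bigr),\]
one has $u_\nu(x,t)=\int_{\reals^2}e^{-ix\cdot y-it\Phi_\nu(y)}(1-r_\nu^2|y|^2)^{-1/2}F_\nu(y)\,dy$, an adjoint restriction operator for the graph $\Gamma_\nu=\{(y,\Phi_\nu(y))\}$ with amplitude $(1-r_\nu^2|\cdot|^2)^{-1/2}F_\nu$. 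Since $\nabla\Phi_\nu(y)=y(1-r_\nu^2|y|^2)^{-1/2}$ and $\nabla^2\Phi_\nu(y)=(1-r_\nu^2|y|^2)^{-1/2}I+r_\nu^2(1-r_\nu^2|y|^2)^{-3/2}y\otimes y$, on any fixed ball $\{|y|\le 2\rho\}$ we get $\nabla^2\Phi_\nu=I+O_\rho(r_\nu^2)$, uniformly nondegenerate once $\nu$ is large, with all higher derivatives $O_\rho(r_\nu^2)$; recall also that $F_\nu$ is supported in $\{|y|\le r_\nu^{-1/2}\}$ and $\norm{F_\nu}_2$ is bounded.

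Next I would fix $\rho\ge1$ and split $F_\nu=P_\nu+Q_\nu$ via \eqref{3uniform}, taking $P_\nu=\chi(\,\cdot/\rho)(m_\rho* F_\nu)$ with $\chi\in C^\infty_c(B(0,2))$ equal to $1$ on $B(0,1)$ and $\widehat{m_\rho}$ a smooth cutoff equal to $1$ on $\{|\xi|\le\rho\}$ and supported in $\{|\xi|\le2\rho\}$. Then $P_\nu\in C^\infty_c(B(0,2\rho))$ with $\norm{P_\nu}_{C^k}\le C_{k,\rho}$ uniformly in $\nu$ (because $\norm{\partial^\alpha(m_\rho* F_\nu)}_\infty\le C_{\alpha,\rho}\norm{F_\nu}_2$), while $\norm{Q_\nu}_2\le\norm{F_\nu\one_{\{|y|\ge\rho\}}}_2+C\norm{\widehat{F_\nu}\one_{\{|\xi|\ge\rho\}}}_2\le\omega(\rho)$, with $\omega(\rho)\to0$ as $\rho\to\infty$, uniformly in $\nu$. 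The $Q_\nu$ part contributes a globally small error: for $\tilde u_\nu$ I would use $\norm{\widehat{Q_\nu\sigma_P}}_{L^4(\reals^3)}\le\scriptr_\paraboloid\norm{Q_\nu}_2\le\scriptr_\paraboloid\,\omega(\rho)$ from \eqref{Rparaboloiddefn}; for $u_\nu$ I would run the change of variables preceding the lemma with $F_\nu$ replaced by $Q_\nu$, which identifies the $Q_\nu$ part of $u_\nu$ with $e^{-itr_\nu^{-2}}r_\nu^{-1}\widehat{q_\nu\sigma}(r_\nu^{-1}x,-r_\nu^{-2}t)$ for the function $q_\nu$ on $\scriptc(z_\nu,r_\nu^{1/2})$ determined by $Q_\nu$, which satisfies $\norm{q_\nu}_{\lt(S^2)}^2=\int(1-r_\nu^2|y|^2)^{-1/2}|Q_\nu(y)|^2\,dy\le2\,\omega(\rho)^2$ for large $\nu$; by a further change of variables and \eqref{inequalityR} this part then has $L^4(\reals^3)$ norm $\le\scriptr\sqrt2\,\omega(\rho)$, again uniformly in $\nu$.

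The heart of the matter is the remaining bulk terms $v_\nu(x,t)=\int e^{-ix\cdot y-it|y|^2/2}P_\nu(y)\,dy$ (for $\tilde u_\nu$) and $v_\nu'(x,t)=\int e^{-ix\cdot y-it\Phi_\nu(y)}(1-r_\nu^2|y|^2)^{-1/2}P_\nu(y)\,dy$ (for $u_\nu$): oscillatory integrals whose amplitudes lie in $C^\infty_c(B(0,2\rho))$ with uniformly bounded $C^k$ norms and whose spatial Hessians are $-t(I+O_\rho(r_\nu^2))$, uniformly nondegenerate for large $\nu$. I would prove, by ordinary stationary phase on $\{|x|\le C\rho(1+|t|)\}$ (where the critical point, if any, lies in $B(0,2\rho)$ and $|t|\gtrsim_\rho|(x,t)|$) and by $N$-fold integration by parts on the complement (where $|\nabla_y(\text{phase})|\gtrsim\rho(1+|t|)+|x|$ throughout $B(0,2\rho)$), the pointwise bound
\[|v_\nu(x,t)|+|v_\nu'(x,t)|\le C_{\rho,N}\Bigl((1+|(x,t)|)^{-1}\one_{\{|x|\le C\rho(1+|t|)\}}+(1+|(x,t)|)^{-N}\one_{\{|x|>C\rho(1+|t|)\}}\Bigr),\]
uniformly in large $\nu$. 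Raising this to the fourth power and integrating over $\{|(x,t)|\ge R\}$, the first regime contributes $\lesssim_\rho\rho^2\int_{|t|\gtrsim R/\rho}(1+|t|)^{-2}\,dt\lesssim\rho^3/R$ and the second $\lesssim_{\rho,N}R^{3-4N}$, so
\[\int_{|(x,t)|\ge R}\bigl(|v_\nu(x,t)|^4+|v_\nu'(x,t)|^4\bigr)\,dx\,dt\le C_{\rho,N}\bigl(R^{-1}+R^{3-4N}\bigr)\to0\ \text{ as }\ R\to\infty,\]
uniformly in $\nu\ge\nu_0(\rho)$. To finish I would run the standard three-parameter estimate: given $\eps>0$, first pick $\rho$ so that the $Q_\nu$ contributions above have $L^4(\reals^3)$ norm $<\tfrac12\eps^{1/4}$, then with $\rho$ fixed pick $R_0,\nu_0$ so that the last displayed integral is $<2^{-3}\eps$ whenever $R\ge R_0$ and $\nu\ge\nu_0$, and combine using $|a+b|^4\le2^3(|a|^4+|b|^4)$ to get $\int_{|(x,t)|\ge R}|u_\nu|^4<\eps$ and $\int_{|(x,t)|\ge R}|\tilde u_\nu|^4<\eps$ for such $R,\nu$; for the finitely many $\nu<\nu_0$ it suffices that $u_\nu$ and $\tilde u_\nu$ each lie in $L^4(\reals^3)$ (the former a rescaling of $\widehat{\tilde f_\nu\sigma}$, the latter equal to $\widehat{F_\nu\sigma_P}$, both in $L^4$ by \eqref{inequalityR} and \eqref{Rparaboloiddefn}), so their $L^4$ tails already tend to $0$ and $R_0$ may be enlarged to absorb them. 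I expect the main obstacle to be the uniformity in $\nu$ of the stationary/non-stationary phase bounds for $v_\nu'$, that is, keeping the implied constants independent of $r_\nu$; this is exactly why the splitting $F_\nu=P_\nu+Q_\nu$ is performed on $\reals^2$ before passing to the surface, so that the surviving oscillatory integral is supported over a fixed ball on which $\Gamma_\nu$ has uniformly nondegenerate curvature and the amplitude uniformly controlled derivatives.
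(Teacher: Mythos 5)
Your proposal is correct and follows essentially the same route as the paper: split off a uniformly small $\lt$ remainder handled by the uniform $\lt\to L^4$ bounds for the operators (traced back, as in the paper, to the sphere/paraboloid restriction inequalities via the change of variables), and treat the smooth compactly supported bulk by stationary/non-stationary phase, whose constants are uniform in $\nu$ because the phases converge to $|y|^2/2$ with uniformly nondegenerate Hessians. You have merely filled in, with explicit cutoffs and tail integrals, the "routine argument" that the paper leaves to the reader.
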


\begin{comment}
From this,
the $L^2\to L^4$ inequality for the Fourier restriction for the
paraboloid, and the uniform $\lt$ decay bounds for the functions $F_\nu$
it follows that
\begin{equation} \label{tendstoFtransform}
\int e^{-ix\cdot y-it|y|^2/2}
F_\nu(y)
h_\nu(t,y)
\,dy
\to
\int e^{-ix\cdot y-it|y|^2/2}
F(y)
\,dy
\end{equation}
in $L^4$ norm on any compact subset of $\reals^3$.
\end{comment}

\begin{proof}
Define operators $T_\nu$ and $T$
from $\lt(\reals^2)$ to $L^4(\reals^3)$ by
\begin{align}
T_\nu g(x,t)
&=
\int_{\reals^2} e^{-ix\cdot y-it|y|^2/2}g(y)
\chi_{r_\nu^{-1}|y|\le 1/2}(y)
h_\nu(t,y)\,dy
\\
Tg(x,t)
&=
\int e^{-ix\cdot y -it|y|^2/2}g(y)\,dy.
\end{align}
$T:\lt(\reals^2)\to L^4(\reals^3)$ is bounded.
Although
the operators $T_\nu$ are written in coordinates which disguise this fact,
they are bounded
from $\lt(\reals^2)$ to $L^4(\reals^3)$
uniformly in $\nu$, because they are obtained via
norm-preserving changes of variables from the single bounded operator
$\lt(S^2,\sigma)\owns h\mapsto \widehat{h\sigma}$.

If $g\in C^2(\reals^2)$ has compact support, then
$|T_\nu g(x,t)|\le C_g |(x,t)|^{-1}$
where $C_g$ depends only on the $C^1$ norm of $g$
and on the diameter of its support,
provided that $\nu$ is sufficiently large that the support
of $g$ is contained in $B(0,r_\nu^{-1})$.
This follows from \eqref{htendsto1} together with
the method of stationary phase; the phase functions
appearing in the definition of $T_\nu$ have uniformly nondegenerate
critical points (if any), uniformly in $\nu$.

These two facts, together with the three uniform inequalities
\eqref{3uniform},
lead directly to the stated conclusion for $u_\nu$ by a routine argument.

A slightly simpler application of the same reasoning applies to
$\tilde u_\nu$.
\end{proof}

Therefore it suffices to prove that for any $R<\infty$,
\begin{equation} \label{utildeuclose}
\int_{|(x,t)|\le R}
\big|
u_\nu(x,t)-\tilde u_\nu(x,t)
\big|^4\,dx\,dt\to 0
\text{ as $\nu\to\infty$}.
\end{equation}
If $g\in L^1$ has compact support,
then
\begin{equation} \label{softconvergence}
|T_\nu(g)(x,t)-T(g)(x,t)|\to 0,
\text{ uniformly for all $|(x,t)|\le R$.}
\end{equation}
Since $T_\nu,T$ are uniformly bounded operators from
$\lt$ to $L^4$,
and since the class of all compactly supported $g\in L^1$
is dense in $\lt$,
\eqref{utildeuclose} follows from \eqref{softconvergence}.
% Lemma~\ref{lemma:uniformL4decay}.
\end{proof}

\section{Estimation of the cross term $\norm{F_\nu^0\sigma*F_\nu^\infty\sigma}_2^2$}

To prove Lemma~\ref{lemma:smallcrossterm},
let $f_\nu,F_\nu^0,F_\nu^\infty$
be as above. Let $f_\nu$ be upper even-normalized with respect to a cap $\scriptc_\nu$
of radius $r_\nu$. Since the inequality in question
is invariant under rotations of $\reals^3$,
we may suppose without loss of generality that
$\scriptc_\nu$ is centered at the north pole $z_0=(0,0,1)$.

Decompose $F_\nu^0 = F_\nu^{0,+}+ F_\nu^{0,-}$
where both summands are real-valued,
$F_\nu^{0,+}$ is supported in $\scriptc(z_0,\tfrac12)$,
$F_\nu^{0,-}(x)=F_\nu^{0,+}(-x)$,
$F_\nu^{0,\pm}$ is upper normalized with respect
to $\scriptc(\pm z_0,r_\nu)$,
and $F_\nu^{0,\pm}$ have the same Fourier representations
\eqref{F-Repres} as $F_\nu^0$.
There is a parallel decomposition $F_\nu^\infty = F_\nu^{\infty,+}+F_\nu^{\infty,-}$.
By Lemma~\ref{lemma:switchconvolutionfactors},
\begin{multline}
\norm{F_\nu^{0,+}\sigma*F_\nu^{\infty,+}\sigma}_2
=
\norm{F_\nu^{0,-}\sigma*F_\nu^{\infty,-}\sigma}_2
\\
=
\norm{F_\nu^{0,-}\sigma*F_\nu^{\infty,+}\sigma}_2
=
\norm{F_\nu^{0,+}\sigma*F_\nu^{\infty,-}\sigma}_2.
\end{multline}
Therefore it suffices to bound
$\norm{F_\nu^{0,+}\sigma*F_\nu^{\infty,+}\sigma}_2$.
%We simplify notation below by writing simply $F_\nu^0,F_\nu^\infty$
%in place of $F_\nu^{0,+},F_\nu^{\infty,+}$.  Thus $F_\nu^0,F_\nu^\infty$
%are now upper normalized with respect to $\scriptc_\nu(z_0,r_\nu)$.

% ??? abuse of language; $f_\nu$ is even; there are two antipodal caps \scriptc here.

% {\em We need to check, and say explicitly, that $F_\nu^0,F_\nu^\infty$
%are indeed normalized above.}
% ?? I've now done this.

\begin{lemma}
Let $\delta_\nu,\delta_\nu^*>0$ be sequences of positive numbers which satisfy
\begin{align}
\delta_\nu^{\phantom{*}}/r_\nu^2&\to 0 %\text{ slowly}
\\
\delta_\nu^*/r_\nu^2&\to\infty. %\text{ slowly}.
\end{align}
Then
\begin{equation}
\norm{F_\nu^{0,+}\sigma*F_\nu^{\infty,+}\sigma}_{\lt(\{x\in\reals^3: |x|>2-\delta_\nu
\text{ or } |x|<2-\delta_\nu^* \})}
\to 0 \text{ as } \nu\to\infty.
\end{equation}
\end{lemma}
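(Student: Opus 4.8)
The plan is to prove both assertions --- the bound on $\{|x|>2-\delta_\nu\}$ and the bound on $\{|x|<2-\delta_\nu^*\}$ --- by one and the same elementary decomposition, using \emph{only} that $F_\nu^{0,+}$ and $F_\nu^{\infty,+}$ are upper normalized with respect to $\scriptc(z_0,r_\nu)$. The Fourier separation of the two summands plays no role here: it is needed solely on the complementary annulus $2-\delta_\nu^*\le|x|\le2-\delta_\nu$, where the operators $T_\rho$ and the smoothing bound \eqref{FIObound} will enter; that is where the real difficulty of Lemma~\ref{lemma:smallcrossterm} lies. Since $|F_\nu^{0,+}\sigma*F_\nu^{\infty,+}\sigma|\le|F_\nu^{0,+}|\sigma*|F_\nu^{\infty,+}|\sigma$ pointwise, it suffices to bound the latter.

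Fix $R\ge1$. Using the normalization inequalities \eqref{eq:normalization1} and \eqref{eq:normalization2} for $F_\nu^{0,+}$ relative to $\scriptc(z_0,r_\nu)$, I will write $|F_\nu^{0,+}|=g_\nu^0+h_\nu^0$ with $g_\nu^0$ nonnegative, $\norm{g_\nu^0}_\infty\le Rr_\nu^{-1}$, $g_\nu^0$ supported in $\scriptc(z_0,Rr_\nu)$, and $\norm{h_\nu^0}_2^2\le2\Theta(R)$; and likewise $|F_\nu^{\infty,+}|=g_\nu^\infty+h_\nu^\infty$. Expanding $|F_\nu^{0,+}|\sigma*|F_\nu^{\infty,+}|\sigma$ into four terms, the three terms carrying a factor $h_\nu^0$ or $h_\nu^\infty$ have $\lt(\reals^3)$ norm at most $C\Theta(R)^{1/2}$, by the bilinear estimate $\norm{F\sigma*G\sigma}_2\le\Sbest^2\norm{F}_2\norm{G}_2$ together with the uniform bounds $\norm{g_\nu^0}_2,\norm{g_\nu^\infty}_2\lesssim1$. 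Thus only $g_\nu^0\sigma*g_\nu^\infty\sigma$ remains to be controlled on the two target regions.

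Because $g_\nu^0$ and $g_\nu^\infty$ are both supported in $\scriptc(z_0,Rr_\nu)$, the convolution $g_\nu^0\sigma*g_\nu^\infty\sigma$ is supported in $\{x:\,2-|x|\le C(Rr_\nu)^2\}\cap\{|x-2z_0|\le CRr_\nu\}$: if $\pi$ denotes orthogonal projection onto $z_0^\perp$, then any $p,q\in\scriptc(z_0,Rr_\nu)$ satisfy $|\pi(p)|,|\pi(q)|\le Rr_\nu$ and $2-|p+q|\asymp|\pi(p)-\pi(q)|^2$. Hence on $\{|x|<2-\delta_\nu^*\}$ this function vanishes identically as soon as $C(Rr_\nu)^2<\delta_\nu^*$, which holds for all large $\nu$ since $\delta_\nu^*/r_\nu^2\to\infty$; this disposes of the inner region. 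On $\{|x|>2-\delta_\nu\}$ I argue exactly as in the proof of Lemma~\ref{lemma:outerbrute} applied to the pair $(g_\nu^0,g_\nu^\infty)$: one has $g_\nu^0\sigma*g_\nu^\infty\sigma\le(Rr_\nu^{-1})^2\,\sigma*\sigma\le C(Rr_\nu^{-1})^2|x|^{-1}$, supported in the ball of radius $CRr_\nu$ about $2z_0$, and the $\lt$ norm of $|x|^{-1}$ over the intersection of that ball with $\{|x|>2-\delta_\nu\}$ is $\le CRr_\nu\delta_\nu^{1/2}$, whence $\norm{g_\nu^0\sigma*g_\nu^\infty\sigma}_{\lt(\{|x|>2-\delta_\nu\})}\le CR^3(\delta_\nu/r_\nu^2)^{1/2}$.

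Assembling the pieces, for each fixed $R\ge1$ and all sufficiently large $\nu$ the $\lt$ norm of $F_\nu^{0,+}\sigma*F_\nu^{\infty,+}\sigma$ over $\{|x|>2-\delta_\nu\}\cup\{|x|<2-\delta_\nu^*\}$ is $\le CR^3(\delta_\nu/r_\nu^2)^{1/2}+C\Theta(R)^{1/2}$; taking $\limsup_{\nu\to\infty}$ (using $\delta_\nu/r_\nu^2\to0$) and then $R\to\infty$ (using $\Theta(R)\to0$) gives the claim. The one step that is not pure bookkeeping is the geometric estimate for the sliver $\{|x|>2-\delta_\nu\}\cap\operatorname{supp}(g_\nu^0\sigma*g_\nu^\infty\sigma)$, and this is precisely the computation already carried out in Lemma~\ref{lemma:outerbrute}; the conceptual point worth noting is simply that both target regions yield to the single structural input of upper normalization, with no recourse to the frequency separation.
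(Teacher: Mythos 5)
Your proof is correct and rests on the same ingredients as the paper's: the upper-normalization bounds, the bilinear estimate $\norm{F\sigma*G\sigma}_2\le C\norm{F}_2\norm{G}_2$, the support properties of convolutions of cap-supported functions, and the sliver computation of Lemma~\ref{lemma:outerbrute}. The only (cosmetic) difference is organizational: the paper disposes of the outer region by citing Corollary~\ref{cor:neglectlargerx} and treats the inner region with a slowly growing cutoff radius $t_\nu r_\nu$, whereas you use a single fixed-$R$ truncation in both space and amplitude and conclude by letting $\nu\to\infty$ and then $R\to\infty$.
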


\begin{proof}
Since $F_\nu^{0,+},F_\nu^{\infty,+}$
are upper normalized with respect to $\scriptc_\nu$,
Corollary~\ref{cor:neglectlargerx}
%and Lemma~\ref{lemma:innerbrute}
asserts that the region $|x|>2-\delta_\nu$
makes a small contribution for large $\nu$.
To handle the region $|x|<2-\delta_\nu^*$,
choose a sequence $t_\nu\ge 1$ tending slowly to infinity.
Decompose
$F_\nu^{0,+}=F_\nu^{0,+} \chi_{\scriptc(z_0, t_\nu r_\nu)}
+F_\nu^{0,+} \chi_{S^2\setminus \scriptc(z_0, t_\nu r_\nu)}$,
and decompose $F_\nu^{\infty,+}$ in the same way.
If $t_\nu\to\infty$ sufficiently slowly,
then the main term
$F_\nu^{0,+} \chi_{\scriptc(z_0, t_\nu r_\nu)}\sigma
*
F_\nu^{\infty,+} \chi_{\scriptc(z_0, t_\nu r_\nu)}\sigma$
is supported where $|x|>2-\delta_\nu^*$.
Expanding $F_\nu^{0,+}\sigma*F_\nu^{\infty,+}\sigma$ according
to this decomposition leaves three more terms.
Each of these has small norm in $\lt(\reals^3)$ for large $\nu$,
because $\norm{F_\nu^{0,+}}_{\lt(S^2\setminus\scriptc(z_0,t_\nu r_\nu))}\to 0$
and $\norm{F_\nu^{\infty,+}}_{\lt(S^2\setminus\scriptc(z_0,t_\nu r_\nu))}\to 0$.
\end{proof}

If $h_1,h_2$ are supported in $\scriptc(z_0,r)$
then $h_1\sigma*h_2\sigma$ is supported
in $\{x\in\reals^3: |x-2z_0|\le Cr\}$.
Since $F_\nu^{0,+},F_\nu^{\infty,+}$
are upper normalized with respect to $\scriptc(z_\nu,r_\nu)$,
and since $r_\nu\to 0$,
it follows from the inequality $\norm{h_1\sigma*h_2\sigma}_{\lt(\reals^3)}
\le C\norm{h_1}_2\norm{h_2}_2$ that
\begin{equation} \label{lastgasp}
\int_{|x-2z_0|\ge 1/100} |(F_\nu^{0,+}\sigma*F_\nu^{\infty,+}\sigma)(x)|^2\,dx
\to 0 \text{ as } \nu\to\infty.
\end{equation}
On the other hand, if $|x-2z_0|\le 1/100$,
then for all sufficiently large $\nu$,
$ (F_\nu^{0,+}\sigma*F_\nu^{\infty,+}\sigma)(x)$
depends only on the restrictions of $F_\nu^{0,+},F_\nu^{\infty,+}$
to $\scriptc(z_0,1/10)$.
This has the following significance
in terms of the Fourier representations \eqref{FsFT1},\eqref{FsFT2}
of Lemma~\ref{lemma:Fnufourierbounds}:
\begin{equation} \label{FsFT3}
F_\nu^{0,+}(x)=
\int_{r_\nu|\zeta|\le 4s_\nu}
e^{ix\zeta} a_\nu^{0,+}(\zeta)\,d\zeta
\ + \  o(1) \text{ in $\lt(\scriptc(z_0,1/10))$ as $\nu\to\infty$}
\end{equation}
by virtue of \eqref{FsFT2};
this does not follow for $\lt(S^2)$ because surface measure
on $S^2$ is not approximately equivalent to Lebesgue measure on $\{(x_1,x_2,0)\}$ near
the equator $\{x\in S^2: x_3=0\}$.
Likewise, by \eqref{FsFT1},
\begin{equation} \label{FsFT4}
F_\nu^{\infty,+}(x)=
\int_{r_\nu|\zeta|\ge S_\nu/4}
e^{ix\zeta} a_\nu^{\infty,+}(\zeta)\,d\zeta
\ + \  o(1) \text{ in $\lt(\scriptc(z_0,1/10))$ as $\nu\to\infty$}.
\end{equation}
Henceforth we simplify notation by writing $a_\nu^0$ in place of $a_\nu^{0,+}$
and $a_\nu^{\infty}$ in place of $a_\nu^{\infty,+}$,
and we will take these functions to be supported in  the sets
$r_\nu|\zeta|\le 4s_\nu$ and $r_\nu|\zeta|\ge S_\nu/4$, respectively.

\begin{comment}
% marker May 18
This allows us to decompose
\begin{equation}
F_\nu^0(x) =
\int_H e^{ix\cdot\zeta}a_\nu(\zeta)\,d\zeta
+ o(1) \text{ in $\lt$ norm}
\end{equation}
with $a_\nu$ supported in $\{\zeta: r_\nu|\zeta|\le C s_\nu\}$
and $\norm{a_\nu}_2=O(1)$.
Define $g_\nu(x) = \int_H e^{ix\cdot\xi}a_\nu(\xi)\,d\xi$;
thus $\norm{F_\nu^0-g_\nu}_2=o(1)$ as $\nu\to\infty$.
\end{comment}

Set $H=\{\xi\in\reals^3: \xi_3=0\}$,
and identify $(\xi_1,\xi_2,0)\in H$ with $(\xi_1,\xi_2)\in\reals^2$.
Denote by $\scripta_\nu$ the region
and $I_\nu$ the interval
\begin{align}
\scripta_\nu&=\{x\in\reals^3:
2-\delta_\nu^*\le|x|\le 2-\delta_\nu \text{ and } |x-2z_0|<1/100\}
\\
I_\nu&=[2-\delta_\nu^*,2-\delta_\nu].
\end{align}
It remains only to estimate
$\norm{F_\nu^{\infty,+}\sigma*F_\nu^{0,+}\sigma}_{\lt(\scripta_\nu)}$.
%Recall that $\rho(t)^2+(t/2)^2=1$.
%It follows that
For $x\in \scripta_\nu$,
for all sufficiently large $\nu$,
$(F_\nu^{0,+}\sigma*F_\nu^{\infty,+}\sigma)(x)$
depends only on the restrictions of $F_\nu^{0,+},F_\nu^{\infty,+}$
to $\scriptc(z_0,1/10)$.
Therefore in majorizing
$\norm{F_\nu^{\infty,+}\sigma*F_\nu^{0,+}\sigma}_{\lt(\scripta_\nu)}$,
we may replace
$F_\nu^{0,+}(x)$ by
$\int_{r_\nu|\zeta|\le 4s_\nu} e^{ix\zeta} a_\nu^0(\zeta)\,d\zeta$
and
$F_\nu^{\infty,+}(x)$ by
$\int_{r_\nu|\zeta|\ge S_\nu/4} e^{ix\zeta} a_\nu^\infty(\zeta)\,d\zeta$,
at the expense of additional terms which are $o(1)$ as $\nu\to\infty$.
We will continue to denote these modified functions
by $F_\nu^{0,+},F_\nu^{\infty,+}$.
%and will always take $a_\nu^{0},a_\nu^\infty$ to be supported in the indicated regions.

Set $h_\zeta= e_{-i\zeta} F_\nu^{\infty,+}$,
for $r_\nu|\zeta|\le 4s_\nu$.
Let
\begin{equation}
H^*=\{\zeta\in H: r_\nu|\zeta|\le 4s_\nu\}.
\end{equation}
By  \eqref{switchcharacter}, \eqref{Iversion}, \eqref{FsFT3}, and \eqref{FsFT4},
\begin{align*}
\norm{F_\nu^{\infty,+}\sigma*F_\nu^{0,+}\sigma}_{\lt(\scripta_\nu)}^2
&\le C
\int_{I_\nu}
\Big\| \int_{H^*} |a_\nu(\zeta)| \cdot|T_{\rho(t)} h_\zeta| \,d\zeta
\Big\|_{\lt(S^2)}^2 \,dt
\ +o(1)
\\
&\le C \int_{I_\nu}
\Big(
\int_{H^*} |a_\nu(\zeta)|\cdot
\norm{T_{\rho(t)} h_\zeta}_{\lt(S^2)}\,d\zeta
\Big)^2
\,dt
\ +o(1)
\\
&\le
C\norm{a_\nu}_{2}^2
\int_{H^*}
\int_{I_\nu}
\norm{T_{\rho(t)} h_\zeta}_{\lt(S^2)}^2\,dt\,d\zeta
\ +o(1)
\\
&\le C
\int_{H^*} \int_{I_\nu}
\norm{T_{\rho(t)} h_\zeta}_{\lt(S^2)}^2\,dt\,d\zeta
\ +o(1)
\end{align*}
by Minkowski's inequality and Cauchy-Schwarz.
Inserting the Fourier integral operator bound
$\norm{T_\rho(h_\zeta)}_2^2
\le C \norm{(I-\rho^2\Delta)^{-1/4}h_\zeta}_2^2$
yields
\begin{align}
\notag
\norm{F_\nu^{\infty,+}\sigma*F_\nu^{0,+}\sigma}_{\lt(\scripta_\nu)}^2
&\le C
\int_{\zeta\in H^*} \int_{I_\nu}
\int_{\xi\in H} (1+\rho(t)|\xi|)^{-1}|\widehat{h_\zeta(\xi)}|^2\,d\xi
\,dt\,d\zeta
\ +o(1)
\\
\notag
&= C
\int_{\zeta\in H^*} \int_{I_\nu}
\int_{\xi \in H} (1+\rho(t)|\xi|)^{-1}|{a_\nu^\infty}(\xi-\zeta)|^2\,d\xi
\,dt\,d\zeta
\ +o(1)
%\\ \notag
%&\sim \int_{H^*} \int_{I_\nu}
%\int_{H} (1+\rho(t)|\xi-\zeta|)^{-1}|{a_\nu^\infty}(\xi-\zeta)|^2\,d\xi
%\,dt\,d\zeta \ +o(1)
\\
\label{eq:willcontinue}
&\sim
s_\nu^2 r_\nu^{-2}
\int_{I_\nu}
\int_{H} (1+\rho(t)|\xi|)^{-1}|{a_\nu^\infty}(\xi)|^2\,d\xi
\,dt
\ +o(1)
\end{align}
since $|\xi|\gg|\zeta|$ for $\zeta$ in the support of $a_\nu^{0}$
and $\xi$ in the support of $a_\nu^\infty$.
Next,
\begin{align}
\notag
\int_{H}
(1+\rho|\xi|)^{-1}
&|{a_\nu^\infty}(\xi)|^2\,d\xi
\\
\label{applyFsFT1}
&\le
C \int_{r_\nu|\xi|\le c_0 S_\nu} |{a_\nu^\infty}(\xi)|^2\,d\xi
+
C \int_{r_\nu|\xi|\ge c_0 S_\nu}
(1+\rho|\xi|)^{-1}
|{a_\nu^\infty}(\xi)|^2\,d\xi
\\
\notag
&\le C S_\nu^{-1} \norm{F_\nu^{\infty,+}}_2^2
+ C\max_{r_\nu|\xi|\ge c_0 S_\nu} (1+\rho|\xi|)^{-1}
\cdot \norm{F_\nu^{\infty,+}}_2^2
\\
\notag
&\le C
S_\nu^{-1} + C\rho^{-1}r_\nu S_\nu^{-1}.
\end{align}
The first term in \eqref{applyFsFT1} was estimated using \eqref{FsFT1}.
Inserting the final line into \eqref{eq:willcontinue} yields
\begin{align*}
\norm{F_\nu^{\infty,+}\sigma*F_\nu^{0,+}\sigma}_{\lt(\scripta_\nu)}^2
&\le
C s_\nu^2 r_\nu^{-2}
\int_{I_\nu}
\big(S_\nu^{-1} + \rho(t)^{-1}r_\nu S_\nu^{-1}\big)
\,dt
\\
&\le
C s_\nu^2 r_\nu^{-2}
\int_{I_\nu}
\big(S_\nu^{-1} + (2-t)^{-1/2}r_\nu S_\nu^{-1}\big)
\,dt
\\
\intertext{since $(t/2)^2+\rho(t)^2=1$ implies $\rho(t)\ge C(2-t)^{1/2}$}
&= C s_\nu^2 S_\nu^{-1} r_\nu^{-2}
\int_{I_\nu}
(1+r_\nu(2-t)^{-1/2}) \,dt
\\
&\le C s_\nu^2 S_\nu^{-1} r_\nu^{-2}
|I_\nu|
\big(1+\max_{t\in I_\nu} r_\nu(2-t)^{-1/2}\big)
\\
&\le C s_\nu^2 S_\nu^{-1}
(r_\nu^{-2}\delta_\nu^*)
\big( 1+\delta_\nu^{-1/2}r_\nu \big)
\\
&\le C s_\nu^{-1}
(r_\nu^{-2}\delta_\nu^*)
\big( 1+\delta_\nu^{-1/2}r_\nu \big)
\end{align*}
since $S_\nu\ge s_\nu^3$.

Combining all terms, we have shown that
\begin{equation}
\norm{F_\nu^{0,+}\sigma*F_\nu^{\infty,+}\sigma}_2^2
\le o(1) +
C s_\nu^{-1} (r_\nu^{-2}\delta_\nu^*) \big( 1+\delta_\nu^{-1/2}r_\nu \big)
\end{equation}
as $\nu\to\infty$,
provided that $\delta_\nu/r_\nu^2\to 0$ and $\delta_\nu^*/r_\nu^2\to\infty$.
Since $s_\nu\to\infty$,
it is  possible to choose $\delta_\nu,\delta_\nu^*$ to satisfy
the additional constraint
\begin{equation}
s_\nu^{-1} (r_\nu^{-2}\delta_\nu^*) \big( 1+\delta_\nu^{-1/2}r_\nu \big)
\to 0 \text{ as } \nu\to\infty.
\end{equation}
With such a choice, we obtain
\begin{equation}
\norm{F_\nu^{0,+}\sigma*F_\nu^{\infty,+}\sigma}_2^2
\to 0 \text{ as } \nu\to\infty,
\end{equation}
completing the proof of Lemma~\ref{lemma:smallcrossterm}.
\qed

\section{Large caps}
It remains to prove Proposition~\ref{prop:precompactness}
in the case when $r_\nu > 1$.
Introduce a $C^\infty$ partition of unity
of $S^2$ by nonnegative even functions $\eta_j$,
each of which is supported in
$\scriptc(z_j,1/8)\cup\scriptc(-z_j,1/8)$ for some $z_j\in S^2$.
Decompose $f_\nu = \sum_j f_{\nu,j}$, where $f_{\nu,j}=\eta_j f_\nu$.
For each index $\nu$ we thus obtain the collection
of functions $g_{\nu,j}=\phi_\nu^*(f_{\nu,j})\in\lt(\reals^2)$.
Lemma~\ref{lemma:fourierseparated} is now modified in the natural way:
Either
there exists a function $\theta:[1,\infty)\to(0,\infty)$ satisfying
$\theta(s)\to 0 \text{ as } s\to\infty$
such that
\begin{equation}
\label{forlargecapcase}
\int_{|\xi|\ge s} |\widehat{g_{\nu,j}}(\xi)|^2\,d\xi
\le\theta(s)
\qquad\text{ for all } s\in[1,\infty) \text{ and all $\nu,j$,}
\end{equation}
or there exist $\delta,\eps_k,s_k,S_k$ as in that lemma,
such that the conclusions in the second case of that lemma
hold, with $|\widehat{g_\nu}|^2$ replaced by $\sum_j |\widehat{g_{\nu,j}}|^2$.
In the former case, the conclusion of Proposition~\ref{prop:precompactness}
is just a reformulation of the conjunction of \eqref{forlargecapcase}
with the upper normalization bounds for $f_\nu$.

It remains only to demonstrate that the latter case cannot arise.
If it did, then
by summing over $j$ one would obtain again a decomposition
$f_\nu = F_\nu^0 + F_\nu^\infty+F_\nu^\flat$
where $\lim_{\nu\to\infty}\norm{F_\nu^\flat}_2=0$,
$F_\nu^0$ is comparatively slowly varying,
and $F_\nu^\infty$ is highly oscillatory.
It would follow as above that
$\norm{F_\nu^0}_2^2+\norm{F_\nu^\infty}_2^2\to 1=\norm{f_\nu}_2^2$
and
$\norm{F_\nu^0\sigma*F_\nu^\infty\sigma}_{\lt(\reals^3)}\to 0$,
and then
that $\limsup_{\nu\to\infty}\norm{f_\nu\sigma*f_\nu\sigma}_2^2
<{\mathbf S}^4$, contradicting the assumption that $\{f_\nu\}$
is an extremizing sequence.
\qed

\section{Constants are local maxima}

Theorem~\ref{thm:localmax} asserts that constant functions
are local maxima.
Define
\begin{align}
\Psi(f) &=\norm{f\sigma*f\sigma}_{\lt(\reals^3)}^2
\\
\Phi(f) &= \frac{\Psi(f)}{\norm{f}_{\lt(S^2)}^4}.
\end{align}
Denote by $\one$ the constant function $\one(x)=1$ for all $x\in S^2$.

\begin{comment}
\begin{theorem} \label{thm:localmax}
There exists $\delta>0$ such that
whenever $\norm{f-\one}_{\lt(S^2)}<\delta$,
\begin{equation}
\Phi(f)\le \phi(\one),
\end{equation}
with equality only if $f$ is constant.
\end{theorem}
\end{comment}

\begin{proof}[Proof of Theorem~\ref{thm:localmax}]
Since $\Phi(f)=\Phi(tf)$ for all $t>0$,
and since $\Phi(f)\le\Phi(|f|)$,
we may restrict attention to functions of the form
$f = \one+\eps g$ where $0\le\eps\le\delta$,
$g\perp\one$, $g$ is real-valued, and $\norm{g}_{\lt(S^2)}=1$.
We may further assume that $g(-x)=g(x)$, by Proposition~\ref{prop:symmetrize}.

$\one$ is a critical point for $\Phi$. Indeed, by rotation symmetry,
$f=\one$ satisfies the generalized Euler-Lagrange equation
$f = \lambda (f\sigma*f\sigma*f\sigma)\Big|_{S^2}$
which characterizes critical points.

A straightforward calculation gives the Taylor expansion
\begin{equation}
\Phi(\one+\eps g) = \Phi(\one) +
\eps^2\norm{\one}_{\lt(S^2)}^{-4}
\Big(
6\langle g\sigma *g\sigma,\,\sigma*\sigma\rangle
- 2\Psi(\one)\norm{\one}_{2}^{-2} \norm{g}_2^2
\Big)
+O(\eps^3)
\end{equation}
where $O(\eps^3)$ denotes a quantity whose absolute value is majorized
by $C\eps^3$, uniformly for $g\in\lt(S^2)$ satisfying $\norm{g}_2\le 1$.
Thus it suffices to show that
\begin{equation}
\sup_{\norm{g}_2=1}
6\langle g\sigma *g\sigma,\,\sigma*\sigma\rangle
< 2\Psi(\one)\norm{\one}_{2}^{-2}.
\end{equation}

The quantities $\Psi(\one)$ and $\norm{\one}_2$ can be
evaluated explicitly.
Firstly,
$\norm{\one}_2^2= \sigma(S^2) = 4\pi$.
Secondly,
\begin{equation}
(\sigma*\sigma)(x) = 2\pi|x|^{-1}\chi_{|x|\le 2}.
\end{equation}
Indeed, it follows from trigonometry that $\sigma*\sigma(x) = a|x|^{-1}\chi_{|x|\le 2}$
for some $a>0$,
and $a$ can be evaluated by
\begin{equation}
(4\pi)^2 =
\sigma(S^2)^2 = \int_{\reals^3} (\sigma*\sigma)(x)\,dx
= \int_0^2 ar^{-1} \cdot 4\pi r^2\,dr
= 8\pi a.
\end{equation}
Therefore
\begin{multline*}
\Psi(\one) = \int_{\reals^3} \big(\sigma*\sigma(x) \big)^2\,dx
= \int_{\reals^3} 4\pi^2|x|^{-2}\,dx
\\
= 4\pi^2\int_0^2 r^{-2}\cdot 4\pi r^2\,dr
= 4\pi^2\cdot 4\pi\cdot 2
=32\pi^3.
\end{multline*}
Therefore it suffices to prove that
\begin{equation}
\sup_{\norm{g}_2=1}
\langle g\sigma *g\sigma,\,\sigma*\sigma\rangle
<
\tfrac13\cdot 32\pi^3\cdot (4\pi)^{-1}
= \tfrac83 \pi^2
\end{equation}
where the supremum is taken over all real-valued, even $g\in\lt(S^2)$
satisfying $\norm{g}_2=1$ and $\int g\,d\sigma=0$.

The following key bound will be established below.
\begin{lemma}\label{lemma:spharmonics}
For all real-valued even functions $g\in\lt(S^2)$ satisfying $\int g\,d\sigma=0$,
\begin{equation}
\Big|\iint_{S^2\times S^2}g(x)g(y)|x-y|^{-1}\,d\sigma(x)\,d\sigma(y)\Big|
\le \tfrac45\pi\norm{g}_{\lt(S^2)}^2.
\end{equation}
\end{lemma}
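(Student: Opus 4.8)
The plan is to diagonalize the quadratic form by spherical harmonics, exploiting the rotation invariance of the kernel. Since $|x-y|^2 = 2-2\,x\cdot y$ for $x,y\in S^2$, the kernel $|x-y|^{-1} = 2^{-1/2}(1-x\cdot y)^{-1/2}$ is a function of $t=x\cdot y$ alone, so the operator $Tg(x)=\int_{S^2}|x-y|^{-1}g(y)\,d\sigma(y)$ commutes with rotations of $S^2$. Hence $T$ acts on the space of degree-$\ell$ spherical harmonics as a scalar $\lambda_\ell$, and by the Funk--Hecke theorem
\[
\lambda_\ell = 2\pi\int_{-1}^1 2^{-1/2}(1-t)^{-1/2}P_\ell(t)\,dt,
\]
where $P_\ell$ denotes the Legendre polynomial of degree $\ell$. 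Writing $g=\sum_{\ell\ge 0}g_\ell$ for the decomposition of $g$ into its degree-$\ell$ components, $T$ is self-adjoint, and the double integral in the lemma equals $\langle Tg,g\rangle = \sum_{\ell\ge 0}\lambda_\ell\,\norm{g_\ell}_{\lt(S^2)}^2$.

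Next I would evaluate $\lambda_\ell$. Using the generating function $(1-2xt+x^2)^{-1/2}=\sum_{\ell\ge 0}P_\ell(t)x^\ell$ and integrating against $(1-t)^{-1/2}$ over $[-1,1]$: the substitution $1-t=u^2$ reduces the left side to $2\int_0^{\sqrt 2}\bigl((1-x)^2+2xu^2\bigr)^{-1/2}\,du = 2(2x)^{-1/2}\log\frac{1+\sqrt x}{1-\sqrt x} = 2\sqrt 2\sum_{\ell\ge 0}\frac{x^\ell}{2\ell+1}$. Matching coefficients gives $\int_{-1}^1(1-t)^{-1/2}P_\ell(t)\,dt = 2\sqrt 2/(2\ell+1)$, whence $\lambda_\ell = 4\pi/(2\ell+1)$. (Alternatively, one may simply invoke the classical fact from potential theory that the single-layer potential on $S^2$ of a degree-$\ell$ spherical harmonic is that same harmonic times $4\pi/(2\ell+1)$.) In particular each $\lambda_\ell$ is positive and $\lambda_\ell$ is strictly decreasing in $\ell$.

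Finally I would use the two hypotheses on $g$. Because $g$ is even, $g_\ell(-x)=(-1)^\ell g_\ell(x)$ forces $g_\ell\equiv 0$ for all odd $\ell$; because $\int g\,d\sigma = 0$, the degree-zero component $g_0$ vanishes as well. Therefore only $\ell\in\{2,4,6,\dots\}$ contribute, and since the largest of the surviving eigenvalues is $\lambda_2 = 4\pi/5$, Parseval's identity yields
\[
\Bigl|\iint_{S^2\times S^2}g(x)g(y)|x-y|^{-1}\,d\sigma(x)\,d\sigma(y)\Bigr| = \sum_{\ell\ge 2,\ \ell\text{ even}}\frac{4\pi}{2\ell+1}\norm{g_\ell}_{\lt(S^2)}^2 \le \frac{4\pi}{5}\sum_{\ell\ge 0}\norm{g_\ell}_{\lt(S^2)}^2 = \frac{4\pi}{5}\norm{g}_{\lt(S^2)}^2.
\]
(The absolute value costs nothing, since the form is nonnegative.)

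There is no real obstacle here: the only computation of substance is the eigenvalue identity $\lambda_\ell = 4\pi/(2\ell+1)$, and the rest is the routine diagonalization of a rotation-invariant kernel on the sphere together with the bookkeeping of parity. The one point worth emphasizing is that the evenness hypothesis is essential for the stated constant: discarding only $\ell=0$ would leave $\lambda_1 = 4\pi/3$, too large, whereas discarding both $\ell=0$ and (via evenness) $\ell=1$ brings the bound down to $\lambda_2 = 4\pi/5$, which is precisely what is claimed.
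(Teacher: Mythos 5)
Your proof is correct and takes essentially the same route as the paper: diagonalize the rotation-invariant kernel via the Funk--Hecke formula, obtain the eigenvalues $\lambda_\ell=4\pi/(2\ell+1)$, and use evenness plus $\int g\,d\sigma=0$ to discard $\ell=0$ and all odd $\ell$, so the bound is governed by $\lambda_2=4\pi/5$. The only (cosmetic) difference is in evaluating $\int_{-1}^1(1-t)^{-1/2}P_\ell(t)\,dt$: you do it by an explicit substitution and the logarithm expansion of the generating function, whereas the paper uses orthogonality of the Gegenbauer (Legendre) polynomials at $r=1$.
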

The factor $\tfrac45\pi$ is optimal, and is attained if and only if
$g$ is a spherical harmonic of degree $2$.

Now for such $g$ satisfying $\norm{g}_2=1$,
\begin{align}
\langle g\sigma *g\sigma,\,\sigma*\sigma\rangle
&= \langle g\sigma*(\sigma*\sigma),g\rangle
\\
&= 2\pi\iint_{S^2\times S^2}g(x)g(y)|x-y|^{-1}\,d\sigma(x)\,d\sigma(y)
\\
&\le 2\pi\cdot \tfrac45\pi
= \tfrac85\pi^2
<\tfrac83\pi^2,
\end{align}
completing the proof of Theorem~\ref{thm:localmax}.
\end{proof}

\begin{proof}[Proof of Lemma~\ref{lemma:spharmonics}]
We first recall the Funk-Hecke Formula in the theory of spherical harmonics, see e.g.,  \cite[p. 29]{Muller:1998:special-functions} or \cite[Theorem A]{xu:2000:Funk_Hecke}.
\begin{theorem}[Funk-Hecke formula]
Let $d\ge 2$ and $k\ge 0$ be integers.
Let $f$ be a continuous function on $[-1,1]$ and $Y_k$ be a spherical harmonic of degree $k$, on the sphere $S^d$.
Then for any $x\in S^d$,
\begin{equation}\label{eq-5}
\int_{S^d} f(x\cdot y) Y_k(y)d\sigma(y)=\lambda_k Y_k(x),
\end{equation}
where $x\cdot y$ is the usual inner product in $\mathbb{R}^{d+1}$, and
\begin{equation*}
\lambda_k=\dfrac{\omega_d\int_{-1}^1 f(t)C_k^{\frac {d-1}{2}}(t)(1-t^2)^{\frac {d-2}{2}}dt}{C_k^{\frac {d-1}{2}}(1)\int_{-1}^1(1-t^2)^{\frac {d-2}{2}}dt},
\end{equation*}
where $\omega_d:=\frac {2\pi^{\frac {d+1}{2}}}{\Gamma(\frac {d+1}{2})}$ denotes the surface area of the unit sphere $S^d$, and $C_k^\nu(t)$ is the Gegenbauer polynomial defined by the generating function
\begin{equation}\label{eq-6}
(1-2rt+r^2)^{-\nu}=\sum_{k=0}^\infty C_k^{\nu}r^k,
\end{equation}
for $0\le r<1$ and $-1\le t\le 1$ and $\nu>0$.
\end{theorem}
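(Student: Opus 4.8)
\section*{Proof proposal for the Funk--Hecke formula}

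The plan is to prove the Funk--Hecke formula by the classical representation-theoretic route, exploiting the rotational invariance of the operator $T_f$ defined by $T_f g(x) = \int_{S^d} f(x\cdot y)\, g(y)\, d\sigma(y)$ together with the orthogonal decomposition of $L^2(S^d)$ into the spaces $\mathcal{H}_m$ of spherical harmonics of degree $m$. First I would record three classical facts about spherical harmonics, available in standard references such as \cite{Muller:1998:special-functions}: (a) $L^2(S^d)=\bigoplus_{m\ge 0}\mathcal{H}_m$ orthogonally, with each $\mathcal{H}_m$ finite-dimensional and invariant under the rotation action $g\mapsto g\circ\rho$, $\rho\in O(d+1)$; (b) each $\mathcal{H}_m$ possesses a reproducing kernel $Z_m^{(x)}(y)$, characterised by $\int_{S^d} Z_m^{(x)}(y)\, Y(y)\,d\sigma(y)=Y(x)$ for all $Y\in\mathcal{H}_m$, which by rotational invariance depends only on $x\cdot y$, and which, being a degree-$m$ polynomial in $x\cdot y$ that lies in $\mathcal{H}_m$, equals $c_{m,d}\,C_m^{(d-1)/2}(x\cdot y)$ for a nonzero constant $c_{m,d}$; (c) the polar-coordinates identity $\int_{S^d} F(x\cdot y)\,d\sigma(y)=\omega_{d-1}\int_{-1}^1 F(t)(1-t^2)^{(d-2)/2}\,dt$ for continuous $F$, where $\omega_{d-1}$ is the surface area of $S^{d-1}$; taking $F\equiv 1$ gives $\omega_{d-1}=\omega_d\big/\!\int_{-1}^1(1-t^2)^{(d-2)/2}\,dt$.

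Next I would establish the eigenfunction relation \eqref{eq-5}. By the Weierstrass approximation theorem it suffices to treat $f$ a polynomial: if $f_n\to f$ uniformly on $[-1,1]$ then $T_{f_n}Y_k\to T_f Y_k$ uniformly on $S^d$, while the candidate value $\lambda_k$ is manifestly continuous in $f$ with respect to the sup norm, so the relation for each $f_n$ passes to the limit. For a polynomial $f$ of degree $n$, the function $y\mapsto f(x\cdot y)$ is a polynomial of degree $\le n$ on $S^d$, hence decomposes as $f(x\cdot y)=\sum_{m=0}^{n}Q_m^x(y)$ with $Q_m^x\in\mathcal{H}_m$. Applying a rotation simultaneously to $x$ and $y$ shows that each $Q_m^x(y)$ depends only on $x\cdot y$ and lies in $\mathcal{H}_m$ as a function of $y$; hence $Q_m^x$ is a scalar multiple of $Z_m^{(x)}$, say $Q_m^x=\beta_m c_{m,d}^{-1}Z_m^{(x)}$. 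Therefore, by orthogonality of spherical harmonics of distinct degrees and the reproducing property,
\[
T_f Y_k(x)=\int_{S^d}\Big(\sum_{m} Q_m^x(y)\Big)Y_k(y)\,d\sigma(y)=\int_{S^d} Q_k^x(y)\,Y_k(y)\,d\sigma(y)=\tfrac{\beta_k}{c_{k,d}}\,Y_k(x),
\]
which is \eqref{eq-5} with $\lambda_k=\beta_k/c_{k,d}$ independent of the choice of $Y_k\in\mathcal{H}_k$.

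Finally I would identify the constant by testing against a zonal harmonic. Apply the relation just proved to $Y_k(y)=Z_k^{(e)}(y)=c_{k,d}\,C_k^{(d-1)/2}(e\cdot y)$ for a fixed pole $e\in S^d$, and evaluate at $x=e$. Using fact (c),
\[
\lambda_k\,c_{k,d}\,C_k^{(d-1)/2}(1)=\lambda_k\,Z_k^{(e)}(e)=\int_{S^d} f(e\cdot y)\,c_{k,d}\,C_k^{(d-1)/2}(e\cdot y)\,d\sigma(y)=c_{k,d}\,\omega_{d-1}\int_{-1}^1 f(t)\,C_k^{(d-1)/2}(t)(1-t^2)^{(d-2)/2}\,dt,
\]
and dividing by $c_{k,d}\,C_k^{(d-1)/2}(1)$ and inserting $\omega_{d-1}=\omega_d\big/\!\int_{-1}^1(1-t^2)^{(d-2)/2}\,dt$ produces exactly the asserted formula for $\lambda_k$.

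The main obstacle is fact (b): that the zonal (reproducing) harmonic of degree $k$ is precisely a multiple of the Gegenbauer polynomial $C_k^{(d-1)/2}$ evaluated at $x\cdot y$. This identification — the addition theorem for spherical harmonics — is exactly what pins down the normalization appearing in the statement. I would either cite it from \cite{Muller:1998:special-functions}, or derive it by verifying that $y\mapsto C_k^{(d-1)/2}(e\cdot y)$ is an eigenfunction of the spherical Laplacian with eigenvalue $-k(k+d-1)$, hence lies in $\mathcal{H}_k$, and is the unique zonal element of $\mathcal{H}_k$ up to scalar, so must be proportional to the zonal harmonic $Z_k^{(e)}$. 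The remaining ingredients are bookkeeping with the orthogonal decomposition and the single polar-coordinates integral in fact (c).
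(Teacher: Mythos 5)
Your proposal is a correct and complete sketch of the standard proof, but note that the paper does not actually prove this theorem: it is quoted as a classical result, with the proof delegated to the cited references (M\"uller's book and Xu's paper). So there is nothing in the paper to compare your argument against step by step; what you have written is essentially the textbook argument that those references contain. Your three ingredients are all sound: the rotation-equivariance argument showing that each component $Q_m^x$ of $f(x\cdot y)$ is zonal about $x$ (and, by a further application of equivariance, that the proportionality constant $\beta_m$ is independent of $x$), the reduction to polynomial $f$ via Weierstrass together with the continuity of both sides in $\|f\|_\infty$, and the evaluation of $\lambda_k$ by testing against the zonal harmonic at its pole combined with the slicing identity $\int_{S^d}F(x\cdot y)\,d\sigma(y)=\omega_{d-1}\int_{-1}^1F(t)(1-t^2)^{(d-2)/2}\,dt$. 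You correctly isolate the one genuinely nonelementary input, namely the addition theorem (equivalently, the one-dimensionality of the space of zonal elements of $\mathcal{H}_k$ and the identification of the zonal harmonic with $C_k^{(d-1)/2}(x\cdot y)$ up to a constant); since that is precisely the normalization the statement depends on, you should either cite it explicitly or carry out the Laplacian eigenvalue computation you outline. With that input supplied, the proof is complete.
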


For $\nu=1/2$ and $t=1$,
the generating formula becomes
$(1-r)^{-2/2} = \sum_{k=0}^\infty C_k^{1/2}r^k$,
so
\begin{equation}
C_k^{1/2}=1 \text{ for all } k\ge 0.
\end{equation}
For $d=2$, $(d-2)/2=0$ and $\omega_d = 4\pi$,
and the relevant index $\nu$ is $\nu = (d-1)/2 = 1/2$.
Therefore for $d=2$,
\begin{equation}\label{eq-7}
\lambda_k=2\pi \int_{-1}^1 f(t)C_k^{1/2}(t)dt.
\end{equation}

Choosing $\nu=1/2$ and set $r=1$ in the generating function \eqref{eq-6}, we obtain
$$(2-2t)^{-1/2}=\sum_{k=0}^\infty C_k^{1/2}(t).$$
This formula is not entirely valid, since \eqref{eq-6} only applies for $r<1$; but
all calculations below can be justified by writing the corresponding formulae for $r<1$,
and then passing to the limit $r=1$. We will omit these details, and work directly with $r=1$.

We also recall the following fact in \cite[Chapter 4, Corollary 2.16]{Stein-Weiss:1971:fourier-analysis}: for $S^2$, the polynomials $C_k^{1/2}(t)$, $k=0,1,\ldots,$ are mutually orthogonal with respect to the inner product
$\langle f, g\rangle =\int_{-1}^1 f(t)g(t)dt$.
So for $f=(2-2t)^{-1/2}$ in \eqref{eq-7} and for any $k\ge 0$, by orthogonality,
\begin{equation}\label{eq-8}
\begin{split}
\lambda_k=2\pi\int_{-1}^1 (2-2t)^{-1/2} C_k^{1/2}(t)dt
& =2\pi\int_{-1}^1 \sum_{m=0}^\infty C_m^{1/2}(t) C_k^{1/2}(t)dt
\\
&= 2\pi\int_{-1}^1 \bigl(C_k^{1/2}(t)\bigr)^2dt\\
&=\frac {4\pi}{2k+1},
\end{split}
\end{equation}where the last identity follows from the normalized value of $C_k^{1/2}(t)$ over $(-1,1)$, see e.g.,  \cite[p.461]{Andrews-Askey-Roy:1999:special-functions} or \cite[10.15, p.54]{Muller:1998:special-functions}.
Hence for $f(t)=(2-2t)^{-1/2}$, for $x\in S^2$,
\begin{equation}\label{eq-9}
\int_{S^2} f(x\cdot y) Y_k(y)d\sigma(y)=\frac {4\pi}{2k+1}Y_k(x),\, \forall \,k\ge 0.
\end{equation}

Now return to
$\iint g(x)g(y)|x-y|^{-1}\,d\sigma(x)\,d\sigma(y)$.
Here $|x-y|^{-1} =(2-2x\cdot y)^{-1/2} = f(x\cdot y)$
where $f(t) = (2-2t)^{-1/2}$.
Since all spherical harmonics of odd degrees are odd,
and since $g\perp\one$,
$g$ may be expanded as
$g = \sum_{k=1}^\infty Y_{2k}$
where each $Y_{2k}$ is a spherical harmonic of degree $2k$.
These are of course pairwise orthogonal in $\lt(S^2)$.
Therefore
\begin{multline}
\iint g(x)g(y)|x-y|^{-1}\,d\sigma(x)\,d\sigma(y)
= \sum_{k=1}^\infty \langle \lambda_{2k}Y_{2k},Y_{2k}\rangle
\\
= \sum_{k=1}^\infty \langle \frac{4\pi}{2(2k)+1}Y_{2k},Y_{2k}\rangle
\le \frac{4\pi}{5}\sum_{k=1}^\infty\norm{Y_{2k}}_2^2
= \frac{4\pi}{5}\norm{g}_2^2.
\end{multline}
This completes the proof of Lemma~\ref{lemma:spharmonics}.
\end{proof}

\medskip
\begin{remark}
Consider inequalities of the modified form
\begin{equation} \label{L44}
\int_{\reals^3} \big|(f\sigma*f\sigma)(x) \big|^2\,w(x)\,dx
\le C\norm{f}_{L^4(S^2)}^4,
\end{equation}
where $w\ge 0$ is any {radial} weight.
The modification consists in placing the $L^4$ norm on the right-hand side
of the inequality, instead of the $L^2$ norm.

If the inequality holds for some $C<\infty$, and if $w$ satisfies
$|\lambda_k(w)|\le \lambda_0(w)$
where
\[
\lambda_k(w) = 2\pi\int_{-1}^1 w((2+2t)^{1/2})(2+2t)^{-1/2}\,C_k^{1/2}(t)\,dt,
\]
then constant functions are (global) extremals.
This holds in particular for $w\equiv 1$.
\end{remark}

This is proved as follows, in the spirit of Foschi \cite{foschi}.
We may assume that $f\ge 0$.
\begin{align*}
\int_{\reals^3} (f\sigma*f\sigma)(x)^2\,w(x)\,dx
&\le
\int_{\reals^3} \big[(f^2\sigma*\sigma)(x)\big]^2 \,w(x)\,dx
\\
&=2\pi\iint_{S^2\times S^2} f^2(x)f^2(y)|x+y|^{-1}w(|x+y|)\,d\sigma(x)\,d\sigma(y).
\end{align*}
The first inequality follows from Cauchy-Schwarz,
and is an equality if $f$ is constant modulo null sets
on almost every circle (that is, the intersection of $S^2$ with an affine plane)
in $S^2$; thus if and only if $f$ is constant modulo $\sigma$--null sets.
Expand $f^2=\sum_{k=0}^\infty Y_k$ in spherical harmonics.
Then
\[
2\pi\iint_{S^2\times S^2} f^2(x)f^2(y)|x+y|^{-1}w(|x+y|)\,d\sigma(x)\,d\sigma(y)
=
2\pi\sum_{k=0}^\infty \lambda_k \norm{Y_k}_2^2
\le 2\pi\sup_k\lambda_k\norm{f}_4^4,
\]
for certain coefficients $\lambda_k$ which depend only on $w$.
If there is a valid inequality \eqref{L44} with $C<\infty$,
then $\lambda_0<\infty$.
%Clearly $|\lambda_k|\le \lambda_0$ for all $k$,
%$\lambda_k\to 0$ as $k\to\infty$,
%and $|\lambda_k|<\lambda_0$ for all $k\ne 0$.
Thus constant functions are extremizers.
If $\max_{k\ne 0}|\lambda_k(w)|<\lambda_0(w)$,
then $f$ is an extremizer if and only if $f^2$ has a spherical harmonic
expansion with $Y_k=0$ for all $k\ge 1$, that is,
if and only if $f^2$ is constant.
For $f\ge 0$, this forces $f$ to be constant.
\qed

\section{A variational calculation} \label{section:calculation}

Recall the notation $e_\xi(x)=e^{x\cdot\xi}$.  It is natural to study
$\norm{\widehat{f\sigma}}_4/\norm{f}_2$ for $f(x)=e_\xi(x)$, for several reasons.

\noindent
(i)
Extremizers for the paraboloid $\paraboloid=\{x: x_3=\tfrac12|x'|^2\}$
where $x'=(x_1,x_2)$ are Gaussian functions of $x'$; but these are simply
restrictions to $\paraboloid$ of simple exponentials $e^{x\cdot\xi}$
for $\xi\in\complex^3$ satisfying $\Re(\xi_3)<0$.
\newline
(ii) $(f\sigma*f\sigma)(x)$ is expressed for each $x$ as an
integral of a product of two factors. When $f=e_\xi$,
the integrand becomes a constant for each $x$,
and hence the Cauchy-Schwarz inequality becomes an equality when
applied to each such integral in an appropriate way.
Such equalities are the key to one proof \cite{foschi} that
Gaussians are extremal for $\paraboloid$.
\newline
(iii)
$\norm{e_\xi\sigma*e_\xi\sigma}_2/\norm{e_\xi}_2^2$
is susceptible to a perturbative analysis for large $|\xi|$.
\newline
(iv) This analysis appears more likely to be generalizable to
other manifolds than $S^2$,
than does the calculation of Lemma~\ref{lemma:constantfunction} for $f\equiv 1$.

For these reasons, we carry out in this section
a perturbative analysis of
$\norm{e_\xi\sigma*e_\xi\sigma}_2/\norm{e_\xi}_2^2$,
thereby establishing Proposition~\ref{prop:perturbative}.

We will work with functions  concentrated principally
in a very small neighborhood of the north pole $(0,0,1)$.
A point $z\approx (0,0,1)$ in $S^2$ can be written
as
\begin{equation}
(y,(1-|y|^2)^{1/2}) = (y,1-\tfrac12|y|^2 - \tfrac18|y|^4+O(|y|^6))
\end{equation}
where $y\in\reals^2$ and $|y|<1$.
Let $\sigma$ denote surface measure on $S^2$;
\begin{equation}
d\sigma= (1+\tfrac12 |y|^2+O(|y|^4))\,dy.
\end{equation}
% ?? why was there a subscript: d\sigma_\eps
% subscript deleted 5/13/2010

For $z\in S^2$ and $\eps>0$ define
\begin{equation}
f_\eps(z) = \eps^{-1/2}
e^{(z_3-1)/\eps}\chi_{|(z_1,z_2)|<\tfrac12}\chi_{z_3>0}.
\end{equation}
Within the domain of $f_\eps$, the mapping
$(z_1,z_2,z_3)\leftrightarrow (z_1,z_2)$
is a one-to-one correspondence between $S^2$ and a ball in $\reals^2$.

$f_\eps$ is essentially $\eps^{-1/2}e^{-1/\eps}e_\xi$ where $\xi = (0,0,\eps^{-1})$;
the two functions differ by $O(e^{-c/\eps})$ in $\lt$ norm for some $c>0$.
The cutoff functions are inserted for convenience in the calculation.

For $(t,x)\in\reals^{1+2}$ define
\begin{equation}
u_\eps(t,x)
=
\int_{S^2} f_\eps(z)
e^{-i(x,t)\cdot z} \,d\sigma(z)
\end{equation}
where of course $(x,t)\cdot z = x_1z_1+x_2z_2+tz_3$.
Then
\begin{align*}
u_\eps(t,x)
& =
\eps^{-1/2}
\int_{S^2}
e^{(z_3-1)/\eps}
e^{-ix\cdot (z_1,z_2)} e^{-itz_3}
\, \tilde\chi(z)
\,d\sigma(z)
\\
\begin{split}
& =
\eps^{-1/2}
e^{-it}
\int_{\reals^2}
e^{\big(-\tfrac12|y|^2-\tfrac18|y|^4+O(|y|^6)\big)\eps^{-1}}
\\
&\qquad\qquad
e^{-ix\cdot y} e^{-it(-\tfrac12|y|^2-\tfrac18|y|^4+O(|y|^6))}
(1+\tfrac12 |y|^2+O(|y|^4))
\chi(y)\,dy
\end{split}
\end{align*}
where $\tilde\chi,\chi$ denote disks centered respectively at $(0,0,1)\in S^2$
and $0\in\reals^2$, which are independent of $\eps$.
A change of variables gives
\begin{multline*}
u_\eps(t,x) =
\eps^{1/2}
e^{-it}
\int_{\reals^2}
e^{-i\eps^{1/2}x\cdot y}
e^{ -(1-i\eps t)(\tfrac12|y|^2+\eps\tfrac18|y|^4+O(\eps^{-1}{|\eps^{1/2} y|^6)}) }
\\
(1+\tfrac12 \eps|y|^2+O(|\eps^{1/2} y|^4))
\chi(\eps^{1/2}y)\,dy.
\end{multline*}
Setting
\begin{align*}
v_\eps(t,x)
&= e^{-it/\eps}\eps^{-1/2}u_\eps(-\eps^{-1} t,\eps^{-1/2}x)
\\
&=
\int_{\reals^2}
e^{-i x\cdot y}
e^{ -(1+it)(\tfrac12|y|^2+\eps\tfrac18|y|^4+O(\eps^{-1}{|\eps^{1/2} y|^6)} }
(1+\tfrac12 \eps|y|^2+O(|\eps^{1/2} y|^4))
\chi(\eps^{1/2}y)\,dy
\end{align*}
we have
\begin{equation}\label{vuequality}
\norm{v_\eps}_{L^4(\reals^3)}^4
=
\norm{u_\eps}_{L^4(\reals^3)}^4.
\end{equation}

Set
\begin{equation}
w_\eps(t,x)
=
\int_{\reals^2}
e^{-i x\cdot y}
e^{ -(1+it)(\tfrac12|y|^2+\eps\tfrac18|y|^4)}
(1+\tfrac12 \eps|y|^2)
\,dy
\qquad\qquad\text{for }\eps\ge 0.
\end{equation}
%The switch from $1-it$ to $1+it$ in the exponent
%amounts to a complex conjugation, and therefore does not affect the $L^4$ norm.
Using the exact definition of $f_\eps$ rather than
the approximate expressions above, it is routine to verify that
\begin{equation}
\norm{w_\eps}_4^4 = \norm{v_\eps}_4^4 + O(\eps^2)
\text{ as } \eps\to 0^+.
\end{equation}
Since we are interested in first variations with respect to $\eps$
of the $L^4$ norm at $\eps=0$, it will suffice to analyze $\norm{w_\eps}_4^4$.
Also introduce
\begin{equation}
g_\eps(y) = e^{-\tfrac12|y|^2-\eps\tfrac18|y|^4}
\end{equation}
and
\begin{equation}
d\sigma_\eps(y) = (1+\eps\tfrac12|y|^2)\,dy.
\end{equation}
Then
\begin{equation}
\norm{f_\eps}_{\lt(\sigma)}^2=\norm{g_\eps}_{\lt(\sigma_\eps)}^2+O(\eps^2).
\end{equation}

Although $f_\eps$ is not well-defined in the limit $\eps=0$,
$\lim_{\eps\to 0^+} \norm{f_\eps}_2^2>0$ does exist,
and we will abuse notation by writing $\norm{f_0}_2^2$
to denote this quantity. We have
\begin{equation}
\norm{f_0}_2^2
= \int_{\reals^2} e^{-2|y|^2/2}\,dy.
\end{equation}
It is a routine exercise to verify that
$\eps\mapsto \norm{v_\eps}_4^4$
is a $C^\infty$ function on $[0,\infty)$;
hence the same goes for $\norm{w_\eps}_4^4$,
and for $\norm{u_\eps}_4^4$ by \eqref{vuequality}.
Similarly,
$\eps\mapsto\norm{f_\eps}_2^2$
is $C^\infty$ on $[0,\infty)$.

Consider the functional
\begin{equation}
\Psi(\eps) = \log\frac {\norm{u_\eps}_{L^4}^4} {\norm{f_\eps}_{\lt}^4},
\end{equation}
which is initially defined for $\eps>0$ but extends continuously
and differentiably to $\eps=0$.
Its derivative is
\begin{equation} \label{DPsiformula}
\partial_\eps\big|_{\eps=0}\Psi(\eps)
=
\frac
{\partial_\eps\norm{w_\eps}_4^4\big|_{\eps=0}}
{\norm{w_0}_4^4}
-
2\frac{\partial_\eps\big|_{\eps=0}\norm{g_\eps}_2^2}
{\norm{g_0}_2^2},
\end{equation}
and of course
\begin{equation}
\Psi(0) = \log(\scriptr_\paraboloid^4)
\end{equation}
where $\scriptr_\paraboloid$ \eqref{Rparaboloiddefn} is the optimal constant for the adjoint restriction
inequality for the paraboloid.

We will calculate:
\begin{lemma}
\begin{equation}
\frac{\partial\Psi}{\partial\eps}\bigg|_{\eps=0} >0.
\end{equation}
\end{lemma}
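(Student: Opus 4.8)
The plan is to prove $\partial_\eps\Psi|_{\eps=0}>0$ by a direct Taylor-expansion computation of both quantities on the right-hand side of \eqref{DPsiformula}, exploiting that $\eps=0$ corresponds exactly to the paraboloid and that the $\eps$-derivative measures the first-order effect of the sphere's deviation from the paraboloid. Since $\norm{w_\eps}_4^4 = \int |w_\eps(t,x)|^4\,dx\,dt = \int (w_\eps^2\overline{w_\eps}^2)\,dx\,dt$, and by Plancherel this equals a convolution expression $c\int |g_\eps\sigma_\eps * g_\eps\sigma_\eps|^2$, I would differentiate under the integral sign in $\eps$. The summand $g_\eps(y) = e^{-|y|^2/2}e^{-\eps|y|^4/8}$ has $\partial_\eps g_\eps|_{\eps=0} = -\tfrac18|y|^4 e^{-|y|^2/2}$, and $d\sigma_\eps = (1+\tfrac12\eps|y|^2)\,dy$ contributes $\partial_\eps|_{\eps=0} = \tfrac12|y|^2\,dy$. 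So $\partial_\eps|_{\eps=0}(g_\eps\,d\sigma_\eps)$ is an explicit Gaussian-times-polynomial measure, and the derivative of the bilinear convolution $L^2$-norm becomes a finite sum of Gaussian integrals over $\reals^2$ (or over the support of the $*$-convolution in $\reals^3$), all computable in closed form. The same applies to the denominator piece $\partial_\eps|_{\eps=0}\norm{g_\eps}_2^2 = \int_{\reals^2}(-\tfrac14|y|^4 + \tfrac12|y|^2)e^{-|y|^2}\,dy$, which is elementary.

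First I would set up the convolution form: with $F(y) = e^{-|y|^2/2}$ on the paraboloid, one has $F\mu*F\mu(z) = \tfrac{a}{2}e^{-z_3}\chi_{z_3 > |z'|^2/4}$ exactly as computed in the proof of Lemma~\ref{lemma:constantfunction}. Then I would write $g_\eps\,d\sigma_\eps = F\,d\mu + \eps\,d\rho + O(\eps^2)$ where $d\rho = (\tfrac12|y|^2 - \tfrac18|y|^4)e^{-|y|^2/2}\,dy$ is a signed measure, so that $(g_\eps\sigma_\eps*g_\eps\sigma_\eps) = (F\mu*F\mu) + 2\eps(F\mu*\rho) + O(\eps^2)$, and hence
\begin{equation*}
\partial_\eps\big|_{\eps=0}\norm{g_\eps\sigma_\eps*g_\eps\sigma_\eps}_2^2 = 4\langle F\mu*F\mu,\ F\mu*\rho\rangle.
\end{equation*}
By the same reflection-free manipulation used throughout (the identity $\langle F\mu*G\mu, H\mu*K\mu\rangle = \int (H\mu * F\mu * K\mu)\,dG$ — or more directly Fubini), this pairing reduces to $\int_{\reals^2} (\text{explicit Gaussian kernel})(y)\,(\tfrac12|y|^2 - \tfrac18|y|^4)e^{-|y|^2/2}\,dy$. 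The ``explicit Gaussian kernel'' is $(F\mu*F\mu*F\mu)|_{\paraboloid}$ evaluated appropriately, which by the three-fold convolution of the exponential is again a constant multiple of $e^{-|y|^2/2}$ times the characteristic function of the relevant region; since everything is Gaussian, the region's boundary contributes only exponentially small corrections to the leading Gaussian moments, so the integral is a rational combination of the moments $\int |y|^{2k}e^{-|y|^2}\,dy = \pi k!$. Assembling the numerator and denominator contributions and clearing the common factors of $\pi$ and $a$, the claimed strict inequality $\partial_\eps\Psi|_{\eps=0}>0$ becomes a single numerical comparison of two explicit rationals.

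The main obstacle I expect is bookkeeping the quartic corrections consistently: there are three independent sources of $O(\eps)$ terms — the $\eps|y|^4/8$ in the exponent of $g_\eps$, the $\tfrac12\eps|y|^2$ Jacobian factor in $d\sigma_\eps$, and (in the honest calculation before passing to $w_\eps$) the $O(|y|^6)$ phase errors and the moving cutoff, which the text has already argued contribute only $O(\eps^2)$. One must also track that the $\eps$-dependence sits inside a modulus-squared, so cross terms between the real perturbation of $g_\eps$ and the purely phase-valued perturbations (the $e^{it}$ and $e^{-it}$ factors in $w_\eps$, equivalently the imaginary part of the curvature correction) could in principle enter; showing these cross terms vanish or combine favorably is the delicate point. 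A clean way to sidestep some of this is to work, as the proof of Lemma~\ref{lemma:constantfunction} does, directly with $\norm{f\sigma*f\sigma}_2^2$ rather than $\norm{\widehat{f\sigma}}_4^4$, so that $\eps$ enters only through the real weight $g_\eps\,d\sigma_\eps$ on a fixed surface, making the first variation manifestly a real pairing of the form displayed above; then the only remaining work is evaluating Gaussian moments and checking that the resulting rational number is positive, which it is because the sphere's extra curvature term $-\tfrac18|y|^4$ reinforces rather than opposes the concentrating Gaussian — consistent with the factor $2^{1/4} > (3/2)^{1/4}$ found by the exact computation in Step 3.
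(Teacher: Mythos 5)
There is a genuine gap, and it sits exactly at the step you describe as your way of sidestepping the delicate point. You claim that by working with $\norm{f\sigma*f\sigma}_{\lt}^2$ instead of $\norm{\widehat{f\sigma}}_4^4$, the parameter $\eps$ "enters only through the real weight $g_\eps\,d\sigma_\eps$ on a fixed surface." That is not true. To have a nondegenerate limit as $\eps\to 0$ you must perform the parabolic rescaling $(y,x_3)\mapsto(\eps^{1/2}y,\,1+\eps\,\cdot)$, and under this ($\eps$-dependent) rescaling the sphere becomes the surface $x_3=-\tfrac12|y|^2-\tfrac18\eps|y|^4+O(\eps^2)$: the quartic term $-\tfrac18\eps|y|^4$ perturbs the \emph{geometry} (where the mass sits in $\reals^3$), not just the density. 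In the extension picture this is precisely the $(1+it)$ multiplying $\eps\tfrac18|y|^4$ in $w_\eps$; the "$1$" is your amplitude effect, the "$it$" is the moving surface. In the convolution picture the geometric part of the first variation is a transverse (derivative-of-delta) perturbation of the measure, not a signed measure $\eps\rho$ with an $L^2$ density on the paraboloid, so your formula $\partial_\eps\norm{g_\eps\sigma_\eps*g_\eps\sigma_\eps}_2^2\big|_{\eps=0}=4\langle F\mu*F\mu,\,F\mu*\rho\rangle$ omits it.

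The omission is fatal rather than cosmetic, because the weight-only variation you propose to compute is exactly zero. The Gaussian is an extremizer (hence a critical point) of the paraboloid functional $\norm{\widehat{F\sigma_P}}_4^4/\norm{F}_{\lt}^4$, so perturbing only the function/density on the \emph{fixed} paraboloid — which is what $d\rho=(\tfrac12|y|^2-\tfrac18|y|^4)e^{-|y|^2/2}\,dy$ together with $\partial_\eps\norm{g_\eps}_{\lt(\sigma_\eps)}^2\big|_{\eps=0}=0$ amounts to — gives vanishing first variation of $\Psi$. All of the positivity in the paper's computation comes from the oscillatory, surface-deviation term: taking the real part of $\tfrac12(1+it)(\phi_t^2+\phi_{tt})+i\phi_t$ and integrating the resulting $(1-3t^2)(1+t^2)^{-3}$ and $(1-t^2)(1+t^2)^{-2}$ profiles against $|w_0|^4$ in $t$ yields the value $\tfrac14$ in \eqref{DPsiformula}. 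So your "clean way to sidestep" the cross terms removes the only source of the strict inequality; carried out honestly, your Gaussian-moment computation would return $0$, and restoring the geometric term puts you back to handling the phase factor $e^{-it(\tfrac12|y|^2+\tfrac18\eps|y|^4)}$, i.e., essentially the paper's calculation. (A secondary issue: your assertion that the boundary of the support $\Omega$ of $F\mu*F\mu$ contributes only exponentially small corrections is unjustified — $F\mu*F\mu=\tfrac{a}{2}e^{-z_3}\chi_\Omega$ is of size $\sim a/2$ near the vertex of $\partial\Omega$ — but this is moot given the main gap.)
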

Proposition~\ref{prop:perturbative} follows, since by radial
symmetry, $\norm{e_\xi\sigma*e_\xi\sigma}_2/\norm{e_\xi}_2^2$
depends only on $|\xi|$.

The most involved calculation is that of the numerator in the first term
of \eqref{DPsiformula}.
To begin that calculation,
\begin{align*}
\partial_\eps\big|_{\eps=0} w_\eps(t,x)
&=
\int
\big[-\tfrac18(1+it) |y|^4 + \tfrac12|y|^2\big]
e^{-ix\cdot y} e^{-(1+it)|y|^2/2}\,dy
\\
&=
\Big[-\tfrac18(1+it) (-i/2)^{-2} \partial_t^2
+ \tfrac12 (-i/2)^{-1}\partial_t
\Big]
\int
e^{-ix\cdot y} e^{-(1+it)|y|^2/2}\,dy
\\
&=
\Big[\tfrac12(1+it) \partial_t^2 + i\partial_t \Big]
\int
e^{-ix\cdot y} e^{-(1+it)|y|^2/2}\,dy
\\
&=
\Big[\tfrac12(1+it) \partial_t^2 + i\partial_t \Big]
w_0(t,x)
\\
&=
\Big[\tfrac12(1+it) \partial_t^2 + i\partial_t \Big]
c_0(1+it)^{-1}e^{-|x|^2/2(1+it)}
\end{align*}
where
$c_0$ is a positive constant whose precise value will play no role,
since it will ultimately appear in both the numerator and denominator of a certain ratio.

Define
\begin{equation}
\phi(t,x) = -\tfrac12|x|^2(1+it)^{-1}-\log(1+it),
\end{equation}
so that
\begin{equation}w_0=c_0e^\phi.\end{equation}
The last quantity above may be written as
\begin{align*}
&=c_0\Big[\tfrac12(1+it) \partial_t^2 + i\partial_t \Big]
e^{\phi}
\\
&= \tfrac12 c_0(1+it) \big(\phi_t^2+\phi_{tt} \big) e^\phi
+ c_0i\phi_t e^\phi
\\
&=
\big( \tfrac12(1+it) (\phi_t^2+\phi_{tt}) + i\phi_t \big)
w_0
\end{align*}
where
$\phi_t,\phi_{tt}$ denote respectively the first and
second partial derivatives of $\phi$ with respect to $t$.

Now
\begin{align*}
\phi_t
&= \tfrac{i}2 |x|^2(1+it)^{-2}-i(1+it)^{-1}
\\
\phi_{tt}
&= \tfrac{i}2 (-2i) |x|^2(1+it)^{-3} -i(-i)(1+it)^{-2}
\\
&= |x|^2(1+it)^{-3}-(1+it)^{-2}
\\
\phi_t^2
&=
-\tfrac14|x|^4(1+it)^{-4}
+ |x|^2(1+it)^{-3}
-(1+it)^{-2}
\end{align*}
so
\begin{equation}
\phi_t^2+\phi_{tt}
= -\tfrac14|x|^4(1+it)^{-4}
+ 2|x|^2(1+it)^{-3}
-2(1+it)^{-2}.
\end{equation}
Consequently
\begin{multline}
\tfrac12(1+it) \big(\phi_t^2+\phi_{tt}\big)
+i\phi_t
\\
=
-\tfrac18 |x|^4(1+it)^{-3}
+ |x|^2(1+it)^{-2}
-(1+it)^{-1}
- \tfrac{1}2 |x|^2(1+it)^{-2}+ (1+it)^{-1}
\\
=
-\tfrac18|x|^4(1+t^2)^{-3}(1-it)^3
+ \tfrac12 |x|^2(1+t^2)^{-2}(1-it)^{2},
%-(1+t^2)^{-1}(1-it)
\end{multline}
whose real part is
\begin{multline}
\Re\Big(
\tfrac12 (1+it) \big(\phi_t^2+\phi_{tt}\big)
+i\phi_t
\Big)
\\
=
-\tfrac18|x|^4(1+t^2)^{-3}(1-3t^2)
+ \tfrac12|x|^2(1+t^2)^{-2}(1-t^2).
\end{multline}
\begin{comment}
% modified May 15 to incorporate Diogo's corrections; here is the older version.
\begin{multline}
\Re\Big(
\tfrac12 (1+it) \big(\phi_t^2+\phi_{tt}\big)
+i\phi_t
\Big)
\\
=
-\tfrac14|x|^4(1+t^2)^{-3}(1-3t^2)
+ \tfrac32|x|^2(1+t^2)^{-2}(1-t^2)
-(1+t^2)^{-1}.
\end{multline}
\end{comment}

Now
\begin{equation}
\partial_\eps\norm{w_\eps}_4^4
=
4\int
|w_\eps|^4
\Re\left( \frac{\partial_\eps w_\eps}{w_\eps}\right)
\end{equation}
and therefore
\begin{align*}
\partial_\eps\norm{w_\eps}_4^4\big|_{\eps=0}
&=
4 \iint
\Re\Big( \tfrac12 (1+it) \big(\phi_t^2+\phi_{tt}\big) +i\phi_t \Big)
|w_0(t,x)|^4\,dx\,dt
\\
&=c_0^4\int_{\reals}\int_{\reals^2}
\Big[ -\tfrac12|x|^4(1+t^2)^{-3}(1-3t^2)
+ 2|x|^2(1+t^2)^{-2}(1-t^2)
%-4(1+t^2)^{-1}
\Big]
\\&\qquad\qquad\qquad\qquad\qquad
(1+t^2)^{-2}|e^{-|x|^2/2(1+it)}|^4
\,dx\,dt
\\
&=
c_0^4\int_{\reals}\int_{\reals^2}
\Big[ -\tfrac12 |x|^4(1+t^2)^{-3}(1-3t^2)
+ 2|x|^2(1+t^2)^{-2}(1-t^2)
%-4(1+t^2)^{-1}
\Big]
\\&\qquad\qquad\qquad\qquad\qquad
(1+t^2)^{-2}\ e^{-2|x|^2/(1+t^2)} \ dx\,dt.
\end{align*}
Substituting $x = (1+t^2)^{1/2}\tilde x$
and then replacing $\tilde x$ by $x$ gives
\begin{equation*}
\partial_\eps\norm{w_\eps}_4^4\big|_{\eps=0}
=
c_0^4\int_{\reals}\int_{\reals^2}
\Big[ -\tfrac12 |x|^4(1-3t^2)
+ 2|x|^2(1-t^2)
%-4
\Big]
(1+t^2)^{-2}
e^{-2|x|^2} \,dx\,dt.
\end{equation*}

By substituting $x = 2^{-1/2}y$ in $\reals^2$
and then $r = s^{1/2}$ in $(0,\infty)$
we derive the identities
\begin{align*}
\int_{\reals^2}
e^{-2|x|^2}\,dx
&=\tfrac12 \int_{\reals^2} e^{-|y|^2}\,dy
= \pi\int_0^\infty e^{-r^2}\,r\,dr
= \tfrac12 \pi\int_0^\infty e^{-s}\,ds
= \frac\pi2
\\
\int_{\reals^2}
|x|^2
e^{-2|x|^2}\,dx
&= \frac\pi4\int_0^\infty se^{-s}\,ds
= \frac\pi4
\\
\int_{\reals^2}
|x|^4
e^{-2|x|^2}\,dx
&= \frac\pi8\int_0^\infty s^2e^{-s}\,ds
= \frac\pi4.
\end{align*}
Recall also that
\begin{alignat*}{2}
&\int_\reals (1+t^2)^{-1}\,dt &&= \pi
\\
&\int_\reals (1+t^2)^{-2}\,dt &&= \frac{\pi}{2}.
\end{alignat*}.

Using these formulas we obtain
\begin{align*}
\partial_\eps\norm{w_\eps}_4^4\big|_{\eps=0}
&=
c_0^4\int_{\reals}
\Big[ -\tfrac12 (1-3t^2)\frac\pi4
+ 2(1-t^2)\frac\pi4
%-4\frac\pi2
\Big]
(1+t^2)^{-2}
\,dt
\\
%&=\tfrac\pi4 c_0^4\int_\reals
%\Big[ -(1-3t^2) + 6(1-t^2) -8\Big]
%(1+t^2)^{-2} \,dt \\
&=
\tfrac\pi4 c_0^4 \int_\reals
(-\tfrac12 t^2+\tfrac32)
(1+t^2)^{-2}
\,dt
\\
&=
\tfrac\pi4 c_0^4 \int_\reals
\Big[
-\tfrac12(1+t^2)^{-1} +2(1+t^2)^{-2}
\Big]
\,dt
\\
&=
\tfrac\pi4 c_0^4
\big(
-\frac{\pi}2 + 2\frac{\pi}2
\big)
\\
&=
c_0^4 \frac{\pi^2}{8}.
\end{align*}

On the other hand,
\begin{align*}
\norm{w_0}_4^4
&= c_0^4 \int_\reals\int_{\reals^2}
(1+t^2)^{-2}
e^{-2|x|^2/(1+t^2)}\,dx\,dt
\\
&= c_0^4 \int_\reals\int_{\reals^2}
(1+t^2)^{-1}
e^{-2|y|^2}\,dy\,dt
\\
&=c_0^4 \tfrac12\pi^2.
\end{align*}
Therefore
\begin{equation}
\frac
{\partial_\eps\norm{w_\eps}_4^4\big|_{\eps=0}}
{\norm{w_0}_4^4}
=
\frac{\pi^2 c_0^4/8}{\pi^2 c_0^4/2 }
=\frac14.
\end{equation}

\begin{comment}
From this we must subtract
\begin{equation}
2\frac{\partial_\eps\norm{g_\eps}_{\lt(\sigma_\eps)}^2\big|_{\eps=0}} {\norm{g_0}_2^2}.
\end{equation}
\end{comment}

The variation of $\norm{g_\eps}_2^2$ must also be taken into account:
\begin{align*}
\partial_\eps
\int_{\reals^2} g_\eps(y)^2\,d\sigma_\eps(y)\ \Big|_{\eps=0}
&=
\partial_\eps
\int_{\reals^2}
e^{-|y|^2-\eps\tfrac14|y|^4}(1+\eps\tfrac12|y|^2)\,dy\ \Big|_{\eps=0}
\\
&=
\int_{\reals^2}
(-\tfrac14|y|^4+\tfrac12|y|^2)
e^{-|y|^2}\,dy
\\
&= -\frac{2\pi}4 + \frac{\pi}2
\\
&= 0.
\end{align*}
%Also
%\begin{equation}
%\norm{g_0}_{\lt(\sigma_0)}^2 = \int_{\reals^2} e^{-|y|^2}\,dy = \pi.
%\end{equation}
Therefore
\begin{equation}
2\frac{\partial_\eps\norm{g_\eps}_{\lt(\sigma_\eps)}^2\big|_{\eps=0}} {\norm{g_0}_2^2}
= 0.
\end{equation}
Putting it all together,
\begin{equation}
\partial_\eps \Psi(\eps) \big|_{\eps=0} = \tfrac14-0 >0.
\end{equation}

\begin{comment}
\bigskip
\noindent{\bf Clarification.}
If $f_\eps^\star$ denotes the even rearrangement of $f_\eps$
and $u_\eps^\star = \widehat{f_\eps^\star}$,
then
\begin{equation} \label{nearequality}
\norm{u_\eps^\star}_4^4
\equiv
\tfrac32 \norm{u_\eps}_4^4
\end{equation}
provided that we choose the product of cutoff functions in the definition
of $f_\eps$ to be supported in a sufficiently small neighborhood
of the north pole.
Thus the factor of $(3/2)^{1/4}$ comes out naturally.
\end{comment}

\section{Proof of Lemma~\ref{lemma:neckpain}}
\label{section:neckpain}

\begin{proof}[Proof of Lemma~\ref{lemma:neckpain}]
Suppose that $f=\chi_E$ is the characteristic function of a set $E$.
We will begin by showing that there exist  $C<\infty$ and
exponents $s,t>0$ such that for any set $E$ and any index $k$,
\begin{multline} \label{charsetbound}
\sum_j |\scriptc_k^j|^2
\big(
|\scriptc_k^j|^{-1}
\int_{\scriptc_k^j}
|\chi_E|^p
\big)^{4/p}
\\
\le C|E|^2
\cdot
\min\big(2^{-2k}|E|^{-1},2^{2k}|E|\big)^t
\cdot
\max_i
\left(\frac{|E\cap\scriptc_k^i|}{|E|+|\scriptc_k^i|}\right)^{s}.
\end{multline}
Indeed,
\begin{align*}
\sum_j |\scriptc_k^j|^2
\big(
|\scriptc_k^j|^{-1}
\int_{\scriptc_k^j}
\chi_E^p
\big)^{4/p}
&=
\sum_j
|\scriptc_k^j|^2 |E\cap \scriptc_k^j|^{4/p}|\scriptc_k^j|^{-4/p}
\\
&\le
\sum_j |E\cap\scriptc_k^j|
\cdot
\max_i \Big(|E\cap \scriptc_k^i|^{4/p-1}|\scriptc_k^i|^{2-4/p}\Big)
\\
&=|E|
\max_i \Big(|E\cap \scriptc_k^i|^{4/p-1}|\scriptc_k^i|^{2-4/p}\Big).
\end{align*}
The analysis now splits into two cases.
Note that $|\scriptc_k^j|\sim 2^{-2k}$ uniformly for all indices $j,k$.
If $2^{-2k}\ge|E|$ then
\begin{align*}
|E|
\max_i \Big(|E\cap \scriptc_k^i|^{4/p-1}|\scriptc_k^i|^{2-4/p}\Big)
&\le
|E|^2
\max_i
\left(\frac{|E\cap\scriptc_k^i|}{|\scriptc_k^i|}\right)^{4/p-2}
\\
&\le
|E|^2
(2^{2k}|E|)^{2/p-1}
\max_i
\left(\frac{|E\cap\scriptc_k^i|}{|\scriptc_k^i|}\right)^{2/p-1}.
\end{align*}
Since $1\le p<2$, $\frac2p-1>0$ and hence
this is a bound of the required form \eqref{charsetbound}.
When instead
$2^{-2k}<|E|$ then
since $4/p-1>1\ge\tfrac12$,
\begin{align*}
|E| \max_i \Big(|E\cap \scriptc_k^i|^{4/p-1}|\scriptc_k^i|^{2-4/p}\Big)
&=
|E|^2
(2^{2k}|E|)^{-1}
\max_i
\left(\frac{|E\cap\scriptc_k^i|}{|\scriptc_k^i|}\right)^{4/p-1}
\\
&\le
|E|^2
(2^{2k}|E|)^{-1}
\max_i
\left(\frac{|E\cap\scriptc_k^i|}{|\scriptc_k^i|}\right)^{1/2}
\\
&=
|E|^2
(2^{2k}|E|)^{-1/2}
\max_i
\left(\frac{|E\cap\scriptc_k^i|}{|E|}\right)^{1/2},
\end{align*}
which again is a bound of the desired form.
Thus \eqref{charsetbound} is proved.

Next consider a general function $f\in\lt(S^2)$.
By sacrificing a constant factor in the inequality,
we may assume that $f$ takes the form
$f = \sum_{\alpha=-\infty}^\infty 2^{\alpha}\chi_{E_\alpha}$
where the sets $E_\alpha$ are pairwise disjoint and
$|E_\alpha|<\infty$.
Invoking the preceding analysis for each summand
together with the triangle inequality for the
sum with respect to $\alpha$ yields
\begin{align}
\norm{f}_{X_p}^4
\label{beforepaininneck}
&\le C\sum_k
\Big(
\sum_\alpha 2^\alpha
|E_\alpha|^{1/2}
\cdot
\min\big(2^{-2k}|E_\alpha|^{-1},2^{2k}|E_\alpha|\big)^{t/4}
\cdot
\max_i
\left(\frac{|E_\alpha\cap\scriptc_k^i|}{|E_\alpha|+|\scriptc_k^i|}\right)^{s/4}
\Big)^4
\\
\label{paininneck}
&\le
C\left(\sum_\alpha 2^{4\alpha}|E_\alpha|^2
\max_{k,i}
\left(\frac{|E_\alpha\cap\scriptc_k^i|}{|E_\alpha|+|\scriptc_k^i|}\right)^{s}\right)^{1/2}
\norm{f}_2^2.
\end{align}

\eqref{paininneck} is deduced as follows.
For each integer $r$ define
\begin{gather}
a_r=\sum_{\beta: |E_\beta|\in[2^r,2^{r+1})}
2^\beta|E_\beta|^{1/2}
\max_{m,i}
\left(\frac{|E_\beta\cap\scriptc_m^i|}{|E_\beta|+|\scriptc_m^i|}\right)^{s/4}
\\
b_{k,r} =
\min\big(2^{-(r+2k)t/4},2^{(r+2k)t/4}\big).
\end{gather}
Then by \eqref{beforepaininneck},
\begin{multline}
\norm{f}_{X_p}
\le C
\Big(\sum_{k=0}^\infty
(\sum_{r=-\infty}^\infty
a_r b_{k,r})^4
\Big)^{1/4}
\\
\le C
\Big(\sum_{k=0}^\infty
(\sum_{r}a_r^4b_{k,r})
(\sum_r b_{k,r})^{3}
\Big)^{1/4}
\\
\le C
\Big(\sum_{k=0}^\infty
\sum_{r}a_r^4b_{k,r}
\Big)^{1/4}
\le C
(\sum_{r}^\infty a_r^4)^{1/4}.
\label{afterpaininneck}
\end{multline}
Finally for each $r$,
an application of H\"older's inequality with exponents $8,\tfrac87$ gives
\begin{align*}
a_r
&=
\sum_{\beta: |E_\beta|\sim 2^r} 2^\beta|E_\beta|^{1/2}
\max_{m,i}
\left(\frac{|E_\beta\cap\scriptc_m^i|}{|E_\beta|+|\scriptc_m^i|}\right)^{s/4}
\\
&\le
C2^{r/2}
\Big(
\sum_{\beta: |E_\beta|\sim 2^r} 2^{4\beta}
\max_{m,i}
\left(\frac{|E_\beta\cap\scriptc_m^i|}{|E_\beta|+|\scriptc_m^i|}\right)^{2s}
\Big)^{1/8}
\Big(
\sum_{\beta: |E_\beta|\sim 2^r} 2^{4\beta/7}
\Big)^{7/8}
\\
&\le C
\Big(
\sum_{\beta: |E_\beta|\sim 2^r} 2^{4\beta}|E_\beta|^2
\max_{m,i}
\left(\frac{|E_\beta\cap\scriptc_m^i|}{|E_\beta|+|\scriptc_m^i|}\right)^{s}
\Big)^{1/8}
\norm{f}_2^{1/2}
\end{align*}
since the sum of the finite series
$\sum_{\beta: |E_\beta|\sim 2^r} 2^{4\beta/7}$
is comparable to its largest term.

\begin{comment}
is a consequence of the presence of
the factors
$\min\big(2^{-2k}|E_\alpha|^{-1},2^{2k}|E_\alpha|\big)^{t/4}$,
and
(to handle the sum over all $\alpha$ such that
$|E_\alpha|$ falls in some range $[\lambda,2\lambda]$)
the fact that for any finite set $S$,
$\sum_{\alpha\in S}2^\alpha\sim \max_{\alpha\in S}2^\alpha$.
\end{comment}

Continuing now from \eqref{afterpaininneck}, we have
\begin{align*}
\norm{f}_{X_p}^8\norm{f}_2^{-4}
&\le C
\sum_\alpha 2^{2\alpha}|E_\alpha|
\ \
\cdot
\ \
\sup_{\alpha}
2^{2\alpha}|E_\alpha|
\max_{k,i}
\left(\frac{|E_\alpha\cap\scriptc_k^i|}{|E_\alpha|+|\scriptc_k^i|}\right)^{s}
\\
&= C\norm{f}_2^4
\cdot
\sup_{\alpha}
\left(
\Big(2^{2\alpha}|E_\alpha|\norm{f}_2^{-2}\Big)
\max_{k,i}
\left(\frac{|E_\alpha\cap\scriptc_k^i|}{|E_\alpha|+|\scriptc_k^i|}\right)^{s}
\right)
\\
&\le C\norm{f}_2^4
\cdot
\sup_{\alpha}
\left(
\Big(2^{2\alpha}|E_\alpha|\norm{f}_2^{-2}\Big)^s
\max_{k,i}
\left(\frac{|E_\alpha\cap\scriptc_k^i|}{|E_\alpha|+|\scriptc_k^i|}\right)^{s}
\right)
\end{align*}
for some $0<s\le 1$.

It remains to show that
\begin{equation}
\sup_{\alpha}
\left(
\Big(2^{2\alpha}|E_\alpha|\norm{f}_2^{-2}\Big)
\max_{k,i}
\left(\frac{|E_\alpha\cap\scriptc_k^i|}{|E_\alpha|+|\scriptc_k^i|}\right)
\right)
\le C\sup_{m,j}\Lambda_{m,j}(f)^r
\end{equation}
for some positive exponent $r$.
Set
\begin{equation}
X=
\sup_{\alpha}
\left(
\Big(2^{2\alpha}|E_\alpha|\norm{f}_2^{-2}\Big)
\max_{k,i}
\left(\frac{|E_\alpha\cap\scriptc_k^i|}{|E_\alpha|+|\scriptc_k^i|}\right)
\right)
\end{equation}
Choose
an index $\alpha$ for which the supremum is attained up to a factor of at most $2$.
Then
\begin{equation}
\tfrac12 X \le
\Big(2^{2\alpha}|E_\alpha|\cdot\norm{f}_2^{-2}\Big)
\max_{k,i}
\left(\frac{|E_\alpha\cap\scriptc_k^i|}{|E_\alpha|+|\scriptc_k^i|}\right).
\end{equation}
The right-hand side is a product of two nonnegative factors, neither of which can exceed $1$,
so
\begin{equation}
2^{2\alpha}|E_\alpha|/\norm{f}_2^{2}\ge X/2
\text{ and there exist $k,i$ such that}
\frac{|E_\alpha\cap\scriptc_k^i|}{|E_\alpha|+|\scriptc_k^i|}\ge  X/4.
\end{equation}
Set $\scriptc=\scriptc_k^i$.
We have $|E_\alpha|\ge 2^{-2\alpha-1}X\norm{f}_2^2$,
and since $|E_\alpha\cap\scriptc| \le 2^{-\alpha}\int_\scriptc |f|$,
\begin{equation}
|\scriptc|^{-1}\int_\scriptc|f|
\ge
2^{\alpha}\frac{|E_\alpha\cap\scriptc|}{|\scriptc|}
\ge
2^{\alpha}\frac{|E_\alpha\cap\scriptc|}{|E_\alpha|+|\scriptc|}
\ge c2^{\alpha}X.
\end{equation}
Also
\begin{multline}
|\scriptc|^{-1}\int_\scriptc|f|
\ge
2^{\alpha}\frac{|E_\alpha\cap\scriptc|}{|E_\alpha|}
\cdot\frac{|E_\alpha|}{|\scriptc|}
\\
\ge
2^{\alpha}\frac{|E_\alpha\cap\scriptc|}{|E_\alpha|+|\scriptc|}
|\scriptc|^{-1}|E_\alpha|
\ge c2^{\alpha}X|\scriptc|^{-1}|E_\alpha|
\\
\ge c2^{\alpha}X|\scriptc|^{-1}
\cdot 2^{-2\alpha}\norm{f}_2^2 X
= c2^{-\alpha}\norm{f}_2^2 X^2.
\end{multline}
Taking the geometric mean of these two bounds yields
\begin{equation}
\frac{ |\scriptc|^{-1}\int_\scriptc|f|}
{|\scriptc|^{-1/2}\norm{f}_2}
\ge
c  X^{3/2},
\end{equation}
which by the definitions of $X$  and $\Lambda_{k,i}(f)$
is a bound of the desired form.
\end{proof}

\end{document}